\newcommand{\cA}{\mathcal{A}}
\newcommand\cB{{\mathcal B}}
\newcommand{\cC}{\mathcal{C}}
\newcommand{\cE}{\mathcal{E}}
\newcommand{\cF}{\mathcal{F}}
\newcommand{\cG}{\mathcal{G}}
\newcommand{\cI}{\mathcal{I}}
\newcommand{\cL}{\mathcal{L}}
\newcommand{\cM}{\mathcal{M}}
\newcommand{\cN}{\mathcal{N}}
\newcommand{\cP}{\mathcal{P}}
\newcommand{\hP}{\mathring{\cP}}
\newcommand{\cQ}{\mathcal{Q}}
\newcommand\cR{{\mathcal R}}
\newcommand{\cS}{\mathcal{S}}
\newcommand{\cW}{\mathcal{W}}
\newcommand{\bH}{\mathbb{H}}
\newcommand{\bx}{\bar{x}}
\newcommand{\bTheta}{\overline{\Theta}}
\newcommand{\bGamma}{\overline{\Gamma}}
\newcommand{\tpsi}{\widetilde{\psi}}
\newcommand{\hW}{\widehat{\cW}}
\newcommand{\hLambda}{\hat{\Lambda}}
\newcommand{\musrb}{\mu_{\tiny{\mbox{SRB}}}}
\newcommand{\hatmusrb}{\hat \mu_{\tiny{\mbox{SRB}}}}
\newcommand{\vf}{\varphi}
\newcommand{\ve}{\varepsilon}
\newcommand{\diam}{\mbox{diam}}
\newcommand{\comments}[1]{} 
\newtheorem{proposition}{Proposition}[section]
\newtheorem{lemma}[proposition]{Lemma}
\newtheorem{sublemma}[proposition]{Sublemma}
\newtheorem{theorem}[proposition]{Theorem}
\newtheorem{corollary}[proposition]{Corollary}
\newtheorem{definition}[proposition]{Definition}
\theoremstyle{remark}
\newtheorem{remark}[proposition]{Remark}
\theoremstyle{definition}
\numberwithin{equation}{section}
\begin{document}

\title[Equilibrium measures for billiard flows]{A family of natural equilibrium measures\\ for Sinai billiard flows}

\author{J\'er\^ome Carrand\textsuperscript{(1),(2)}}
\email{jerome.carrand@sns.it}
\address{(1) Sorbonne Universit\'e and Universit\'e Paris Cit\'e, CNRS,   Laboratoire de Probabilit\'es, Statistique et Mod\'elisation,
	F-75005 Paris, France}
\address{(2) Centro di Ricerca Matematica Ennio De Giorgi, SNS,  
56100 Pisa,  Italy (current address)}

\date{\today}
\begin{abstract}
The Sinai billiard flow on the two-torus, i.e., the periodic Lorentz gas, is a continuous flow, but it is not everywhere differentiable. Assuming finite horizon, we relate the equilibrium states of the flow with those of the Sinai billiard map $T$ -- which is a discontinuous map. We propose a definition for the topological pressure $P_*(T,g)$ associated to a potential $g$. We prove that for any piecewise H\"older potential $g$ satisfying a mild assumption, $P_*(T,g)$ is equal to the definitions of Bowen using spanning or separating sets. We give sufficient conditions under which a potential gives rise to equilibrium states for the Sinai billiard map. We prove that in this case the equilibrium state $\mu_g$ is unique, Bernoulli, adapted and gives positive measure to all nonempty open sets. For this, we make use of a well chosen transfer operator acting on anisotropic Banach spaces, and construct the measure by pairing its maximal eigenvectors. Last, we prove that the flow invariant probability measure $\bar \mu_g$, obtained by taking the product of $\mu_g$ with the Lebesgue measure along orbits, is Bernoulli and flow adapted. We give examples of billiard tables for which there exists an open set of potentials satisfying those sufficient conditions. 
\end{abstract}
\thanks{JC is grateful to ITS--ETH Zurich for their invitation in May 2022. Thanks to Viviane Baladi, Mark Demers, Alexey Korepanov and Françoise P\`ene for their helpful discussions and comments, and for pointing out several typos. 
Research supported by the European Research Council (ERC) under the European Union's Horizon 2020 research and innovation programme (grant agreement No 787304).}
\maketitle

\section{Introduction}

\subsection{Billiards and equilibrium states}

In this work, we are concerned with a class of dynamics with singularities: the dispersing billiards introduced by Sinai \cite{S} on the two-torus. A Sinai billiard on the torus is (the quotient modulo $\mathbbm{Z}^2$, for position, of) the periodic planar Lorentz gas (1905) model for the motion of a single dilute electron in a metal. The scatterers (corresponding to atoms of the metals) are assumed to be strictly convex (but not necessarily discs). Such billiards have become fundamental models in mathematical physics.

To be more precise, a Sinai billiard table $Q$ on the two-torus $\mathbbm{T}^2$ is a set $Q = \mathbbm{T}^2 \smallsetminus B$ with $B = \sqcup_{i=1}^D B_i$ for some finite number $D\geqslant 1$ of pairwise disjoint closed domains $B_i$, called scatterers, with $C^3$ boundaries having strictly positive curvature -- in particular, the scatterers are strictly convex. The billiard flow $\phi_t$ is the motion of a point particle travelling at unit speed in $Q$ with specular reflections off the boundary of the scatterers. Identifying outgoing collisions with incoming ones in the phase space, the billiard flow is continuous. However, the grazing collisions -- those tangential to scatterers -- give rise to singularities in the derivative \cite{chernov2006chaotic}. The Sinai billiard map $T$ -- also called collision map -- is the return map of the single point particle to the scatterers. Because of the grazing collisions, the Sinai billiard map is a discontinuous map.

Sinai billiard maps and flows both preserve smooth invariant probability measures, respectively $\musrb$ and $\tilde \mu _{\rm SRB}$, which have been extensively studied: $(T,\musrb)$ and $(\phi_t,\tilde \mu _{\rm SRB})$ are uniformly hyperbolic, ergodic, K-mixing \cite{S,BuS,SC}, and Bernoulli \cite{GO,ChH}. The measure $\musrb$ is $T$-adapted \cite{KS} in the sense of the integrability condition:
\begin{align*}
\int |\log d(x,\cS_{\pm 1})| \,\mathrm{d}\musrb < \infty \, ,
\end{align*}
where $\cS_{\pm 1}$ is the singularity set for $T^{\pm 1}$.  Both systems enjoy exponential decay of correlations \cite{Y,DZ1}. Since the billiard has many periodic orbits, it thus has many other ergodic invariant measures, but until very recently most of the results apply to perturbations of $\musrb$ \cite{CWZ,DRZ}.

\medskip
In the case of an Anosov flow, it is known since the work of Bowen \cite{bowen1972periodic} that the Kolmogorov-Sinai entropy is upper-semicontinuous, which guarantees the existence of measures of maximal entropy, or more generally, of equilibrium states. Because of the singularities, billiard flows are not Anosov and therefore methods used in the context of Anosov flows cannot be applied easily. The upper-semicontinuity of the entropy is not known at the moment, and, more generally, the existence of equilibrium states has to be treated one potential at the time. 

In a recent paper, Baladi and Demers \cite{BD2020MME} proved, under a mild technical assumption and assuming finite horizon, that there exists a unique measure of maximal entropy $\mu_*$ for the billiard map, and that $\mu_*$ is Bernoulli, $T$-adapted, charges all nonempty open sets and does not have atoms. Their construction of this measure relies on the use of a transfer operator acting on anisotropic Banach spaces, similar to those used by \cite{DZ1} in order to study $\musrb$. Combining their work with those of Lima--Matheus \cite{LM} and Buzzi \cite{Bu}, Baladi and Demers proved that their exists a positive constant $C$ such that 
\begin{align}
C e^{h_* m} \leqslant \# {\rm Fix}\, T^m \, , \quad \forall m \geqslant 1 \, ,
\end{align}
where $ \# {\rm Fix}\, T^m$ denotes the number of fixed points of $T^m$, and $h_*$ is the topological entropy of the map $T$ from \cite{BD2020MME}. Baladi and Demers also give a condition under which $\mu_*$ and $\musrb$ are different.

In a subsequent paper, Baladi and Demers~\cite{BD2} constructed a family of equilibrium states $\mu_t$ for $T$ associated to the family of geometric potentials $-t \log J^u T$, where $J^u T$ is the unstable Jacobian of $T$ and $t \in (0,t_*)$ for some $t_* > 1$. In the case $t=1$, $\mu_t=\musrb$. The construction again relies on the use of a family of transfer operators $\cL_t$ acting on anisotropic Banach spaces. For each $t \in (0,t_*)$, they proved that $\mu_t$ is the unique equilibrium state associated with the potential $-t \log J^u T$, that $\mu_t$ is mixing, $T$-adapted, has full support and does not have atoms. Baladi and Demers also showed that each transfer operator $\cL_t$ has a spectral gap, from which they deduced the exponential rate of mixing for each measure $\mu_t$, for $C^1$ observables.

Even more recently, Demers and Korepanov \cite{DK} proved a polynomial decay of correlations for the measure $\mu_*$ for H\"older observables under an assumption slightly stronger than the one used in \cite{BD2020MME}. Nonetheless, thanks to a result annonced by T\'oth \cite{toth}, the assumptions from \cite{BD2020MME} and \cite{DK} are satisfied for generic billiard tables.

\medskip 

In this paper, we give a sufficient condition under which a piecewise H\"older potential $g$ admits equilibrium states for $T$. Under this assumption, we prove that the equilibrium state is in fact unique, Bernoulli, $T$-adapted and charges all nonempty open sets. We prove that its lift into a flow invariant measure is Bernoulli and flow-adapted (that is, the logarithm of the norm of the differential of the flow is integrable). We also identify the potential $g = - h_{\rm top}(\phi_1) \tau$ to be such that its corresponding equilibrium states for $T$ -- whenever they exist -- are in bijection with measures of maximal entropy of the billiard flow.

Notice that the geometric potentials $- t \log J^u T$ are not piecewise H\"older, and thus the work of Baladi and Demers \cite{BD2} on those potentials is distinct from ours.

\subsection{Statement of main results -- Organization of the paper}

Since transfer operator techniques are simpler to implement for maps than for flows, we will be concerned with the associated billiard map $T:M \to M$ defined to be the first collision map on the boundary of $Q$, where $M = \partial Q \times [-\pi/2, \pi/2]$. We assume as in \cite{Y,BD2020MME}, that the billiard table $Q$ has \emph{finite horizon}, in the sense that the billiard flow does not have any trajectories making only tangential collisions -- in particular, this implies that the return time function $\tau$ to a scatterers is bounded.

The first step is to find a suitable notion of topological pressure $P_*(T,g)$ for the discontinuous map $T$ and a potential $g:M \to \mathbbm{R}$. In order to define it, we introduce as in \cite{BD2020MME}, the following increasing family of partition of $M$. Let $\cP$ be the partition into maximal connected sets on which both $T$ and $T^{-1}$ are continuous, and let $\cP_{-k}^n = \bigvee_{i=-k}^n T^{-i} \cP$. Then the sequence $\sum_{P \in \cP_0^n} \sup_P e^{S_n g}$ is submultiplicative, where $S_n g = \tfrac{1}{n}\sum_{i=0}^{n-1} g \circ T^i$ is the Birkhoff sum of $g$. We can thus define the topological pressure by
\begin{definition}\label{def:topological_pressure}
$P_*(T,g) \coloneqq \lim\limits_{n \to +\infty} \frac{1}{n} \log \sum_{P \in \cP_0^n} \sup_P e^{S_n g} $
\end{definition}
Section~\ref{sect:pressure} is dedicated to the study of this quantity. In particular, we prove (Proposition~\ref{prop:pressure_bowen_definitions}) that whenever the potential $g$ is smooth enough -- piecewise H\"older -- and $P_*(T,g) - \sup g >0$ then $P_*(T,g)$ coincides with both Bowen's definitions using spanning sets and separating sets. We also prove (Lemma~\ref{lemma:var_ppl_easy_dir}) that for each $T$-invariant measure $\mu$, we have $P_*(T,g) \geqslant h_{\mu}(T) + \int g \,\mathrm{d}\mu$. Finally, we show that if $g = - h_{\rm top}(\phi_1) \tau$ admits an equilibrium state $\mu_g$, then the measure $\bar \mu_g = (\int \tau \,\mathrm{d}\mu_g)^{-1} \mu_g \otimes \lambda$ is a measure of maximal entropy for the billiard flow, seen as a suspension flow over $T$, where $\lambda$ is the Lebesgue measure in the flow direction.

To state our existence results (in Section~\ref{sect:measure_mu_g}), we need to quantify the recurrence to the singular set. Fix an angle $\varphi_0$ close to $\pi/2$ and $n_0 \in \mathbbm{N}$. We say that a collision is $\varphi_0$-grazing if its angle with the normal is larger than $\varphi_0$ in absolute value. Let $s_0 = s_0(\varphi_0,n_0) \in (0,1]$ denote the smallest number such that
\begin{align}\label{eq:s0}
\text{any orbit of length $n_0$ has at most $s_0 n_0$ collisions which are $\varphi_0$-grazing.}
\end{align}
Due to the finite horizon condition, we can choose $\varphi_0$ and $n_0$ such that $s_0 < 1$. We refer to \cite[\S 2.4]{BD2020MME} for further discussion on this quantity. From \cite{chernov2006chaotic}, $\Lambda = 1 + \kappa_{\min} \tau_{\min} >1 $ is the expanding factor in the hyperbolicity of $T$, where $\kappa_{\min}$ is the minimal curvature of the scatterers and $\tau_{\min}$ is the minimum of the return time function $\tau$. Define $\cS_0 = \{ (r,\varphi) \in M \mid |\varphi|=\pi /2 \}$ the set of grazing collisions, and $\cS_{\pm n} = \cup_{i=0}^n T^{\mp i} \cS_0$ the singular set of $T^{\pm n}$. Call $\cN_\ve(\cdot)$ the $\ve$-neighbourhood of a set. Then
\begin{theorem}\label{Main_theorem_1}
If $g$ is a bounded, piecewise H\"older\footnote{See precise definition in the beginning of Section~\ref{sect:pressure}.} potential such that $P_*(T,g) - \sup g > s_0 \log 2$ and $\log \Lambda > \sup g - \inf g$, then there exists a probability measure $\mu_g$ such that 
\begin{enumerate}
\item[(i)] $\mu_g$ is $T$-invariant, $T$-adapted and for all $k \in \mathbbm{Z}$, there exists $C_k > 0$ such that $\mu_g(\cN_\varepsilon(\cS_k)) \leqslant C_k |\log \varepsilon|^{-\gamma}$, where $\gamma > 1$ is such that $P_*(T,g) - \sup g > \gamma s_0 \log 2$.
\item[(ii)] $\mu_g$ the unique equilibrium state of $T$ under $g$: that is $P_*(T,g) = h_{\mu_g}(T) + \int g \,\mathrm{d}\mu_g$ and $P_*(T,g) > h_{\mu}(T) + \int g \,\mathrm{d}\mu$ for all $\mu \neq \mu_g$.
\item[(iii)] $\mu_g$ is Bernoulli\footnote{Recall that Bernoulli implies K-mixing, which implies strong mixing, which implies ergodic.} and charges all nonempty open sets.
\end{enumerate}
If the assumption $\log \Lambda > \sup g - \inf g$ is weakened into the condition SSP.1 (Definition~\ref{def:SSP1}), then item (i) still holds. If the assumption $\log \Lambda > \sup g - \inf g$ is weakened into the condition SSP.2 (Definition~\ref{def:SSP2}), then items (i), (ii) and (iii) hold.
\end{theorem}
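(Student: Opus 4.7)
My approach follows the transfer-operator strategy pioneered by Demers--Zhang for the SRB measure and adapted by Baladi--Demers to the measure of maximal entropy \cite{BD2020MME} and the geometric potentials \cite{BD2}. The plan is to introduce the weighted transfer operator
\begin{equation*}
(\cL_g \psi)(x) \;=\; \sum_{y \in T^{-1}x} e^{g(y)} \psi(y)
\end{equation*}
and let it act on an anisotropic Banach space $\cB$ of distributions on $M$ whose norm pairs against smooth test functions supported on admissible stable curves, equipped with a weak norm $\|\cdot\|_w$ and a strong norm $\|\cdot\|_s$. The central technical step is a Lasota--Yorke inequality of the form $\|\cL_g^n \psi\|_s \leqslant C \theta^n \|\psi\|_s + C_n \|\psi\|_w$ with $\theta < e^{P_*(T,g)}$, which yields quasi-compactness of $e^{-P_*(T,g)}\cL_g$ and an essential spectral radius strictly less than $1$.

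The quantitative input for this inequality is exactly the pair of hypotheses in the statement. First, iterating $T^n$ cuts an admissible stable curve into at most roughly $e^{s_0 n \log 2}$ pieces, so the condition $P_*(T,g) - \sup g > s_0 \log 2$ ensures that the pressure exponential dominates the complexity growth in the Lasota--Yorke bound. Second, the oscillation bound $\log \Lambda > \sup g - \inf g$ is the potential-perturbed version of the standard hyperbolicity inequality needed to close the cone estimate; when it fails, one replaces the worst-case bound by an averaged bound along typical orbits, which is the content of SSP.1 and SSP.2. With quasi-compactness in hand, the spectral radius of $\cL_g$ is identified with $e^{P_*(T,g)}$ by pairing the lower bound from Lemma~\ref{lemma:var_ppl_easy_dir} against the upper bound obtained by evaluating $\cL_g^n 1$ on the partition $\cP_0^n$ via Definition~\ref{def:topological_pressure}. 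I would then extract a non-negative left eigenfunction $h_g$ and a non-negative right eigendistribution $\nu_g$ (the latter by dualising on an anisotropic space of distributions along unstable curves, as in \cite{BD2}) and set $\mu_g := h_g \cdot \nu_g / \langle h_g, \nu_g \rangle$.

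For item (i), the adaptedness bound $\mu_g(\cN_\varepsilon(\cS_k)) \leqslant C_k |\log \varepsilon|^{-\gamma}$ follows from applying the Lasota--Yorke scheme with a logarithmic, rather than H\"older, modulus of continuity: the refined gap $P_*(T,g) - \sup g > \gamma s_0 \log 2$ is precisely what absorbs $\gamma$ logarithmic factors. For item (ii), the lower entropy bound $h_{\mu_g}(T) \geqslant P_*(T,g) - \int g \, d\mu_g$ comes from a Misiurewicz-type argument exploiting the eigenvector estimate $\mu_g(P) \leqslant C e^{-n P_*(T,g)} \sup_P e^{S_n g}$ for each $P \in \cP_0^n$, combined with the easy direction already proven. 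Uniqueness and the Bernoulli property in item (iii) then follow the Baladi--Demers scheme: simplicity of the leading eigenvalue (via a Perron--Frobenius argument on the cone of non-negative admissible densities) yields exponential decay of correlations for H\"older observables, hence K-mixing, and combined with nonuniform hyperbolicity one upgrades to Bernoulli by Chernov--Haskell. Positivity on nonempty open sets follows from a uniform lower bound on $h_g$ along every admissible stable curve, together with the fact that every open set contains such a curve segment.

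The main obstacle will be securing the Lasota--Yorke inequality in the presence of a non-constant potential: one must control the H\"older oscillation of $g$ through the non-smooth reparametrisations induced by $T$ near the singular set, and arrange the two norms so that both the complexity input $e^{s_0 n \log 2}$ and the contraction input, which behaves like $\Lambda^{-n} e^{n(\sup g - \inf g)}$, combine to produce a ratio strictly below $e^{n(P_*(T,g) - \sup g)}$. The weakening of the coarse condition $\log \Lambda > \sup g - \inf g$ to the averaged SSP.1 and SSP.2 conditions will be the most delicate point, requiring large-deviation-type estimates on the frequency with which typical orbits visit the region where the one-step hyperbolicity is weaker than $e^{\sup g - \inf g}$.
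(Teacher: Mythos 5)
Your overall blueprint (anisotropic Banach spaces, transfer operator, eigenvector pairing) matches the paper's, but your plan hinges on a spectral mechanism that fails here, and that failure propagates through the rest of your argument.

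The key gap is your claim of quasi-compactness. You propose a Lasota--Yorke inequality $\|\cL_g^n \psi\|_s \leqslant C\theta^n\|\psi\|_s + C_n\|\psi\|_w$ with $\theta < e^{P_*(T,g)}$, which would give an essential spectral radius strictly below the spectral radius. But the paper deliberately builds a \emph{logarithmic} modulus of continuity into the strong stable and unstable norms (see the weights $|\log|W||^{\gamma}$ and $|\log\varepsilon|^{\varsigma}$ in Section~\ref{subsect:banach_spaces}), and as the footnote there explains, this destroys the spectral gap: Proposition~\ref{prop:upper_bounds_norms} gives only $\|\cL_g^n f\|_{\cB} \leqslant C e^{nP_*(T,g)}\|f\|_{\cB}$ with \emph{no} contracting factor separating strong and weak norms. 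This is unavoidable, since the singularities of $T$ preclude a H\"older modulus in the unstable norm while keeping the stable estimate closed. Consequently, $e^{-P_*(T,g)}\cL_g$ is not quasi-compact, and eigenvectors cannot be extracted from spectral theory. The paper instead constructs $\nu$ and $\tilde\nu$ as Cesàro-type limits $\nu_n = \tfrac1n\sum_{k<n}e^{-kP_*}\cL_g^k 1$ (Proposition~\ref{prop:exist}), using the compact embedding $\cB\hookrightarrow\cB_w$ to pass to a subsequential limit and then verifying the eigenvector equation directly.

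Once the spectral gap is gone, the downstream steps in your proposal also need replacing. Exponential decay of correlations and simplicity of the leading eigenvalue via a Perron--Frobenius cone argument are not available; the paper proves K-mixing by a Hopf argument using absolute continuity of the unstable holonomy with respect to $\mu_g$ (Corollary~\ref{corol:abs_c0_mug_unst_fol}, Proposition~\ref{prop:mug_is_ergodic}), and uniqueness by a Katok--Bowen type entropy comparison against any singular invariant measure using the partitions $\cM_{-2n}^0$ and the sets $G_{2n}$, $B_{2n}$ (Proposition~\ref{prop:uniqueness}). Two further slips worth noting: the operator in the paper is $\cL_g f = (f\,e^g/J^sT)\circ T^{-1}$, carrying the stable Jacobian so that the change of variables on stable curves works; your definition omits this weight. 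And SSP.1/SSP.2 are not large-deviation bounds on weak hyperbolicity regions, but fragmentation conditions controlling the weighted proportion of long curves in $\cG_n^\delta(W)$, needed to make the thermodynamic sums over $\cM_0^n$ supermultiplicative (Lemma~\ref{lemma:supermultiplicativity}) and hence of exact exponential growth (Proposition~\ref{prop:almost_exponential_growth}); that exactness, not a spectral gap, is what closes the upper bound in Proposition~\ref{prop:upper_bounds_norms}.
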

The above theorem will follow from  Proposition~\ref{prop:exist}, Lemma~\ref{lemma:measure_Sk}, Corollary~\ref{thm:equilibrium states}, and Propositions~\ref{prop:uniqueness}, \ref{prop:mu_g_is_bernoulli}, \ref{prop:full_support}. Furthermore, assuming the sparse recurrence to singularities condition from \cite{BD2020MME}, we provide in Remark~\ref{remark:neighbourhood_zero_potential} an open set of potentials, each having SSP.1 and SSP.2.

The tool used to construct the measure $\mu_g$ is a transfer operator $\cL_g$ with $\cL_g f = \left( f\, e^g / J^s T \right) \circ T^{-1}$, similar to the one used in \cite{BD2020MME} corresponding to the case $g \equiv 0$. This operator and the anisotropic Banach spaces on which it acts are defined in details in Section~\ref{sect:banach_spaces_and_transfer_operators}. Section~\ref{sect:growth_lemma} contains key combinatorial growth lemmas, controlling the growth in weighted complexity of the iterates of a stable curve. These lemmas will be crucial since the quantity they control appears in the norms of the iterates of $\cL_g$. In Section~\ref{sect:norm_estimates}, we prove a (degenerated) ``Lasota--Yorke" type inequality (Proposition~\ref{prop:upper_bounds_norms}) --~giving an upper bound on the spectral radius of $\cL_g$~-- as well as a lower bound on the spectral radius (Theorem~\ref{thm:spectral_radius}). 

Section~\ref{sect:measure_mu_g} is devoted to the construction and the properties of the measure $\mu_g$. From the estimates on the norms from the previous section, we are able to construct left and right maximal eigenvectors ($\tilde \nu$ and $\nu$) for $\cL_g$. We construct the measure $\mu_g$ by pairing these eigenvectors.  We then prove the estimates on the measure of a neighbourhood of the singular sets (Lemma~\ref{lemma:measure_Sk}). Section~\ref{sect:absolute_continuity} contains the key result of the absolute continuity of the stable and unstable foliations with respect to $\mu_g$, as well as the proof that $\mu_g$ has total support -- this is done by extending $\nu$ into a measure and exploiting the $\nu$-almost everywhere positive length of unstable manifolds from Section~\ref{sect:positive_length_unstable_manifolds}. In Section~\ref{sect:bernoulli_var_principle}, we show that $\mu_g$ is ergodic, from which we bootstrap to K-mixing using a Hopf argument. Adapting \cite{ChH} with modifications from \cite{BD2020MME}, we deduce from the hyperbolicity and the K-mixing that $\mu_g$ is Bernoulli. Still in Section~\ref{sect:bernoulli_var_principle}, we give an upper-bound on the measure of weighted Bowen balls, from which we deduce, using the Shannon--MacMillan--Breiman theorem, that $\mu_g$ is an equilibrium state for $T$ under the potential $g$ (Corollary~\ref{thm:equilibrium states}). Finally, the Section~\ref{sect:uniqueness} is dedicated to the uniqueness of the equilibrium state $\mu_g$.

In Section~\ref{sect:billiard_flow}, we prove using arguments from \cite{chernov2006chaotic} that $(\phi_t,\bar \mu_g)$ is K-mixing (Proposition~\ref{prop:billiard_flow_k-system}), and again, using the hyperbolicity of the billiard flow, we adapt \cite{ChH} in order to prove that $(\phi_t,\bar \mu_g)$ is Bernoulli (Proposition~\ref{prop:billiard_flow_bernoulli}). Finally, we prove that $\bar \mu_g$ is flow adapted in the sense of the integrability condition formulated in Proposition~\ref{prop:flow_adapted}. We summarize these results about the billiard flow in the following theorem.

\begin{theorem}
Let $g$ be a potential satisfying the assumptions from Theorem~\ref{Main_theorem_1}, and let $\bar \mu_g \coloneqq (\int \tau \,\mathrm{d}\mu_g)^{-1} \mu_g \otimes \lambda$. Then $\bar \mu_g$ is a $\phi_t$-invariant Borel probability measure that is an equilibrium states for $\phi_t$ and any potential $\tilde g$ such that $g= \lambda(\tilde g) - P(\phi_1,\tilde g)\tau$, where $\lambda(\tilde g)(x)=\int_0^{\tau(x)} \tilde g (\phi_t(x)) \, \mathrm{d}t$. Furthermore, $\bar \mu_g$ is flow adapted and $(\phi_t,\bar \mu_g)$ is Bernoulli.
\end{theorem}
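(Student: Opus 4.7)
The plan is to deduce each of the four assertions (invariance and normalization, equilibrium-state characterization, Bernoulli property, flow-adaptedness) from the corresponding statement for $\mu_g$ already established in Theorem~\ref{Main_theorem_1}, together with the three Propositions of Section~\ref{sect:billiard_flow} that the excerpt announces. The first assertion is essentially bookkeeping: the finite horizon assumption guarantees $\tau_{\min} \leqslant \int \tau\,\mathrm{d}\mu_g \leqslant \tau_{\max}$, so the suspension measure $\bar \mu_g = (\int \tau\,\mathrm{d}\mu_g)^{-1}\, \mu_g \otimes \lambda$ is a well-defined probability, and $\phi_t$-invariance is automatic from the identification of the billiard flow with the suspension of $T$ under the roof function $\tau$.

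To handle the equilibrium-state property, I would use Abramov's formula. Any $\phi_t$-invariant probability $\nu$ is the suspension of a unique $T$-invariant probability $\mu_\nu$, and
\begin{align*}
h_\nu(\phi_1) = \frac{h_{\mu_\nu}(T)}{\int \tau\,\mathrm{d}\mu_\nu}, \qquad \int \tilde g\,\mathrm{d}\nu = \frac{\int \lambda(\tilde g)\,\mathrm{d}\mu_\nu}{\int \tau\,\mathrm{d}\mu_\nu}.
\end{align*}
The hypothesis $g = \lambda(\tilde g) - P(\phi_1, \tilde g)\tau$ together with the observation recalled in Section~\ref{sect:pressure} (the classical fact that $P(\phi_1,\tilde g)$ is the unique $r$ for which $P_*(T, \lambda(\tilde g) - r\tau) = 0$, i.e.\ Sinai's trick via the Abramov relation) forces $P_*(T,g) = 0$. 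Applying Lemma~\ref{lemma:var_ppl_easy_dir} and Theorem~\ref{Main_theorem_1}~(ii) then gives $h_{\mu_\nu}(T) + \int g\,\mathrm{d}\mu_\nu \leqslant 0$ with equality exactly when $\mu_\nu = \mu_g$. Dividing by $\int \tau\,\mathrm{d}\mu_\nu > 0$ and using the two formulas above, one obtains
\begin{align*}
h_\nu(\phi_1) + \int \tilde g\,\mathrm{d}\nu = \frac{h_{\mu_\nu}(T) + \int g\,\mathrm{d}\mu_\nu}{\int \tau\,\mathrm{d}\mu_\nu} + P(\phi_1, \tilde g) \leqslant P(\phi_1,\tilde g),
\end{align*}
with equality if and only if $\nu = \bar\mu_g$. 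This simultaneously establishes that $\bar\mu_g$ is an equilibrium state and that it is the unique one among $\phi_t$-invariant probabilities.

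The remaining two claims are handled by invoking the three Propositions announced in Section~\ref{sect:billiard_flow}. K-mixing of $(\phi_t,\bar\mu_g)$ is obtained from K-mixing of $(T,\mu_g)$ provided by Theorem~\ref{Main_theorem_1}~(iii), combined with the non-coboundary property of the roof $\tau$, which plays the role of an aperiodicity assumption; this is the classical suspension argument of \cite{chernov2006chaotic}. The promotion to Bernoulli adapts the Chernov--Haskell scheme \cite{ChH} in the same fashion as \cite{BD2020MME}, the key inputs being hyperbolicity of the flow and the local product structure of $\mu_g$ supplied by the absolute continuity of the stable and unstable foliations from Section~\ref{sect:absolute_continuity}. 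The main obstacle here, and the reason these propositions are nontrivial, is that $\mu_g$ is not an SRB measure, so one must carefully verify that the Chernov--Haskell very weak Bernoulli estimates survive when Lebesgue conditionals along unstable manifolds are replaced by the conditionals of $\mu_g$; the absolute continuity results from Section~\ref{sect:absolute_continuity} are precisely what make this replacement work.

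Finally, for the flow-adaptedness, the norm $\|D\phi_t\|$ blows up only near tangential collisions, and for $t$ smaller than $\tau_{\min}$ one has a bound of the form $\log \|D\phi_t(x)\| \leqslant C\bigl(1 + |\log d(x,\cS_0)|\bigr)$. Integrating this against $\bar\mu_g$ and unfolding the suspension reduces the claim to $\int |\log d(\cdot,\cS_0)|\,\mathrm{d}\mu_g < \infty$, which in turn follows from the estimate $\mu_g(\cN_\varepsilon(\cS_0)) \leqslant C_0 |\log \varepsilon|^{-\gamma}$ with $\gamma > 1$ from Theorem~\ref{Main_theorem_1}~(i), by a standard layer-cake computation.
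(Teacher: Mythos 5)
Your proposal is correct and follows essentially the same route as the paper: the theorem is assembled from Propositions~\ref{prop:billiard_flow_k-system}, \ref{prop:billiard_flow_bernoulli}, \ref{prop:flow_adapted} together with the Abramov computation that Corollary~\ref{corol:eq_state_and_MME} and the remark after it defer to, and your explicit verification that $g=\lambda(\tilde g)-P(\phi_1,\tilde g)\tau$ forces $P_*(T,g)=P(T,g)=0$ (via Lemma~\ref{lemma:var_ppl_easy_dir} and Theorem~\ref{Main_theorem_1}~(ii)) is exactly the filling-in the paper intends. One small imprecision that does not affect the argument since you only invoke the proposition: the K-mixing of the flow in Proposition~\ref{prop:billiard_flow_k-system} is obtained from the Hopf-type $4$-loop construction exploiting the temporal distance function (non-joint-integrability of the flow stable/unstable foliations, \`a la Chernov--Markarian \S 6.9--6.11), not from a ``non-coboundary of $\tau$'' aperiodicity hypothesis, and K-mixing of the base is not directly used there.
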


In a subsequent joint work with Baladi and Demers \cite{BCD}, we bootstrap from the results of the present paper to show that if $h_{\rm top}(\phi_1) \tau_{\min} > s_0 \log 2$ then the potential $-h_{\rm top}(\phi_1) \tau$ satisfies the sufficient assumptions SSP.1 and SSP.2 in our Theorem~\ref{Main_theorem_1}, thus constructing a measure of maximal entropy for the billiard flow. This is done by studying the family of potentials $-t \tau$ and proving that the maximal value $t_\infty$ of $t$ such that $- t' \tau$ has SSP.1 and SSP.2 for all $0 \leqslant t' \leqslant t$, satisfies $t_\infty > h_{\rm top}(\phi_1)$. By Remark~\ref{remark:neighbourhood_zero_potential} and Corollary~\ref{corol:eq_state_and_MME}, for every small enough $|t|$, $-t \tau$ has SSP.1 and SSP.2 (thus $t_\infty >0$), and the case $t=h_{\rm top}(\phi_1)$ corresponds to measures of maximal entropy for the billiard flow.

\section{Topological Pressure, Variational Principle and Abramov Formula}\label{sect:pressure}

In this section, we formulate definitions of topological pressure for the billiard map $T$ under a potential $g$, and prove that -- under some conditions on $g$ -- they are equivalent. Using a classical estimate, we then prove one direction of the variational principle. Finally, making use of the Abramov formula, we relate equilibrium states of $T$ with the ones of the billiard flow. More specifically, we identify the potential for $T$ which is related to -- up to existence -- the measures of maximal entropy of $\phi_t$.

We first introduce notation: Adopting the standard coordinates $x=(r,\varphi)$ on each connected component $M_i$ of 
\[
M \coloneqq \partial Q \times \left[ -\frac{\pi}{2},\frac{\pi}{2} \right] = \bigsqcup_{i=1}^D \partial B_i \times \left[ -\frac{\pi}{2},\frac{\pi}{2} \right],
\]
where $r$ denotes arclength along $\partial B_i$, $\varphi$ is the angle the post-collisional trajectory makes with the normal to $\partial B_i$ and $M_i = \partial B_i \times \left[ -\tfrac{\pi}{2},\tfrac{\pi}{2} \right]$. In these coordinates, the collision map $T :M \to M$ preserve a smooth invariant probability measure $\musrb$ given by $\mathrm{d}\musrb = (2|\partial Q|)^{-1} \cos \varphi \, \mathrm{d}r \mathrm{d}\varphi$.

We now define the sets where $T$ and its iterates are discontinuous. Let $\cS_0 \coloneqq \{ (r,\varphi) \in M \mid |\varphi|= \pi/2 \}$ denote the set of grazing collisions. For each nonzero $n \in \mathbbm{N}$, let 
\[
\cS_{\pm n} \coloneqq \bigcup_{i=0}^n T^{\mp i} \cS_0,
\]
denote the singularity set for $T^{\pm n}$. It would be natural to study the map $T$ restricted to the invariant set $M \smallsetminus \cup_{n \in \mathbbm{Z}} \cS_n$ where $T$ is continuous, however the set of curves $\cup_{n \in \mathbbm{Z}} \cS_n$ is dense in $M$ \cite[Lemma~4.55]{chernov2006chaotic}. We thus introduce the classical family of partitions of $M$ as follows.

For $k$, $n \geqslant 0$, let $\cM_{-k}^{n}$ denote the partition of $M \smallsetminus (\cS_{-k} \cup \cS_n)$ into its maximal connected components. Note that all elements of $\cM_{-k}^n$ are open sets on which $T^i$ is continuous, for all $-k \leqslant i \leqslant n$. Since the thermodynamic sums over elements of $\cM_0^n$ of a potential $g$ will play a key role in the estimates on the norms of the iterates of the transfer operator $\cL_g$ in Section~\ref{sect:norm_estimates}, it should be natural -- by analogy to the case of continuous maps -- to define the topological pressure from these sums.

\medskip

Another natural family of partitions is defined as follows. Let $\cP$ denote the partition of $M$ into maximal connected components on which both $T$ and $T^{-1}$ are continuous. Define $\cP_{-k}^n = \bigvee_{i=-k}^n T^{-i} \cP$ and remark that $T^i$ is continuous on each element of $\cP_{-k}^n$, for all $-k \leqslant i \leqslant n$. 

The interior of each element of $\cP$ corresponds to precisely one element of $\cM_{-1}^1$, but its refinements $\cP_{-k}^n$ may also contain some isolated points -- this happens if three or more scatterers have a common grazing collision. These partitions already appeared in the work of Baladi and Demers, where they proved \cite[Lemma~3.2]{BD2020MME} that the number of isolated points in $\cP_{-k}^n$ grows linearly in $n+k$.

Finally, denote $\hP_{-k}^n$ the collection of interior of elements of $\cP_{-k}^n$. In \cite[Lemma~3.3]{BD2020MME}, Baladi and Demers proved that $\hP_{-k}^n = \cM_{-k-1}^{n+1}$, for all $n$, $k \geqslant 0$. It should be natural that the topological pressures obtained from these three families of partitions coincide. This is the content of Theorem~\ref{thm:pressure}.

\medskip

In order to formulate the result on the equivalence between definitions of topological pressure for $T$, we need to be more specific about the definition of \emph{piecewise} H\"older.

We say that a function $g$ is $(\cM_0^1,\alpha)$-H\"older, $0<\alpha <1$, if $g$ is $\alpha$-H\"older continuous on each element of the partition $\cM_0^1$. 
We define the $C^{\alpha}$ norm $|g|_{C^{\alpha}}$ to be the maximum of the usual $C^{\alpha}$ norms $|g|_{C^{\alpha}(A)}$, for $A \in \cM_0^1$, that is
\[ |g|_{C^{\alpha}} \coloneqq \max \{ |g|_{C^{0}(A)} + H^{\alpha}_A(g) \mid A \in \cM_0^1 \}, \quad \text{where } H^{\alpha}_A(g) = \sup\limits_{x,y \in A} \frac{|g(x) - g(y)|}{d(x,y)^{\alpha}}. \]
We simply say that a function $g$ is piecewise H\"older if there exists $\alpha \in (0,1)$ such that $g$ is $(\cM_0^1,\alpha)$-H\"older. 

Similarly, we say that a function $g$ is $\cM_0^1$-continuous if $g$ is bounded and continuous on each element of the partition $\cM_0^1$. We define the $C^0$ norm $|g|_{C^0}$ to be the maximum of the usual $C^0$ norms $|g|_{C^0(A)}$, over $A \in \cM_0^1$, that is $|g|_{C^{0}} \coloneqq \max \{ |g|_{C^{0}(A)} \mid A \in \cM_0^1 \}$.

\begin{theorem}\label{thm:pressure}
Let $g :M \to \mathbbm{R} $ be a potential bounded from above. Then \[  P_*(T,g) \coloneqq \lim\limits_{n \to +\infty} \frac{1}{n} \log \sum\limits_{A \in \mathcal{P}_{0}^n} \sup\limits_{x \in A} e^{(S_n g)(x)} \] exists and is called the (topological) pressure of $g$. Moreover, the map $g \mapsto P_*(T,g)$ is convex.

When $g$ is $\cM_0^1$-continuous and $P_*(T,g) - \sup g > 0$, the following limits exist and are equal to $P_*(T,g)$.

\begin{tabular}{rlcrl}
\textit{(i)} &$\lim\limits_{n \to +\infty} \frac{1}{n} \log \sum\limits_{A \in \mathring{\mathcal{P}}_{0}^n} \sup\limits_{x \in A} e^{(S_n g)(x)}$, & &
\textit{(ii)} &$\lim\limits_{n \to +\infty} \frac{1}{n} \log \sum\limits_{A \in \mathcal{M}_{0}^n} \sup\limits_{x \in A} e^{(S_n g)(x)}$.
\end{tabular}

Furthermore, when $g$ is also $(\cM_0^1,\alpha)$-H\"older continuous, then the following limits are equal to $P_*(T,g)$.

\begin{tabular}{rlrl}
\textit{(iii)} &$\lim\limits_{n \to +\infty} \frac{1}{n} \log \sum\limits_{A \in \mathcal{P}_{0}^n} \inf\limits_{x \in A} e^{(S_n g)(x)}$, &
\textit{(iv)} &$\lim\limits_{n \to +\infty} \frac{1}{n} \log \sum\limits_{A \in \mathring{\mathcal{P}}_{0}^n} \inf\limits_{x \in A} e^{(S_n g)(x)}$, \\
\textit{(v)} &$\lim\limits_{n \to +\infty} \frac{1}{n} \log \sum\limits_{A \in \mathcal{M}_{0}^n} \inf\limits_{x \in A} e^{(S_n g)(x)}$. & &
\end{tabular} \\
Finally, for a bounded potential, the sequence $n \mapsto \log \sum\limits_{A \in \cM_0^n} \sup\limits_{x \in A} e^{(S_{n-1}g)(x)}$ is subadditive.
\end{theorem}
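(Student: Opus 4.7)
Write $b_n = \sum_{A \in \cM_0^n} \sup_{x \in A} e^{(S_{n-1}g)(x)}$; subadditivity of $\log b_n$ is equivalent to $b_{m+n} \leq b_m b_n$ for all $m,n \geq 1$. The plan is to exploit the refinement structure of the partitions $\cM_0^n$ together with a splitting of the Birkhoff sum. First I would note that $\cS_{m+n} = \cS_m \cup T^{-m}\cS_n$, so $\cM_0^{m+n}$ is the refinement into maximal connected components of the join $\cM_0^m \vee T^{-m}\cM_0^n$. Consequently, for each $A \in \cM_0^{m+n}$ there is a unique $B(A) \in \cM_0^m$ with $A \subset B(A)$, and since $T^m|_A$ is a homeomorphism onto a connected subset of $M \setminus (\cS_{-m} \cup \cS_n)$, also a unique $C(A) \in \cM_0^n$ with $T^m(A) \subset C(A)$.

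Next, using the Birkhoff decomposition
\[
(S_{m+n-1}g)(x) = (S_{m-1}g)(x) + g(T^{m-1}x) + (S_{n-1}g)(T^m x),
\]
and the submultiplicativity of the supremum of positive functions, I would deduce the pointwise bound
\[
\sup_{x \in A} e^{(S_{m+n-1}g)(x)} \leq \sup_{B(A)} e^{S_{m-1}g} \cdot e^{\sup g} \cdot \sup_{C(A)} e^{S_{n-1}g}.
\]
Summing over $A \in \cM_0^{m+n}$ and grouping by the pair $(B(A), C(A)) \in \cM_0^m \times \cM_0^n$, the injectivity of $A \mapsto (B(A), C(A))$ would produce $b_{m+n} \leq e^{\sup g} b_m b_n$. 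The residual factor $e^{\sup g}$, coming from the single boundary term $g \circ T^{m-1}$ unmatched by the $S_{n-1}$-versus-$\cM_0^n$ index convention, is harmless for Fekete's lemma; alternatively it can be eliminated by redistributing $g(T^{m-1}x)$ into either the first factor (giving $S_m g$ on $B(A)$) or the second (giving $S_n g \circ T^{m-1}$), at the cost of a shifted comparison with $b_m$ or $b_n$.

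The main obstacle is the injectivity of $A \mapsto (B(A), C(A))$: an intersection $B \cap T^{-m}C$ could a priori have several connected components, each forming a distinct element of $\cM_0^{m+n}$, so that a naive grouping would over-count. Controlling this requires billiard-specific information on how the singularity curves of $\cM_0^m$ interact with those of $T^{-m}\cM_0^n$, in the spirit of the combinatorial growth lemmas of Section~\ref{sect:growth_lemma}; if multiplicities cannot be excluded, then the resulting inequality is only subadditive up to a polynomial correction in $m+n$, which nonetheless suffices for the existence of $\lim_n \tfrac{1}{n}\log b_n$ and the definition of $P_*(T,g)$.
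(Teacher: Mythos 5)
Your splitting of $\cM_0^{m+n}$ via $A\mapsto(B(A),C(A))$ is the right shape of argument, and you are correct to single out the injectivity of this map as the crux: it genuinely fails. The paper sidesteps the issue entirely by proving subadditivity for the partitions $\mathring{\cP}_1^n$ and exploiting the word parametrization of their elements, rather than working with $\cM_0^n$ directly. Recall $\cM_0^k = \mathring{\cP}_1^{k-1}$, so each $A \in \cM_0^{m+n}$ corresponds to a word $(A_{-1},\dots,A_{-(m+n-1)})$ in $\cP$. Your map sends this word to its first $m-1$ letters (giving $B(A)\in\cM_0^m$) and its last $n-1$ letters (giving $C(A)\in\cM_0^n$), dropping the middle letter $A_{-m}$. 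Hence the fibre has cardinality at most $\#\cP$ --- a constant, not the polynomial-in-$(m+n)$ correction you anticipate --- and you only obtain $b_{m+n} \leqslant e^{\sup g}\,\#\cP\cdot b_m b_n$. Fekete still applies, but the statement asserts exact subadditivity, which this does not give. No input from the growth lemmas of Section~\ref{sect:growth_lemma} is needed or relevant here; the counting is purely symbolic, governed by $\cP$, not by how the singularity curves interact geometrically.

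The fix is not to drop the $A_{-m}$ coordinate: pair $A\in\cM_0^{m+n}$ instead with $(B',C)\in\cM_0^{m+1}\times\cM_0^n$, where $B'\in\cM_0^{m+1}=\mathring{\cP}_1^m$ carries the first $m$ letters. Then $A\mapsto(B',C)$ is injective, and with the matching Birkhoff split $S_{m+n-1}g = S_m g + (S_{n-1}g)\circ T^m$ (the $g\circ T^{m-1}$ term absorbed into the block associated to $B'$, in line with its $m$ letters) one gets $\sup_A e^{S_{m+n-1}g}\leqslant\sup_{B'}e^{S_m g}\cdot\sup_C e^{S_{n-1}g}$ with no residual $e^{\sup g}$; summing gives $b_{m+n}\leqslant b_{m+1}\,b_n$, equivalently subadditivity of $n\mapsto\log\sum_{P\in\mathring{\cP}_1^n}\sup_P e^{S_n g}$. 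This is exactly what the paper's proof establishes (summing directly over $\cP$-words, the factorization is immediate); the shift by one index is intrinsic and is the reason the paper phrases the argument for $\mathring{\cP}_1^n$ first and only afterwards translates to $\cM_0^n$.
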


\begin{proposition}\label{prop:pressure_bowen_definitions}
Let $g$ be a $\cM_0^1$-continuous potential. Let $P_{\rm span}(T,g)$ and $P_{\rm sep}(T,g)$ be the pressure obtained using Bowen's definition with, respectively, spanning sets and separating sets. Then $P_{\rm span}(T,g) \leqslant P_*(T,g)$ and $P_{\rm sep}(T,g) \leqslant P_*(T,g)$. When $P_*(T,g) - \sup g >0$, then $P_*(T,g)=P_{\rm sep}(T,g)$, and if furthermore $g$ is $(\cM_0^1,\alpha)$-H\"older, then $P_*(T,g)=P_{\rm span}(T,g)$.
\end{proposition}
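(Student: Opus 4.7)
The plan is to prove the upper bounds $P_{\rm span}(T,g) \leq P_*(T,g)$ and $P_{\rm sep}(T,g) \leq P_*(T,g)$ unconditionally, and then the reverse inequalities under the stated hypotheses. The first is reduced to the second by the standard observation that a maximal $(n,\varepsilon)$-separated set is automatically $(n,\varepsilon)$-spanning. For $P_{\rm sep}(T,g) \leq P_*(T,g)$, given an $(n,\varepsilon)$-separated set $E$, I partition it along $\cP_0^{n-1}$ and use hyperbolicity to bound $|E \cap A| \leq N_\varepsilon$ uniformly in $n$: since $T^i A$ is a topological rectangle with stable side $\lesssim \Lambda^{-i}$ and unstable side $\lesssim \Lambda^{-(n-i)}$, both sides are much smaller than $\varepsilon$ at intermediate $i$, so $(n,\varepsilon)$-separation can only be witnessed at the ``edge times" $i \leq O(\log\varepsilon^{-1})$ or $i \geq n-O(\log\varepsilon^{-1})$ along a one-dimensional direction, giving $N_\varepsilon = O(\varepsilon^{-2})$. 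This yields $\sum_{x \in E} e^{(S_n g)(x)} \leq N_\varepsilon \sum_{A \in \cP_0^{n-1}} \sup_A e^{S_n g}$, and the conclusion follows by taking $\tfrac{1}{n}\log$ and letting $n \to \infty$, then $\varepsilon \to 0$.

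For the reverse inequality $P_*(T,g) \leq P_{\rm sep}(T,g)$ under $P_*(T,g) - \sup g > 0$, I switch to the partition $\cM_0^{n-1}$, whose thermodynamic sum also grows at rate $P_*(T,g)$ by item \textit{(ii)} of Theorem~\ref{thm:pressure}, since $g$ is continuous on each of its cells. For each $A \in \cM_0^{n-1}$ pick $x_A \in A$ with $e^{(S_n g)(x_A)} \geq \tfrac{1}{2}\sup_A e^{S_n g}$, and extract a maximal $(n,\varepsilon)$-separated subfamily $\{x_{A_j}\}_j$. Each omitted $x_A$ is within $d_n$-distance $\varepsilon$ of some $x_{A_j}$, and the same hyperbolic argument shows that each Bowen ball meets at most $N_\varepsilon$ cells, so the clustering has bounded multiplicity. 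The core comparison $|(S_n g)(x_A) - (S_n g)(x_{A_j})| \leq n\,\omega_g(\varepsilon) + O(\log\varepsilon^{-1}) + (\text{jump terms})$ follows from the hyperbolic estimate on $\diam(T^i A_j)$ at intermediate times, $\cM_0^1$-continuity of $g$ on common cells, and the trivial bound $\leq 2|g|_{C^0}$ at each iterate where the two orbits lie in different $\cM_0^1$-cells. The count of cells suffering many jumps is bounded by $e^{n \sup g + o(n)}$, strictly dominated by $e^{nP_*(T,g)}$ under the gap hypothesis.

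For $P_*(T,g) \leq P_{\rm span}(T,g)$ under the additional $(\cM_0^1,\alpha)$-H\"older hypothesis, I replace the per-cell modulus $\omega_g$ by the uniform bound $|g|_{C^\alpha}\varepsilon^\alpha$. Starting from a minimal $(n,\varepsilon)$-spanning set $F_n$ and covering each $A \in \cM_0^{n-1}$ by a Bowen ball $B_n(x,\varepsilon)$ with $x \in F_n$, the H\"older estimate lets me compare $\sup_A e^{S_n g}$ with $e^{(S_n g)(x)}$ up to the uniform factor $e^{n|g|_{C^\alpha}\varepsilon^\alpha}$ (plus a jump contribution handled as before). Summing yields $\sum_A \sup_A e^{S_n g} \leq N_\varepsilon\, e^{n|g|_{C^\alpha}\varepsilon^\alpha} \sum_{x \in F_n} e^{(S_n g)(x)}$, and the conclusion follows by passing to the limits.

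The principal obstacle is the control of the discontinuous jump contributions of $g$ across $\cM_0^1$-boundaries along an orbit, a difficulty absent in the classical continuous-map setting. It is handled by combining the one-sided hyperbolic shrinking of $T^i A$ (which limits the multiplicity with which a given Bowen ball meets cells of $\cM_0^{n-1}$) with the gap $P_*(T,g) - \sup g > 0$ (which ensures that exponentially rare ``bad" cells cannot affect the top exponential growth rate of the thermodynamic sum).
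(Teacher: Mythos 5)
Your treatment of the two upper bounds is essentially sound and, modulo the hyperbolic multiplicity estimate, morally equivalent to the paper's: the paper passes to the finer partition $\cP_{-k_\ve}^{k_\ve+n}$, which contains at most one point of any $(n,\ve)$-separated set in each cell (by \cite[Lemma~3.4]{BD2020MME}), and then shifts back to $\cP_0^{2k_\ve+n}$ at a cost of $e^{k_\ve(\sup g - \inf g)}$; your version keeps $\cP_0^{n-1}$ and pays a bounded multiplicity $N_\ve$. Your shortcut $P_{\rm span}\leqslant P_{\rm sep}$ via maximal separated sets is also valid, though the paper instead argues the spanning bound directly from the dual fact in \cite[Lemma~3.5]{BD2020MME}.

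The genuine gap is in both lower bounds. You pick one representative $x_A$ per cell of $\cM_0^{n-1}$ and then extract a maximal $(n,\ve)$-separated subfamily of these representatives, which forces you to compare $S_n g(x_A)$ with $S_n g(x_{A_j})$ for $x_A$ and $x_{A_j}$ lying in \emph{distinct} cells. This is exactly where the difficulty you flag arises: at times $i$ where $T^i x_A$ and $T^i x_{A_j}$ fall in different elements of $\cM_0^1$ (both near $\cS_{\pm 1}$), you only have the trivial bound $2|g|_{C^0}$, and you then assert that ``the count of cells suffering many jumps is bounded by $e^{n\sup g + o(n)}$''. That estimate is not justified by anything you have available. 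It amounts to a quantitative recurrence-to-singularities bound, and in this paper such control requires the parameter $s_0$ and the SSP conditions, none of which are hypotheses of Proposition~\ref{prop:pressure_bowen_definitions}. Without it, the jump contribution can a priori grow like $e^{2n|g|_{C^0}}$ on exponentially many cells and destroy the estimate. The paper sidesteps this entirely: by \cite[Lemma~3.4]{BD2020MME}, there exists $\ve_0>0$ such that for $\ve<\ve_0$ any set with one point per element of $\cM_0^n$ is \emph{already} $(n,\ve)$-separated, so the full family $\{x_A\}$ can be used as $E$, no clustering is needed, and comparisons of $S_n g$ happen only within a single cell of the partition, where the orbits have identical $\cM_0^1$-itineraries and hence no jump terms. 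The same citation in the spanning direction (each cell of $\cM_0^n$ meets any $(n,\ve)$-spanning set, $\ve<\ve_0$) plays the analogous role. The fix to your proof, therefore, is not to improve the jump accounting but to replace the ad-hoc maximal subfamily with these two geometric facts, which reconnect $\cM_0^n$ directly to Bowen's separated and spanning sets; once this is done, the remaining ingredient is item~(v) of Theorem~\ref{thm:pressure} (H\"older regularity is needed there to pass from $\inf_A$ to $\sup_A$ via Lemma~\ref{lemma:sup_less_poly_inf}), exactly as in the paper.
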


The proof of the last three forms of $P_*(T,g)$ in Theorem~\ref{thm:pressure} relies crucially on the following lemma.

\begin{lemma}\label{lemma:sup_less_poly_inf}
For every $(\cM_0^1,\alpha)$-H\"older continuous potential $g$ there exists a constant $C_g$ such that for all $n \geqslant 1$ and all $P \in \cP_0^n$, \[ \sup\limits_P e^{S_n g} \leqslant C_g \inf\limits_P e^{S_n g}. \]
The estimate still holds, for the same constant $C_g$, when $\cP_0^n$ is replaced by $\cP_{-l}^n$, $\mathring{\cP}_{-l}^n$ or $\cM_{-l}^n$, for any fixed $l \geqslant 0$.
\end{lemma}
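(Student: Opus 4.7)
The plan is to exploit the uniform hyperbolicity of $T$ together with the $(\cM_0^1,\alpha)$-Hölder regularity of $g$. Fix $P \in \cP_0^n$ and $x,y \in P$. Because $T^i$ is continuous on $P$ for each $0 \leqslant i \leqslant n$, the points $T^i x$ and $T^i y$ lie in a common element of $\cP$, hence in a common element of $\cM_0^1$. So the Hölder hypothesis yields
\[
|S_n g(x) - S_n g(y)| \,\leqslant\, H^\alpha(g)\, \sum_{i=0}^{n-1} d(T^i x, T^i y)^\alpha,
\]
and the whole task is to bound this sum by a constant independent of $n$ and of $P$.

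Next I would invoke the local product structure available inside $P$. The boundary of $P$ is made of stable curves (components of $T^{-i}\cS_0$, $i \geqslant 0$) and unstable curves (components of $T\cS_0$), so $P$ is foliated by genuine local stable and unstable manifolds, and any two points $x,y \in P$ can be joined by concatenating a local stable arc from $x$ to some $x_0 \in P$ with a local unstable arc from $x_0$ to $y$, both entirely inside $P$. The stable arc has length at most a constant $C_s$ (the stable diameter of $M$ is enough), while the unstable arc has length at most $C_u \Lambda^{-n}$: indeed $T^n$ sends this arc into a single element of $\cP$ (of bounded unstable diameter) and expands unstable distances by a factor at least $\Lambda$ per iterate.

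Using the standard contraction estimates along stable and unstable manifolds, for $0 \leqslant i \leqslant n-1$,
\[
d(T^i x, T^i y) \,\leqslant\, d(T^i x, T^i x_0) + d(T^i x_0, T^i y) \,\leqslant\, C_s \Lambda^{-i} + C_u \Lambda^{-(n-i)}.
\]
Plugging this into the Hölder bound and summing two geometric series gives
\[
\sum_{i=0}^{n-1} d(T^i x, T^i y)^\alpha \,\leqslant\, \frac{2^\alpha \bigl(C_s^\alpha + C_u^\alpha\bigr)}{1 - \Lambda^{-\alpha}},
\]
a quantity independent of $n$ and $P$; exponentiating yields the desired constant $C_g$.

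The delicate step is the local product structure inside $P$: one must verify that the stable and unstable arcs used above never cross any of the singularity curves defining $\cP_0^n$. This is a standard consequence of the invariance of the stable and unstable cones under $T^{\pm 1}$ and of the fact that the singularity curves of $T^{\pm k}$ are monotone with slopes lying in the appropriate cone (see \cite{chernov2006chaotic}). For the other partitions, the argument is identical: on elements of $\cP_{-l}^n$ and $\cM_{-l}^n$ the stable length is only shorter (bounded by $C_s \Lambda^{-l}$) while the unstable length is still bounded by $C_u \Lambda^{-n}$, and the identity $\hP_{-l}^n = \cM_{-l-1}^{n+1}$ from \cite[Lemma~3.3]{BD2020MME} handles the remaining case. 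Hence the same constant $C_g$ works throughout.
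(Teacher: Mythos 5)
Your overall strategy is the right one: reduce to bounding $\sum_{i=0}^{n-1}d(T^ix,T^iy)^\alpha$ uniformly over $n$ and $P$, and get the decay from hyperbolicity. The treatment of the unstable part is also fine: a cone-unstable arc $W^u\ni x_0,y$ inside $P$ has $T^iW^u$ cone-unstable for all $0\leqslant i\leqslant n$ (forward invariance of $\cC^u$), $T^nW^u$ lies in one cell of $\cP$ so has bounded length, and backward contraction of unstable curves gives $|T^iW^u|\leqslant C\Lambda^{-(n-i)}$. However, there is a genuine gap on the stable side, and it is exactly where the argument must do real work.

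You claim that $P$ is ``foliated by genuine local stable and unstable manifolds,'' and that along the stable arc from $x$ to $x_0$ one has $d(T^ix,T^ix_0)\leqslant C_s\Lambda^{-i}$. Neither step is justified. In a Sinai billiard, genuine local stable and unstable manifolds exist only Lebesgue--a.e.\ and do not foliate any open set; what exists everywhere are cone-stable curves (tangent in $\cC^s$). But the stable cone is invariant under $DT^{-1}$, not $DT$, and the hyperbolicity estimate in the paper only gives \emph{backward} expansion of cone-stable curves, $\|D_xT^{-n}v\|\geqslant C_1\Lambda^n\|v\|$ for $v\in\cC^s$. Forward iterates of a generic cone-stable curve do not stay in $\cC^s$; they rotate into $\cC^u$ after finitely many steps and then expand. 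Consequently the bound $d(T^ix,T^ix_0)\leqslant C_s\Lambda^{-i}$ is false for an arbitrary cone-stable arc inside $P$. (It \emph{would} hold if the arc were a genuine stable manifold, or a piece of $T^{-(n+1)}\cS_0$, but you cannot connect two arbitrary points of $P$ by such an arc.) The product-structure claim (that any $x,y\in P$ can be joined by a stable arc followed by an unstable arc lying entirely inside $P$) is also not obvious: the boundary of $P$ is made of curves from $T^{-j}\cS_0$ at many different depths $j$, and $P$ need not be a curvilinear rectangle.

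The paper's proof sidesteps both problems. It never asserts a product structure for $P$; it introduces the two-sided variation $\mathrm{Var}_m(g,T,\varepsilon)$ and feeds into it the established diameter bound for $\cM_{-m}^m$ (cited from \cite{BD2020MME}, which encapsulates the ``two-sided Bowen ball is exponentially small'' fact in a form that does not require a pointwise product decomposition). It then first proves the estimate for $P\in\cP_{-k_\varepsilon}^{k_\varepsilon+n}$, where the orbit segment $0\leqslant i\leqslant n$ is genuinely two-sidedly controlled, and deduces the case $P\in\cP_0^n$ by splitting the Birkhoff sum into the first $k_\varepsilon$, the middle, and the last $k_\varepsilon$ terms, paying $e^{2k_\varepsilon(\sup g-\inf g)}$ for the two short tails. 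If you want to repair your argument, you should either invoke the diameter bound for two-sided refinements as a black box (as the paper does), or replace the ``stable arc'' step by the observation that for $k_\varepsilon\leqslant i\leqslant n-k_\varepsilon$ the points $T^ix,T^iy$ lie in a common element of $\cM_{-m}^m$ with $m=\min(i,n-i)-O(1)$, and then use the known diameter bound on $\cM_{-m}^m$; the first and last $k_\varepsilon$ indices must be estimated crudely, since nothing forces $d(x,y)$ itself to be small.
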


Before the proofs of these results, we first recall that $T$ is uniformly hyperbolic in the sense that the cones 
\begin{align}\label{eq:stable_cones}
\begin{split}
\cC^u &\coloneqq \{ (dr,d\varphi) \in \mathbbm{R}^2 \mid \kappa_{\min} \leqslant d\varphi /dr \leqslant \kappa_{\max} +1/ \tau_{\min} \}, \\
\cC^s &\coloneqq \{ (dr,d\varphi) \in \mathbbm{R}^2 \mid -\kappa_{\min} \geqslant d\varphi /dr \geqslant - \kappa_{\max} -1/ \tau_{\min} \}, 
\end{split}
\end{align}
are strictly invariant under $DT$ and $DT^{-1}$, respectively, whenever these derivatives exist (see \cite{chernov2006chaotic}). Here $\kappa_{\max} = \max \kappa$, $\kappa_{\min} = \min \kappa$, where $\kappa$ is the curvature of the scatterer boundaries, and $\tau_{\min}= \min \tau$, where $\tau$ is the return time function. Furthermore, there exists $C_1 >0$ such that for all $n \geqslant 1$,
\begin{align*}
||D_x T^n (v) || \geqslant C_1 \Lambda^n ||v||, \, \forall v \in \cC^u \, , \quad ||D_x T^{-n} (v) || \geqslant C_1 \Lambda^n ||v||, \, \forall v \in \cC^s \, ,
\end{align*}
where $\Lambda =1 + 2\kappa_{\min} \tau_{\min}$ is the minimum hyperbolicity constant.

\begin{proof}[Proof of Lemma~\ref{lemma:sup_less_poly_inf}]
Let $d_n$ denote the $n$-th Bowen distance, that is the distance given by
\begin{align*}
d_n(x,y) = \max_{0 \leqslant i \leqslant n} d(T^i x,T^i y) \, ,
\end{align*}
where $d(x,y)$ is the Euclidean metric on each $M_i$, with $d(x,y)= 10 D \, \max_i \diam(M_i)$ if $x$ and $y$ belong to different $M_i$ (this definition ensure we have a compact set). Let $\varepsilon_0 >0$ be as in \cite[(3.3)]{BD2020MME}, that is: if $d_n(x,y) < \varepsilon_0$ then $x$ and $y$ lie in the same element of $\cM_0^n$. 
Therefore, by the uniform hyperbolicity of $T$, if $d(T^i(x),T^i(y)) \leqslant \varepsilon_0/2$ for all $|i|\leqslant n$ then $d(x,y) \leqslant C_1 \Lambda^{-n} \varepsilon_0/2$.

Given a potential $g$, for all integers $m$, define the $m$-th variation by \[\mathrm{Var}_m(g,T,\varepsilon) \coloneqq \sup \{ |g(x) - g(y)| \mid d(T^j x,T^j y) \leqslant \varepsilon, \, |j|\leqslant m \}. \]
 
When $g$ is $(\cM_0^1,\alpha)$-H\"older, we get that $\mathrm{Var}_m(g,T,\frac{\varepsilon_0}{2C_1}) \leqslant C (\frac{\varepsilon_0}{2} \Lambda^{-m})^\alpha$. Therefore \[ \sum\limits_{m \geqslant 0} \mathrm{Var}_m \Big( g,T,\frac{\varepsilon_0}{2C_1} \Big) \eqqcolon K < \infty.\]

By uniform hyperbolicity of $T$, there exists $k_{\varepsilon}$ such that $\mathrm{diam}(\cM_{-k_{\varepsilon}}^{n+1}) < \varepsilon_0/2C_1$ for all $n \geqslant k_{\varepsilon}$. It then follows from the proof of \cite[Lemma 3.5]{BD2020MME} that if $x$ and $y$ lie in the same element of $\cP_{-k_{\varepsilon}}^{k_{\varepsilon}+n}$, then $d_n(x,y) \leqslant \varepsilon_0/2C_1$, for all $n\geqslant 0$.

Let $P \in \cP_{-k_{\varepsilon}}^{k_{\varepsilon}+n}$ and let $x$, $y \in P$. Let $0 \leqslant k \leqslant n$. Then for all $|j| < m_k \coloneqq \min(k,n-k)$, $d(T^j(T^k x), T^j(T^k y)) < \varepsilon_0/2C_1$ and so $| g(T^k x) - g(T^k y)| \leqslant \mathrm{Var}_{m_k}(g,T,\frac{\varepsilon_0}{2C_1})$. Therefore 
\[ |S_n g (x) - S_n g (y) | \leqslant 2 \sum\limits_{m=0}^{\lfloor \frac{n}{2} \rfloor +1} \mathrm{Var}_m \Big( g,T,\frac{\varepsilon_0}{2C_1} \Big) \leqslant 2K < \infty.\]

Now, let $P \in \cP_0^n$ for some $n \geqslant 2k_{\varepsilon}$. Notice that $\cP_0^n= \bigvee_{i=k_\varepsilon}^{n-k_\varepsilon} T^{-i} \cP_{-k_\varepsilon}^{k_\varepsilon}$, in other words for all $l$ such that $k_{\varepsilon} \leqslant l \leqslant n-k_{\varepsilon}$, $T^lP$ is included in an element of $\cP_{-k_\varepsilon}^{k_\varepsilon}$. Finally, by decomposing each orbit into three parts, we get that for all $x$, $y \in P$,
\begin{align*}
e^{S_n g(x) - S_n g(y)} &= e^{S_{k_\varepsilon} g(x) - S_{k_\varepsilon} g(y)} e^{S_{n-2k_\varepsilon} g(T^{k_\varepsilon} x) - S_{n-2k_\varepsilon} g(T^{k_\varepsilon} y)} e^{S_{k_\varepsilon} g(T^{n-k_\varepsilon}x) - S_{k_\varepsilon} g(T^{n-k_\varepsilon}y)} \\
&\leqslant e^{2k_\varepsilon (\sup g - \inf g)} e^{2K}.
\end{align*}
The claim holds for $n\geqslant 2k_\varepsilon$ by taking the sup over $x$ and the inf over $y$ in $P$. Since there are only finitely many values of $n$ to correct for, by taking a larger constant, the claimed estimate holds for all $n \geqslant 1$. 

Fix some $l \geqslant 0$. Since an element $P \in \cP_{-l}^n$ is contained in a unique $\tilde{P} \in \cP_0^n$, we get that \[ \sup\limits_{P} e^{S_n g} \leqslant \sup\limits_{\tilde P} e^{S_n g} \leqslant C \inf\limits_{\tilde P} e^{S_n g} \leqslant C \inf\limits_{P} e^{S_n g}. \]

Now, assume that $\mathring{P} \neq \emptyset$. Then, by the continuity of $S_n g$ on $P$, the estimate also holds when the $\sup$ and the $\inf$ are taken over $\mathring{P}$. In other words, the claim is true for all $P \in \mathring{\cP}_{_l}^n$.

Since by \cite[Lemma 3.3]{BD2020MME}, $\mathring{\cP}_{-l}^n = \cM_{-l-1}^{n+1}$, the claim is true for all $P \in \cM_{-l}^n$, for fixed $l \geqslant 1$. We finish the proof with the case $P \in \cM_{0}^n$. Remark that letting $A \in \cM_{-1}^n$, then $T^{-1}A \in \cM_0^{n+1}$. Therefore 
\begin{align*}
e^{-\sup g} \sup\limits_{T^{-1}A} e^{S_{n+1} g} &\leqslant \sup\limits_{T^{-1}A} e^{S_{n+1} g - g} = \sup\limits_{A} e^{S_{n} g} \leqslant C \inf\limits_{A} e^{S_n g} = C  \inf\limits_{T^{-1}A} e^{S_{n+1} g - g} \\
&\leqslant C  e^{-\inf g} \inf\limits_{T^{-1}A} e^{S_{n+1} g}.
\end{align*}
Only in this last case, we need to replace $C$ by $C e^{\sup g - \inf g} \geqslant C$.
\end{proof}

\begin{proof}[Proof of Theorem~\ref{thm:pressure}]
Let $p_n = \sum\limits_{A \in \mathcal{P}_{0}^n} \sup\limits_{x \in A} e^{(S_n g)(x)}$. Then, for $k \geqslant n$, \begin{align*}
p_{n+k} &= \sum\limits_{B \cap C \in \mathcal{P}_{0}^n \bigvee T^{-n} \mathcal{P}_{0}^k } \sup\limits_{x \in B \cap C} e^{(S_n g)(x) + (S_k g)(T^n x)} \leqslant p_n \, p_k.
\end{align*}
Therefore $(\log p_n)_n$ is a sub-additive sequence. It is then classical that $\frac{1}{n}\log p_n$ converges to $\inf\limits_{n \geqslant 1} \frac{1}{n}\log p_n$, hence $P_*(T,g)$ exists. We now prove the statement about convexity. Let $g_1$ and $g_2$ be two potentials bounded from above and $p \in [0,1]$. Using the H\"older inequality, we get
\begin{align*}
\sum\limits_{A \in \mathcal{P}_{0}^n} \sup\limits_{A} e^{p \, S_n g_1 + (1-p)S_n g_2} 
\leqslant \Big( \sum\limits_{A \in \mathcal{P}_{0}^n} \sup_A e^{S_n g_1} \Big)^p  \Big( \sum\limits_{A \in \mathcal{P}_{0}^n} \sup_A e^{S_n g_2} \Big)^{1-p} \, , \quad \forall n \geqslant 1 \, .
\end{align*}
Taking the appropriate limits, we get that $P_*(T,p g_1 + (1-p)g_2) \leqslant p P_*(T,g_1) + (1-p)P_*(T,g_2)$, hence the claimed convexity.

For $(i)$, consider $\tilde{p}_n = \sum\limits_{A \in \mathring{\mathcal{P}}_{0}^n} \sup\limits_{x \in A} e^{(S_n g)(x)}$. In \cite[Lemma 3.2]{BD2020MME}, Baladi and Demers proved that $\# \{ A \in \mathcal{P}_{0}^n \mid \mathring{A} = \emptyset \} $ grows at most linearly. Hence, using the smoothness of $g$ for the last equality
\[ p_n = \sum\limits_{ A \in \mathcal{P}_{0}^n } \sup\limits_{x \in A} e^{(S_n g)(x)} \leqslant C n e^{n \sup g} + \sum\limits_{\substack{ A \in \mathcal{P}_{0}^n \\ \mathring{A}=\emptyset}} \sup\limits_{x \in A} e^{(S_n g)(x)} = C n e^{n \sup g} + \tilde p_n. \]
Since $p_n$ is submultiplicative, $p_n \geqslant e^{n P_*(T,g)}$. Now, from the assumption $P_*(T,g) - \sup g >0$, we must have $\liminf_n \tfrac{1}{n}\log \tilde p_n \geqslant P_*(T,g)$. Finally, since $\tilde p_n \leqslant p_n$, $(\frac{1}{n} \log \tilde{p}_n)_n$ converges to the same limit as $(\frac{1}{n} \log p_n)_n$ does.

For $(ii)$, we use \cite[Lemma 3.3]{BD2020MME} that $\mathring{\mathcal{P}}_{0}^n = \mathcal{M}_{-1}^{n+1}$. Hence \[ \sum\limits_{A \in \mathcal{M}_{0}^{n+1}} \sup\limits_{x \in A} e^{(S_{n+1} g)(x)} \leqslant \sum\limits_{A \in \mathcal{M}_{-1}^{n+1}} \sup\limits_{x \in A} e^{(S_{n+1} g)(x)} \leqslant \sum\limits_{A \in \mathring{\mathcal{P}}_{0}^{n}} \sup\limits_{x \in A} e^{(S_{n} g)(x)} \sup\limits_{x \in M} e^{g(x)}.\]
Furthermore, since $\mathcal{M}_{-1}^{n+1} = \mathcal{M}_{0}^{n+1} \bigvee \mathcal{M}_{-1}^{0}$, each element of $\mathcal{M}_{0}^{n+1}$ contains at most $\#\mathcal{M}_{-1}^{0}$ elements of $\mathcal{M}_{-1}^{n+1}$. Hence \[ \sum\limits_{A \in \mathcal{M}_{0}^{n+1}} \sup\limits_{x \in A} e^{(S_{n+1} g)(x)} \geqslant \frac{1}{\# \mathcal{M}_{-1}^0} \sum\limits_{A \in \mathring{\mathcal{P}}_{0}^{n}} \sup\limits_{x \in A} e^{(S_{n} g)(x)} \inf\limits_{x \in M} e^{g(x)}.\]

Point $(iii)$ (resp. $(iv)$, $(v)$) follows directly from the definition of $P_*(T,g)$ (resp. from point $(i)$, $(ii)$) and from Lemma~\ref{lemma:sup_less_poly_inf} since for all $A$ in $\cP_0^n$ (resp. $\mathring{\cP}_0^n$, $\cM_0^n$) \[ \inf\limits_{A} e^{S_n g} \leqslant \sup\limits_{A} e^{S_n g} \leqslant C_g \inf\limits_{A} e^{S_n g}. \] 

For the final claim, we prove that $\log \sum_{P \in \mathring{\cP}_{1}^{n}} \sup_{P} e^{S_n g}$ is subadditive. Take $P$ a nonempty element of $\mathring{\cP}_{1}^{n+m}$. It is the interior of an intersection of elements of the form $T^{-j}A_j$ for some $A_j \in \cP$, for $j=1,\ldots,n+m$. This is equal to the intersection of the interiors of $T^{-j}A_j$. But since $P$ is nonempty, none of the $T^{-j}A_j$ has empty interior, and so none of the $A_j$ has empty interior. Thus the interiors of $A_j$ are in $\mathring{\cP}$. Now, splitting the intersection of the first $n$ sets from the last $m$, we see that the intersection of the first $n$ sets forms an element of $\mathring{\cP}_1^n$. For the last $m$ sets, we can factor out $T^{-n}$ at the price of making the set slightly bigger:
\[ \mathrm{int}( T^{-n-j} A_{-n-j}) \subset T^{-n}(\mathrm{int}(T^{-j}(A_{-n-j})))\, , \quad 1 \leqslant j \leqslant m  \]
where $\mathrm{int}$ denotes the interior of a set. Thus
\begin{align*}
\sum\limits_{P \in \mathring{\cP}_1^{n+m} } \sup\limits_{P} \,&e^{S_{n+m}g} 
\leqslant \sum\limits_{\substack{A_{-j} \in \mathring{\cP} \\ 1\leqslant j \leqslant n+m }} \sup\{ e^{S_n g + S_m g \circ T^n}(x) \mid x \in \bigcap\limits_{j=1}^n T^{-j} A_{-j} \cap T^{-n} \bigcap\limits_{j=1}^m T^{-j} A_{-n-j} \} \\
&\leqslant \sum\limits_{\substack{A_{-j} \in \mathring{\cP} \\ 1\leqslant j \leqslant n }} \sup\{ e^{S_n g}(x) \mid x \in \bigcap\limits_{j=1}^n T^{-j} A_{-j} \} \sum\limits_{\substack{A_{-j} \in \mathring{\cP} \\ 1\leqslant j \leqslant m }} \sup \{ e^{S_m g}(x) \mid x \in \bigcap\limits_{j=1}^m T^{-j}A_{-j} \} \\
&\leqslant \sum\limits_{P \in \mathring{\cP}_1^{n} } \sup\limits_{P} e^{S_{n}g} \sum\limits_{P \in \mathring{\cP}_1^{m} } \sup\limits_{P} e^{S_{m}g} 
\end{align*}
Taking logs, the sequence is subadditive. And then so is the sequence with $\cM_0^n$ in place of $\mathring{\cP}_1^{n-1}$.
\end{proof}

\begin{proof}[Proof of Proposition~\ref{prop:pressure_bowen_definitions}.]
We first prove the claim about separating sets. Let $\ve >0$ and let $k_\ve$ be large enough so that $\diam^s(\cM_{-k_\ve -1}^0) \leqslant C \Lambda^{-k_\ve} < c_1 \varepsilon$ for some constant $c_1$ to be defined later. Therefore $\diam^u(\cM_{-k_\ve -1}^{n+1}) \leqslant C \Lambda^{-k_\ve} < c_1 \varepsilon$ for all $n \geqslant k_\ve$. By the uniform transversality between the stable and the unstable cones, we can choose $c_1$ such that $\diam(\cM_{-k_\ve -1}^{n+1}) < \ve$ for all $n \geqslant k_\ve$. 

Let $E$ be $(n,\ve)$-separated, for some $n \geqslant k_\ve$. It is shown in the proof of \cite[Lemma~3.4]{BD2020MME} that if $x,y \in E$ are distinct, then they cannot be contained in the same element of $\cP_{-k_\ve}^{k_\ve +n}$. Thus
\begin{align*}
\sum\limits_{x \in E} e^{S_n g(x)} &\leqslant \!\!\! \sum\limits_{A \in \cP_{-k_\ve}^{k_\ve +n}} \!\!\! |e^{S_n g}|_{C^0(A)} = \!\!\! \sum\limits_{A \in \cP_{0}^{2k_\ve +n}} \!\!\! |e^{S_n g \circ T^{k_\ve}}|_{C^0(A)} \leqslant e^{k_\ve(\sup g - \inf g)} \!\!\!  \sum\limits_{A \in \cP_{0}^{2k_\ve +n}} \!\!\! |e^{S_{2k_\ve + n} g}|_{C^0(A)}.
\end{align*}
Therefore $\lim\limits_{n \to \infty} \frac 1n \log \sup \{ \sum_{x \in E} e^{S_n g(x)} \mid \text{ $E$ is $(n,\ve)$-separated} \} \leqslant P_*(T,g)$, and this holds for any $\ve > 0$. Taking the limit $\ve \to 0$, we get $P_{\rm sep}(T,g) \leqslant P_*(T,g)$.

For the reverse inequality, assume that $g$ is such that $P_*(T,g) - \sup g > 0$. From the proof of \cite[Lemma~3.4]{BD2020MME}, there exists $\ve_0 >0$ such that for all $\ve < \ve_0$, any set $E$ which contains only one point per element of $\cM_0^n$ is $(n,\ve)$-separated. For all $A \in \cM_0^n$, there exists $x \in A$ such that $e^{S_n g(x)} \geqslant \tfrac{9}{10} \sup_A e^{S_n g}$. Let $E$ be the collection of such $x$. Thus
\[
\sum\limits_{x \in E} e^{S_n g(x)} \geqslant \frac{9}{10} \sum\limits_{A \in \cM_0^n} |e^{S_n g}|_{C^0(A)} \, .
\]
Therefore $ \lim\limits_{n \to \infty} \frac 1n \log \sup \{ \sum\limits_{x \in E} e^{S_n g(x)} \mid \text{ $E$ is $(n,\ve)$-separated} \} \geqslant P_*(T,g)$, and this holds for any $0 < \ve < \ve_0$. Taking the limit $\ve \to 0$, we get $P_{\rm sep}(T,g) \geqslant P_*(T,g)$, thus the claimed equality.

We now prove the claim concerning spanning sets. Let $\ve >0$ and let $k_\ve$ be such that $\diam(\cM_{-k_\ve -1}^{n+1}) < \ve$ for all $n \geqslant k_\ve$. Let $F$ be a set containing one point in each element of $\cP_{-k_\ve}^{n+1}$. From the proof of \cite[Lemma~3.5]{BD2020MME}, $F$ is $(n,\ve)$-spanning. Since 
\[
\sum\limits_{x \in F} e^{S_n g(x)} \leqslant e^{ k_\ve (\sup g - \inf g) } \sum\limits_{A \in \cP_0^{2k_\ve +n}} |e^{S_{2k_\ve +n} g} |_{C^0(A)} \,
\]
we get that $\lim\limits_{n \to \infty} \frac 1n \log \inf \{ \sum_{x \in F} e^{S_n g(x)} \mid \text{$F$ is $(n,\ve)$-spanning} \} \leqslant P_*(T,g)$, and thus for all $\ve > 0$. Taking the limit $\ve \to 0$, we get $P_{\rm span}(T,g) \leqslant P_*(T,g)$.

For the reverse inequality, assume that $g$ is a $(\cM_0^1,\alpha)$-H\"older potential such that $P_*(T,g) - \sup g >0$. Let $\ve < \ve_0$ and let $F$ be a $(n,\ve)$-spanning set. By the proof of \cite[Lemma~3.5]{BD2020MME}, each element of $\cM_0^n$ contains at least one element of $F$. Thus $\sum_{x \in F} e^{S_n g(x)} \geqslant \sum_{A \in \cM_0^n} \inf_{A} e^{S_n g}$. Taking the appropriate limits, we get that $P_{\rm span}(T,g) \geqslant P_*(T,g)$, thus the claimed equality.
\end{proof}

\subsection{Easy Direction of the Variational Principle for the Pressure}

Recall that given a $T$-invariant probability measure $\mu$ and a finite
measurable partition $\mathcal{A}$ of $M$, the entropy of $\mathcal{A}$ with respect to $\mu$ is
defined by $H_\mu(\mathcal{A}) = - \sum_{A \in \mathcal{A}} \mu(A) \log \mu(A)$, and the
entropy of $T$ with respect to $\mathcal{A}$ is
$
h_\mu(T, \mathcal{A}) = \lim_{n \to \infty} \frac 1n H_\mu\left(\bigvee_{i=0}^{n-1} T^{-i} \mathcal{A}\right) 
$.

\begin{lemma}\label{lemma:var_ppl_easy_dir}
Let $\varphi : M \to \mathbbm{R}$ be a measurable function. Then
\begin{align*}
\begin{split} P_*(T,\varphi) \geqslant P(T,\varphi)\end{split} \coloneqq \sup \{ h_{\mu}(T) + \int \varphi \, \mathrm{d}\mu \mid \mbox{$\mu$ is a $T$-invariant Borel probability measure} \}
\end{align*}
\end{lemma}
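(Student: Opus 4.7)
The plan is to combine a Jensen-type inequality with the generating property of the partition $\cP$. First, I would recall the classical bound: for any (at most countable) measurable partition $\cA$ of $M$ and any measurable $\psi : M \to \mathbbm{R}$ bounded above,
\[
H_\mu(\cA) + \int \psi \, \mathrm{d}\mu \;\leqslant\; \log \!\!\sum_{A \in \cA,\, \mu(A) > 0}\!\! \exp\!\left( \sup_A \psi \right) \;\leqslant\; \log \sum_{A \in \cA} \exp\!\left( \sup_A \psi \right),
\]
obtained by rewriting each nonzero contribution as $\mu(A) \log\bigl( \mu(A)^{-1} \exp\bigl( \mu(A)^{-1} \int_A \psi \, \mathrm{d}\mu \bigr) \bigr)$ and applying the concavity of $\log$ to the probability vector $(\mu(A))_{A \in \cA}$.

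Then I would apply this inequality with $\cA = \cP_0^{n-1}$ and $\psi = S_n \varphi$. By $T$-invariance of $\mu$, $\int S_n \varphi \, \mathrm{d}\mu = n \int \varphi \, \mathrm{d}\mu$, so dividing by $n$ yields
\[
\frac{1}{n} H_\mu(\cP_0^{n-1}) + \int \varphi \, \mathrm{d}\mu \;\leqslant\; \frac{1}{n} \log \sum_{A \in \cP_0^{n-1}} \exp\!\left( \sup_A S_n \varphi \right).
\]
As $n \to \infty$, the LHS tends to $h_\mu(T, \cP) + \int \varphi \, \mathrm{d}\mu$. For the RHS, each $A \in \cP_0^{n-1}$ is a disjoint union of at most $\#\cP$ elements of $\cP_0^n$, so $\sum_{A \in \cP_0^{n-1}} \exp(\sup_A S_n \varphi) \leqslant \sum_{B \in \cP_0^n} \exp(\sup_B S_n \varphi)$, and $\frac{1}{n} \log$ of the latter converges to $P_*(T, \varphi)$ by Definition~\ref{def:topological_pressure}.

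The main obstacle, and the remaining step, is to upgrade $h_\mu(T, \cP)$ to the full Kolmogorov--Sinai entropy $h_\mu(T)$. This amounts to checking that $\cP$ is a two-sided Kolmogorov generator for every $T$-invariant probability measure $\mu$. I would argue using the uniform hyperbolicity of $T$ recalled in \eqref{eq:stable_cones}: any two points in a common element of $\cP_{-n}^n$ must be at distance $O(\Lambda^{-n})$, since the unstable direction is expanded by at least $C_1 \Lambda^n$ under $T^n$ and the stable direction by at least $C_1 \Lambda^n$ under $T^{-n}$, while the invariant cones $\cC^u,\cC^s$ are uniformly transverse. Hence $\diam(\cP_{-n}^n) \to 0$, so $\bigvee_{i \in \integer} T^{-i}\cP$ coincides with the Borel $\sigma$-algebra modulo $\mu$-null sets; since $\cP$ has finitely many elements, $H_\mu(\cP) < \infty$, and the Kolmogorov--Sinai theorem gives $h_\mu(T) = h_\mu(T, \cP)$. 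Taking the supremum over $T$-invariant $\mu$ then yields $P_*(T, \varphi) \geqslant P(T, \varphi)$.
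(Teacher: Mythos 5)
Your proof is correct and follows essentially the same route as the paper's: both rely on the concavity inequality $\sum_A \mu(A)\bigl(\sup_A S_n\varphi - \log\mu(A)\bigr) \leqslant \log\sum_A e^{\sup_A S_n\varphi}$ (Walters' Lemma 9.9 in the paper, your Jensen-type bound), applied to the refinements $\cP_0^n$, together with the fact that $\cP$ is a two-sided generator, so that $h_\mu(T,\cP) = h_\mu(T)$. The only differences are cosmetic — you index by $\cP_0^{n-1}$ and then pass to $\cP_0^n$, and you spell out the hyperbolicity argument behind $\diam\bigl(\bigvee_{|i|\leqslant n}T^{-i}\cP\bigr)\to 0$, which the paper asserts without detail by citing that $\bigvee_{i\in\integer}T^{-i}\cP$ separates points.
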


\begin{proof}
Let $\mu$ be a $T$-invariant probability measure on $M$. Notice that $\cP$ is a generator for $T$ since $\bigvee_{i=-\infty}^\infty T^{-i} \cP$ separates points in $M$. 
Thus $h_{\mu}(T) = h_{\mu}(T, \cP)$ (see for example \cite[Theorem~4.17]{Walters82ergth}).
Then,
\begin{align*}
\begin{split}
&h_{\mu}(T) +\int \varphi \,\mathrm{d}\mu = \lim\limits_{n \to \infty} \frac{1}{n} \sum\limits_{A \in \cP_0^n} \left( - \mu(A)\log \mu(A) + \int_A S_n \varphi \,\mathrm{d}\mu \right) \\
&\qquad \leqslant \lim\limits_{n \to \infty} \frac{1}{n} \sum\limits_{A \in \cP_0^n}  \mu(A) (\sup\limits_{A}(S_n \varphi) - \log \mu(A)) \leqslant \lim\limits_{n \to \infty} \frac{1}{n} \log \sum\limits_{A \in \cP_0^n} \sup\limits_{A} e^{S_n \varphi} \leqslant P_*(T,\varphi).
\end{split}
\end{align*}
where we used \cite[Lemma 9.9]{Walters82ergth} for the second inequality.
\end{proof}

\subsection{Abramov Formula and Choice of the Potential $g$}

In order to obtain the existence of MME for the billiard flow, we use the Abramov formula to relate equilibrium measure for $T$ and some potential $g$, to the MME of the flow.
First, we need the following (classical) lemma.

\begin{lemma}\label{lemma:pressure_vanishes}
Let $\varphi$ be a bounded non-negative measurable function such that $\varphi_0 \coloneqq \inf \{ \int \varphi \,\mathrm{d}\mu \mid T_*\mu = \mu \} > 0$. Then, there exists a unique real number $c_\varphi$ such that $P(T,-c_\varphi \varphi) = 0$.
\end{lemma}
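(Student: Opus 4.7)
I would set $F(c) \coloneqq P(T, -c\varphi)$ for $c \in \mathbb{R}$ and show that $F$ is continuous, strictly decreasing with slope $\leq -\varphi_0$, satisfies $F(0) > 0$, and tends to $-\infty$. Existence and uniqueness of $c_\varphi$ then follow from the intermediate value theorem combined with strict monotonicity.

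\textbf{Convexity and continuity.} By the variational definition,
\[
F(c) \;=\; \sup\bigl\{\, h_\mu(T) \;-\; c\!\int \varphi\,\mathrm{d}\mu \;\bigm|\; T_*\mu=\mu\,\bigr\}.
\]
For each fixed $\mu$, the map $c \mapsto h_\mu(T) - c \int\varphi\,\mathrm{d}\mu$ is affine in $c$, so $F$ is a pointwise supremum of affine functions, hence convex on $\mathbb{R}$. Finite convex functions on $\mathbb{R}$ are continuous, so $F$ is continuous once we check it is finite; this will be handled in the last step.

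\textbf{Strict monotonicity.} Given $c_1 < c_2$, for every $T$-invariant $\mu$,
\[
h_\mu(T) - c_1 \!\int \varphi \,\mathrm{d}\mu \;=\; \bigl( h_\mu(T) - c_2 \!\int \varphi\, \mathrm{d}\mu\bigr) + (c_2-c_1)\!\int \varphi\,\mathrm{d}\mu \;\geq\; \bigl(h_\mu(T) - c_2 \!\int \varphi\,\mathrm{d}\mu\bigr) + (c_2-c_1)\varphi_0,
\]
where in the last step I used $\int \varphi\,\mathrm{d}\mu \geq \varphi_0 > 0$. Taking the supremum over $\mu$ on both sides yields $F(c_1) \geq F(c_2) + (c_2-c_1)\varphi_0$, so $F$ is strictly decreasing (with a slope bounded away from $0$). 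This already gives uniqueness of $c_\varphi$.

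\textbf{Sign change and conclusion.} At $c=0$, $F(0) = \sup_\mu h_\mu(T) \geq h_{\musrb}(T) > 0$ since the SRB measure has strictly positive Kolmogorov--Sinai entropy. For the opposite sign, combine Lemma~\ref{lemma:var_ppl_easy_dir} with $\int\varphi\,\mathrm{d}\mu \geq \varphi_0$:
\[
F(c) \;\leq\; \sup_\mu h_\mu(T) \;-\; c\varphi_0 \;\leq\; P_*(T,0) \;-\; c\varphi_0,
\]
and $P_*(T,0) < \infty$ because the partition $\cP_0^n$ has sub-exponential cardinality (indeed, $P_*(T,0)$ coincides with the topological entropy $h_*$ from \cite{BD2020MME}). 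This also confirms $F(c)$ is finite for every $c$, validating continuity. Hence $F(c) < 0$ once $c > P_*(T,0)/\varphi_0$. By continuity and the intermediate value theorem, there exists $c_\varphi$ with $F(c_\varphi)=0$, and strict monotonicity makes it unique. The only mild subtlety is pinning down a positive-entropy invariant measure and a finite upper bound for $\sup_\mu h_\mu(T)$, both of which are supplied by $\musrb$ and Lemma~\ref{lemma:var_ppl_easy_dir} together with Theorem~\ref{thm:pressure}.
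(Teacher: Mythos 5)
Your argument is correct, and its backbone — the strict monotonicity with slope bounded away from zero, $F(c_1) \geq F(c_2) + (c_2-c_1)\varphi_0$ for $c_1<c_2$, derived from $\int\varphi\,\mathrm{d}\mu \geq \varphi_0$ — is identical to the paper's computation. Where you diverge is in the supporting steps. For continuity, the paper bounds the modulus directly, establishing $\varepsilon\varphi_0 \leq P(T,(t+\varepsilon)\varphi) - P(T,t\varphi) \leq \varepsilon(1+\sup\varphi)$, whereas you invoke the abstract fact that $F$ is a pointwise supremum of affine functions, hence convex, hence continuous once finite. Both routes ultimately need boundedness of $\varphi$ (you need it to verify finiteness for $c\leq 0$ as well, a point you glossed over but which is harmless since the zero lies in $[0,\infty)$ anyway). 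For the sign change, the paper argues $\lim_{t\to\pm\infty} P(T,t\varphi) = \pm\infty$ directly from the linear-growth estimate, while you check $F(0) = \sup_\mu h_\mu(T) \geq h_{\musrb}(T) > 0$ and then show $F(c)<0$ for $c>P_*(T,0)/\varphi_0$. Your route pulls in one extra ingredient (positivity of the SRB entropy) that the paper's argument does not require, so the paper's version is marginally more self-contained; on the other hand, the convexity observation for continuity is arguably slicker and dovetails with the convexity of $g\mapsto P_*(T,g)$ that the paper proves in Theorem~\ref{thm:pressure}. Both proofs are valid and essentially equivalent in substance.
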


\begin{proof}
We first prove that the function $t \mapsto P(T,t\varphi)$ is increasing. Let $\varepsilon >0$ and $t_1 < t_2$. There exists a $T$-invariant probability measure $\mu_1$ such that \begin{align*}
P(T,t_1\varphi) \leqslant h_{\mu_1}(T) + t_1 \int \varphi \,\mathrm{d}\mu_1 + \varepsilon 
\leqslant P(T,t_2\varphi) - (t_2-t_1)\varphi_0 + \varepsilon.
\end{align*}
By this computation, we also get that $\lim_{t \to \pm\infty} P(T,t\varphi) = \pm\infty$ .

Now we prove that $t \mapsto P(T,t\varphi)$ is continuous. Let $\varepsilon >0$ and $t \in \mathbbm{R}$. By the previous computation, we get that $\varepsilon \varphi_0 \leqslant P(T,(t+\varepsilon)\varphi) - P(T,t\varphi)$. Let $\mu_2$ be such that $P(T,(t+\varepsilon)\varphi) \leqslant h_{\mu_2}(T) + (t + \varepsilon)\int \varphi \,\mathrm{d}\mu_2 + \varepsilon$. Thus $P(T,(t+\varepsilon)\varphi) - P(T,t\varphi) \leqslant \varepsilon ( 1 + \sup \varphi)$. Therefore $t \mapsto P(T,t\varphi)$ is (strictly) increasing and continuous, so it must vanish at exactly one point, noted $-c_\varphi$.
\end{proof}

We can now use this lemma with the Abramov formula to get the following

\begin{corollary}\label{corol:eq_state_and_MME}
Equilibrium measures of $T$ under the potential $-h_{\rm top}(\phi_1)\tau$ and MME of the billiard flow (seen as a suspension flow) are in one-to-one correspondence through the bijection $\mu \mapsto \mu_\tau \coloneqq \tfrac{1}{\mu(\tau)} \mu \otimes \lambda$, where $\lambda$ is the Lebesgue measure.
\end{corollary}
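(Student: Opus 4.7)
The plan is to combine Abramov's formula with the variational principle for the continuous suspension flow to pin down the constant supplied by Lemma~\ref{lemma:pressure_vanishes}. The finite horizon assumption gives $\tau \geq \tau_{\min} > 0$, so $\varphi := \tau$ satisfies the hypothesis of Lemma~\ref{lemma:pressure_vanishes}, producing a unique real number $c_\tau$ with $P(T, -c_\tau \tau) = 0$.

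Next I would invoke the classical bijection between $T$-invariant Borel probability measures on $M$ (which automatically satisfy $\mu(\tau) < \infty$ since $\tau$ is bounded under finite horizon) and $\phi_1$-invariant Borel probability measures on the suspension, given by $\mu \mapsto \mu_\tau = (\mu \otimes \lambda)/\mu(\tau)$, together with Abramov's formula $h_{\mu_\tau}(\phi_1) = h_\mu(T)/\mu(\tau)$. Taking suprema and applying the variational principle to the continuous map $\phi_1$ on the compact suspension yields
\begin{equation*}
h_{\rm top}(\phi_1) \;=\; \sup_\nu h_\nu(\phi_1) \;=\; \sup_\mu \frac{h_\mu(T)}{\mu(\tau)} \, ,
\end{equation*}
which rearranges to $\sup_\mu \big( h_\mu(T) - h_{\rm top}(\phi_1)\, \mu(\tau) \big) = 0$, i.e.\ $P(T, -h_{\rm top}(\phi_1) \tau) = 0$. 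Uniqueness in Lemma~\ref{lemma:pressure_vanishes} then identifies $c_\tau = h_{\rm top}(\phi_1)$.

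With $g := -h_{\rm top}(\phi_1)\,\tau$, the bijection follows. For the forward direction, given an equilibrium state $\mu$ of $T$ under $g$, combine Lemma~\ref{lemma:var_ppl_easy_dir} with $P(T,g) = 0$ to obtain
\begin{equation*}
P_*(T,g) \;=\; h_\mu(T) + \int g\, d\mu \;\leq\; P(T,g) \;=\; 0 \;\leq\; P_*(T,g),
\end{equation*}
so $h_\mu(T) - h_{\rm top}(\phi_1)\,\mu(\tau) = 0$, and Abramov gives $h_{\mu_\tau}(\phi_1) = h_{\rm top}(\phi_1)$, making $\mu_\tau$ an MME. Conversely, every MME $\nu$ of $\phi_t$ writes $\nu = \mu_\tau$ for a unique $T$-invariant $\mu$; Abramov yields $h_\mu(T) + \int g\, d\mu = 0 = P_*(T,g)$ (the last equality coming from the forward direction having forced $P_*(T,g)=0$), so $\mu$ is an equilibrium state of $T$ under $g$.

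The main obstacle is the a priori gap between $P_*(T,g)$ and $P(T,g)$ left open by Lemma~\ref{lemma:var_ppl_easy_dir}: equilibrium states in the paper's sense attain $P_*$, whereas Abramov naturally produces measures attaining $P$. The forward implication collapses this gap, legitimizing the bijection; one should nonetheless record that the converse is nonvacuous only once some equilibrium state is known to exist (which is precisely the content of Theorem~\ref{Main_theorem_1} and the subsequent bootstrap in \cite{BCD}).
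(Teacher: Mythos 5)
Your argument uses the same tools as the paper — Abramov's formula combined with Lemma~\ref{lemma:pressure_vanishes} — and is essentially correct, with one genuine improvement and one subtle issue worth straightening out.

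The improvement: you identify $c_\tau = h_{\rm top}(\phi_1)$ unconditionally, by combining the variational principle for the continuous map $\phi_1$ with a maximizing sequence $\mu_n$ for $P(T,-c_\tau\tau)$ and using $0<\tau_{\min}\leqslant\mu_n(\tau)\leqslant\tau_{\max}$ to pass from $h_{\mu_n}(T)-c_\tau\mu_n(\tau)\to 0$ to $h_{\mu_n}(T)/\mu_n(\tau)\to c_\tau$. The paper instead extracts $c=h_{m_\tau}(\phi_1)$ after first postulating an equilibrium state $m$, which makes the identification appear conditional on existence; your route is cleaner and self-contained.

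The subtle issue: you read ``equilibrium state'' as a measure attaining $P_*(T,g)$ (the notion of Theorem~\ref{Main_theorem_1}), and then in the converse direction you borrow $P_*(T,g)=0$ from the forward direction. That step is circular: the forward direction yields $P_*(T,g)=0$ only when a measure attaining $P_*$ is already assumed to exist, which is exactly what the converse is trying to produce. If $P_*(T,g)>P(T,g)=0$ were to hold, no measure would attain $P_*$ while MMEs of the flow could still exist, and the bijection as you state it would be false. The paper's computation in this corollary actually operates with the classical notion — a measure attaining the variational supremum $P(T,g)$, as is visible when it writes $0=h_m(T)-c\int\tau\,\mathrm{d}m$ — and with that reading Abramov gives an unconditional bijection, since $h_\mu(T)-h_{\rm top}(\phi_1)\,\mu(\tau)=0$ is exactly $h_{\mu_\tau}(\phi_1)=h_{\rm top}(\phi_1)$. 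Your instinct to flag the $P_*$-versus-$P$ gap left open by Lemma~\ref{lemma:var_ppl_easy_dir} is well placed, but the resolution is to read this corollary with $P(T,g)$; the identity $P_*(T,g)=P(T,g)$ for this potential is a later consequence of Corollary~\ref{thm:equilibrium states} under the SSP assumptions, not an ingredient available at this stage.
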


\begin{proof}
Since $\tau \geqslant \tau_{\min} > 0$, the assumption of Lemma~\ref{lemma:pressure_vanishes} is satisfied for $\varphi = \tau$. Let $c$ be the constant given by Lemma~\ref{lemma:pressure_vanishes} such that $0 = P(T,-c \tau)$. Then, for every equilibrium state $m$ of $T$ under the potential $-c \tau$, we get
\[ 0 = h_m(T) -c \int \tau \,\mathrm{d}m \geqslant h_{\mu}(T) -c \int \tau \,\mathrm{d}\mu,\]
for all $T$-invariant measure $\mu$. Thus
\[ c = \frac{h_{m}(T)}{\int \tau \,\mathrm{d}m} \geqslant \frac{h_{\mu}(T)}{\int \tau \,\mathrm{d}\mu}. \]
Now, by the Abramov formula, $c = h_{m_{\tau}}(\phi_1) \geqslant h_{\mu_{\tau}}(\phi_1)$. In other words, $m_\tau$ is a MME for the billiard flow. Furthermore, since $\phi_1$ is a continuous map of a compact metric space, by \cite[Theorem 8.6]{Walters82ergth}, we get that $h_{\rm top}(\phi_1)=\sup \{ h_\mu (\phi_1) \mid (\phi_1)_* \mu = \mu \}$. Thus $c = h_{\rm top}(\phi_1)$.

To prove that the map is onto, we use that any $\phi_t$-invariant probability measure $\mu_\tau$ must be of the form $\tfrac{1}{\mu(\tau)}\mu\otimes\lambda$, for some $T$-invariant probability measure $\mu$. Thus, reversing the above computations, we get that if $\mu_\tau$ is a MME, then $\mu$ is an equilibrium state for $T$ under the potential $-h_{\rm top}(\phi_1)\tau$.
\end{proof}

Therefore, proving the existence and uniqueness of the MME for the billiard flow is equivalent to proving the existence and uniqueness of the equilibrium state of $T$ under the potential $g= -h_{\rm top}(\phi_1) \tau$. Notice that in the second case, $g$ is $(\cM_0^1,\frac{1}{2})$-H\"older continuous and the condition $P_*(T,g) - \sup g > 0$ from Theorem~\ref{thm:pressure} is realised since $P_*(T,g) - \sup g \geqslant P(T,-h_{\rm top}(\phi_1) \tau) + h_{\rm top}(\phi_1) \tau_{\min} > 0$.

\begin{remark}
Using similar arguments as in Corollary~\ref{corol:eq_state_and_MME}, we can relate the equilibrium states of $\phi_t$ under the (measurable) potential $\tilde g : \Omega \to \mathbbm{R}$ to the ones of $T$ under $g= \lambda(\tilde g) - P(\phi_1,\tilde g)\tau$, where $\lambda(\tilde g) :M \to \mathbbm{R}$ is given by \[ \lambda(\tilde g)(x)=\int_0^{\tau(x)} \tilde g (\phi_t(x)) \, \mathrm{d}t.\]
\end{remark}

\section{Growth Lemma and Fragmentation Lemmas}\label{sect:growth_lemma}

This section contains technical lemmas used throughout the rest of this paper, as well as the precise definition of the conditions SSP.1 and SSP.2. The first condition will be used to prove the ``Lasota--Yorke" bounds on the transfer operator $\cL_g$ in Proposition~\ref{prop:upper_bounds_norms}, as well as the lower bound on the spectral radius in Theorem~\ref{thm:spectral_radius}, while SSP.2 will be crucial for the absolute continuity (Corollary~\ref{corol:abs_c0_mug_unst_fol}) used to prove statistical properties (Propositions~\ref{prop:mug_is_ergodic} and~\ref{prop:mu_g_is_bernoulli}) and to compute the pressure (Corollary~\ref{thm:equilibrium states}). The first lemma (Lemma~\ref{lemma:Growth_lemma}) controls the growth in complexity of the iterates of a stable curve, with a weight $g$, whereas the subsequent lemmas are used both to obtain Proposition~\ref{prop:almost_exponential_growth} -- claiming that the thermodynamic sums grow at an exact exponential rate -- and to prove that some potentials satisfy the SSP.1 and SSP.2.

\medskip
In order to state the results from this section, we need to introduce a certain class of curves, as well as a mean to decompose them into manageable pieces.

First, denote by $\cW^s$ the set of all nontrivial smooth connected subsets $W$ of stable manifolds for $T$ so that $W$ has length at most $\delta_0$ (to be determined latter). Such curves have curvature bounded above by a fixed constant \cite[Prop.~4.29]{chernov2006chaotic}. Thus $T^{-1} \cW^s = \cW^s$, up to subdivision of curves. We define $\cW^u$ similarly from unstable manifolds of $T$.

Now, recalling the stable and unstable cones \eqref{eq:stable_cones}, we define the set of cone-stable curves $\hW^s$ containing smooth curves whose tangent vectors all lie in $\cC^s$, with length at most $\delta_0$ and curvature bounded above so that $T^{-1} \hW^s \subset \hW^s$, up to subdivision of curves. We define a set of cone-unstable curves $\hW^u$ similarly. These sets of curves will be relevant since $\cS_n$ and $\cS_{-k}$ are composed of curves in $\hW^s$ and $\hW^u$, respectively. Obviously, $\cW^s \subset \hW^s$.

\medskip
For $\delta \in (0,\delta_0]$ and $W \in \hW^s$, let $\cG^\delta_0(W) \coloneqq \{W \}$.  For $n \geqslant 1$, define the \emph{$\delta$-scaled subdivision} $\cG_n^{\delta}(W)$ inductively as the collection of smooth components of $T^{-1}(W')$ for $W' \in \cG_{n-1}^\delta(W)$, where elements longer than $\delta$ are subdivided to have length between $\delta/2$ and $\delta$. Thus $\cG_n^\delta(W) \subset \hW^s$ for each $n$ and $\cup_{U \in \cG_n^\delta(W)} U = T^{-n} W$.  Moreover, if $W \in \cW^s$, then $\cG_n^\delta(W) \subset \cW^s$.

Denote by $L_n^\delta(W)$ those elements of $\cG_n^\delta(W)$ having length at least $\delta/3$ (the \emph{long} curves), $S_n^\delta(W) \coloneqq \cG_n^\delta(W) \smallsetminus L_n^\delta(W)$ (the \emph{short} curves), and define $\cI_n^\delta(W)$ to comprise those elements $U \in \cG_n^\delta(W)$ for which $T^iU$ is contained in an element of $S_{n-i}^\delta(W)$ 
for all $0 \leqslant i \leqslant n-1$.

A fundamental fact \cite[Lemma~5.2]{chernov01} we will use is that the growth in complexity for the billiard is at most linear:
\begin{align}\label{eq:complex}
\begin{split}
\mbox{$\exists$ $K >0$ \mbox{ such that } $\forall$ $n \geqslant 0$, } & \mbox{the number of curves in $\cS_{\pm n}$ that intersect} \\
& \mbox{at a single point is at most $Kn$.}
\end{split}
\end{align}

\subsection{Growth Lemma}

\begin{lemma}\label{lemma:Growth_lemma}
For any $m \in \mathbbm{N}$, there exists $\delta_0 = \delta_0(m) \in (0,1)$ such that for all $W \in \hW^s$, if $|W|<\delta_0$, then for all $0 \leqslant l \leqslant 2m$, $T^{-l}W$ comprises at most $Km+1$ connected components.\\
Furthermore, for any $\delta \in (0,\delta_0]$, the $\delta$-scaled subdivisions satisfy the following estimates: for all $n \geqslant 1$, all $\gamma \in [0,\infty)$, all $W \in \hW^s$, and all $\cM_0^1$-continuous potential $g$, we have
\begin{itemize}
\item[a)] $\displaystyle \sum\limits_{W_i \in \cI_n^\delta(W)} \left( \frac{\log |W|}{\log |W_i|} \right)^\gamma |e^{S_n g}|_{C^0(W_i)} \leqslant 2^{( (n \vee n_0) s_0 +1)\gamma+1}(Km+1)^{n/m} e^{n \sup g} $
\item[b)] $\displaystyle \sum\limits_{W_i \in \cG_n^\delta(W)} \left( \frac{\log |W|}{ \log |W_i|} \right)^{\gamma} |e^{S_n g}|_{C^{0}(W_i)} \leqslant \min \left\lbrace 2C \delta^{-1} 2^{((n \vee n_0)s_0 + 1)\gamma} \sum\limits_{A \in \cM_0^n} |e^{S_n g}|_{C^{0}(A)}\, , \right.$

\flushright{$\displaystyle \left.  2^{2\gamma +1} C\delta^{-1} \sum\limits_{j=1}^n 2^{(j \vee n_0)s_0 \gamma}(Km+1)^{j/m} e^{j \sup g} \sum\limits_{A \in \cM_0^{n-j}} |e^{S_{n-j} g}|_{C^{0}(A)} \right\rbrace  $}
\end{itemize}
where $(n \vee n_0)= \max(n,n_0)$. \\
Moreover, if $|W| \geqslant \delta/2$, then both factors $2^{(ns_0 + 1)\gamma}$ can be replaced by $2^\gamma$.
\end{lemma}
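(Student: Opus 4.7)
The plan is to prove the three assertions sequentially, using the linear complexity estimate \eqref{eq:complex} and the uniform hyperbolicity of $T$ as the main inputs. For the component count, I choose $\delta_0=\delta_0(m)$ as follows. The singular set $\cS_{-2m}$ is a finite union of smooth curves in the unstable cone. By \eqref{eq:complex}, at most $K\cdot 2m$ of them pass through any single point, and compactness of $M$ together with transversality between the stable and unstable cones yields $\delta_0(m)>0$ such that any stable curve $W$ with $|W|<\delta_0$ is crossed by at most $Km$ of them (after localizing all crossings to a neighborhood of a single clustering point and adjusting the constant). Hence $T^{-l}W$ comprises at most $Km+1$ connected components for every $0\leqslant l\leqslant 2m$.

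For part (a), I iterate the above in blocks of $m$ iterates. Any $W_i\in\cI_n^\delta(W)$ satisfies $|T^iW_i|<\delta/3\leqslant\delta_0$ for all $0\leqslant i\leqslant n-1$, so the component count applies on each block and yields $\#\cI_n^\delta(W)\leqslant(Km+1)^{n/m}$. The bound $|e^{S_ng}|_{C^0(W_i)}\leqslant e^{n\sup g}$ is immediate. The weighted factor $(\log|W|/\log|W_i|)^\gamma$ is controlled by combining the uniform hyperbolicity with the $s_0$-grazing count: the distortion accumulated along $n$ iterates has contribution $\Lambda^n$ from non-grazing collisions, plus an extra multiplicative factor of at most $2$ per $\varphi_0$-grazing collision from the $\cos\varphi$ term in the stable Jacobian; since at most $(n\vee n_0)s_0$ collisions of any orbit of length $n\vee n_0$ are $\varphi_0$-grazing, this gives $(\log|W|/\log|W_i|)^\gamma\leqslant 2^{((n\vee n_0)s_0+1)\gamma}$. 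Multiplying the three bounds together, with a factor $2$ absorbed for the rounding in $n/m$, produces (a).

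For part (b), I use the natural decomposition $\cG_n^\delta(W)=\bigsqcup_{j=0}^n\bigsqcup_{A\in L_{n-j}^\delta(W)}\cI_j^\delta(A)$, where $j$ records the elapsed time since $W_i$ was born as a short piece cut from a long component at step $n-j$. Applying (a) to each inner slice $\cI_j^\delta(A)$ produces the factor $2^{((j\vee n_0)s_0+1)\gamma+1}(Km+1)^{j/m}e^{j\sup g}$. For the outer sum over long pieces (each with $|A|\geqslant\delta/3$), the factor $\log|W|/\log|A|$ is bounded by a constant depending only on $\delta$, and a length-packing argument (distinct long pieces embed into distinct elements of $\cM_0^{n-j}$ via $T^{n-j}$, with total image length at most $|W|$) yields the prefactor $C\delta^{-1}\sum_{A\in\cM_0^{n-j}}|e^{S_{n-j}g}|_{C^0(A)}$, giving the second bound. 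The first bound follows by noting that each $W_i\in\cG_n^\delta$ lies in a unique element of $\cM_0^n$ and using the submultiplicativity from Theorem~\ref{thm:pressure} to absorb the sum over $j$ into a single $\sum_{\cM_0^n}$ factor with prefactor $2C\delta^{-1}2^{((n\vee n_0)s_0+1)\gamma}$. The improvement when $|W|\geqslant\delta/2$ is immediate: $|\log|W||$ is then bounded uniformly in terms of $\delta$, so the grazing amplification collapses to a mere $2^\gamma$ prefactor.

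The main technical obstacle is establishing the log-ratio bound $(\log|W|/\log|W_i|)^\gamma\leqslant 2^{((n\vee n_0)s_0+1)\gamma}$: it requires carefully tracking how $\varphi_0$-grazing collisions amplify the $\cos\varphi$ factor in the stable Jacobian, and how the sparse recurrence condition (which is a uniform statement only for orbits of length $\geqslant n_0$) produces the $n\vee n_0$ in the exponent rather than plain $n$.
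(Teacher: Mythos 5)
The core of the student's proposal is right for the component count, part~(a), and the ``moreover'' clause, but there are two places where the argument as written does not go through.

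First, the description of the mechanism behind the ratio bound $(\log|W|/\log|W_i|)^\gamma\leqslant 2^{((n\vee n_0)s_0+1)\gamma}$ is off: it is not about accumulating Jacobian distortion by a factor $\Lambda$ and a factor $2$ per grazing collision. The actual input is a relation between \emph{lengths}: near a grazing collision, $|T^{-1}W'|\lesssim |W'|^{1/2}$ (\cite[Exercise~4.50]{chernov2006chaotic}), while at non-grazing collisions $|T^{-1}W'|\leqslant\Lambda_1 |W'|$. Iterating the square-root $s_0 n$ times along a sequence of forever-short pieces gives $|W_i|\leqslant\tilde C^2|W|^{2^{-ns_0}}$ for all $W_i\in\cI_n^\delta(W)$, and it is $\log(x^{1/2})=\tfrac12\log x$, not a Jacobian factor, that produces the doubling of $|\log|W||/|\log|W_i||$ per grazing collision. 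Crucially, the square-root accumulates only as long as the intermediate pieces stay shorter than $\delta/3$; this is why the bound is stated for $\cI_n^\delta(W)$ and not for general $\cG_n^\delta(W)$.

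Second, and this is the genuine gap, your decomposition of $\cG_n^\delta(W)$ in part~(b) is the wrong one. You group $W_i$ by its \emph{most recent} long ancestor $A\in L_{n-j}^\delta(W)$ and claim the outer factor $(\log|W|/\log|A|)^\gamma$ is ``bounded by a constant depending only on~$\delta$.'' This is false when $|W|$ is much smaller than $\delta/3\leqslant|A|$: that ratio can be arbitrarily large. The only way to control it is through the chain of ancestors of $A$ all the way back to~$W$, which is exactly what the paper's decomposition via the \emph{first} long ancestor $V\in F_j(W)$ does: since $V\in\cI_j^\delta(W)$, part~(a) applies to $(\log|W|/\log|V|)^\gamma$ and gives the $j$-dependent factor $2^{(j\vee n_0)s_0\gamma}$, while $(\log|V|/\log|W_i|)^\gamma\leqslant 2^\gamma$ from $|V|\geqslant\delta/2$ via the ``moreover'' improvement. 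Your decomposition inverts this order and loses the $j$-dependence of the factor; it would at best produce the crude $2^{((n\vee n_0)s_0+1)\gamma}$ uniformly in $j$, which is not the stated second bound of the min. Relatedly, your route to the first bound of the min (``submultiplicativity from Theorem~\ref{thm:pressure}'') is not what is needed; that bound comes from the packing estimate (at most $2C\delta^{-1}$ pieces of $\cG_n^\delta(W)$ per element of $\cM_0^n$) combined with the ratio bound, and involves no thermodynamic submultiplicativity.
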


\begin{proof}
By \cite[Exercise 4.50]{chernov2006chaotic}, there exist constants $\delta_{\rm CM}>0$ and $C\geqslant 1$ such that for all $W \in \hW^s$ with $|W|< \delta_{\rm CM}$, then $|T^{-1}W| \leqslant C |W|^{1/2}$. Notice also that there exists $\Lambda_1 \coloneqq \Lambda_1(\varphi_0)$ such that for $W \in \hW^s$ with $T^{-1}W \cap \{ |\varphi| > \varphi_0 \} = \emptyset$, then $|T^{-1}W| \leqslant \Lambda_1 |W|$. We want to combine these bounds to estimate $|T^{-n}W|$.

Let $\delta \in (0, \delta_{\rm CM}]$, $W \in \hW^s$ with $|W|<\delta$, and $W_i \in \cI_n^\delta(W)$. Let $V \subset W$ corresponding to $W_i$, that is $V = T^n W_i$. Thus, for all $1 \leqslant j \leqslant n$, we have $|T^{-j}V|=|T^{n-j}W_i| \leqslant \delta/3$.

We can decompose $V = \bigsqcup\limits_{i_0 \in I_0} V^{0}_{i_0,{\rm graz}} \cup \bigsqcup\limits_{j_0 \in J_0} V^{0}_{j_0,{\rm exp}}$ such that: for all $i_0 \in I_0$, $T^{-1}V^{0}_{i_0,{\rm graz}} \subset \{ |\varphi| \geqslant \varphi_0 \}$, and thus $|T^{-1}V^{0}_{i_0,{\rm graz}}|  \leqslant C |V^{0}_{i_0,{\rm graz}}|^{1/2}$, and for all $j_0 \in J_0$, $T^{-1}V^{0}_{j_0,{\rm exp}} \subset \{ |\varphi| < \varphi_0\}$, and thus $|T^{-1}V^{0}_{j_0,{\rm exp}}|  \leqslant \Lambda_1 |V^{0}_{j_0,{\rm exp}}|$. We can perform the same decomposition for each $V^{0}_{i_0,{\rm graz}}$ or $V^{0}_{j_0,{\rm exp}}$ instead of $V$:
\begin{align*}
V^{0}_{i_0,{\rm graz}} = \bigsqcup\limits_{i_1} V^{1,i_0}_{i_1,{\rm graz}} \cup \bigsqcup\limits_{j_1} V^{1,i_0}_{j_1,{\rm exp}} \,\, , \qquad
V^{0}_{j_0,{\rm exp}} = \bigsqcup\limits_{i_1} V^{1,j_0}_{i_1,{\rm graz}} \cup \bigsqcup\limits_{j_1} V^{1,j_0}_{j_1,{\rm exp}}.
\end{align*}
We iterate these decompositions until having a decomposition of $T^{-n}V=W_i$. Notice that since the stable curves $T^{-j}V$ have length at most $\delta/3 \leqslant \delta_{\rm CM}/3$ and are uniformly transverse to $\cS_0$, they can cross $\{ |\varphi| \geqslant \varphi_0 \}$ at most $B$ times, where $B>0$ is a constant uniform in $W$. Thus, at each step of the decomposition, a curve is split into at most $2B$ pieces.

Thus $W_i=T^{-n}V = \bigsqcup_{*, \alpha_k} V^{n,\alpha_0,\ldots,\alpha_{n-1}}_{\alpha_n,*}$, where $* \in \{ {\rm graz, exp} \}, \,  \alpha_k \in I_k \sqcup J_k$, and the union is made of at most $(2B)^n$ elements we can estimate the length.

Consider first the case $n \leqslant n_0$. By definition, $s_0$ is such that $s_0 = \sup_{M} \frac{1}{n_0} \sum_{k=0}^{n_0-1} \mathbbm{1}_{\{ |\varphi|> \varphi_0\} } \circ T^k <1$. Thus, for each $V^{n,\alpha_0,\ldots,\alpha_{n-1}}_{\alpha_n,*}$ there are at most $s_0 n_0$ indices $\alpha_k \in I_k$. Hence $|V^{n,\alpha_0,\ldots,\alpha_{n-1}}_{\alpha_n,*}| \leqslant C^2 \Lambda_1^{n_0} |V|^{2^{-s_0 n_0}}$. Therefore
\begin{align}\label{eq:sqrt_bound}
|W_i| = |T^{-n_0}V| \leqslant (2B)^{n_0} C^2 \Lambda_1^{n_0} |V|^{2^{-s_0 n_0}} \leqslant \tilde C |W|^{2^{-s_0 n_0}} \, , \quad \forall \, W_i \in \cI_{n}^\delta(W), \, n\leqslant n_0 , \, \delta \leqslant \delta_{\rm CM}
\end{align}

Now, consider the case $n=kn_0+l$, for $k \geqslant 1$ and $0\leqslant l < n_0$. By construction, if $W_i \in \cI_n^\delta(W)$, then $T^{l} W_i \subset W_i^0 \in \cI_{kn_0}^\delta(W)$ and $T^{n_0} W_i^j \subset W_i^{j+1} \in \cI_{(k-j-1)n_0}^\delta(W)$ for all $0 \leqslant j \leqslant k-1$. Thus, we can iterate \eqref{eq:sqrt_bound}:
\begin{align*}
|W_i| \leqslant \tilde C |W_i^0|^{2^{-s_0 n_0}} \leqslant \tilde C^{\sum_{m=0}^{j} 2^{-m s_0 n_0}} |W_i^{j}|^{2^{-j s_0 n_0}} \leqslant \tilde C^2 |W|^{2^{-(k+1)s_0 n_0}} 
\end{align*}
and so $|W_i| \leqslant \tilde C^2 |W|^{2^{-ns_0}}$ for all $W_i \in \cI_n^\delta(W)$, $n\geqslant n_0$ and all $W \in \hW^s$ with $|W| < \delta_{\rm CM}$.

Therefore, if $\delta \leqslant \min(\tilde C^{-2}, \delta_{\rm CM})$, we have
\begin{align}\label{eq:ratio_of_logs}
\left( \frac{\log|W|}{\log |W_i|} \right)^{\gamma} \leqslant \left( 2^{s_0n}\left( 1 - \frac{\log \tilde C^2}{\log|W_i|} \right)\right)^{\gamma} \leqslant 2^{(ns_0 +1)\gamma}, \, \forall \, W_i \in \cI_n^\delta(W),
\end{align}
since $|W_i| \leqslant \delta$. In the case $|W| \geqslant \delta /2$, since $\delta < 2$, we can directly obtain that the ration of logs is bounded by $2^\gamma$.

\smallskip
\noindent
(a) Let $m \geqslant 1$ and $W \in \hW^s$ with $|W|<\delta \leqslant \min(\tilde C^{-2}, \delta_{\rm CM})$. First, we want to estimate the number of smooth components of $T^{-l}W$, for $0\leqslant l \leqslant 2m$. The problem is the same as knowing the number of connected components of $W \smallsetminus \cS_{-l}$. Now, by \eqref{eq:complex}, at most $Kl$ curves in $\cS_{-l}$ can intersect at a given point. Since $W$ and $\cS_{-l}$ are uniformly transverse, for each $0 \leqslant l \leqslant 2m$ there exists $\delta_{(l)}$ such that if $|W|<\delta_{(l)}$ then $W \smallsetminus \cS_{-l}$ has at most $Kl+1$ connected components. Let $\delta_0 \coloneqq \min \{ \delta_{(l)} \mid 0\leqslant l \leqslant 2m \}$.

Let $n \geqslant 1$, $\delta \in (0 , \delta_0]$ and $W \in \hW^s$ with $|W| < \delta$. We want to estimate $\# \cI_n^\delta(W)$. We prove by induction that $\# \cI_{jm}^\delta(W) \leqslant (Km+1)^j$. For $j=1$, this follows from the choice of $\delta_0$. Since elements of $\cI_{(j+1)m}^\delta(W)$ are of the form $V \in \cI_{m}^\delta(W_i)$ for $W_i \in \cI_{jm}^\delta(W)$, we have 
\[ \# \cI_{(j+1)m}^\delta(W) \leqslant (Km+1)\# \cI_{jm}^\delta(W) \leqslant (Km+1)^{j+1}. \]
Now for estimating $\# \cI_{jm +l}^\delta(W)$, $0\leqslant l <m$, we only need to modify the last step:
\[ \# \cI_{jm+l}^\delta(W) \leqslant (K(m+l)+1)\# \cI_{(j-1)m}^\delta(W) \leqslant 2(Km+1)^{j}. \]
Therefore, $\# \cI_{n}^\delta(W) \leqslant 2(Km+1)^{n/m}$, for all $n \geqslant 1$. Combining this estimate with \eqref{eq:ratio_of_logs} and $e^{S_n g} \leqslant e^{n \sup g}$, we obtain (a).

\smallskip
\noindent
(b) Let $\delta \leqslant \delta_0$, and $W \in \hW^s$ with $|W|< \delta$. We start by estimating $\sum_{W_i \in \cG_n^\delta(W)} |e^{S_n g}|_{C^0(W_i)}$. Since the boundary of elements of $\cM_{-n}^0$ is contained in $\cS_{-l}$, by uniform transversality, each element of $\cM_{-n}^0$ is crossed at most one time by $W$. Thus, each element of $\cM_0^n$ is crossed at most one time by $T^{-n}W$. Now, since the diameter of elements of $\cM_0^n$ is bounded uniformly in $n$ by some constant $C$, there can be no more than $2C \delta^{-1}$ elements of $\cG_n^\delta(W)$ in a single element of $\cM_0^n$. Thus
\begin{align}\label{eq:G_n_leqs_M_0^n}
\sum\limits_{W_i \in \cG_n^\delta(W)} |e^{S_n g}|_{C^0(W_i)} \leqslant 2C \delta^{-1} \sum\limits_{A \in \cM_0^n} |e^{S_n g}|_{C^0(A)}
\end{align}

First, in the case $|W| \geqslant \delta/2$, the estimate 
\begin{align*}
\sum\limits_{W_i \in \cG_n^\delta(W)} \left( \frac{\log|W|}{\log|W_i|} \right)^{\bar{\gamma}} |e^{S_n g}|_{C^0(W_i)} \leqslant 2^{1+ \bar{\gamma}} C\delta^{-1} \sum\limits_{A \in \cM_0^n} |e^{S_n g}|_{C^0(A)},
\end{align*}
is enough for what we need.

Now, assume that $|W| < \delta/2$. Let $F_1(W)$ denote those $V \in \cG_1^\delta(W)$ whose length is at least $\delta/2$. Inductively, define $F_j(W)$, for $2 \leqslant j \leqslant n-1$, to contain those $V \in \cG_j^\delta(W)$ whose length is at least $\delta/2$, and such that $T^kV$ is contained in an element of $\cG_{j-k}^\delta(W) \smallsetminus F_{j-k}(W)$ for any $1 \leqslant k \leqslant j-1$.  Thus $F_j(W)$ contains elements of $\cG_j^\delta(W)$ that are ``long for the first time'' at time $j$.

We gather the $W_i \in \cG_n^\delta(W)$ according to their ``first long ancestors'' as follows.  We say $W_i$ has \emph{first long ancestor}\footnote{Note that ``ancestor'' refers to the backwards dynamics mapping $W$ to $W_i$.} $V \in F_j(W)$ for $1\leqslant j \leqslant n-1$ if $T^{n-j}W_i \subseteq V$.  Note that such a $j$ and $V$ are unique for each $W_i$ if they exist.  If no such $j$ and $V$ exist, then $W_i$ has been forever short
and so must belong to $\cI_n^\delta(W)$. Denote by $A_{n-j}(V)$ the set of $W_i \in \cG_n^\delta(W)$ whose first long ancestor is $V \in F_j(W)$, that is
\[ A_{n-j}(V) \coloneqq \{ W_i \in \cG_n^\delta(W) \mid T^{n-j} W_i \subset V \}. \]
By construction, we thus have the partition
\[ \cG_n^\delta(W) = \cI_n^\delta(W) \sqcup \bigsqcup\limits_{j=1}^{n-1} \bigsqcup\limits_{V \in F_j(W)} A_{n-j}(V).\]
Therefore
\begin{align*}
&\sum\limits_{W_i \in \cG_n^\delta(W)} \left( \frac{\log |W|}{ \log |W_i|} \right)^{\gamma} |e^{S_n g}|_{C^{0}(W_i)} \\
&\quad= \sum\limits_{j=1}^{n-1} \sum\limits_{V_l \in F_j(W)} \sum\limits_{W_i \in A_{n-j}(V_l)} \left( \frac{\log |W|}{ \log |W_i|} \right)^{\gamma} |e^{S_n g}|_{C^{0}(W_i)} + \sum\limits_{W_i \in \cI_n(W)} \left( \frac{\log |W|}{ \log |W_i|} \right)^{\gamma} |e^{S_n g}|_{C^{0}(W_i)} \\
&\quad\leqslant \sum\limits_{j=1}^{n-1} \sum\limits_{V_l \in F_j(W)}  \left( \frac{\log |W|}{ \log |V_l|} \right)^{\gamma}  |e^{S_j g}|_{C^{0}(V_l)} \sum\limits_{W_i \in A_{n-j}(V_l)} \left( \frac{\log |V_l|}{ \log |W_i|} \right)^{\gamma} |e^{S_{n-j} g}|_{C^{0}(W_i)} \\
&\quad\qquad + 2^{((n \vee n_0)s_0 +1)\gamma}(Km+1)^{n/m} e^{n \sup g} \\
&\quad\leqslant 2^{\gamma + 1} C \delta^{-1} \sum\limits_{j=1}^{n-1} \sum\limits_{V_l \in F_j(W)}  \left( \frac{\log |W|}{ \log |V_l|} \right)^{\gamma} |e^{S_j g}|_{C^{0}(V_l)} \sum\limits_{A \in \cM_0^{n-j}} |e^{S_n g}|_{C^{0}(A)} \\
&\quad\qquad + 2^{((n \vee n_0)s_0 +1)\gamma}(Km+1)^{n/m} e^{n \sup g} \\
&\quad\leqslant 2^{2\gamma +1} C \delta^{-1} \sum\limits_{j=1}^{n} 2^{(j \vee n_0)s_0 \gamma}(Km+1)^{j/m} e^{j\sup g} \sum\limits_{A \in \cM_0^{n-j}} |e^{S_n g}|_{C^{0}(A)}.
\end{align*}
where we have applied part (a) from time 1 to time $j$ and the first estimate in part (b) from time $j$ to time $n$, since each $|V_\ell| \geqslant \delta/2$.
\end{proof}

\subsection{Fragmentation Lemmas}\label{subsect:Frag_lemmas--round_1}

Until now, we have only used mild assumptions on a given potential $g$. Here, we introduce the conditions of Small Singular Pressure (SSP.1 and SSP.2), which are crucial for the estimates given in the lemmas from this subsection. These estimates will be used in Section~\ref{subsect:supermultiplicativity}, leading to a precise growth rate of the thermodynamic sums. We also prove that there exist potentials satisfying simultaneously SSP.1 and SSP.2.

In what follows, we will always assume that $P_*(T,g) - \sup g > s_0 \log 2$ and choose some $\delta_0$ accordingly: Let $m$ be large enough so that $\tfrac{1}{m} \log(Km+1) < P_*(T,g) - \sup g - s_0 \log 2$, and let $\delta_0=\delta_0(m)$ be as in Lemma~\ref{lemma:Growth_lemma}. Notice that $m$, and therefore also $\delta_0$, depend on $g$.

\medskip
We start by giving the definition of SSP.1. First, we have to introduce some notations.

Let $L_u^{\delta}(\cM_{-n}^0)$ denote the elements of $\cM_{-n}^0$ whose unstable diameter\footnote{Recall that the unstable diameter of a set is the length of the longest unstable curve contained in that set.} is at least $\delta/3$, for some $\delta \in (0,\delta_0]$. Similarly, let $L_s^{\delta}(\cM_0^n)$ denote the elements of $\cM_0^n$ whose stable diameter is at least $\delta/3$. Recall that the boundary of the partition formed by $\cM_{0}^n$ is comprised of stable curves belonging to $\cS_{n} = \cup_{j=0}^n T^{-j} \cS_0 \subset \hW^s $.

Define 
\begin{align*}
\ell_n^s(g,\delta) \coloneqq \inf \{ \sum_{V \in L_n^{\delta}(W)} |e^{S_n g}|_{C^0(V)} \mid W \in \hW^s, \, \tfrac{\delta}{3} \leqslant |W| \leqslant \delta \}, 
\end{align*}
and its ``time reversal" $\ell_n^u(g,\delta)$ similarly by replacing $\hW^s$ with $\hW^u$, $T$ with $T^{-1}$ and $g$ with $g \circ T^{-1}$ (that is $S_n g$ with $S^{-1}_n g \coloneqq \sum_{i=1}^n g \circ T^{-i} = S_n g \circ T^{-n}$)\footnote{Actually, this is equivalent to simply replace $g$ by the $(\cM_0^1, \alpha)$-H\"older potential $g \circ \cI \circ T$, where $\cI(r,\varphi)=(r,-\varphi)$. Notice that since $\cI \circ T^n$ acts like a permutation on $\cM_0^n$, we have $P_*(T,g)=P_*(T,g\circ \cI \circ T)$.}.

\begin{definition}[SSP.1]\label{def:SSP1}
A potential $g$ such that $P_*(T,g) - \sup g > s_0 \log 2$ is said to have $\varepsilon$-SSP.1 (small singular pressure), for some $\varepsilon>0$, if 
\begin{align}\label{SSP.1}
&\mbox{there exist } \delta = \delta(\ve) \in (0,\delta_0] \mbox{ and } n_1 \in \mathbbm{N} \mbox{ such that} \\
\nonumber &\frac{\sum\limits_{W_i \in L_n^\delta(W)} |e^{S_n g}|_{C^0(W_i)} }{ \sum\limits_{W_i \in \cG_n^\delta(W)} |e^{S_n g}|_{C^0(W_i)} } \geqslant \frac{1 - 2\varepsilon}{1 - \varepsilon}, \quad \mbox{$\forall n \geqslant n_1$ $\forall W \in \hW^s$ with $|W| \geqslant \delta/3$ \, ,}
\end{align}
\begin{align}\label{SSP.3}
\text{ the sequences } (e^{n \sup g} \, \ell_n^s(g,\delta)^{-1})_{n \geqslant n_1} \text{ and } (e^{n \sup g} \, \ell_n^u(g,\delta)^{-1})_{n \geqslant n_1} \text{ are summable,}
\end{align}
and the ``time reversal" of \eqref{SSP.1} holds\footnote{Here again, by time reversal of \eqref{SSP.1} we mean the same estimate but replacing $T$ with $T^{-1}$, $\hW^s$ with $\hW^u$ and $g$ with $g \circ T^{-1}$ (that is $S_n g$ with $S_n^{-1}g$). Here also, this is equivalent to just replace $g$ by $g \circ \cI \circ T$.}. Notice that \eqref{SSP.1} (resp. its time reversal) implies that $\ell_n^s(g,\delta)$ (resp. $\ell_n^u(g,\delta)$) is nonzero for all $n \geqslant n_1$.

A potential is said to have SSP.1 if it has $\varepsilon$-SSP.1 for some $\varepsilon \leqslant 1/4$.
\end{definition}

The following lemma bootstraps from Lemma~\ref{lemma:Growth_lemma} and will be crucial to get the lower bound on the spectral radius:

\begin{lemma}\label{lemma:short_curves_rare}
If $g$ is a $(\cM_0^1,\alpha)$-H\"older potential such that $P(T,g) - \sup g > s_0 \log 2$ and $\log \Lambda > \sup g - \inf g$, then $g$ satisfies \eqref{SSP.1}, as well as its time reversal, for all $\ve >0$. 
\end{lemma}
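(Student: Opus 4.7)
The plan is to partition the short descendants $S_n^\delta(W)$ by their most recent long ancestor, apply Lemma~\ref{lemma:Growth_lemma}(a) to the resulting forever-short chunks, and close the ensuing convolution-type inequality against the lower bound on $\sigma_n(W)$ coming from the backward expansion of stable curves. Given $\ve>0$, I would first choose $m$ large enough that $\tfrac{1}{m}\log(Km+1)<P_*(T,g)-\sup g-s_0\log 2$ (so Lemma~\ref{lemma:Growth_lemma} applies) and $A:=(Km+1)^{1/m}e^{\sup g}<\Lambda e^{\inf g}=:L$, the second being feasible since $\log\Lambda>\sup g-\inf g$ and $(Km+1)^{1/m}\to 1$. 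Set $q:=A/L\in(0,1)$ and let $\delta_0=\delta_0(m)$.

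For $W\in\hW^s$ with $|W|\geq\delta/3$ and $V\in S_n^\delta(W)$, let $k$ be the largest index in $\{0,\dots,n-1\}$ with $V^{(k)}\in L_k^\delta(W)$ (with $V^{(0)}:=W$ and $L_0^\delta(W):=\{W\}$). Writing $j=n-k$ and $V':=V^{(k)}$, the intermediate ancestors $V^{(k+1)},\dots,V^{(n)}=V$ are all short, so $V\in\cI_j^\delta(V')$. This yields the disjoint decomposition $S_n^\delta(W)=\bigsqcup_{j=1}^{n}\bigsqcup_{V'\in L_{n-j}^\delta(W)}\cI_j^\delta(V')$. Splitting $S_ng=S_jg+S_{n-j}g\circ T^j$ on each $V$ (using $T^jV\subset V'$) and applying Lemma~\ref{lemma:Growth_lemma}(a) with $\gamma=0$ to the inner sum produces
\begin{align*}
\sigma_n^S(W):=\sum_{V\in S_n^\delta(W)}|e^{S_ng}|_{C^0(V)}\leq 2\sum_{j=1}^{n}A^{j}\,\sigma_{n-j}^L(W).
\end{align*}
On the other hand, $\sum_{V\in\cG_n^\delta(W)}|V|\geq C_1\Lambda^n|W|$ combined with $|V|\leq\delta$ gives $\#\cG_n^\delta(W)\geq C_1\Lambda^n|W|/\delta$ and hence $\sigma_n(W)\geq c_0L^n$ for some $c_0>0$. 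Normalizing by $L^n$, the sequences $b_n:=\sigma_n^L(W)/L^n$ and $s_n:=\sigma_n^S(W)/L^n$ satisfy the coupled estimates $s_n\leq 2\sum_{j=1}^{n}q^{j}b_{n-j}$ and $b_n\geq c_0-2\sum_{j=1}^{n}q^{j}b_{n-j}$.

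The hard part, and main obstacle, is closing this convolution uniformly so that $\sigma_n^L/\sigma_n\geq(1-2\ve)/(1-\ve)$ holds for every $\ve>0$. A direct generating-function inspection $S(z)\leq\tfrac{2qz}{1-qz}B(z)$ only yields an asymptotic ratio $s_n/b_n\leq 2(A/\mu)/(1-A/\mu)+o(1)$ at the common growth rate $\mu=e^{P_*(T,g)}$, with $A/\mu<2^{-s_0}$ fixed; this is not arbitrarily small. To upgrade to every $\ve$, one must refine the per-iterate estimate on $\cI_j^\delta(V')$ by exploiting that each forever-short $V$ satisfies $|T^jV|\leq \Lambda^{-j}|V|\leq\Lambda^{-j}\delta/3$ -- an exponentially-shrinking footprint inside $V'$ -- and replace the crude $e^{j\sup g}$ weight by a factor involving $(\Lambda e^{\inf g-\sup g})^{-j}$ via the H\"older regularity of $g$; this is precisely where the hypothesis $\log\Lambda>\sup g-\inf g$ enters in an essential way to quench the $A^j$. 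The time-reversed statement is obtained by the same argument applied to $(T^{-1},\hW^u,g\circ T^{-1})$: the expansion of $T$ on unstable curves substitutes for that of $T^{-1}$ on stable, the pressure is preserved via $P_*(T,g)=P_*(T,g\circ\cI\circ T)$, and all fragmentation constants of Section~\ref{sect:growth_lemma} are symmetric under $T\leftrightarrow T^{-1}$.
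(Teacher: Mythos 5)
You have landed on the same decomposition the paper uses -- grouping $S_n^\delta(W)$ by the most recent long ancestor -- but, as you yourself flag, the convolution does not close as you set it up, and the repair you sketch does not work. The problem is that your constant $A=(Km+1)^{1/m}e^{\sup g}$ comes from Lemma~\ref{lemma:Growth_lemma}(a), whose subexponential factor $(Km+1)^{j/m}$ survives as a fixed geometric base. Pushing $m\to\infty$ only drives $A$ down to $e^{\sup g}$, so $q=A/L$ stays bounded below by $e^{\sup g-\inf g}/\Lambda>0$, and your generating-function bound $\sigma_n^S/\sigma_n\lesssim 2q/(1-q)$ is a fixed constant, not something that tends to zero with $\ve$. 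Your proposed cure -- trading $e^{j\sup g}$ for a factor involving $(\Lambda e^{\inf g-\sup g})^{-j}$ via H\"older regularity of $g$ -- is not available: Lemma~\ref{lemma:sup_less_poly_inf} gives a uniform constant comparing $\sup_P e^{S_n g}$ to $\inf_P e^{S_n g}$ over the same cylinder, but it does not let you replace $\sup g$ by $\inf g$ in the per-step growth on forever-short pieces; $S_j g$ really can be close to $j\sup g$ along some curves, so that weight cannot be shaved.

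The idea you are missing is that the paper introduces a \emph{second} free parameter $n_1$, independent of the growth-lemma block length $m$, and exploits a two-tier complexity estimate. For iterates $j\leq 2n_1$, one shrinks $\delta=\delta(n_1)$ so that $T^{-j}W$ has at most $Kj+1$ smooth components -- a \emph{linear}, not merely subexponential, bound -- while $\#\cG_j^\delta(W)\geq C_1\Lambda^{j}/3$ for $|W|\geq\delta/3$. The resulting per-block ratio is $\lesssim n_1(Kn_1+1)\,e^{n_1(\sup g-\inf g-\log\Lambda)}$, and this \emph{does} go to zero as $n_1\to\infty$ because $\sup g-\inf g-\log\Lambda<0$; this is exactly where the hypothesis enters. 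For $n>2n_1$ the paper then runs your age decomposition, but grouped into blocks of length $n_1$, so that each block contributes a factor $\leq\ve$ and the telescoped sum is $\leq\sum_{q\geq 2}\ve^q+\ve\leq\ve/(1-\ve)$. The structural point is that the block length $n_1$ can be made as large as needed \emph{while retaining a linear complexity bound inside a single block}, which is possible precisely because $\delta$ is free to shrink with $n_1$. Your argument keeps $m$ and $\delta_0$ coupled through Lemma~\ref{lemma:Growth_lemma}, which forbids this decoupling and caps you at a bounded, non-improvable ratio.
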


\begin{proof}
Fix $\varepsilon > 0$. Choose $n_1$ so large that $ 6 C C_1^{-1} n_1 (Kn_1 +1) e^{n_1(\sup g - \inf g - \log \Lambda)} < \varepsilon $, where $C$ is the constant from Lemma~\ref{lemma:sup_less_poly_inf} and $C_1$ is such that $|T^{-n}W| > C_1 \Lambda^n |W|$ whenever $W \in \hW^s$. Next, choose $\delta > 0$ sufficiently small that if $W \in \hW^s$ with $|W| < \delta$, then $T^{-n} W$ comprises at most $Kn+1$ smooth pieces of length at most $\delta_0$ for all $0 \leqslant n \leqslant 2n_1$ (where $\delta_0$ is defined using Lemma~\ref{lemma:Growth_lemma} and the assumption $P(T,g) - \sup g > s_0 \log 2$, as described above).

Let $W \in \hW^s$ with $|W| \geqslant \delta/3$. We shall prove the following equivalent inequality for $n \geqslant n_1$: 
\begin{align}\label{eq:Sn_over_Gn}
 \frac{\sum\limits_{W_i \in S_n^\delta(W)} |e^{S_n g}|_{C^0(W_i)} }{ \sum\limits_{W_i \in \cG_n^\delta(W)} |e^{S_n g}|_{C^0(W_i)} } \leqslant \frac{\varepsilon}{1 - \varepsilon}.
\end{align}
For $n \geqslant n_1$, write $n = k n_1 + l$ for some $0 \leqslant l < n_1$. If $k=1$, the above inequality is clear since $S^\delta_{n_1 + l}(W)$ contains at most $K(n_1+l)+1$ components by assumption on $\delta$ and $n_1$, while $|T^{-n_1 -l}W| \geqslant C_1 \Lambda^{n_1+l} |W| \geqslant C_1 \Lambda^{n_1+l} \delta/3$. Thus $\cG^\delta_n(W)$ must contain at least $C_1 \Lambda^{n_1+l} /3$ curves since each has length at most $\delta$. Thus,
\[ \frac{\sum\limits_{W_i \in S^\delta_n(W)} |e^{S_n g}|_{C^0(W_i)} }{\sum\limits_{W_i \in \cG^\delta_n(W)} |e^{S_n g}|_{C^0(W_i)}} \leqslant 3\frac{K(n_1 +l)+1}{C_1 \Lambda^{n_1+l}}\frac{e^{(n_1+l)\sup g}}{e^{(n_1+l)\inf g}} \leqslant 6 C_1^{-1} (Kn_1 +1) e^{n_1(\sup g - \inf g - \log \Lambda)} <\varepsilon,\]
where the last inequality holds by choice of $n_1$.

For $k>1$, we split $n=kn_1 +l$, $0\leqslant l < n_1$, into $k-1$ blocks of length $n_1$ and the last block of length $n_1 +l$. For each $V \in \cG_n^\delta(W) \smallsetminus \cI_n^\delta(W)$, let $j<n$ be the greatest integer such that $T^{n-j}V$ is contained in an element $V_a$ of $L_j^\delta(W)$ and for all $j<i<n$, $T^{n-i}V$ is contained in an element of $S_i^\delta(W)$. We call $V_a$ the \emph{most recent long ancestor} of $V$ and $j$ its \emph{age}. If such a $j$ does not exist, it means that for all $i < n$, $T^{n-i}V$ is short, that is $V \in \cI_n^\delta(W)$ and we set $j=0$ in this case.

We group elements of $S_n^\delta(W)$ by their age in $[(j-1) n_1, j n_1 -1]$, $1 \leqslant j \leqslant k-1$, and $[(k-1)n_1,n-1]$. In other words, we consider the following partition
\begin{align}\label{eq:decomposition_Sn(W)}
S_n^\delta(W) =
\bigsqcup\limits_{q=0}^{k-2} \left( \bigsqcup\limits_{j=qn_1}^{(q+1)n_1-1} \bigsqcup\limits_{V \in L_j^\delta(W)} \cI_{n-j}^\delta(V) \right)  
\sqcup \left( \bigsqcup\limits_{j=(k-1)n_1}^{n-1} \bigsqcup\limits_{V \in L_j^\delta(W)} \cI_{n-j}^\delta(V) \right).
\end{align}
We can therefore split the left hand side of \eqref{eq:Sn_over_Gn} into two manageable parts. For this, we rely on Lemma~\ref{lemma:Growth_lemma} for $\gamma=0$ and the fact that
\[ \cG_{n}^\delta(W) \supset \bigsqcup\limits_{V \in L_j^\delta(W)} \cG_{n-j}^\delta(V), \quad \forall \, 0 < j < n.\]
Thus, using Lemma~\ref{lemma:sup_less_poly_inf}, we have
\begin{align*}
&\frac{
	\sum\limits_{q=0}^{k-2} \sum\limits_{j=qn_1}^{(q+1)n_1-1} \!\!\!\! \sum\limits_{V \in L^\delta_{j}(W)} \sum\limits_{W_i \in \cI^\delta_{n-j}(V)} \!\!\!\!\!\! |e^{S_n g}|_{C^0(W_i)} 
	}
	{
	 \sum\limits_{W_i \in \cG_{kn_1+l}^\delta(W)} |e^{S_n g}|_{C^0(W_i)} 
	} 
	\leqslant \sum\limits_{q=0}^{k-2} \sum\limits_{j=qn_1}^{(q+1)n_1-1} \!\! \frac{ \sum\limits_{V \in L^\delta_{j}(W)} \!\!\!\! |e^{S_{j} g}|_{C^0(V)} \!\!\!\!\! \sum\limits_{W_i \in \cI^\delta_{n-j}(V)} \!\!\!\!\!\! |e^{S_{n-j} g}|_{C^0(W_i)}   }{ C^{-1} \!\!\!\!\! \sum\limits_{V \in L^\delta_{j}(W)} \!\!\!\! |e^{S_{j} g}|_{C^0(V)} \!\!\!\!\! \sum\limits_{W_i \in \cG^\delta_{n-j}(V)} \!\!\!\!\!\! e^{(n-j) \inf g}}  \\
&\qquad\qquad\qquad\qquad\quad \leqslant \sum\limits_{q=0}^{k-2} 6 C C_1^{-1} n_1 (Kn_1 + 1)^{k-q} e^{(k-q)n_1(\sup g - \inf g - \log \Lambda)} 
\leqslant \sum\limits_{q=0}^{k-2} \varepsilon^{k-q} = \sum\limits_{q=2}^{k} \varepsilon^{q}.
\end{align*}
Similarly, for the second part we have
\begin{align*}
\frac{
	\sum\limits_{j=(k-1)n_1}^{n-1} \sum\limits_{V \in L^\delta_{j}(W)} \sum\limits_{W_i \in \cI^\delta_{n-j}(V)} \!\!\! |e^{S_n g}|_{C^0(W_i)} 
	}
	{
	 \sum\limits_{W_i \in \cG_{kn_1+l}^\delta(W)} |e^{S_n g}|_{C^0(W_i)} 
	} 
	&\leqslant \!\! \sum\limits_{j=(k-1)n_1}^{n-1} \!\! \frac{ \sum\limits_{V \in L^\delta_{j}(W)} \!\!\! |e^{S_j g}|_{C^0(V)} \!\!\! \sum\limits_{W_i \in \cI^\delta_{n-j}(V)} \!\!\! |e^{S_{n-j} g}|_{C^0(W_i)}  }{ C^{-1} \!\!\! \sum\limits_{V \in L^\delta_{j}(W)} \!\!\! |e^{S_j g}|_{C^0(V)} \!\!\! \sum\limits_{W_i \in \cG^\delta_{n-j}(V)}  \!\!\! e^{(n-j) \inf g} } \\
&\leqslant 6 C C_1^{-1} n_1 (Kn_1 +1) e^{n_1(\sup g - \inf g - \log \Lambda)} \leqslant \varepsilon.
\end{align*}
Summing these two estimates, we obtain \eqref{eq:Sn_over_Gn}.

The time reversal is obtained from the same proof by changing the construction of the set $\cG_n^\delta(W)$ (and thus $L_n^\delta(W)$, $S_n^\delta(W)$ and $\cI_n^\delta(W)$) so that elements of $\cG_n^\delta(W)$ are contained in $T^n W$ (instead of $T^{-n}W$) for $W \in \hW^u$.
\end{proof}

Notice that if $\ve \leqslant 1/4$ and $\delta_1 \leqslant \delta_0$ and $n_1$ are the corresponding $\delta$ and $n_1$ from the $\ve$-SSP.1 condition, then we have for all $W \in \hW^s$ with $|W| \geqslant \delta_1/3$ and $n \geqslant n_1$
\begin{equation}\label{eq:delta1}
\sum_{W_i \in L_n^{\delta_1}(W)} |e^{S_n g}|_{C^0(W_i)} \geqslant \frac 23 \sum_{W_i \in \cG_n^{\delta_1}(W)} |e^{S_n g}|_{C^0(W_i)} \,,
\end{equation}
In particular, since $\cG_n^{\delta_1}(W) = L_n^{\delta_1}(W) \sqcup S_n^{\delta_1}(W)$, we also get that \begin{align*}
\sum_{W_i \in L_n^{\delta_1}(W)} |e^{S_n g}|_{C^0(W_i)} \geqslant 2 \sum_{W_i \in S_n^{\delta_1}(W)} |e^{S_n g}|_{C^0(W_i)} \, .
\end{align*}

The following lemma will be used to get both lower and upper bounds on the spectral radius via Proposition~\ref{prop:GnW_geq_M0n}:

\begin{lemma}\label{lemma:Lu_geq_M-n0}
Let $g$ be a $(\cM_0^1,\alpha)$-H\"older potential such that $P_*(T,g) - \sup g > s_0 \log 2$ and which has SSP.1. 
Let $\delta_1$ and $n_1$ be the corresponding parameters associated with SSP.1. Then there exist $C_{n_1}>0$ and $n_2 \geqslant n_1$ such that for all $n \geqslant n_2$,
\begin{align}\label{eq:long_cells_geq_all}
\begin{split}
\sum\limits_{A \in L_u^{\delta_1}(\cM_{-n}^0)} \!\!\!\!\!\!\!\!\!\! |e^{S^{-1}_n g}|_{C^0(A)} &\geqslant C_{n_1} \delta_1  \!\!\! \sum\limits_{A \in \cM_{-n}^0} \!\!\!\! |e^{S^{-1}_n g}|_{C^0(A)} \,\,\text{ and } \!\!\!\!\!
\sum\limits_{A \in L_s^{\delta_1}(\cM_{0}^n)} \!\!\!\!\!\!\!\! |e^{S_n g}|_{C^0(A)} \geqslant C_{n_1} \delta_1 \!\!\! \sum\limits_{A \in \cM_{0}^n} \!\! |e^{S_n g}|_{C^0(A)}.
\end{split}
\end{align}
Furthermore, if $g$ is a $(\cM_0^1,\alpha)$-H\"older potential with $P_*(T,g) - \sup g > s_0 \log 2$ and $\log \Lambda > \sup g - \inf g$, then $g$ has SSP.1.
\end{lemma}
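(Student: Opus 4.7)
The plan splits into two parts: (a) the ``Furthermore'' statement, that the stronger hypothesis $\log \Lambda > \sup g - \inf g$ implies SSP.1, and (b) the two long-diameter inequalities assuming SSP.1. Part (a) is a quick consequence of Lemma~\ref{lemma:short_curves_rare} together with a direct lower bound on $\ell_n^s, \ell_n^u$ from uniform hyperbolicity; part (b) combines the growth lemma with SSP.1 through a bounded-distortion, bounded-multiplicity argument linking iterated curve pieces to partition elements.

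For part (a), Lemma~\ref{lemma:short_curves_rare} yields \eqref{SSP.1} and its time reversal for every $\varepsilon > 0$; fix $\varepsilon = 1/4$ and let $\delta$, $n_1$ be the resulting parameters. It then suffices to verify \eqref{SSP.3}. For any $W \in \hW^s$ with $|W| \in [\delta/3, \delta]$, the uniform expansion of $T^{-1}$ on stable curves gives $|T^{-n}W| \geq C_1 \Lambda^n |W| \geq (C_1 \delta/3)\Lambda^n$, so $\cG_n^\delta(W)$ contains at least $C_1\Lambda^n/3$ pieces, each contributing a factor at least $e^{n\inf g}$. Since SSP.1 ensures that long pieces account for at least $2/3$ of the weighted sum,
\[
\ell_n^s(g,\delta) \;\geq\; \tfrac{2C_1}{9}\,\Lambda^n\, e^{n\inf g},
\]
hence $e^{n\sup g}/\ell_n^s(g,\delta) \leq (9/(2C_1))\, e^{n(\sup g - \inf g - \log \Lambda)}$, which is summable; the argument for $\ell_n^u$ is symmetric.

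For part (b), I focus on the first inequality, the second being obtained by exchanging stable and unstable roles. The key identification is the bijection $T^n : \cM_0^n \to \cM_{-n}^0$, under which $|e^{S^{-1}_n g}|_{C^0(T^n A')} = |e^{S_n g}|_{C^0(A')}$, and $T^n A' \in L_u^{\delta_1}(\cM_{-n}^0)$ iff $A'$ contains an unstable sub-curve whose $T^n$-image has length at least $\delta_1/3$. The plan is to pick a finite family $\{U_k\}_{k=1}^N$ of unstable cone curves $U_k \in \hW^u$ with $|U_k| \in [\delta_1/3, \delta_1]$, one in each element of the reference partition $\cM_{-n_1}^0$ of unstable diameter $\geq \delta_1$; the finitely many remaining (``thin'') elements contribute a fixed combinatorial overhead absorbable into $C_{n_1}$. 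For each $U_k$, the time-reversed SSP.1 yields $\sum_{V \in L_{n-n_1}^{\delta_1}(U_k)} |e^{S^{-1}_{n-n_1}g}|_{C^0(V)} \geq \tfrac{2}{3}\sum_{V \in \cG_{n-n_1}^{\delta_1}(U_k)}|e^{S^{-1}_{n-n_1}g}|_{C^0(V)}$; each long piece lies in an element of $\cM_{-n}^0$ of large unstable diameter, while each such element meets at most $O(\delta_1^{-1})$ pieces from a single $U_k$ by uniform transversality. Applying Lemma~\ref{lemma:sup_less_poly_inf} to transfer $C^0$-norms from pieces to containing elements and summing over $k$, one extracts the lower bound $C_{n_1}\delta_1 \sum_{A \in \cM_{-n}^0}|e^{S^{-1}_n g}|_{C^0(A)}$, valid for all $n \geq n_2$ with $n_2$ chosen large enough that the SSP.1 bound applies to every $U_k$. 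The main obstacle is guaranteeing that, as $n$ grows, the iterated family $\{T^{n-n_1}U_k\}$ collectively captures a definite fraction (proportional to $\delta_1$) of the total partition weight uniformly in $n$; this uniform covering property relies on Lemma~\ref{lemma:Growth_lemma}(b), used in the opposite direction to upper-bound the contribution from descendants of the ``thin'' $\cM_{-n_1}^0$-elements, with the constants $C_{n_1}$ absorbing the cardinality of the reference partition together with the multiplicative factors from bounded distortion.
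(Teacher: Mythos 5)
Your argument for the \emph{Furthermore} part is correct, and in fact more direct than the paper's. You verify \eqref{SSP.3} by combining the $\ve=1/4$ form of \eqref{SSP.1} (which Lemma~\ref{lemma:short_curves_rare} provides) with the elementary lower bound $\#\cG_n^\delta(W)\geqslant C_1\Lambda^n/3$ coming from uniform hyperbolicity, yielding $\ell_n^s(g,\delta)\geqslant \tfrac{2C_1}{9}\Lambda^n e^{n\inf g}$ and hence exponential summability under $\log\Lambda>\sup g-\inf g$. The paper instead invokes convexity to get $P_*(T,g)-\sup g\leqslant h_*$, then borrows counting estimates from \cite{BD2020MME} to obtain $\ell_n^s(g,\delta_1)\geqslant \tfrac{2}{3}c_0 e^{n(\inf g + h_*)}$ and closes with $h_*\geqslant\log\Lambda$; the endpoint is the same, but your route avoids those external inputs.

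The proof of the inequalities \eqref{eq:long_cells_geq_all} has a genuine gap. You set up a finite seed family $\{U_k\}\subset\hW^u$, iterate forward, and use the time-reversed \eqref{SSP.1} to control the ratio of long pieces to all pieces. That much is fine. But you then need the total weight $\sum_k\sum_{V\in\cG_{n-n_1}^{\delta_1}(U_k)}|e^{S_{n-n_1}^{-1}g}|_{C^0(V)}$ to dominate (up to $\delta_1$-factors) the whole partition sum $\sum_{A\in\cM_{-n}^0}|e^{S_n^{-1}g}|_{C^0(A)}$, and you acknowledge this as ``the main obstacle'' without resolving it. Lemma~\ref{lemma:Growth_lemma}(b), which you suggest using ``in the opposite direction,'' only gives one-sided inequalities bounding sums over curve pieces \emph{above} by sums over $\cM_0^n$; it does not bound the contribution of thin $\cM_{-n_1}^0$-elements (or their descendants) above, and cannot, since those descendants can carry most of the partition weight a priori. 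The statement you need is essentially the content of Proposition~\ref{prop:GnW_geq_M0n}, which requires the mixing of $\musrb$ together with the Cantor-rectangle machinery — and its proof itself invokes Lemma~\ref{lemma:Lu_geq_M-n0}, so completing your argument that way would be circular. The paper's actual proof is structurally different: it bounds the sum over the \emph{short} cells $I_s(\cM_0^n)$ directly, by observing that each short cell's boundary must terminate at a short curve in some $T^{-j}\cS_0$, then chains through $I_s(T^{-j}\cS_0)\to L_s(T^{-j}\cS_0)\to L_s(T^{-n}\cS_0)\to L_s^{\delta_1}(\cM_0^n)$ using \eqref{SSP.1}, \eqref{SSP.3}, the linear-complexity bound \eqref{eq:complex}, and Lemma~\ref{lemma:sup_less_poly_inf}. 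No seed family, no mixing, and the inequality is established self-containedly from SSP.1 alone.
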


\begin{proof}
We prove the lower bound for $L_s^{\delta_1}(\cM_0^n)$. The lower bound for $L_u^{\delta_1}(\cM_{-n}^0)$ then follows by time reversal. First, we need to define sets that will be relevant only here. Let 
\[ I_s(\cM_0^n) \coloneqq \{ A \in \cM_0^n \mid \mathrm{diam}^s (A) < \delta_1/3 \} \] 
be the complement of $L_s^{\delta_1}(\cM_0^n)$ in $\cM_0^n$, and 
\[I_s(T^{-j}\cS_0) \coloneqq \{ \text{unstable curves in $T^{-j}(\cS_0)$ with length less than $\delta_1/3$}\} . \] 
Define also $L_s(T^{-j}\cS_0)$ as the complement of $I_s(T^{-j}\cS_0)$ in $\cG_j^{\delta_1}(\cS_0)$.

We will deduce the claim by estimating the sum of norms of $e^{S_n g}$ over $I_s(\cM_0^n)$ by the one over $L_s^{\delta_1}(\cM_0^n)$. To do so, we estimate the sum over $I_s(\cM_0^n)$ with the sums over $I_s(T^{-j}\cS_0)$. Then, using \eqref{SSP.1} we estimate the sum over $I_s(T^{-j}\cS_0)$ with sums over $L_s(T^{-j}\cS_0)$. Finally, we estimate sums over $L_s(T^{-j}\cS_0)$ with a sum over $L_s^{\delta_1}(\cM_0^n)$ and treat some troublesome terms.

\smallskip
In order to estimate the sum over $I_s(\cM_0^n)$, first remark that if $A \in \cM_0^n$ then $\partial A \subset \cS_n = \bigcup_{j=0}^n T^{-j}\cS_0$. Let $A \in I_s(\cM_0^n)$. We distinguish two cases:

\noindent
(a) For some $1 \leqslant j \leqslant n$, $\partial A$ contains a point of intersection between two curves of $T^{-j} \cS_0$. Since such intersection point is the image by $T^{-j+1}$ of an intersection point between curves of $T^{-1} \cS_0$, which are finite, and thanks to the linear complexity~\eqref{eq:complex}, we get that there are at most $K_2 n$ elements of $I_s(\cM_0^n)$ in this case.

\noindent
(b) $\partial A$ only contains intersection points between curves belonging to $T^{-j}\cS_0$ for different $j$. Let $j_A$ be the maximal $1 \leqslant j \leqslant n$ such that $A \cap T^{-j}\cS_0 \neq \emptyset$, and $\gamma \in T^{-j_A}\cS_0$ such that $\gamma \cap A \neq \emptyset$. Notice that $\gamma$ must intersect other curves from $\partial A$. These curves belong to $T^{-j} \cS_0$ for some $j < j_A$. Applying $T^j$, it appears that $\gamma$ must terminate at these intersection points, and thus $\gamma \subset \partial A$. Since $\gamma$ is a stable curve, $\gamma$ belongs to $I_s(T^{-j_A} \cS_0)$ by assumption on $A$. Finally, such a curve $\gamma$ belong to at most $2$ elements of $I_s(\cM_0^n)$.

Therefore 
\begin{align}\label{eq:Is(M)_leq_Is(TjS0)}
\sum\limits_{A \in I_s(\cM_0^n)} |e^{S_n g}|_{C^0(A)} 
&\leqslant K_2 n e^{n \sup g} + C \sum\limits_{j=1}^n \sum\limits_{W \in I_s(T^{-j}\cS_0)} |e^{S_n g}|_{C^0_+(W)} + |e^{S_n g}|_{C^0_-(W)},
\end{align}
where we have extended $e^{S_n g}$ by H\"older continuity to $W$ from both sides -- and noted $|\cdot|_{C^0_+(W)}$ and $|\cdot|_{C^0_-(W)}$ the corresponding norms -- and $C$ is the constant from Lemma~\ref{lemma:sup_less_poly_inf}.

\smallskip
In order to use \eqref{SSP.1}, we decompose $\cS_0 = \bigsqcup_{i=1}^{l_0} U_i$ where each $U_i$ is a connected curve such that $\tfrac{\delta_1}{3} \leqslant |T^{-1} U_i| \leqslant \delta_1$. But first we need to compare the sum indexed by $I_s(T^{-j}\cS_0)$ with the one indexed by $I_s(\cG_{j-1}^{\delta_1}(U_i))$. Let $W \in I_s(T^{-j}\cS_0)$. Thus, each $W \cap T^{-j}U_i$ is a single maximal smooth component of length less than $\delta_1/3$. In other words, $W \cap T^{-j}U_i \in I_s(\cG_{j-1}^{\delta_1}(U_i))$. Therefore
\begin{align}\label{eq:Is(TjS)_leq_Is(G)}
\sum\limits_{W \in I_s(T^{-j}\cS_0)} |e^{S_n g}|_{C^0_\pm(W)} \leqslant \sum\limits_{i=1}^{l_0} \sum\limits_{W \in I_s(\cG_{j-1}^{\delta_1}(T^{-1}U_i))} |e^{S_n g}|_{C^0_\pm(W)}.
\end{align}

Now, using SSP.1 \eqref{SSP.1}, in the case $j > n_1$, we get that
\begin{align}\label{eq:Is(G)_leq_Ls(G)}
\sum\limits_{W \in I_s(\cG_{j-1}^{\delta_1}(T^{-1}U_i))} |e^{S_{n} g}|_{C^0_\pm(W)} \leqslant \frac{1}{2} e^{(n-j+1)\sup g} \sum\limits_{W \in L_s(\cG_{j-1}^{\delta_1}(T^{-1}U_i))} |e^{S_{j-1} g}|_{C^0_\pm(W)}.
\end{align}

In order to estimate this last sum with the sum indexed by $L_s(\cG_{n-1}^{\delta_1}(T^{-1}U_i))$, notice that 
\[ L_s(\cG_{n-1}^{\delta_1} (T^{-1}U_i)) \supset \bigsqcup\limits_{V \in L_s(\cG_{j-1}^{\delta_1}(T^{-1}U_i))} L_s(\cG_{n-j}^{\delta_1}(V)) .\]
Thus
\begin{align*}
\sum\limits_{W \in L_s( \cG_{n-1}^{\delta_1}(T^{-1}U_i) )} &|e^{S_{n} g}|_{C^0_\pm(W)} \geqslant \sum\limits_{W \in L_s( \cG_{j-1}^{\delta_1}(T^{-1}U_i) )} \sum\limits_{V \in L_s( \cG_{n-j}^{\delta_1}(W) )} |e^{S_{n-j} g + S_{j} g \circ T^{n-j}}|_{C^0_\pm(V)} \\
&\geqslant C^{-2} e^{\inf g} \sum\limits_{W \in L_s( \cG_{j-1}^{\delta_1}(T^{-1}U_i) )} |e^{S_{j-1} g}|_{C^0_\pm(W)} \sum\limits_{V \in L_s( \cG_{n-j}^{\delta_1}(W) )} |e^{S_{n-j} g}|_{C^0_\pm(V)} \\
&\geqslant C^{-2} e^{\inf g} \, \ell^s_{n-j}(g,\delta_1)  \sum\limits_{W \in L_s( \cG_{j-1}^{\delta_1}(T^{-1}U_i) )} |e^{S_{j-1} g}|_{C^0_\pm(W)},
\end{align*}
where we used Lemma~\ref{lemma:sup_less_poly_inf} for the second inequality, and the definition of $\ell^s_{n-j}(g,\delta_1)$ for the third inequality. Notice however that \eqref{SSP.1} ensures that $\ell^s_{n-j}(g,\delta_1) \neq 0$ only for $n-j \geqslant n_1$. We will treat these troublesome $j$ afterwards. Assume for now that $n_1 \leqslant j \leqslant n-n_1$.  
Combining the above lower bound with \eqref{eq:Is(TjS)_leq_Is(G)} and \eqref{eq:Is(G)_leq_Ls(G)}, we get 
\begin{align}\label{eq:Is(TjS)_leq_exp(n)_Ls(TnS)}
\sum\limits_{W \in I_s(T^{-j}\cS_0)} |e^{S_{n} g}|_{C^0_\pm(W)}
&\leqslant \bar{C} \, e^{(n-j)\sup g} \ell^s_{n-j}(g,\delta_1)^{-1} \sum\limits_{W \in L_s(T^{-n}\cS_0)} |e^{S_{n} g}|_{C^0_\pm(W)} \, ,
\end{align}
where we used that $\bigsqcup_{i=1}^{l_0} L_s(\cG_{j-1}^{\delta_1}(T^{-1}U_i)) \subset L_s (T^{-j} \cS_0)$ -- which is true if we choose the $\delta_1$-scaling $\cG_1(T^{-j}\cS_0)$ to be adapted with the decomposition $\cS_0 = \bigsqcup_i U_i$.

Now, if $n-n_1 \leqslant j \leqslant n$, then we obtain from similar computations 
\begin{align}\label{eq:Is(TjS)_leq_exp(n)_Ls(TnS)_2}
\sum\limits_{W \in I_s(T^{-j}\cS_0)} |e^{S_{n} g}|_{C^0_\pm(W)}
&\leqslant \frac{1}{2} C^2 e^{\inf g} e^{(n-j+1)\sup g} \ell_{n_1}^s(g,\delta_1)^{-1} \sum_{W \in L_s(T^{-j-n_1}\cS_0)} |e^{S_{n_1+j}g}|_{C^0_{\pm}(W)}
\end{align}

Finally, we estimate the sum over $L_s(T^{-n}\cS_0)$ with the sum over $L_s^{\delta_1}(\cM_0^n)$. We proceed similarly as for \eqref{eq:Is(M)_leq_Is(TjS0)}: Let $W \in L_s(T^{-n}\cS_0)$. We distinguish the two following cases:

\noindent
(a) $W$ intersects another curve from $T^{-n}\cS_0$. There are at most $2K_2$ elements of $L_s(T^{-n}\cS_0)$ in this case,

\noindent
(b) $W$ does not intersect other curves from $T^{-n}\cS_0$. In that case, $W$ must be contained in the boundary of an element of $\cM_0^n$, and thus an element of $L_s^{\delta_1}(\cM_0^n)$. Now, there are at most $2C \delta_1^{-1}$ elements of $L_s(T^{-n}\cS_0)$ in the boundary of a single element of $L_s^{\delta_1}(\cM_0^n)$, where $C$ is a large enough constant depending only on the billiard table. 

Thus
\begin{align}\label{eq:Ls(TnS)_leq_Ls(M)}
\sum\limits_{W \in L_s(T^{-n}\cS_0)} |e^{S_{n} g}|_{C^0_\pm(W)} &
\leqslant 2K_2 e^{n \sup g} + C \delta_1^{-1} \sum\limits_{A \in L_s^{\delta_1}(\cM_0^n)} |e^{S_n g}|_{C^0(A)}.
\end{align}
Similarly, for all $n-n_1 \leqslant j \leqslant n$,
\begin{align}\label{eq:Ls(TnS)_leq_Ls(M)_2}
\sum\limits_{W \in L_s(T^{-n_1-j}\cS_0)} |e^{S_{n_1+j} g}|_{C^0_\pm(W)} &
\leqslant 2K_2 e^{(n_1+j)\sup g} + C \delta_1^{-1} \sum\limits_{A \in L_s^{\delta_1}(\cM_0^{n_1+j})} |e^{S_{n_1+j} g}|_{C^0(A)}.
\end{align}

Putting together \eqref{eq:Is(M)_leq_Is(TjS0)}, \eqref{eq:Is(TjS)_leq_exp(n)_Ls(TnS)} and \eqref{eq:Ls(TnS)_leq_Ls(M)}, as well as \eqref{eq:Is(TjS)_leq_exp(n)_Ls(TnS)_2} and \eqref{eq:Ls(TnS)_leq_Ls(M)_2}, we get
\begin{align*}
&\sum\limits_{A \in I_s(\cM_0^n)} |e^{S_n g}|_{C^0(A)} 
\leqslant K_2 n e^{n \sup g} + C \sum\limits_{j=1}^{n_1-1} \sum\limits_{W \in I_s(T^{-j}\cS_0)} |e^{S_n g}|_{C^0_+(W)} + |e^{S_n g}|_{C^0_-(W)}  \\
&\quad + C \! \sum\limits_{j=n_1}^{n-n_1} \sum\limits_{W \in I_s(T^{-j}\cS_0)} \!\!\!\!\!\!\!\! |e^{S_n g}|_{C^0_+(W)} + |e^{S_n g}|_{C^0_-(W)} 
 + C \!\!\!\!\!\!\! \sum\limits_{j=n-n_1+1}^{n} \sum\limits_{W \in I_s(T^{-j}\cS_0)} \!\!\!\!\!\!\!\! |e^{S_n g}|_{C^0_+(W)} + |e^{S_n g}|_{C^0_-(W)} \\
&\,\, \leqslant (K_2n + \bar{C}_{n_1})e^{n \sup g} + \bar{C} \sum_{j=n_1}^{n-n_1} e^{j\sup g}\, \ell^s_{j}(g,\delta_1)^{-1} \sum\limits_{W \in L_s(T^{-n}\cS_0)} |e^{S_{n} g}|_{C^0_+(W)} + |e^{S_{n} g}|_{C^0_-(W)} \\
& \quad + C \sum_{j=n-n_1+1}^n e^{(n-j)\sup g} \ell_{n_1}^s(g,\delta_1)^{-1} \sum\limits_{W \in L_s(T^{-j-n_1}\cS_0)} |e^{S_{j+n_1} g}|_{C^0_+(W)} + |e^{S_{j+n_1} g}|_{C^0_-(W)} \\
&\,\, \leqslant (K_2n + \bar{C}_{n_1})e^{n \sup g} + \tilde{C} \left( 2K_2 e^{n \sup g} + C \delta_1^{-1} \sum\limits_{A \in L_s^{\delta_1}(\cM_0^n)} |e^{S_n g}|_{C^0(A)} \right) \\
&\quad + \sum_{j=n-n_1+1}^{n} C'_{n_1} C'_g \left( 2K_2 e^{(n_1 + j)\sup g} + C \delta_1^{-1} \sum_{A \in L_s^{\delta_1}(\cM_{0}^{j+n_1})} |e^{S_{j+n_1}g}|_{C^0(A)} \right),
\end{align*}
where in the last inequality we used \eqref{SSP.3} and the fact that $n-n_1+1 \leqslant j \leqslant n$ is equivalent to $0 \leqslant n-j \leqslant n_1-1$, that is, in the second sum over $j$ after the second inequality symbol, the $e^{(n-j)\sup g}$ are uniformly bounded (by $C'_g$).

We now relate the sum over $L_s^{\delta_1}(\cM_{0}^{j+n_1})$ to the sum over $L_s^{\delta_1}(\cM_{0}^{n})$. To do so, notice that if $A \in L_s^{\delta_1}(\cM_{0}^{n})$, then it contains at most $B^{j+n_1-n}$ elements of $L_s^{\delta_1}(\cM_{0}^{j+n_1})$, where $B = |\cP|$. On the other hand, an element $A' \in L_s^{\delta_1}(\cM_{0}^{j+n_1})$ is contained in exactly one element of $L_s^{\delta_1}(\cM_{0}^{n})$. Thus
\begin{align*}
&\sum_{A \in L_s^{\delta_1}(\cM_{0}^{j+n_1})} |e^{S_{j+n_1}g}|_{C^0(A)} = \sum_{A \in L_s^{\delta_1}(\cM_{0}^{j+n_1})} \sum_{\substack{A' \in L_s^{\delta_1}(\cM_{0}^{n}) \\ A \subset A' }}  |e^{S_{j+n_1}g}|_{C^0(A)} \\
&\quad \leqslant \sum_{A \in L_s^{\delta_1}(\cM_{0}^{j+n_1})} \sum_{\substack{A' \in L_s^{\delta_1}(\cM_{0}^{n}) \\ A \subset A' }} e^{n_1 \sup g} |e^{S_{j}g}|_{C^0(A')} 
\leqslant \sum_{A' \in L_s^{\delta_1}(\cM_{0}^{n})}  \sum_{\substack{A \in L_s^{\delta_1}(\cM_{0}^{j+n_1}) \\ A \subset A' }} e^{n_1 \sup g} |e^{S_{j}g}|_{C^0(A')} \\
&\quad \leqslant B^{j+n_1-n} e^{n_1 \sup g} \sum_{A \in L_s^{\delta_1}(\cM_{0}^{n})} |e^{S_{j}g}|_{C^0(A)},
\end{align*}
and therefore,
\begin{align*}
&\sum_{j=n-n_1+1}^{n} \sum_{A \in L_s^{\delta_1}(\cM_{0}^{j+n_0})} |e^{S_{j+n_1}g}|_{C^0(A)} \leqslant \sum_{j=n-n_1+1}^{n} B^{j+n_1-n} e^{n_1 \sup g} \sum_{A \in L_s^{\delta_1}(\cM_{0}^{n})} |e^{S_{j}g}|_{C^0(A)} \\
&\quad \leqslant \sum_{j=n-n_1+1}^{n} B^{j+n_1-n} e^{n_1 \sup g} e^{(n-j)\inf g} \sum_{A \in L_s^{\delta_1}(\cM_{0}^{n})} |e^{S_{n}g}|_{C^0(A)} 
\leqslant \tilde C_{n_1} \sum_{A \in L_s^{\delta_1}(\cM_{0}^{n})} |e^{S_{n}g}|_{C^0(A)}.
\end{align*}
Using this last estimate, we obtain

\begin{align*}
\sum_{A \in I_s(\cM_0^n)} \!\!\! |e^{S_n g}|_{C^0(A)} &\leqslant (K_2n + \bar C_{n_1} + C'_{n_1} C'_g K_2) e^{n \sup g} 
 + (\tilde C C + C'_{n_1} C'_g \tilde C_{n_1}) \delta_1^{-1} \!\!\!\!\!\!\! \sum_{A \in L_s^{\delta_1}(\cM_0^n) } \!\!\!\!\!\!\! |e^{S_n g}|_{C^0(A)},\\
&\leqslant C_{1} e^{n\sup g} + C_{2} \delta_1^{-1} \sum_{A \in L_s^{\delta_1}(\cM_0^n) } |e^{S_n g}|_{C^0(A)},
\end{align*}
where $\tilde C$ is a constant coming from the summability assumption \eqref{SSP.3}, and $\bar C_{n_1}$ depends only on $n_1$ and $g$.

Finally, since $I_s(\cM_0^n) \sqcup L_s^{\delta_1}(\cM_0^n) = \cM_0^n$, we get that
\begin{align*}
\sum\limits_{A \in L_s^{\delta_1}(\cM_0^n)} |e^{S_n g}|_{C^0(A)} 
&\geqslant \frac{\sum\limits_{A \in \cM_0^n} |e^{S_n g}|_{C^0(A)} - C_1 e^{n \sup g} }{1 + C_2 \delta_1^{-1}}.
\end{align*}
Since $ \lim_{n \to + \infty}\frac{1}{n} \log \sum_{A \in \cM_0^n} |e^{S_n g}|_{C^0(A)} = P_*(T,g)$ and by the assumption $P_*(T,g) > \sup g$, there is an integer $n_2$ such that for all $n \geqslant n_2$, 
\[ \sum\limits_{A \in \cM_0^n} |e^{S_n g}|_{C^0(A)} - C_1 e^{n \sup g} \geqslant \frac{1}{2} \sum\limits_{A \in \cM_0^n} |e^{S_n g}|_{C^0(A)}. \]
Thus, there exists $C_{n_1}>0$ such that for all $n \geqslant n_2$ \eqref{eq:long_cells_geq_all} holds.

\medskip

We now prove the second part of Lemma~\ref{lemma:Lu_geq_M-n0}. Assume that $g$ is a $(\cM_0^1,\alpha)$-H\"older potential with $P_*(T,g) - \sup g > s_0 \log 2$ and $\log \Lambda > \sup g - \inf g$. From the convexity of the topological pressure (Theorem~\ref{thm:pressure}), we get that $t \mapsto P_*(T,tg)$ is a convex function. Thus, the map $t \mapsto P_*(T,t(g - \sup g)) = P_*(T,tg)-t\sup g$ is continuous on $[0,1]$. Since for all $s < t$ we have
\begin{align*}
\sum_{A \in \cM_0^n} \!\! |e^{S_n t(g-\sup g)} |_{C^0(A)} \leqslant e^{n(t-s)\sup(g-\sup g)} \!\! \sum_{A \in \cM_0^n} \!\! |e^{S_n s(g-\sup g)} |_{C^0(A)} =  \!\!\sum_{A \in \cM_0^n} \!\! |e^{S_n s(g-\sup g)} |_{C^0(A)},
\end{align*}
the map is nonincreasing. Thus 
\[
P_*(T,g) - \sup g = P_*(T,g - \sup g) \leqslant P_*(T,0) = h_*,
\]
where $h_*$ is the topological entropy of $T$ from \cite{BD2020MME}. Therefore we have $h_* > s_0 \log 2$ and estimates from \cite{BD2020MME} can be used. For all $W \in \hW^s$ with $\delta_1 \geqslant |W| \geqslant \delta_1/3$ and all $n \geqslant n_1$,
\begin{align*}
\sum_{V \in L_n^{\delta_1}(W)} \!\!\!\!\!\! |e^{S_n g}|_{C^0(V)} \! \geqslant \! e^{n \inf g} \# L_n^{\delta_1}(W)  \! \geqslant \! \frac{2}{3} e^{n \inf g} \# \cG_n^{\delta_1}(W) \! \geqslant \! \frac{2}{3} c_0 e^{n \inf g} \# \cM_0^{n} 
\! \geqslant \! \frac{2}{3} c_0 e^{n (\inf g + P_*(T,0))}
\end{align*}
where we used \cite[Lemma~5.2]{BD2020MME} for the second inequality, and Propositions~4.6 and 5.5 from \cite{BD2020MME} in the third inequality\footnote{We can choose the scale $\delta_1$ from \cite{BD2020MME} to agree with the one here. The constant $c_0$ comes from \cite[Proposition~5.5]{BD2020MME} and depends on $\delta_1$.}.

Thus we get that $\ell_n^s(g,\delta_1) \geqslant \tfrac{2}{3} c_0 e^{n (\inf g + P_*(T,0))}$. Since\footnote{$\log \Lambda$ is a lower bound on the unstable Lyapunov exponent of $T$. Integrating against $\musrb$ gives the desired inequality.} $P_*(T,0) = h_* \geqslant \log \Lambda$, we then get the summability of the sequence $(e^{n\sup g} \ell_n^s(g,\delta_1)^{-1} )_{n\geqslant n_1}$. The summability of $e^{n\sup g} \ell_n^u(g,\delta_1)^{-1}$ is obtained similarly by considering lower bounds on $\# L_u^{\delta_1}(W)$, also given in \cite{BD2020MME}. 
\end{proof}

We now introduce the precise definition of SSP.2: 
\begin{definition}[SSP.2]\label{def:SSP2}
A potential $g$ is said to have $\varepsilon$-SSP.2 if it has $\varepsilon$-SSP.1, if there exists $\bar n_1 : (0,+\infty) \to \mathbbm{N}$ such that 
\begin{align}\label{SSP.2}
\frac{\sum\limits_{W_i \in L_n^\delta(W)} |e^{S_n g}|_{C^0(W_i)}}{\sum\limits_{W_i \in \cG_n^\delta(W)} |e^{S_n g}|_{C^0(W_i)}} \geqslant \frac{1 - 3 \varepsilon}{1 - \varepsilon}, \quad \forall W \in \hW^s , \, \forall n \geqslant \bar n_1(|W|) ,
\end{align}
and if the time reversal\footnote{As for \eqref{SSP.1}, we call time reversal of \eqref{SSP.2} the same estimate but with $\hW^s$ replaced by $\hW^u$, $T$ by $T^{-1}$ and $g$ by $g \circ T^{-1}$ (that is $S_n g$ replaced by $S^{-1}_n g$).} of \eqref{SSP.2} holds, where $\delta$ is the corresponding constant from $\ve$-SSP.1. A potential is said to have SSP.2 if it has $\ve$-SSP.2 for some $\ve \leqslant 1/4$.
\end{definition}

\begin{corollary}\label{corol:long_over_all}
If $g$ is a $(\cM_0^1,\alpha)$-H\"older potential such that $P(T,g) - \sup g > s_0 \log 2$ and $\log \Lambda > \sup g - \inf g$, then there exists $C_2 >0$ such that $g$ has $\varepsilon$-SSP.2 for all $\varepsilon >0$ and $\bar n_1(|W|) = C_2 n_1 \frac{|\log (|W| / \delta) |}{|\log \varepsilon |}$, where $\delta$ and $n_1$ are the corresponding constants from Lemma~\ref{lemma:short_curves_rare}.
\end{corollary}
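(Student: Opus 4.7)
Since $\log\Lambda > \sup g - \inf g$, Lemma~\ref{lemma:short_curves_rare} together with the last part of Lemma~\ref{lemma:Lu_geq_M-n0} shows that $g$ has $\ve$-SSP.1 for every $\ve > 0$, with associated constants $\delta(\ve)$ and $n_1(\ve)$. Setting $\mu := \log\Lambda - (\sup g - \inf g) > 0$, the choice of $n_1$ in the proof of Lemma~\ref{lemma:short_curves_rare} forces $n_1 \mu \asymp |\log\ve|$ as $\ve \to 0^+$, hence $n_1/|\log\ve| \asymp \mu^{-1}$ independently of $\ve$. The case $|W| \geq \delta/3$ of \eqref{SSP.2} is immediate from \eqref{SSP.1}, since $(1-2\ve)/(1-\ve) > (1-3\ve)/(1-\ve)$: it suffices to take $\bar n_1(|W|) \geq n_1$ in that regime, which is arranged by a suitable choice of $C_2$. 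The heart of the proof concerns $W \in \hW^s$ with $|W| < \delta/3$.

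The strategy is to enlarge the ``first block'' in the proof of Lemma~\ref{lemma:short_curves_rare} so as to accommodate short starting curves. Repeating the base case ($k=1$) of that proof verbatim, but substituting $|W|$ for $\delta/3$ in the length lower bound $|T^{-n}W| \geq C_1 \Lambda^n |W|$, yields
\[
\frac{\sum_{W_i \in S_n^\delta(W)} |e^{S_n g}|_{C^0(W_i)}}{\sum_{W_i \in \cG_n^\delta(W)} |e^{S_n g}|_{C^0(W_i)}} \leq \frac{(Kn+1)\delta}{C_1\,|W|}\,e^{-n\mu},
\]
the linear complexity bound $\# S_n^\delta(W) \leq Kn+1$ being provided by Lemma~\ref{lemma:Growth_lemma} for $|W|$ sufficiently small with respect to $n$. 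The right-hand side is $\leq \ve$ as soon as $n \geq k_0 := \lceil C'(|\log(|W|/\delta)| + |\log\ve|)/\mu \rceil$ for an absolute constant $C'$, and using $n_1/|\log\ve| \asymp \mu^{-1}$ one may arrange $k_0 \leq C_2\, n_1\, |\log(|W|/\delta)|/|\log\ve|$ for a suitable universal $C_2 > 0$.

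Setting $\bar n_1(|W|) := k_0$ and taking $n \geq \bar n_1(|W|)$, one bootstraps by the most-recent-long-ancestor decomposition used in the proof of Lemma~\ref{lemma:short_curves_rare}: split every $W_i \in \cG_n^\delta(W)$ according to its unique ancestor $V \in \cG_{k_0}^\delta(W)$ containing $T^{n-k_0}W_i$. For $V \in L_{k_0}^\delta(W)$ (so that $|V| \geq \delta/3$), \eqref{SSP.1} applies to $V$ with $n-k_0 \geq n_1$ iterations, giving $\sum_{W_i \in S_{n-k_0}^\delta(V)} |e^{S_{n-k_0} g}|_{C^0(W_i)} \leq \tfrac{\ve}{1-\ve} \sum_{W_i \in \cG_{n-k_0}^\delta(V)} |e^{S_{n-k_0} g}|_{C^0(W_i)}$. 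For $V \in S_{k_0}^\delta(W)$, the trivial bound $\sum_{S_{n-k_0}^\delta(V)} \leq \sum_{\cG_{n-k_0}^\delta(V)}$ is used. Combining these with the distortion control of Lemma~\ref{lemma:sup_less_poly_inf} and with the base-block estimate $\sum_{S_{k_0}^\delta(W)} |e^{S_{k_0}g}|/\sum_{\cG_{k_0}^\delta(W)} |e^{S_{k_0}g}| \leq \ve$, one derives $\sum_{S_n^\delta(W)}|e^{S_n g}|/\sum_{\cG_n^\delta(W)}|e^{S_n g}| \leq 2\ve/(1-\ve)$ --- equivalent to \eqref{SSP.2} --- after rescaling $\ve$ by an absolute constant to absorb distortion factors (which modifies $C_2$ but not its form). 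The time reversal of \eqref{SSP.2} is obtained by the identical argument carried out on $\hW^u$ with $T^{-1}$ and $g\circ T^{-1}$ in place of $\hW^s$, $T$, and $g$.

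The main technical obstacle is controlling the contribution of short ancestors $V \in S_{k_0}^\delta(W)$ to the total $\sum_{\cG_n^\delta(W)}|e^{S_n g}|$: unlike long ancestors, such $V$ do not admit a matching lower bound on $\sum_{\cG_{n-k_0}^\delta(V)}|e^{S_{n-k_0}g}|$ to the uniform upper bound $\leq 2C\delta^{-1}\sum_{A \in \cM_0^{n-k_0}}|e^{S_{n-k_0}g}|_{C^0(A)}$ provided by Lemma~\ref{lemma:Growth_lemma}(b). This is handled by comparing against the lower bound $\ell_{n-k_0}^s(g,\delta)$ on $\sum_{\cG_{n-k_0}^\delta(V)}|e^{S_{n-k_0}g}|$ for long $V$, which, under the hypothesis $\log\Lambda > \sup g - \inf g$ and via the proof of Lemma~\ref{lemma:Lu_geq_M-n0}, makes the relative contribution of short ancestors remain of order $\ve$: the slack given by $\mu > 0$ ensures that the subexponential discrepancy between $\sum_{\cM_0^n}|e^{S_n g}|$ and $\ell_n^s(g,\delta)$ is absorbed by the contractive gain coming from the enlarged base block.
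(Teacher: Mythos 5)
Your overall strategy (isolate an initial ``transient'' block until the curve's iterates become long, then bootstrap from $\varepsilon$-SSP.1 once a long ancestor is available) captures the basic idea, but there is a real gap in the base-block estimate that the paper's proof is careful to avoid.

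The gap is the claim $\#S_n^\delta(W)\leqslant Kn+1$, which you need at time $n=k_0$. This is not what Lemma~\ref{lemma:Growth_lemma} provides, and in fact it fails in general. The linear complexity bound~\eqref{eq:complex} controls the number of \emph{smooth components} of $T^{-n}W$, i.e.\ components of $W\smallsetminus\cS_{-n}$, and only up to a bounded number of iterates. But $S_n^\delta(W)$ can contain many more pieces: once a long piece has been artificially subdivided at some time $j<n$ (which begins to happen by time $n_2$, the first iterate at which a long component appears), the subdivision endpoints propagate under $T^{-1}$, and a smooth component adjacent to such an endpoint can be cut into a short ``orphan'' piece that is neither a smooth component of $T^{-n}W$ nor a member of $\cI_n^\delta(W)$. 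Since your $k_0\asymp(|\log(|W|/\delta)|+|\log\ve|)/\mu$ exceeds $n_2\leqslant\overline C_2|\log(|W|/\delta)|$ once $\ve$ is small (the $|\log\ve|/\mu$ term dominates), the base-block estimate is being asserted precisely in the regime where the linear count on $\#S_{k_0}^\delta(W)$ is not available.

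The paper circumvents this by never estimating $S_n^\delta(W)$ wholesale. Instead it reuses the exact decomposition~\eqref{eq:decomposition_Sn(W)} from Lemma~\ref{lemma:short_curves_rare} (grouping short pieces by the age of their most recent long ancestor) and observes that for $|W|<\delta/3$ one additional piece must be added: the forever-short collection $\cI_n^\delta(W)$. The ``aged'' pieces are then handled exactly as in Lemma~\ref{lemma:short_curves_rare}. For $\cI_n^\delta(W)$, the dichotomy ``either $\#\cG_l^\delta(W)\leqslant Kl+1$ or some element of $\cG_l^\delta(W)$ is long'' gives a linear count only up to the first long time $n_2$, which is then bounded by $\overline C_2|\log(|W|/\delta)|$; beyond time $n_2$ the forever-short pieces are controlled by the \emph{sub-exponential} bound $\#\cI_{n-n_2}^\delta(W')\leqslant 2(Km+1)^{(n-n_2)/m}$ of Lemma~\ref{lemma:Growth_lemma}(a), not by a linear bound, while the denominator $\sum_{\cG_n^\delta(W)}$ is lower-bounded through the single long piece $W'\in\cG_{n_2}^\delta(W)$. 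This is a genuinely more careful bookkeeping than using $S_{k_0}^\delta(W)$: the quantity that is linearly bounded is $\#\cG_{n_2}^\delta(W)$, and the quantity that is sub-exponentially bounded is $\#\cI_{n-n_2}^\delta(W')$. A second, smaller issue is that your treatment of descendants of short ancestors $V\in S_{k_0}^\delta(W)$ invokes a comparison with $\ell_{n-k_0}^s(g,\delta)$ that is left as a sketch; in the paper no such comparison is needed because the short-ancestor contributions have already been organized by age and absorbed into the geometric series $\sum_{q\geqslant 2}\ve^q$ from Lemma~\ref{lemma:short_curves_rare}.

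Your observation that $n_1\mu\asymp|\log\ve|$ (so that $n_1/|\log\ve|\asymp\mu^{-1}$) is correct and is indeed the reason the final bound can be written in the form $\bar n_1(|W|)=C_2 n_1|\log(|W|/\delta)|/|\log\ve|$. To repair the argument, replace the base-block complexity count on $S_{k_0}^\delta(W)$ by: (i) the decomposition into $\cI_n^\delta(W)$ and aged pieces; (ii) the bound $n_2\leqslant\overline C_2|\log(|W|/\delta)|$ on the first long time; and (iii) the sub-exponential bound of Lemma~\ref{lemma:Growth_lemma}(a) for iterates past $n_2$.
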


\begin{proof}
From the Lemmas~\ref{lemma:short_curves_rare} and~\ref{lemma:Lu_geq_M-n0}, such a potential has SSP.1. We thus only prove \eqref{SSP.2}.

The proof is similar the one for Lemma~\ref{lemma:short_curves_rare}, except that for curves shorter than $\delta/3$ one must wait $n \lesssim |\log(|W|/\delta)|$ for at least one component of $\cG_n^\delta(W)$ to belong to $L_n^\delta(W)$.

More precisely, fix $\varepsilon >0$ and the corresponding $\delta$ and $n_1$ from Lemma~\ref{lemma:short_curves_rare}. Let $W \in \hW^s$ with $|W|<\delta/3$ and take $n>n_1$. Decomposing $\cG_n^\delta(W)$ and $S_n^\delta(W)$ as in the proof of Lemma~\ref{lemma:short_curves_rare}, we estimate the second part as before. For the first part, we have to split the sum between $\cI_n^\delta(W)$ and the rest, which is estimated as before.

For the first part, concerning $\cI_n^\delta(W)$, for $\delta$ sufficiently small, notice that since the flow is continuous, either $\# \cG_l^\delta(W) \leqslant Kl+1$ by (\ref{eq:complex}) or at least one element of $\cG_l^\delta(W)$ has length at least $\delta/3$. Let $n_2$ denote the first iterate $l$ at which $\cG_l^\delta(W)$ contains at least one element of length more than $\delta/3$. By the complexity estimate (\ref{eq:complex}) and the fact that $|T^{-n_2}W| \geqslant C_1 \Lambda^{n_2}|W|$ by hyperbolicity of $T$, there exists $\overline{C}_2 >0$, independent of $W \in \hW^s$, such that $n_2 \leqslant \overline{C}_2 |\log(|W|/\delta)|$.

Now, for $n\geqslant n_2$, 
\begin{align*}
\sum\limits_{W_i \in \mathcal{I}_n^\delta(W)} |e^{S_n g}|_{C^0(W_i)}
&\leqslant \sum\limits_{W' \in \cG_{n_2}^\delta(W)} |e^{S_{n_2} g}|_{C^0(W')} \sum\limits_{W_i \in \cI_{n-n_2}^\delta(W')} |e^{S_{n-n_2} g}|_{C^0(W_i)}\\
&\leqslant K(Kn_2 +1) e^{n_2 \sup g} \times  2(Kn_1+1)^{\frac{n-n_2}{n_1}} e^{(n-n_2)\sup g}
\end{align*}
and by hyperbolicity and Lemma~\ref{lemma:sup_less_poly_inf},
\begin{align*}
\sum\limits_{W_i \in \cG^\delta_n(W)} \!\!\!\! |e^{S_n g}|_{C^0(W_i)} 
&\geqslant C^{-1} |e^{S_{n_2} g}|_{C^0(W')} \!\!\!\!\!\! \sum\limits_{W_i \in \cG^\delta_{n-n_2}(W')} \!\!\!\!\!\! e^{(n-n_2) \inf g} 
\geqslant \frac{1}{3} C_1 C^{-1} e^{n_2 \inf g} e^{(n-n_2)(\inf g + \log \Lambda)}
\end{align*}
where $W' \in \cG_{n_2}^\delta(W)$ is such that $|W'|> \delta/3$. Therefore,
\begin{align*}
\frac{ \sum\limits_{W_i \in \mathcal{I}_n^\delta(W)} |e^{S_n g}|_{C^0(W_i)} }{ \sum\limits_{W_i \in \cG^\delta_n(W)} |e^{S_n g}|_{C^0(W_i)} }
&\leqslant 6 C_1^{-1} C e^{n_2(\sup g - \inf g)} K(Kn_2+1) (Kn_1+1)^{\frac{n-n_2}{n_1}} e^{(n-n_2)(\sup g - \inf g - \log \Lambda)} \\
&\leqslant 2c_0^{-1}C^2 e^{n_2(\sup g - \inf g)} K(Kn_2+1) \varepsilon^{n/n_1}.
\end{align*}
Since $n_2 \leqslant \overline{C}_2 |\log(|W|/\delta)|$, we can bound this expression by $\varepsilon$ by choosing some $C_2>0$ and $n$ large enough so that $n/n_1 \geqslant C_2 \frac{\log(|W|/\delta)}{\log \varepsilon}$. For such $n$, the left hand side of \eqref{eq:Sn_over_Gn} is bounded by $\varepsilon + \frac{\varepsilon}{1-\varepsilon} \leqslant \frac{2\varepsilon}{1-\varepsilon}$, which completes the proof of the corollary.

As usual, the time reversal of \eqref{SSP.2} is obtained by performing the same proof, but with the time reversal counterpart of $\cG_n^{\delta}(W)$, for unstable curves $W$. 
\end{proof}

\subsection{Exact Exponential Growth of Thermodynamic Sums -- Cantor Rectangles}\label{subsect:supermultiplicativity}
It follows from the submultiplicativity in the characterisation of $P_*(T,g)$ (Theorem~\ref{thm:pressure}) that \[ e^{nP_*(T,g)} \leqslant e^{- \inf g} \sum_{A \in \cM_{0}^n} \sup_{x \in A} e^{(S_n g)(x)} \, , \quad \forall n \geqslant 1. \]
In this subsection, we shall prove a supermultiplicativity statement (Lemma~\ref{lemma:supermultiplicativity}) from which we deduce the upper bound for $\sum_{A \in \cM_{0}^n} \sup_{x \in A} e^{(S_n g)(x)}$ in Proposition~\ref{prop:almost_exponential_growth} giving the upper bound in Proposition~\ref{prop:upper_bounds_norms}, and ultimately the upper bound on the spectral radius of $\cL_g$ on $\cB$.

The following key estimate gives the reverse inequality of \eqref{eq:G_n_leqs_M_0^n}, for stable curves that are not to short, thus linking thermodynamic sums over curves and partitions. The proof will crucially use the fact that the SRB measure is mixing in order to bootstrap from SSP.1.

\begin{proposition}\label{prop:GnW_geq_M0n}
Let $g$ be a $(\cM_0^1,\alpha)$-H\"older potential with $P_*(T,g) - \sup g > s_0 \log 2$ and which has SSP.1. Let $\delta_1$ be the value of $\delta$ from the condition SSP.1. Then there exists $c_0 > 0$ such that for all $W \in \hW^s$ with $|W| \geqslant \delta_1 /3$ and $n \geqslant 1$, we have 
\[ \sum\limits_{W_i \in \cG_n^{\delta_0}(W)} |e^{S_n g}|_{C^0(A)} \geqslant c_0 \sum\limits_{A \in \cM_{-n}^0} |e^{S^{-1}_n g}|_{C^0(A)}. \]
The constant $c_0$ depends on $\delta_1$.
\end{proposition}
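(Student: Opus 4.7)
The plan is to first reduce the right-hand side to a sum over the forward partition $\cM_0^n$. Since $T^n \cS_n = \cS_{-n}$ one has $\cM_{-n}^0 = T^n \cM_0^n$, and for $B \in \cM_0^n$ with $A = T^n B$ the identity $S^{-1}_n g \circ T^n = S_n g$ yields $|e^{S^{-1}_n g}|_{C^0(A)} = |e^{S_n g}|_{C^0(B)}$, so that
\[
\sum_{A \in \cM_{-n}^0} |e^{S^{-1}_n g}|_{C^0(A)} = \sum_{B \in \cM_0^n} |e^{S_n g}|_{C^0(B)}.
\]
The time-reversed form of Lemma~\ref{lemma:Lu_geq_M-n0} then further reduces the problem, for $n \geq n_2$, to proving
\[
\sum_{W_i \in \cG_n^{\delta_0}(W)} |e^{S_n g}|_{C^0(W_i)} \geq c \sum_{B \in L_s^{\delta_1}(\cM_0^n)} |e^{S_n g}|_{C^0(B)},
\]
with the finitely many smaller $n$ absorbed into $c_0$ by a uniform lower bound.

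The heart of the argument will be an injective assignment $B \mapsto W_i(B) \in \cG_n^{\delta_0}(W)$ with $W_i(B) \subset B$ and $|e^{S_n g}|_{C^0(W_i(B))} \geq c' |e^{S_n g}|_{C^0(B)}$. Each $B \in L_s^{\delta_1}(\cM_0^n)$ contains a stable curve $V_B$ of length at least $\delta_1/3$, and since smooth pieces of $T^{-n}W$ are stable curves that cannot cross $\cS_n$, any such piece close to $V_B$ in the unstable direction lies entirely inside $B$. The bounded distortion of $S_n g$ on $B$ from Lemma~\ref{lemma:sup_less_poly_inf} will then give the weight comparability.

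To produce the piece inside each $B$ I would fix a finite family of Cantor rectangles $\{R_1, \ldots, R_K\}$ with stable and unstable skeletons of length at least a fixed fraction of $\delta_1$, chosen densely enough that every stable curve of length $\geq \delta_1/3$ contains a subsegment lying in the stable foliation of some $R_j$, and enjoying uniformly absolutely continuous stable holonomies with bounded Jacobians (standard for Sinai billiards, \cite{chernov2006chaotic}). By the mixing of $(T, \musrb)$, for any $W \in \hW^s$ with $|W| \geq \delta_1/3$ there is $N = N(W)$ such that $T^{-n}W$ properly crosses every $R_j$ for $n \geq N$. Projecting the crossing of $R_{j(B)}$ by $T^{-n}W$ along the stable foliation onto the subsegment of $V_B$ lying in $R_{j(B)}$ then produces the desired stable curve inside $B$.

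The main obstacle is the bootstrapping from SSP.1 to the mixing-based spatial spread of $T^{-n}W$. SSP.1 and Lemma~\ref{lemma:Lu_geq_M-n0} only assert that most of the thermodynamic weight concentrates on long stable or unstable pieces; they do not a priori give the geometric statement that $T^{-n}W$ reaches every Cantor rectangle uniformly. The mixing of $\musrb$ is exactly what provides this global spatial spread, and it is the key ingredient for closing the matching argument. Careful bookkeeping will be needed to make the threshold $N(W)$ independent of $W$ in the class $\{W \in \hW^s : |W| \geq \delta_1/3\}$, and to absorb the finitely many $n \leq N$ into the constant $c_0$ without destroying its $\delta_1$-dependence.
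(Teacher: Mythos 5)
Your opening reductions are correct: $\cM_{-n}^0 = T^n\cM_0^n$ with $S_n^{-1}g = S_n g\circ T^{-n}$ gives equality of the two thermodynamic sums, and the time-reversed half of Lemma~\ref{lemma:Lu_geq_M-n0} validly passes to $L_s^{\delta_1}(\cM_0^n)$. You have also identified the right tools (finite Cantor rectangle cover, mixing of $\musrb$). However, the matching step has a geometric gap. You want, for each $B\in L_s^{\delta_1}(\cM_0^n)$, a piece $W_i(B)\in\cG_n^{\delta_0}(W)$ with $W_i(B)\subset B$. But $\partial B\subset\cS_n$ consists of \emph{stable} curves, and the components of $T^{-n}W$ are stable curves too, roughly parallel to them. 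A piece of $T^{-n}W$ crossing $R_{j(B)}$ is not forced into $B$: an arc of $\cS_n$ can enter $D(R_{j(B)})$ through its unstable sides and separate $V_B$ from the crossing piece, which then lies in a neighbouring cell. Your phrase ``any piece close to $V_B$ in the unstable direction lies inside $B$'' needs ``close'' to be measured against the unstable width of $B$ inside $R_{j(B)}$, a $B$-dependent quantity shrinking in $n$ that you do not control. And ``projecting the crossing piece along the stable foliation onto $V_B$'' yields a subarc of $V_B$, not an element of $\cG_n^{\delta_0}(W)$; it contributes nothing to the left-hand side. In effect, the step you need --- that each long cell of $\cM_0^n$ contains a component of $T^{-n}W$ --- is essentially the (unweighted) content of the proposition itself, so the argument is circular at this point.

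The paper's proof avoids this by keeping the cells as $A\in\cM_{-n}^0$, bounded by \emph{unstable} curves in $\cS_{-n}$. A single stable curve $V$ --- a component of $T^{-n_3}W$ crossing a chosen Cantor rectangle $R_{i_n}$, produced by mixing of $\musrb$ and \cite[Lemma~7.90]{chernov2006chaotic} --- then meets every cell $A$ that crosses $R_{i_n}$ in the unstable direction: $V\cap A$ is nonempty by transversality, and since $\cS_{-n}$ cuts $V$ exactly at the cell boundaries, $V\cap A$ is a full subarc $T^nW_i$ with $W_i\in\cG_n^{\delta_0}(V)$. The injection runs from cells $A\in\tilde L_n$ (a $1/k$-fraction of $L_u^{\delta_1}(\cM_{-n}^0)$ selected by pigeonhole) into subarcs of the one fixed curve $V$, so no containment needs to be argued; a chain-rule bound using Lemma~\ref{lemma:sup_less_poly_inf} then transfers the estimate from $\cG_n^{\delta_0}(V)$ to $\cG_n^{\delta_0}(W)$ at a cost depending only on $n_3$. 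If you undo your first reduction and work with $\cM_{-n}^0$ directly, the transversality of stable curves against $\cS_{-n}$ does what your containment claim cannot.
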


The proof relies crucially on the notion of \emph{Cantor rectangles}. We introduce this notion as in \cite[Definition~5.7]{BD2020MME}. Let $W^s(x)$ and $W^u(x)$ denote the maximal smooth components of the local stable and unstable manifolds of $x \in M$.

\begin{definition}\label{def:Cantor_rectangle+proper_crossing}
A solid rectangle $D$ in $M$ is a closed connected set whose boundary comprises precisely four nontrivial curves: two (segments of) stable manifolds and two (segments of) unstable manifolds. Given a solid rectangle $D$, the (locally maximal) Cantor rectangle $R$ in $D$ is formed by taking the points in $D$ whose local stable and unstable manifolds completely cross $D$. Cantor rectangles have a natural product structure: for any $x,\, y \in R$, then $W^s(x) \cap W^u(y) \in R$. In \cite[Section~7.11]{chernov2006chaotic}, Cantor rectangles are proved to be closed, and thus contain their outer boundaries, which are contained in the boundary of $D$. With a slight abuse, we will call these pairs of stable and unstable manifolds the stable and unstable boundaries of $R$. In this case, we denote $D$ by $D(R)$ to emphasize that it is the smallest solid rectangle containing $R$.
\end{definition}

\begin{proof}[Proof of Proposition~\ref{prop:GnW_geq_M0n}]
Using \cite[Lemma 7.87]{chernov2006chaotic}, we may find finitely many Cantor rectangles $R_1,...,R_k$ satisfying
\begin{align}\label{eq:fat_Cantor_rectangle}
\inf\limits_{x \in R_i} \frac{m_{W^u}(W^u(x) \cap R_i)}{m_{W^u}(W^u(x) \cap D(R_i))} \geqslant 0.9 \, , \quad \forall 1 \leqslant i \leqslant k,
\end{align}
whose stable and unstable boundaries have lengths at most $\tfrac{1}{10}\delta_1$, ``\emph{covering $M$}'' in the sense that any stable curve of length at least $\delta_1/3$ \emph{properly crosses} at least one of them. A stable curve $W \in \hW^s$ is said to properly cross $R$ if $W$ crosses both unstable sides of $R$, $W$ does not cross any stable manifolds $W^s(x) \cap D(R)$ for $x \in R$, and the point $W \cap W^u(x)$ subdivides the curve $W^u(x) \cap D(R)$ in a ratio between $0.1$ and $0.9$ (i.e. $W$ does not come to close to either stable boundary of $R$). The cardinality $k$ is fixed, depending only on $\delta_1$.

Recall that $L_u^{\delta_1}(\cM_{-n}^0)$ denotes the elements of $\cM_{-n}^0$ whose unstable diameter is longer than $\delta_1/3$. We claim that for all $n \in \mathbbm{N}$, at least one $R_i$ is fully crossed in the unstable direction by each element in a subset $\tilde{L}$ of $\cM_{-n}^0$ such that 
\begin{align}\label{eq:large_proportion_fully_cross}
\sum\limits_{A \in \tilde{L}} |e^{S_n^{-1} g}|_{C^0(A)} \geqslant \frac{1}{k} \sum\limits_{A \in L_u^{\delta_1}(\cM_{-n}^0)} |e^{S_n^{-1} g}|_{C^0(A)}.
\end{align}
Notice that if $A \in \cM_{-n}^0$, then $\partial A$ is comprised of unstable curves belonging to $\cup_{i=1}^n T^i \cS_0$, and possibly $\cS_0$. By definition of unstable manifolds, $T^i \cS_0$ cannot intersect the unstable boundaries of the $R_i$; thus if $A\cap R_i \neq \emptyset$, then either $\partial A$ terminates inside $R_i$ or $A$ fully crosses $R_i$. Thus elements of $L_u^{\delta_1}(\cM_{-n}^0)$ fully cross at least one $R_i$ and so at least one $R_i$ must be fully crossed by \emph{a large fraction} $\tilde{L}$ of $L_u^{\delta_1}(\cM_{-n}^0)$ in the sense of (\ref{eq:large_proportion_fully_cross}), proving the claim.

For each $n \in \mathbbm{N}$, denote by $i_n$ the index of a rectangle $R_{i_n}$ which is fully crossed by a large enough subset $\tilde{L}_n$ of $L_u(\cM_{-n}^0)$, in the sense of (\ref{eq:large_proportion_fully_cross}).

Fix $\delta_* \in (0,\delta_1/10)$ and for $i=1,...k$, choose a ``high density" subset $R_i^* \subset R_i$ satisfying the following conditions: $R_i^*$ has a non-zero Lebesgue measure, and for any unstable manifold $W^u$ such that $W^u \cap R_i^* \neq \emptyset$ and $|W^u| < \delta_*$, we have $\tfrac{m_{W^u}(W^u \cap R_i^*)}{|W^u|} \geqslant 0.9$. (Such a $\delta_*$ and $R_i^*$ exist due to the fact that $m_{W^u}$-almost every $y \in R_i$ is a Lebesgue density point of the set $W^u(y) \cap R_i$ and the unstable foliation is absolutely continuous with respect to $\musrb$ or, equivalently, Lebesgue.)

Due to the mixing property of $\musrb$ and the finiteness of the number of rectangles $R_i$, there exist $\varepsilon >0$ and $n_3 \in \mathbbm{N}$ such that for all $1 \leqslant i,j \leqslant k$ and all $n \geqslant n_3$, $\musrb(R_i^* \cap T^{-n}R_j) \geqslant \varepsilon$. If necessary, we increase $n_3$ so that the unstable diameter of the set $T^{-n}R_i$ is less than $\delta_*$ for each $i$, and $n \geqslant n_3$.

Now let $W \in \hW^s$ with $|W| \geqslant \delta_1 /3$ be arbitrary. Let $R_j$ be a Cantor rectangle that is properly crossed by $W$. Let $n \in \mathbbm{N}$ and let $i_n$ be as above. By mixing, $\musrb(R_{i_n}^* \cap T^{-n_3}R_j) \geqslant \varepsilon$. By \cite[Lemma 7.90]{chernov2006chaotic}, there is a component of $T^{-n_3}W$ that fully crosses $R_{i_n}^*$ in the stable direction. Call this component $V \in \cG_{n_3}^{\delta_0}(W)$. Thus
\begin{align*}
\sum\limits_{W_i \in \cG_n^{\delta_0}(V)} |e^{S_n g}|_{C^0(W_i)} &= \sum\limits_{W_i \in \cG_n^{\delta_0}(V)} |e^{S_n^{-1} g}|_{C^0(T^{n}W_i)} \geqslant \sum\limits_{A \in \tilde{L}_n} \inf\limits_{A} |e^{S_n^{-1} g}| \geqslant \frac{1}{C_g} \sum\limits_{A \in \tilde{L}_n} \sup\limits_{A} |e^{S_n^{-1} g}| \\
&\geqslant \frac{1}{k C_g} \sum\limits_{A \in L_u^{\delta_1}(\cM_{-n}^0)} |e^{S_n^{-1} g}|_{C^0(A)}.
\end{align*}
We now have to relate the lhs to the analogous quantity where $V$ is replace by $W$.
\begin{align*}
&\sum\limits_{W_i \in \cG_n^{\delta_0}(W)}|e^{S_n g}|_{C^0(W_i)} = \sum_{V_j \in \cG_{n+n_3}^{\delta_0}(W)} \sum_{\substack{W_i \in \cG_n^{\delta_0}(W) \\ T^{n_3}V_j \subset W_i}} \frac{|e^{S_n g}|_{C^0(W_i)}}{\#\{ V_j \in \cG_{n+n_3}^{\delta_0}(W) \mid T^{n_3}V_j \subset W_i \}} \\
&\geqslant \sum_{V_j \in \cG_{n+n_3}^{\delta_0}(W)} |e^{S_n g \circ T^{n_3}}|_{C^0(V_j)} \sum_{\substack{W_i \in \cG_n^{\delta_0}(W) \\ T^{n_3}V_j \subset W_i}} \frac{1}{\#\{ V_j \in \cG_{n+n_3}^{\delta_0}(W) \mid T^{n_3}V_j \subset W_i \}} \\
&\geqslant \frac{C \delta_0}{\# \cM_0^{n_3}} e^{-n_3 \sup g} \sum_{V_j \in \cG_{n+n_3}^{\delta_0}(W)} |e^{S_{n+n_3} g}|_{C^0(V_j)} \geqslant \frac{C \delta_0}{\# \cM_0^{n_3}} e^{-n_3 \sup g} \sum_{V_j \in \cG_{n}^{\delta_0}(V)} |e^{S_{n+n_3} g}|_{C^0(V_j)} \\
&\geqslant \frac{C \delta_0}{\# \cM_0^{n_3}} e^{-n_3 (\sup g - \inf g)} \!\!\!\! \sum_{V_j \in \cG_{n}^{\delta_0}(V)} \!\!\!\! |e^{S_{n} g}|_{C^0(V_j)} 
\geqslant \frac{1}{kC_g} \frac{C \delta_0}{\# \cM_0^{n_3}} e^{-n_3 (\sup g - \inf g)} \!\!\!\! \sum\limits_{A \in L_u^{\delta_1}(\cM_{-n}^0)} \!\!\!\! |e^{S_n^{-1} g}|_{C^0(A)} \\
&\geqslant C_{n_1} \delta_1 \frac{1}{kC_g} \frac{C \delta_0}{\# \cM_0^{n_3}} e^{-n_3 (\sup g - \inf g)} \sum\limits_{A \in \cM_{-n}^0} |e^{S_n^{-1} g}|_{C^0(A)},
\end{align*}
for all $n \geqslant \max\{n_2, n_3\}$, where we used Lemma~\ref{lemma:Lu_geq_M-n0} for the last inequality. Thus the proposition holds for all $n \geqslant \max\{n_2, n_3\}$. It extends to all $n\in \mathbbm{N}$ since there are finitely many values of $n$ to correct for.
\end{proof}

\begin{lemma}[Supermultiplicativity]\label{lemma:supermultiplicativity}
There exists a constant $c_1$ such that for all $n \in \mathbbm{N}$, and all $0 < j < n$, we have
\[ \sum\limits_{A \in \cM_0^n} |e^{S_n g}|_{C^0(A)} \geqslant c_1 \sum\limits_{A \in \cM_0^{n-j}} |e^{S_{n-j} g}|_{C^0(A)} \sum\limits_{A \in \cM_{0}^j} |e^{S_j g}|_{C^0(A)}.\]
\end{lemma}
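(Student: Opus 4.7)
\emph{Plan.} My approach is based on the product-like refinement $\cM_0^n = \cM_0^{n-j} \vee T^{-(n-j)}\cM_0^j$: every $A \in \cM_0^n$ lies in a unique pair $(B,C)$ with $B \in \cM_0^{n-j}$, $A \subset B$ and $C \in \cM_0^j$, $T^{n-j}A \subset C$. The cocycle identity $S_n g|_A = S_{n-j}g|_A + (S_j g)\circ T^{n-j}|_A$ gives $\inf_A e^{S_n g} \geqslant \inf_B e^{S_{n-j}g} \cdot \inf_C e^{S_j g}$, and summing over $A$ yields
\begin{equation*}
\sum_{A \in \cM_0^n} \inf_A e^{S_n g} \;\geqslant\; \sum_{C \in \cM_0^j}\inf_C e^{S_j g} \sum_{\substack{B \in \cM_0^{n-j} \\ B \cap T^{-(n-j)}C \neq \emptyset}} \inf_B e^{S_{n-j}g} \, .
\end{equation*}
Thanks to Lemma~\ref{lemma:sup_less_poly_inf}, infima and suprema are comparable, so it suffices to show that, for a positive weighted proportion of $C$, the inner $B$-sum is comparable to the full sum $\sum_B \inf_B e^{S_{n-j}g}$.

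\emph{Main step.} I restrict to $C \in L_s^{\delta_1}(\cM_0^j)$, which by the stable version of Lemma~\ref{lemma:Lu_geq_M-n0} carry a proportion at least $C_{n_1}\delta_1$ of the total $C$-weight. For each such $C$, fix a stable curve $W_C \subset C$ with $|W_C| \geqslant \delta_1/3$ and apply Proposition~\ref{prop:GnW_geq_M0n} with depth $n-j$:
\begin{equation*}
\sum_{W_i \in \cG_{n-j}^{\delta_0}(W_C)} |e^{S_{n-j}g}|_{C^0(W_i)} \;\geqslant\; c_0 \sum_{B \in \cM_0^{n-j}} |e^{S_{n-j}g}|_{C^0(B)} \, .
\end{equation*}
Each $W_i$ is a stable curve in a unique $B_i \in \cM_0^{n-j}$, and $T^{n-j}W_i \subset W_C \subset C$ forces $B_i \cap T^{-(n-j)}C \neq \emptyset$. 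So every $W_i$ witnesses a nonempty intersection, and collecting by $B_i$ will give the desired lower bound on the inner sum, provided the fiber cardinality $\#\{W_i : W_i \subset B\}$ is uniformly bounded. Granting this, combining with Lemma~\ref{lemma:Lu_geq_M-n0} for $C$ and with Lemma~\ref{lemma:sup_less_poly_inf} to pass back to suprema, I absorb all the constants ($c_0$, $C_{n_1}\delta_1$, $C_g$, and the multiplicity bound) into a single $c_1 > 0$ and conclude.

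\emph{Main obstacle.} The delicate point is controlling the fibers $\{W_i : W_i \subset B\}$ of the assignment $W_i \mapsto B_i$. My expectation is that this cardinality is bounded by a constant depending only on the billiard table and $\delta_0$: the boundary of any $B \in \cM_0^{n-j}$ lies in $\cS_{n-j} \subset \hW^s$, so that the unstable diameter of $B$ decays like $\Lambda^{-(n-j)}$, and only a bounded number of disjoint stable curves of length $\geqslant \delta_0/2$ coming from distinct smooth components of $T^{-(n-j)}W_C$ can be packed into such a thin strip (using uniform transversality with $\cC^u$ and uniform expansion). Should this geometric argument prove delicate, an alternative is to bypass the counting altogether by running the Cantor-rectangle/mixing mechanism from the proof of Proposition~\ref{prop:GnW_geq_M0n} one additional time, transferring the weight from $W_C$ to a stable curve intersecting a prescribed rectangle and thereby distributing the $W_i$'s across distinct atoms of $\cM_0^{n-j}$ by construction.
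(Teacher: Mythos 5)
Your argument is correct and, at the structural level, is the paper's proof with the roles of $j$ and $n-j$ interchanged. Where the paper groups $\cM_0^n$ via $\cM_0^{n-j}$ and $\cM_{-j}^0$, restricts to $A\in L_s^{\delta_1}(\cM_0^{n-j})$ using Lemma~\ref{lemma:Lu_geq_M-n0}, picks a long stable curve $V_A\subset A$, and applies Proposition~\ref{prop:GnW_geq_M0n} at depth $j$ to $\cG_j^{\delta_0}(V_A)$, you group via $\cM_0^{n-j}$ and $\cM_0^j$, restrict to $C\in L_s^{\delta_1}(\cM_0^j)$, pick $W_C\subset C$, and apply the same proposition at depth $n-j$. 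The three ingredients (Lemma~\ref{lemma:sup_less_poly_inf}, Lemma~\ref{lemma:Lu_geq_M-n0}, Proposition~\ref{prop:GnW_geq_M0n}) and the resulting constant $c_1=\text{const}\cdot c_0 C_{n_1}\delta_1$ are the same. One small omission: Lemma~\ref{lemma:Lu_geq_M-n0} only applies for $j\geq n_2$, so you need the symmetric counterpart of the paper's closing remark (for $j<n_2$, bound $\sum_{A\in\cM_0^j}|e^{S_j g}|_{C^0(A)}$ above by a constant depending only on $n_2$ and shrink $c_1$), mirroring how the paper treats $n-j<n_2$.

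On the fiber count, your concern is legitimate --- the paper elides exactly this point with ``up to subdivision of long pieces'' --- but your geometric justification is off. The $W_i\in\cG_{n-j}^{\delta_0}(W_C)$ that land in a single $B\in\cM_0^{n-j}$ do \emph{not} come from distinct smooth components of $T^{-(n-j)}W_C$: the boundary of $T^{n-j}B\in\cM_{-(n-j)}^0$ lies in $\cS_{-(n-j)}$, so it consists of unstable curves, and uniform transversality forces the stable curve $W_C$ to cross $T^{n-j}B$ at most once. Hence all the $W_i\subset B$ are subintervals of the \emph{single} smooth arc $T^{-(n-j)}\bigl(W_C\cap T^{n-j}B\bigr)$, and the unstable thinness of $B$ is irrelevant. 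The bound you need is, in fact, already recorded in the proof of Lemma~\ref{lemma:Growth_lemma}(b): since $T^{-(n-j)}W_C$ crosses each element of $\cM_0^{n-j}$ at most once and these elements have uniformly bounded diameter, there are at most $2C\delta_0^{-1}$ elements of $\cG_{n-j}^{\delta_0}(W_C)$ in any one of them. Citing that is cleaner and sufficient; the Cantor-rectangle detour you suggest as a fallback is unnecessary here.
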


\begin{proof}
Fix $n,j \in \mathbbm{N}$ with $j<n$. First, notice that
\begin{align*}
&\sum\limits_{A \in \cM_0^n} |e^{S_n g}|_{C^0(A)} \geqslant \sum\limits_{A \in \cM_0^n} \sup\limits_{A} e^{(S_{n-j} g + S_j^{-1} g )\circ T^j} \geqslant  \sum\limits_{A \in \cM_{-j}^{n-j}} \sup\limits_{A} e^{S_{n-j}g} \inf\limits_{A} e^{S_j^{-1} g}\\
&\qquad\qquad \geqslant  \sum\limits_{A \in \cM_{0}^{n-j}} \sup\limits_{A} e^{S_{n-j}g} \sum\limits_{\substack{B \in \cM_{-j}^0 \\ B \cap A \neq \emptyset}} \inf\limits_{B} e^{S_j^{-1} g} 
\geqslant C_g \sum\limits_{A \in \cM_{0}^{n-j}} |e^{S_{n-j}g}|_{C^0(A)} \sum\limits_{\substack{B \in \cM_{-j}^0 \\ B \cap A \neq \emptyset}} |e^{S_j^{-1} g}|_{C^0(B)} \\
&\qquad\qquad \geqslant C_g \sum\limits_{A \in \cM_{0}^{n-j}} |e^{S_{n-j}g}|_{C^0(A)} \sum\limits_{\substack{B \in \cM_{-j}^0 \\ B \cap A \neq \emptyset}} |e^{S_j^{-1} g}|_{C^0(B)},
\end{align*}
where we used Lemma~\ref{lemma:sup_less_poly_inf} for the forth inequality.

Recall that $L_u^{\delta_1}(\cM_{-j}^{0})$ denotes the elements of $\cM_{-j}^{0}$ whose unstable diameter is longer than $\delta_1/3$. Similarly, $L_s^{\delta_1}(\cM_0^{n-j})$ denotes those elements of $\cM_0^{n-j}$ whose stable diameter is larger than $\delta_1/3$. By Lemma~\ref{lemma:Lu_geq_M-n0}
\[\sum\limits_{A \in L_s^{\delta_1}(\cM_0^{n-j})} |e^{S_{n-j} g}|_{C^0(A)} \geqslant C_{n_1} \delta_1 \sum\limits_{A \in \cM_0^{n-j}} |e^{S_{n-j} g}|_{C^0(A)} , \quad \text{for $n-j \geqslant n_2$.} \]

Let $A \in L_s^{\delta_1}(\cM_0^{n-j})$ and let $V_A \in \hW^s$ be a stable curve in $A$ with length at least $\delta_1/3$. By Proposition~\ref{prop:GnW_geq_M0n}, \[ \sum\limits_{W_i \in \cG_{j}^{\delta_0}(V_A)} |e^{S_{j} g}|_{C^0(W_i)} \geqslant c_0 \sum\limits_{B \in \cM_{-j}^0} |e^{S_{j}^{-1} g}|_{C^0(B)}.\]
Each component of $\cG_j^{\delta_0}(V_A)$ corresponds to one component of $V_A \smallsetminus \cS_{-j}$ (up to subdivision of long pieces in $\cG_j^{\delta_0}(V_A)$). Thus
\begin{align*}
&\sum\limits_{A \in \cM_{0}^{n-j}} |e^{S_{n-j}g}|_{C^0(A)} \sum\limits_{\substack{B \in \cM_{-j}^0 \\ B \cap A \neq \emptyset}} |e^{S_j^{-1} g}|_{C^0(B)}
\geqslant \sum\limits_{A \in L_s^{\delta_1}(\cM_0^{n-j})} |e^{S_{n-j}g}|_{C^0(A)} \sum\limits_{W_i \in \cG_j^{\delta_0}(V_A)} |e^{S_j^{-1} g}|_{C^0(T^j W_i)} \\
&\qquad \geqslant \!\!\! \sum\limits_{A \in L_s^{\delta_1}(\cM_0^{n-j})} \!\!\! |e^{S_{n-j}g}|_{C^0(A)} \!\!\! \sum\limits_{W_i \in \cG_j^{\delta_0}(V_A)} \!\!\! |e^{S_j g}|_{C^0(W_i)} 
\geqslant C \!\! \sum\limits_{A \in \cM_0^{n-j}} \!\! |e^{S_{n-j}g}|_{C^0(A)} \sum\limits_{B \in \cM_{-j}^0} |e^{S_j^{-1} g}|_{C^0(B)},
\end{align*}
proving the lemma with $c_1= c_0 C_{n_1} C^2 \delta_1  $ when $n-j \geqslant n_2$. For $n-j \leqslant n_2$, since 
\[ \sum\limits_{A \in \cM_0^{n-j}} |e^{S_{n-j-1} g}|_{C^0(A)} \leqslant \left( \sum\limits_{A \in \cM_0^{1}} |e^{ g}|_{C^0(A)} \right)^{n-j}\]
the lemma holds by decreasing $c_1$ since there are only finitely many values to correct for.
\end{proof}

\begin{proposition}[Exact Exponential Growth]\label{prop:almost_exponential_growth}
Let $g$ be a $(\cM_0^1,\alpha)$-H\"older continuous potential such that $P_*(T,g) - \sup g >0 $ and which has SSP.1. Let $c_1$ be the constant given by Lemma~\ref{lemma:supermultiplicativity}. Then for all $n \in \mathbbm{N}$, we have
\[ \sum\limits_{A \in \cM_0^n} |e^{S_n g}|_{C^0(A)} \leqslant \frac{2}{c_1} e^{n P_*(T,g)}.\]
\end{proposition}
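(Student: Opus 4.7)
The plan is to apply Fekete's lemma, in its superadditive form, to a suitable renormalization of the sequence $b_n := \sum_{A \in \cM_0^n} |e^{S_n g}|_{C^0(A)}$. The starting point is Lemma~\ref{lemma:supermultiplicativity}, which reads $b_{n+j} \geqslant c_1 b_n b_j$ for every $0<j<n$. First I would set $a_n := c_1 b_n$; then the inequality rewrites as the clean supermultiplicative bound $a_{n+j} \geqslant a_n a_j$, or equivalently, the sequence $(\log a_n)_n$ is superadditive.

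With the superadditivity in hand, Fekete's lemma produces the limit $L := \lim_n \tfrac{1}{n} \log a_n \in (-\infty, +\infty]$ and, crucially, the identity $L = \sup_n \tfrac{1}{n}\log a_n$. The second input is Theorem~\ref{thm:pressure}(ii), which identifies $\lim_n \tfrac{1}{n}\log b_n$ with $P_*(T,g)$; since $\tfrac{1}{n}\log a_n - \tfrac{1}{n}\log b_n = \tfrac{\log c_1}{n} \to 0$, this forces $L = P_*(T,g) < +\infty$ (the assumption $P_*(T,g)-\sup g>0$ together with SSP.1 guarantees that all the $b_n$ are positive, so $\log a_n$ is well defined). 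Combining the two facts, one gets $\tfrac{1}{n}\log a_n \leqslant P_*(T,g)$ for every $n\geqslant 1$, which unpacks to
\[
b_n \;\leqslant\; \tfrac{1}{c_1}\, e^{n P_*(T,g)} \;\leqslant\; \tfrac{2}{c_1}\, e^{n P_*(T,g)},
\]
as claimed.

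There is no genuine obstacle here: the proposition is a packaging result that marries the exponential growth rate (Theorem~\ref{thm:pressure}(ii)) with the supermultiplicativity of the thermodynamic sums (Lemma~\ref{lemma:supermultiplicativity}), and Fekete's lemma does the bookkeeping. The slack factor $2$ in the statement is cosmetic; in fact the slightly cleaner bound $\tfrac{1}{c_1}e^{n P_*(T,g)}$ holds for \emph{every} $n\geqslant 1$. The only slight subtlety worth flagging is that Lemma~\ref{lemma:supermultiplicativity} only supplies the inequality for $0<j<n$ (not for the degenerate cases $j\in\{0,n\}$), but this is precisely what Fekete's lemma requires, so no special handling of boundary indices is needed.
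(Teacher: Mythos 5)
Your proof is correct, and it takes a genuinely cleaner route than the paper's. The paper argues by contradiction: it supposes $\psi(n_1) := e^{-n_1 P_*(T,g)} b_{n_1} \geqslant 2/c_1$ for some $n_1$, iterates the supermultiplicative inequality along the dyadic sequence $2^k n_1$ to force $\psi(2^k n_1) \geqslant \frac{1}{c_1}(c_1\psi(n_1))^{2^k}$, and derives a positive lower bound on $\liminf_n \frac{1}{n}\log\psi(n)$, contradicting the fact that $\frac{1}{n}\log\psi(n) \to 0$. You instead invoke Fekete's lemma on the superadditive sequence $\log(c_1 b_n)$, which directly identifies the limit (which you know equals $P_*(T,g)$ by Theorem~\ref{thm:pressure}(ii), since the $\log c_1 / n$ correction vanishes) with the supremum, yielding the pointwise bound $b_n \leqslant c_1^{-1} e^{n P_*(T,g)}$. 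Both arguments are ultimately powered by the same two ingredients --- the supermultiplicativity of $b_n$ (Lemma~\ref{lemma:supermultiplicativity}) and the known exponential growth rate --- but Fekete packages the iteration cleanly, avoids the contradiction scaffolding, and, as you observe, actually gives the sharper constant $1/c_1$ in place of $2/c_1$. Your remarks about positivity of $b_n$ (needed for $\log$) and about the index range $0<j<n$ in Lemma~\ref{lemma:supermultiplicativity} being exactly what Fekete requires are both correct; $\cM_0^n$ is always a nonempty partition of $M$ minus a finite union of curves, so $b_n>0$ for every $n$.
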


\begin{proof}
Let $\psi(n) \coloneqq e^{-nP_*(T,g)} \sum_{A \in \cM_0^n} |e^{S_n g}|_{C^0(A)}$. 
Suppose there exists $n_1 \in \mathbbm{N}$ such that $\psi(n_1) \geqslant 2/c_1$, where $c_1$ is the constant from Lemma~\ref{lemma:supermultiplicativity}. Then \begin{align*}
\psi(2n_1) \geqslant c_1 \psi(n_1)^2 = \frac{1}{c_1}(c_1 \psi(n_1))^2.
\end{align*}
Iterating this bound, we obtain for any $k \geqslant 1$,
\begin{align*}
\psi(2^{k}n_1) \geqslant \frac{1}{c_1}(c_1 \psi(n_1))^{2^k}.
\end{align*}
This implies that $\lim_{k\to +\infty} \tfrac{1}{2^kn_1}\log \psi(2^kn_1) \geqslant \tfrac{1}{n_1}\log 2 >0$, which contradicts the definition of $\psi(n)$ (since by construction $\lim_{n \to +\infty} \tfrac{1}{n}\log \psi(n) = 0$). We conclude that $\psi(n) \leqslant 2/c_1$ for all $n \geqslant 1$.
\end{proof}

\begin{remark}\label{remark:neighbourhood_zero_potential}
Notice that for $g=0$, the condition $P_*(T,g) - \sup g > s_0 \log 2$ becomes $h_* > s_0 \log 2$, where $h_*$ is the topological entropy of $T$ defined in \cite{BD2020MME}. This is precisely the condition of sparse recurrence to singularities from \cite{BD2020MME}, and as discussed there, we don't know any example of billiard table not satisfying this condition. Notice that by continuity, if $h_* > s_0 \log 2$ holds, then $P_*(T,g) - \sup g > s_0 \log 2$ holds for all $g$ in a neighbourhood of the zero potential. Up to taking a smaller neighbourhood,$\log \Lambda > \sup g - \inf g$ also holds. Therefore, by Lemmas~\ref{lemma:short_curves_rare}, \ref{lemma:Lu_geq_M-n0} and Corollary~\ref{corol:long_over_all}, there exists a neighbourhood of $g=0$ (in the $(\cM_0^1,\alpha)$-H\"older topology) in which every potential has SSP.1 and SSP.2.
In particular, for any $t \in \mathbbm{R}$ with $|t|$ close enough to zero, the potential $-t \tau$ has SSP.1 and SSP.2.
\end{remark}

\subsection{Estimates on norms of the potential}

In Section~\ref{sect:measure_mu_g}, we will need similar estimates as in the present section but with the $C^0$ norm replaced by the $C^\beta$ norm, $0<\beta< 1/3$. The following lemma shows that previous estimates are still valid up to a multiplicative constant.

\begin{lemma}\label{lemma:norm_C0_exp_birkhoff}
For every bounded $(\cM_0^1,\alpha)$-H\"older continuous potential $g$, there exists $C>0$ such that for all $W \in \cW^s$, all $n \geqslant 0$ and all $W_i \in \cG_n^{\delta}(W)$, $|e^{S_ng}|_{C^{\alpha}(W_i)} \leqslant C |e^{S_ng}|_{C^{0}(W_i)}$, where $\delta \in (0,\delta_0]$.
\end{lemma}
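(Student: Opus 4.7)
The plan is to exploit the uniform hyperbolicity of $T$ along forward iterates of a stable curve together with the piecewise H\"older regularity of $g$, in order to obtain a uniform (in $n$) H\"older bound on $S_n g$ restricted to $W_i$, which we then transfer to $e^{S_n g}$ by a convexity argument.

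First, I would fix $x, y \in W_i$ and exploit the definition of $\cG_n^\delta(W)$: since $W_i$ is a smooth component of $T^{-n}W$, it avoids $\cS_n = \bigcup_{j=0}^n T^{-j}\cS_0$, so for every $0 \leqslant k \leqslant n-1$ the curve $T^k W_i$ is smooth and avoids $\cS_0 \cup T^{-1}\cS_0$. Hence $T^k W_i$ lies in a single element of $\cM_0^1$; in particular $T^k x$ and $T^k y$ live in the same element of $\cM_0^1$, so the $\alpha$-H\"older bound $|g(T^k x) - g(T^k y)| \leqslant |g|_{C^\alpha} d(T^k x, T^k y)^\alpha$ is applicable at each step.

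Next, I would use uniform hyperbolicity to contract along the stable curve. Since tangent vectors to $W_i \in \cW^s$ lie in $\cC^s$ and $DT^{-1}$ expands $\cC^s$-vectors by at least $C_1 \Lambda$, we get $\|DT^k w\| \leqslant C_1^{-1} \Lambda^{-k} \|w\|$ for $w$ tangent to $W_i$ and $0 \leqslant k \leqslant n$. Because stable curves have uniformly bounded curvature (\cite[Prop.~4.29]{chernov2006chaotic}), arc length and Euclidean distance are comparable on each $T^k W_i$ up to an absolute constant, which yields $d(T^k x, T^k y) \leqslant C \Lambda^{-k} d(x,y)$ with $C$ independent of $n, k, x, y$. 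Summing the resulting geometric series:
\[
|S_n g(x) - S_n g(y)| \leqslant |g|_{C^\alpha} C^\alpha \sum_{k=0}^{n-1} \Lambda^{-\alpha k} d(x,y)^\alpha \leqslant K_g \, d(x,y)^\alpha,
\]
with $K_g := |g|_{C^\alpha} C^\alpha (1-\Lambda^{-\alpha})^{-1}$, finite since $\Lambda > 1$.

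Finally, since $d(x,y) \leqslant |W_i| \leqslant \delta_0$, the quantity $M := K_g \delta_0^\alpha$ bounds $|S_n g(x) - S_n g(y)|$ uniformly. Writing $a = S_n g(x)$, $b = S_n g(y)$ and using the inequalities $|e^a - e^b| \leqslant e^{\max(a,b)} (e^{|a-b|}-1)$ together with $e^t - 1 \leqslant \tfrac{e^M-1}{M}\, t$ for $0 \leqslant t \leqslant M$, I obtain
\[
|e^{S_n g(x)} - e^{S_n g(y)}| \leqslant \tfrac{e^M-1}{M} K_g \, |e^{S_n g}|_{C^0(W_i)} \, d(x,y)^\alpha,
\]
from which $|e^{S_n g}|_{C^\alpha(W_i)} \leqslant C\, |e^{S_n g}|_{C^0(W_i)}$ with $C = 1 + K_g(e^M-1)/M$ independent of $n$, $W$, $W_i$. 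I do not anticipate any genuine obstacle; the only bookkeeping subtlety is reconciling the arc length distance along $W_i$ with the Euclidean distance used to define the H\"older norms of $g$ and $e^{S_n g}$, which is absorbed into the constant via the uniform bounded curvature of stable manifolds.
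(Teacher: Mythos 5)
Your proof is correct and follows essentially the same route as the paper: decompose $S_n g$ into the Birkhoff sum, use the uniform contraction along stable curves and the $(\cM_0^1,\alpha)$-H\"older regularity of $g$ to get a summable geometric series $\sum_k \Lambda^{-\alpha k}$, and pass to the exponential. The only cosmetic difference is that you bound $H^\alpha(S_n g)$ first and then exponentiate via a convexity inequality (using only that $g$ is bounded above, so that $|S_ng(x)-S_ng(y)|\leqslant M$), whereas the paper estimates $H^\alpha(e^{S_n g})$ term by term picking up a factor $e^{-c}$ from the lower bound $g\geqslant c$; both yield the same uniform-in-$n$ constant.
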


\begin{proof}
Let $g$ be such a potential. Let $c$ be such that $g \geqslant c$. Let $W_i \in \cG_n^{\delta}(W)$. Then
\begin{align*}
H^{\alpha}_{W_i}(e^{S_n g}) &\leqslant \sum\limits_{k=0}^{n-1} |e^{ - g \circ T^k + S_n g}|_{C^0(W_i)} H^{\alpha}_{W_i}(g \circ T^k) 
\leqslant |e^{S_n g}|_{C^0(W_i)} \sum\limits_{k=0}^{n-1} e^{-c} C \Lambda^{-\alpha k} |g|_{C^{\alpha}(M)} \\
&\leqslant |e^{S_n g}|_{C^0(W_i)} C \frac{1}{1 - \Lambda^{\alpha}} e^{-c} |g|_{C^{\alpha}(M)},
\end{align*}
where for the second inequality we adapted the argument from \cite[eq (6.2)]{BD2020MME}, so that \begin{align*}
\frac{g(T^k x) - g(T^k y)}{d_W(T^k x,T^k y)^{\alpha}} \frac{d_W(T^k x,T^k y)^{\alpha}}{d_W(x,y)^{\alpha}} \leqslant C H^{\alpha}_{T^k W_i}(g) |J^s T^k|^{\alpha}_{C^0(W_i)} \leqslant C \Lambda^{-\alpha k} |g|_{C^{\alpha}(M)}.
\end{align*}
\end{proof}

\section{The Banach Spaces $\cB$ and $\cB_w$ and the Transfer Operators $\cL_g$}\label{sect:banach_spaces_and_transfer_operators}

In Section~\ref{sect:measure_mu_g}, we construct the equilibrium state $\mu_g$ for $T$ under the potential $g$ out of left and right eigenvectors, $\tilde \nu$ and $\nu$, of a transfer operator $\cL_g$ associated with the billiard map and the potential $g$, acting on suitable Banach spaces $\cB$ and $\cB_w$ of anisotropic distributions. In this section, we define these Banach spaces $\cB$ and $\cB_w$ as well as the transfer operator $\cL_g$.

\subsection{Motivation and heuristics}

The spaces $\cB$ and $\cB_w$ are the same as in \cite{BD2020MME}, but we recall their construction not only for completeness, but also to introduce notations. The norms we introduce below are defined by integrating along stable manifolds in $\cW^s$. We define precisely the notion of distance $d_{\cW^s}(\cdot,\cdot)$ between such curves as well as a distance $d(\cdot,\cdot)$ defined among functions supported on these curves.

In the setup of uniform hyperbolic dynamic, the relevant transfer operator to study equilibrium states associated to a potential $g$ -- see for example \cite{baladi18book} -- can be defined on measurable function $f$ by
\[ \cL_g f = \left(e^g \frac{f}{J^s T} \right) \circ T^{-1} \]
where $J^s T$ is the stable Jacobian of $T$. Ignoring first the low regularity of $J^s T$, we see from the hyperbolicity of $T$ that the composition with $T^{-1}$ should increase the regularity of $f$ in the unstable direction, while decreasing the regularity in the stable direction. By integrating along stable manifold against the arclength measure, we hope to recover some regularity along the stable manifold -- notice that by a change of variable, $J^s T$ does disappear. Morally, the weak norm $|\cdot |_w$ and the strong stable norm $||\cdot ||_s$ measure the regularity of the averaged action of $\cL_g$. On the other hand, the strong unstable norm $||\cdot ||_u$ captures the regularity when passing from a stable manifold to another one. Here, this regularity should be thought of as a $\log$-scaled H\"older regularity.

\subsection{Definition of the Banach spaces and embeddings into distribution}\label{subsect:banach_spaces}

Recall that $\cW^s$ denote the set of all nontrivial connected subsets $W$ of length at most $\delta_0$ of stable manifolds for $T$. Such curves have bounded curvature above by fixed constant \cite[Prop.~4.29]{chernov2006chaotic}. Thus $T^{-1} \cW^s = \cW^s$, up to subdivision of curves. Obviously, $\cW^s \subset \hW^s$. We define $\cW^u$ similarly from unstable manifolds of $T$.

Given a curve $W \in \cW^s$, we denote by $m_W$ the unnormalized Lebesgue (arclength) measure on $W$, so that $|W| = m_W(W)$. Since the stable cone $C^s$ \eqref{eq:stable_cones} is bounded away from the vertical, we may view each stable manifolds $W \in \cW^s$ as the graph of a function $\varphi_W(r)$ of the arclength coordinate $r$ ranging over some interval $I_W$, that is
\begin{align*}
W = \{ G_W(r) \coloneqq (r,\varphi_W(r)) \mid r \in I_W \}.
\end{align*}
Given two curves $W_1, \, W_2 \in \cW^s$, we may use this representation to define a ``distance"\footnote{Actually, $d_{\cW^s}$ is not a metric since it does not satisfies the triangle inequality. It is nonetheless sufficient for our purpose to produce a usable notion of a distance between stable manifolds.} between them. Define
\begin{align*}
d_{\cW^s}(W_1,W_2) = |I_{W_1} \,\triangle\, I_{W_2}| + |\varphi_{W_1} - \varphi_{W_2}|_{C^1(I_{W_1}\cap I_{W_2})} 
\end{align*}
if $I_{W_1}\cap I_{W_2} \neq \emptyset$, and $d_{\cW^s}(W_1,W_2) = +\infty$ otherwise.

Similarly, given two test functions $\psi_1$ on $W_1$, and $\psi_2$ on $W_2$, we define a distance between them by
\begin{align*}
d(\psi_1,\psi_2)=|\psi_1 \circ G_{W_1} - \psi_2 \circ G_{W_2}|_{C^0(I_{W_1}\cap I_{W_2})} \, ,
\end{align*}
whenever $d_{\cW^s}(W_1,W_2)$ is finite, and $d(\psi_1,\psi_2) = +\infty$ otherwise.

We can now introduce the norms used to define the spaces $\cB$ and $\cB_w$. These norms will depend on the constants $\epsilon_0>0$ and $\delta_0 \in (0,1)$, as well as on four positive real numbers $\alpha$, $\beta$, $\gamma$ and $\zeta$ so that
\begin{align*}
0<\beta<\alpha \leqslant \min\{ 1/3,\alpha_g \}, \quad 1<2^{s_0 \gamma} < e^{P_*(T,g) - \sup g}, \quad 0<\zeta <\gamma
\end{align*}
where $g$ is a given, bounded $(\cM_0^1,\alpha_g)$-H\"older potential such that $P_*(T,g) - \sup g >s_0 \log 2$.
\begin{remark}
The condition $\alpha \leqslant 1/3$ is needed for \cite[Lemma~4.4]{BD2020MME}, which is used to prove the embedding into distributions. The number $1/3$ comes from the regularity of the density function of the conditional measures in the disintegration of $\musrb$ against the stable foliation. The bound $\alpha \leqslant \alpha_g$ will be needed to verify that some functions involving $g$ are $C^\alpha$. The upper bound on $\gamma$ arises from the use of the growth lemma~\ref{lemma:Growth_lemma}. The dependence on $\delta_0$ comes from the definition of $\cW^s$.
\end{remark}

For $f \in C^1(M)$, define the weak norm of $f$ by
\[
| f |_w = \sup_{W \in \cW^s} \sup_{\substack{\psi \in C^\alpha(W) \\ |\psi|_{C^\alpha(W)} \leqslant 1}}
\int_W f \, \psi \, \mathrm{d}m_W \, .
\]
Similarly, define the strong stable norm of $f$ by\footnote{The logarithmic modulus of continuity 
 in $\|f\|_s$ is used to obtain a finite spectral radius.}
\[
\| f \|_s = \sup_{W \in \cW^s} \sup_{\substack{\psi \in C^\beta(W) \\ |\psi|_{C^\beta(W)} \leqslant |\log |W||^\gamma}}
\int_W f \, \psi \, \mathrm{d}m_W \, ,
\]
(note that $| f |_w \leqslant \max \{1, |\log \delta_0|^{-\gamma}\} \| f \|_s$).
Finally, for $\varsigma \in (0,  \gamma)$, define
the strong unstable norm\footnote{The logarithmic modulus of continuity appears in $\|f\|_u$ because
of the logarithmic modulus of continuity in $\|f\|_s$. Its presence in $\|f\|_u$
causes  the loss of the
spectral gap.} of $f$ by
\[
\| f \|_u = \sup_{\ve \leqslant \ve_0} \sup_{\substack{W_1, W_2 \in \cW^s \\ d_{\cW^s}(W_1, W_2) \leqslant \ve}}
\sup_{\substack{\psi_i \in C^\alpha(W_i) \\ |\psi_i|_{C^\alpha(W_i)} \leqslant 1 \\ d(\psi_1, \psi_2) = 0}} 
|\log \ve|^\varsigma \left| \int_{W_1} f \, \psi_1 \, \mathrm{d}m_{W_1} - \int_{W_2} f \, \psi_2 \, \mathrm{d}m_{W_2} \right| \,  .
\]

In order to use functional analysis results, we need to work with complete spaces. Since $C^1(M)$ is not complete for the norms\footnote{For example, the sequence $\left( (r,\varphi) \mapsto \frac{1}{n} \sin 2\pi n^2 \frac{r}{|\Gamma_i|} \right)_n$ is a Cauchy sequence of $C^1(M)$ functions with respect to $|\cdot |_w$, but diverges in the $C^1$-norm.} $|\cdot|_w$ and $\| \cdot \|_s +  \| \cdot \|_u$, we will use the corresponding completed spaces.
\begin{definition}[The Banach spaces]
The space $\cB_w$ is  the completion of $C^1(M)$ with respect to the weak norm
$| \cdot |_w$, while $\cB$ is the completion of $C^1(M)$ with respect to the strong norm,
$\| \cdot \|_{\cB} = \| \cdot \|_s +  \| \cdot \|_u$.
Notice that since $|\cdot |_w \leqslant \| \cdot \|_{\cB}$, there is a canonical map $\cB \to \cB_w$.
\end{definition}

Since the main purpose of the spaces $\cB$ and $\cB_w$ is to contain left and right eigenvectors of a transfer operator acting on those spaces, a crucial feature of $\cB$ and $\cB_w$ is that we can see them as subspaces of the distributional space $(C^1(M))^*$. From this property, we will be able to construct a positive distribution by pairing the left and right eigenvectors, and to extend it into the desired equilibrium measure. First, we need to introduce some other spaces, on which the transfer operator will be naturally defined (and then extended to $\cB$ and $\cB_w$). 

Define the usual homogeneity strips
\[ \mathbbm{H}_k \coloneqq \left\lbrace (r,\varphi) \in M_i \mid \frac{\pi}{2} - \frac{1}{k^2} \leqslant \varphi \leqslant \frac{\pi}{2} - \frac{1}{(k+1)^2} \right\rbrace, \quad k \geqslant k_0 \, ,\]
and analogously for $k \leqslant -k_0$. Define $\cW^s_\mathbbm{H} \subset \cW^s$ as the set of stable manifolds $W \in \cW^s$ such that $T^n W$ lies in a single homogeneity strip for all $n \geqslant 0$. We write $\psi \in C^\alpha(\cW^s_{\mathbbm{H}})$ if $\psi \in C^\alpha(W)$ for all $W \in \cW^s_{\mathbbm{H}}$ with uniformly bounded H\"older norm. The norm of $\psi$ in $C^\alpha(\cW^s_{\mathbbm{H}})$ is defined to be the sup over all the $C^\alpha(W)$ norms, with $W$ ranging in $\cW^s_{\mathbbm{H}}$. Similarly, define the space $C^\alpha_{\cos}(\cW^s_{\mathbbm{H}})$ containing the functions $\psi$ such that $\psi \cos \varphi \in C^\alpha(\cW^s_{\mathbbm{H}})$. The norm of $\psi$ in $C^\alpha_{\cos}(\cW^s_{\mathbbm{H}})$ is defined to be the norm of $\psi \cos \varphi$ in $C^\alpha(\cW^s_{\mathbbm{H}})$. Clearly, $C^\alpha(\cW^s_{\mathbbm{H}}) \subset C^\alpha_{\cos}(\cW^s_{\mathbbm{H}})$.

The canonical map $\cB_w \to (\cF)^*$ (for $\cF = C^1(M)$, or $\cF = C^\alpha(\cW^s_{\mathbbm{H}})$) is understood in the following sense: for $f \in \cB_w$, there exists $C_f < \infty$ such that letting $f_n \in C^1(M)$ be a sequence converging to $f$ in the $\cB_w$ norm, for every $f \in \cF$ the following limit exists
\[ f(\psi) \coloneqq \lim\limits_{n \to +\infty} \int f_n \psi \, \mathrm{d}\musrb \]
and satisfies $|f(\psi)| \leqslant C_f ||\psi||_{\cF}$. 

We summarize the properties of these Banach spaces obtained in \cite{BD2020MME} in the next proposition.

\begin{proposition}The spaces $\cB_w$ and $\cB$ are such that: \\
\noindent (i) The following canonical maps are all continuous
\[ C^1(M) \to \cB \to \cB_w \to (C^\alpha(\cW^s_{\mathbbm{H}}))^* \to (C^1(M) )^*, \]
and the first two maps are injective. In particular, we also have the two injective and continuous maps
\[ (\cB_w)^* \to \cB^* \to (C^1(M))^* .\]
\noindent (ii) The inclusion map $\cB \hookrightarrow \cB_w$ is compact.
\end{proposition}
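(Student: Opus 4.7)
The plan is to follow the approach of \cite[\S 4]{BD2020MME}, where the same spaces are constructed. I would attack the four claims of (i), then (ii), in order.

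First I would verify continuity of each arrow in the chain directly from the definitions. For $C^1(M) \to \cB$, the bound $\|f\|_s \leqslant C|f|_{C^0(M)}$ holds because $t|\log t|^\gamma$ is bounded on $(0,\delta_0]$, and $\|f\|_u \leqslant C|f|_{C^1(M)}$ follows by differencing $\int_{W_1} f\psi_1 - \int_{W_2} f\psi_2$ using $C^1$-control of $f$ and the uniform transversality of $\cW^s$, which gains a factor of $\varepsilon$ beating the $|\log\varepsilon|^\varsigma$. The inclusion $|\cdot|_w \leqslant \max\{1,|\log\delta_0|^{-\gamma}\}\|\cdot\|_s$ (recorded in the text) handles $\cB \to \cB_w$. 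The arrow $\cB_w \to (C^\alpha(\cW^s_\mathbbm{H}))^*$ is the key point: I would disintegrate $\musrb$ along the foliation by homogeneous stable manifolds, so that for $\psi \in C^\alpha(\cW^s_\mathbbm{H})$ and $f \in C^1(M)$,
\[ \int_M f \psi \, \mathrm{d}\musrb = \int_\Xi \int_{W_\xi} f \, \psi \, \rho_\xi \, \mathrm{d}m_{W_\xi} \, \mathrm{d}\bar\mu(\xi), \]
where the conditional density $\rho_\xi$ lies in $C^{1/3}(W_\xi)$ uniformly by \cite[Lemma 4.4]{BD2020MME}. Since $\alpha \leqslant 1/3$, the product $\psi \rho_\xi$ is $C^\alpha(W_\xi)$ with norm controlled by $|\psi|_{C^\alpha(\cW^s_\mathbbm{H})}$, so the inner integral is bounded by $|f|_w$ times a universal constant, and extension by density yields the embedding. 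The last arrow is dual to the bounded restriction $C^1(M) \to C^\alpha(\cW^s_\mathbbm{H})$.

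For injectivity, I would realize both $\cB$ and $\cB_w$ as subspaces of the ambient Hausdorff space $(C^\alpha(\cW^s_\mathbbm{H}))^*$ via the extension of $\iota : f \mapsto (\psi \mapsto \int f\psi \, \mathrm{d}\musrb)$. The injectivity of $C^1(M) \to \cB$ is immediate: if $\|f\|_\cB = 0$ then $f$ vanishes on every $W \in \cW^s$ against every admissible $\psi$, hence on a dense subset of $M$. Injectivity of $\cB \to \cB_w$ then reduces to injectivity of the extended $\iota$ on $\cB_w$, which I would get by showing that an element with zero distributional pairing against every $\psi \in C^\alpha(\cW^s_\mathbbm{H})$ must have vanishing weak norm: the foliation $\cW^s_\mathbbm{H}$ is dense in $\cW^s$ and $C^\alpha$ test functions suffice to probe the $|\cdot|_w$ defining integrals. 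The dual chain $(\cB_w)^* \to \cB^* \to (C^1(M))^*$ then follows mechanically by dualizing continuous maps with dense image ($C^1(M)$ is dense in both $\cB$ and $\cB_w$ by construction).

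The main obstacle, and hardest part, is the compactness of $\cB \hookrightarrow \cB_w$ in (ii), which I would handle by an Arzel\`a–Ascoli argument. Given $(f_n)$ with $\|f_n\|_\cB \leqslant 1$, fix $\varepsilon > 0$ and choose a finite $\varepsilon$-net $W_1,\ldots, W_N$ of $\cW^s$ for $d_{\cW^s}$ (possible because stable curves have uniformly bounded curvature and length). For $W$ within $\varepsilon$ of $W_k$ and matching test functions $\psi, \psi_k$ with $d(\psi,\psi_k) = 0$, the strong unstable norm yields
\[ \left| \int_W f_n \psi \, \mathrm{d}m_W - \int_{W_k} f_n \psi_k \, \mathrm{d}m_{W_k} \right| \leqslant |\log\varepsilon|^{-\varsigma}. \]
On each $W_k$, the functionals $\psi \mapsto \int_{W_k} f_n \psi \, \mathrm{d}m_{W_k}$ are uniformly bounded on the unit ball of $C^\alpha(W_k)$; since $\alpha > \beta$, the inclusion $C^\alpha(W_k) \hookrightarrow C^\beta(W_k)$ is compact, so a diagonal extraction over $k=1,\ldots,N$ and a countable dense set of $C^\alpha$ test functions produces a subsequence along which these integrals converge uniformly on the unit ball of each $C^\alpha(W_k)$. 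This gives $|\cdot|_w$-Cauchyness up to an error $C|\log\varepsilon|^{-\varsigma}$; a further diagonal extraction as $\varepsilon \to 0$ produces a genuine $|\cdot|_w$-Cauchy subsequence. The delicate coordination is ensuring that the approximation scale $\varepsilon$ between curves and the $C^\beta$-approximation of test functions mesh through the $u$-norm without blowing up the other norms.
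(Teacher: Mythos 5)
Your sketches for continuity of the four arrows and for compactness of $\cB\hookrightarrow\cB_w$ are sound and follow essentially the same route the paper takes (delegating to \cite[Propositions~4.2 and~6.1]{BD2020MME}): the continuity estimates are as you state, the disintegration of $\musrb$ along $\cW^s_{\mathbb{H}}$ gives the embedding into $(C^\alpha(\cW^s_{\mathbb{H}}))^*$, and the compactness is indeed obtained by an Arzel\`a--Ascoli argument over a finite $\ve$-net of stable curves plus the compact embedding $C^\alpha\hookrightarrow C^\beta$ on each curve of the net, coordinated through the $u$-norm. The dual chain is handled correctly by dualizing maps with dense image.

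The injectivity of $\cB\to\cB_w$, however, has a genuine gap. You reduce it to ``injectivity of the extended $\iota$ on $\cB_w$'', i.e.\ injectivity of $\cB_w\to(C^\alpha(\cW^s_{\mathbb{H}}))^*$. That reduction runs the wrong way: injectivity of $\iota$ alone does not imply injectivity of $j:\cB\to\cB_w$ (take $j:\mathbb{R}^2\to\mathbb{R}$, $\iota=\mathrm{id}_\mathbb{R}$). What you would actually need is injectivity of the \emph{composite} $\iota\circ j:\cB\to(C^\alpha(\cW^s_{\mathbb{H}}))^*$, i.e.\ that every nonzero $f\in\cB$ has nonzero distributional pairing, and your sketch never establishes that. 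Note also that the proposition does \emph{not} assert that $\cB_w\to(C^\alpha(\cW^s_{\mathbb{H}}))^*$ is injective --- only the first two arrows are claimed injective --- so you are both appealing to something not stated and failing to use it correctly. There is a second structural mismatch: the pairing that underlies $(C^\alpha(\cW^s_{\mathbb{H}}))^*$ is the global $\psi\mapsto\int_M f\psi\,\mathrm{d}\musrb$, whereas the norms $|\cdot|_w$, $\|\cdot\|_s$, $\|\cdot\|_u$ are built from the leafwise integrals $\int_W f\psi\,\mathrm{d}m_W$; these are not the same objects, and the hand-wave ``$C^\alpha$ test functions suffice to probe the $|\cdot|_w$ defining integrals'' does not bridge that gap.

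The paper's proof proceeds directly with the leafwise pairings and never passes through $(C^\alpha(\cW^s_{\mathbb{H}}))^*$: it first shows that the formulas defining each norm extend from $C^1(M)$ to $\cB_w$ (resp.\ $\cB$) and that the extensions coincide with the completion norms; then argues by contraposition. If $\|f\|_s\neq0$ one exhibits $W\in\cW^s$ and $\psi\in C^\beta(W)$ with $\int_W f\psi\,\mathrm{d}m_W>0$, approximates $\psi$ by $\psi'\in C^\alpha(W)$ in the $C^\beta$ norm using density, and concludes $|f|_w>0$. If $\|f\|_u\neq0$ the conclusion is immediate, since the $u$-norm is already defined via $C^\alpha$ test functions, hence some $\int_W f\psi\,\mathrm{d}m_W\neq0$. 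Your sketch has no analogue of either case --- the $\|\cdot\|_u$ contribution, which is precisely what makes $\cB$ strictly smaller than $\cB_w$, is entirely absent --- and so the key injectivity claim is not established.
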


\begin{proof}
The point (i) is the content of \cite[Proposition~4.2]{BD2020MME}. We detail the proof of the injectivity of the map $\cB \to \cB_w$. To do so, we prove that the formula defining $|\cdot |_w$ (respectively $||\cdot ||_s$ and $||\cdot ||_u$) can be extended when $f \in \cB_w$ (respectively $f \in \cB$), and that it coincides with the norm of $f$. 

First, notice that when $f \in C^1(M)$, then for given $W \in \cW^s$ and $\psi \in C^\alpha(W)$ we have $\int_W f \psi \,\mathrm{d}m_W \leqslant |f|_w |\psi|_{C^\alpha(W)}$. Thus $f \mapsto \int_W f \psi \,\mathrm{d}m_W$ can be extended uniquely to $\cB_w$. 

Now, let $f \in \cB_w$, $\varepsilon >0$ and $f_n$ be a Cauchy sequence of $C^1(M)$ functions converging to $f$ in $\cB_w$. Thus, there exists some $n_\varepsilon$ such that for all $n \geqslant n_\varepsilon$, $| f - f_n |_w \leqslant \varepsilon$. Let $W \in \cW^s$ and $\psi \in C^\alpha(W)$ with $|\psi|_{C^\alpha(W)} \leqslant 1$. By definition of $|f_n|_w$, for all $n$, there exist $W_n$ and $\psi_n \in C^\alpha(W_n)$ with $|\psi_n|_{C^\alpha(W_n)}\leqslant 1$ such that 
\begin{align*}
\left| \int_{W_n} f_n \psi_n \,\mathrm{d}m_{W_n} - |f_n|_w \right| \leqslant \varepsilon. 
\end{align*}
Thus, we have 
\begin{align*}
\left| \int_{W_n} f \psi_n \,\mathrm{d}m_{W_n} - \int_{W_n} f_n \psi_n \,\mathrm{d}m_{W_n} \right| \leqslant |f-f_n|_w |\psi_n|_{C^\alpha(W_n)} \leqslant \varepsilon, \quad \forall n \geqslant n_\varepsilon,
\end{align*}
and so $\left| |f_n|_w - \int_{W_n} f \psi_n \,\mathrm{d}m_{W_n} \right| \leqslant 2\varepsilon$. In particular, we get 
\begin{align*}
\sup_{W \in \cW^s} \sup_{\substack{\psi \in C^\alpha(W) \\ |\psi|_{C^\alpha(W)} \leqslant 1}} \int_W f \, \psi \, \mathrm{d}m_W \geqslant |f|_w.
\end{align*}
We now prove the reverse inequality. Using the same notations as above, there exist $V \in \cW^s$ and $\varphi \in C^\alpha(V)$ with $|\varphi|_{C^\alpha(V)} \leqslant 1$ such that 
\begin{align*}
\left| \int_{V} f \varphi \,\mathrm{d}m_{V} - \sup_{W \in \cW^s} \sup_{\substack{\psi \in C^\alpha(W) \\ |\psi|_{C^\alpha(W)} \leqslant 1}} \int_W f \, \psi \, \mathrm{d}m_W \right| \leqslant \varepsilon.
\end{align*} 
Now, since
\begin{align*}
\left| \int_{V} f_n \varphi \,\mathrm{d}m_{V} - \int_{V} f \varphi \,\mathrm{d}m_{V} \right| \leqslant |f-f_n|_w \leqslant \varepsilon, \quad \forall n \geqslant n_\varepsilon,
\end{align*}
we have that $| \sup_{W \in \cW^s} \sup_{\substack{\psi \in C^\alpha(W) \\ |\psi|_{C^\alpha(W)} \leqslant 1}} \int_W f \, \psi \, \mathrm{d}m_W - \int_{V} f_n \varphi \,\mathrm{d}m_{V} | \leqslant 2\varepsilon$ for all large enough $n$. In particular
\begin{align*}
\sup_{W \in \cW^s} \sup_{\substack{\psi \in C^\alpha(W) \\ |\psi|_{C^\alpha(W)} \leqslant 1}} \int_W f \, \psi \, \mathrm{d}m_W \leqslant |f_n|_w + 2\varepsilon.
\end{align*}
Taking the limit in $n$, we get the claimed inequality.

The corresponding results for $f \in \cB$ and norms $||\cdot||_s$ and $||\cdot||_u$ are obtained similarly, noticing that for all $f \in C^1(M)$,
\begin{align*}
\int_W f \psi \,\mathrm{d}m_W \leqslant ||f||_s |\psi|_{C^\beta(W)} |\log |W| |^{-\gamma} \leqslant ||f||_\cB |\psi|_{C^\beta(W)} |\log |W| |^{-\gamma} \, , \,\,\, \forall W \in \cW^s , \, \forall \psi \in C^\beta(W)
\end{align*}
Thus the integrals against $C^\beta(W)$ functions in the definition of $||\cdot ||_s$ makes sense even when $f \in \cB$. On the other hand, since $|\cdot |_w \leqslant ||\cdot ||_{\cB}$, the integrals in the definition of $||\cdot ||_u$ can be extended to $f \in \cB$ as in the above case where $f \in \cB_w$.

We can now show the injectivity of the canonical map $\cB \to \cB_w$. Let $f \in \cB$ with $||f||_\cB \neq 0$. If $||f||_s \neq 0$, then the fact that $|f|_w \neq 0$ follows from the definition of $C^\beta(W)$ as the closure of $C^1(W)$ in the $C^\beta$ norm, so that $C^\alpha(W)$ is dense in $C^\beta(W)$. Now, if $||f||_u \neq 0$, then by definition of $||\cdot ||_u$, we can find some $W \in \cW^s$ and $\psi \in C^\alpha(W)$ so that $\int_w f \psi \, \mathrm{d}m_W >0$. Thus $|f|_w \neq 0$.

The point (ii) is precisely the content of \cite[Proposition~6.1]{BD2020MME}.
\end{proof}

\subsection{The transfer operators}

We may define the transfer operator $\cL_g : (C^\alpha_{\cos}(\cW^s_{\bH}))^* \to (C^\alpha(\cW^s))^*$, for a given weight function $g$ by
\[
\cL_g f (\psi) = f \big(e^g \tfrac{\psi \circ T}{J^sT} \big), \quad \psi \in C^\alpha(\cW^s) \, .
\]

This operator is well defined because, if $\psi \in C^\alpha(\cW^s)$ then $e^g \, \psi \circ T \in C^\alpha(\cW^s)$. Furthermore, since $J^sT$ and $\cos \varphi$ are $1/3$-log-H\"older on homogeneous stable manifolds, and $\cos \varphi / J^sT$ is bounded  away from $0$ and $+\infty$ also on homogeneous stable manifolds, we get that $1/J^sT \in C^\alpha_{\cos}(\cW^s_{\bH})$. Thus $e^g \tfrac{\psi \circ T}{J^sT} \in C^\alpha_{\cos}(\cW^s_{\bH})$.

When $f \in C^1(M)$, we identify $f$ with the measure\footnote{To show the claimed inclusion just use that $\mathrm{d}\musrb=(2  |\partial Q|)^{-1} \cos \vf \, \mathrm{d}r\mathrm{d}\vf$.} 
\begin{equation}\label{iddent}
f \musrb \in (C^\alpha_{\cos}(\cW^s_{\bH}))^* \, . 
\end{equation}
The measure above 
is (abusively) still denoted by $f$.  For $f\in C^1(M)$, we have 
\begin{align*}
\cL_g (f\musrb) (\psi) \!
= \!\! \int \!\! f \,e^g \frac{\psi \circ T}{J^sT} \, \mathrm{d}\musrb 
= \!\! \int \!\! \left(e^g \frac{f}{J^s T} \right) \! \circ T^{-1} \psi \, \mathrm{d}\musrb 
= \left( \! \left(e^g \frac{f}{J^s T} \right) \! \circ T^{-1}  \, \musrb\right) \! (\psi).
\end{align*}
Thus, due to the identification \eqref{iddent} we have $\cL_g f = (e^g f /J^sT )\circ T^{-1}$, as claimed above.

\begin{proposition} For any fixed $(\cM_0^1,\alpha_g)$-H\"older potential $g$ and associated spaces $\cB$ and $\cB_w$:
\begin{enumerate}
\item[(i)] If $f \in C^1(M)$, then $\cL_g(f) \in \cB$.
\item[(ii)] The operators $\cL_g : (C^1(M), |\cdot|_w) \to \cB_w$ and $\cL_g : (C^1(M), ||\cdot||_\cB) \to \cB$ are continuous.
\end{enumerate}
In particular, $\cL_g$ extends uniquely into operators on both $\cB_w$ and $\cB$.
\end{proposition}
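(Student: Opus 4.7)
My plan is to first prove (ii) -- that is, the ``Lasota--Yorke'' type bounds $|\cL_g f|_w \leqslant C|f|_w$ and $\|\cL_g f\|_{\cB} \leqslant C\|f\|_{\cB}$ for $f \in C^1(M)$ -- and then deduce (i) from (ii) by a mollification argument. The final assertion, about unique continuous extensions to $\cB_w$ and $\cB$, will follow at once from (ii) together with the density of $C^1(M)$ in both spaces.

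For (ii), the crucial identity, for any $W \in \cW^s$ and any test function $\psi$, is
\[
\int_W \cL_g f \cdot \psi \,\mathrm{d}m_W \;=\; \sum_{W_i \in \cG_1^{\delta_0}(W)} \int_{W_i} f \cdot \bigl( e^g \,(\psi\circ T)\bigr) \,\mathrm{d}m_{W_i},
\]
obtained by disintegrating $(\cL_g f)\,m_W$ along the smooth components $W_i$ of $T^{-1}W$, the Jacobian $J^s T$ being absorbed by the change of variables. Uniform hyperbolicity gives $|\psi\circ T|_{C^\alpha(W_i)}\leqslant C|\psi|_{C^\alpha(W)}$, and Lemma~\ref{lemma:norm_C0_exp_birkhoff} gives $|e^g|_{C^\alpha(W_i)}\leqslant C|e^g|_{C^0(W_i)}$, so each integral on the right is bounded by $|f|_w \cdot |\psi|_{C^\alpha(W)} \cdot |e^g|_{C^0(W_i)}$ times a constant. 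Summation via Lemma~\ref{lemma:Growth_lemma}(b) with $\gamma=0$ (and the finiteness of $\sum_{A\in\cM_0^1}|e^g|_{C^0(A)}$) yields $|\cL_g f|_w \leqslant C|f|_w$. The strong stable bound is obtained identically with $\psi\in C^\beta$: the weight $|\log|W||^\gamma$ is controlled by the factor $(\log|W|/\log|W_i|)^\gamma$ in Lemma~\ref{lemma:Growth_lemma}(b), and the choice $2^{s_0\gamma}<e^{P_*(T,g)-\sup g}$ makes the resulting series summable.

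For the strong unstable bound, I would pair components of $T^{-1}W_1$ and $T^{-1}W_2$ for $W_1, W_2\in\cW^s$ with $d_{\cW^s}(W_1,W_2)\leqslant\varepsilon$. Since $T^{-1}$ contracts in the unstable direction by $\Lambda^{-1}$, the matched pairs satisfy $d_{\cW^s}(W_{1,i},W_{2,i})\leqslant C\Lambda^{-1}\varepsilon$, except for a bounded number of ``unmatched'' components of length $O(\varepsilon)$ created by singularity curves of $T^{-1}$ cutting only one of the $W_k$. The matched pairs contribute at most $C\|f\|_u \cdot|\log\varepsilon|^\varsigma/|\log(C\Lambda^{-1}\varepsilon)|^\varsigma \leqslant C\|f\|_u$ (the log-ratio being bounded), while the unmatched pieces of length $\ell\leqslant C\varepsilon$ contribute at most $C\|f\|_s \cdot |\log\varepsilon|^\varsigma/|\log\ell|^\gamma \leqslant C\|f\|_s$ since $\varsigma<\gamma$. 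Combining gives $\|\cL_g f\|_u \leqslant C(\|f\|_u+\|f\|_s)\leqslant C\|f\|_{\cB}$, completing (ii).

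For (i), given $f \in C^1(M)$, the element $\cL_g f$ is bounded and piecewise H\"older on the partition $\cM_{-1}^0$ but fails to be $C^1$ globally. Choosing a smooth partition of unity $\{\chi_k^A\}_{A\in\cM_{-1}^0}$ with $\chi_k^A$ supported in a $1/k$-interior of $A$, and a mollifier $\rho_{1/k}$, I would set $(\cL_g f)_k := \sum_{A} \chi_k^A\cdot\bigl((e^g f /J^s T)\circ T^{-1}\bigr)*\rho_{1/k}$. Each $(\cL_g f)_k \in C^1(M)$, and I would show $(\cL_g f)_k\to\cL_g f$ in $\|\cdot\|_{\cB}$ by repeating the estimates from (ii) localised to $1/k$-neighbourhoods of $\cS_{-1}$, whose contribution vanishes as $k\to\infty$. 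The main obstacle here is the unstable-norm convergence, where one must re-do the matching of paired stable curves while accounting simultaneously for the discontinuity set and the regularisation scale; this parallels the $g=0$ case treated in \cite{BD2020MME}. Once $(\cL_g f)_k\to\cL_g f$ in $\|\cdot\|_{\cB}$, (i) follows, and the continuous extensions are obtained by density.
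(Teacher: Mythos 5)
Your approach matches the paper's: point (ii) is obtained as the $n=1$ case of the Lasota--Yorke estimates (Proposition~\ref{prop:upper_bounds_norms}, whose proof does not require SSP.1 for a single iterate), and point (i) is proved by mollifying $\cL_g f$. The paper's mollification argument adapts \cite[Lemma~4.3 and Remark~4.11]{BD2}, and the unstable-norm convergence in fact requires a delicate case split (comparing $\eta^{\alpha_g/6}$ against $|\log \ve|^{-2\zeta}$) that your sketch elides, though you correctly flag it as the main obstacle.
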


\begin{proof}
The proof of (i) proceeds similarly as in its analogous result \cite[Lemma~4.3]{BD2} by introducing a mollification $f_\eta \in C^1(M)$ of $\cL_g(f)$, where $f \in C^1(M)$ and $\eta >0$. As noticed in \cite[Remark~4.11]{BD2}, the proof of \cite[Lemma~4.3]{BD2} can be adapted to the case $g=0$. The modification mainly relies on giving a nonhomogeneous version of \cite[Lemma~4.9]{BD2}, which can easily be done for a weaker -- but sufficiently tight -- upperbound. The corresponding bounds on $||\cL_g(f) - f_\eta||_s$ and $||\cL_g(f) - f_\eta||_u$ decrease to $0$ as $\eta$ goes to zero. Now, for any $(\cM_0^1,\alpha_g)$-H\"older potential $g$, one can use same techniques and decompositions to get the bound $||\cL_g(f) - f_\eta||_s \leqslant C_{f,g} \eta^{\alpha_g/6}$. Concerning the unstable norm, we follow the modifications described in \cite[Remark~4.11]{BD2}, but distinguishing between the cases $\eta^{\alpha_g/6} < |\log \ve|^{-2\zeta}$ and $\eta^{\alpha_g/6} \geqslant |\log \ve|^{-2\zeta}$ (instead of $\eta^{1/6}$). In the first case, we obtain a bound $||\cL_g(f) - f_\eta||_u \leqslant C_{f,g} \eta^{\alpha_g/12}$, whereas in the second case (in which $\ve$ is superexponentially small in $\eta$) we get an upperbound of the form $C_{f,g} \eta^{-\frac{18+\alpha_g}{12}} \exp \Big( -\min(\alpha-\beta, \frac{1}{2}) \eta^{-\frac{\alpha_g}{12\zeta}} \Big)$. Therefore, both $||\cL_g(f) - f_\eta||_s$ and $||\cL_g(f) - f_\eta||_u$ converge to $0$ as $\eta$ goes to zero.

Point (ii) follows from Proposition~\ref{prop:upper_bounds_norms}, in the case $n=1$.
\end{proof}

\section{Norm Estimates and Spectral Radius}\label{sect:norm_estimates}

The purpose of this section is to state and prove sharp upper and lower bounds on the norm of the iterated operator $\cL_g^n$, both in $\cB_w$ and $\cB$. These estimates are instrumental for Section~\ref{sect:measure_mu_g}. Upper bounds are the content of Proposition~\ref{prop:upper_bounds_norms}, while lower bounds are the content of Theorem~\ref{thm:spectral_radius}. We conclude this section with a discussion (Remark~\ref{remark:discussion_assumption}) on the assumptions made in the specific case $g=-h_{\rm top}(\phi_1) \tau$. In particular, we provide billiard tables for which the inequality $P_*(T,g) - \sup g > s_0 \log 2$ holds. The SSP.1 and SSP.2 conditions are also being discussed. 

\begin{proposition}\label{prop:upper_bounds_norms}
Let $g$ be a $(\cM_0^1,\alpha_g)$-H\"older continuous potential. Assume that $P_*(T,g) - \sup g > s_0 \log 2$ and that SSP.1 holds. Then there exist $\delta_0$ and $C > 0$ such that for all $f \in \cB$,
\begin{align}
| \cL_g^n f |_w & \leqslant \frac{C}{\delta_0} e^{n P_*(T,g) } | f |_w \, , \quad \forall n\geqslant 0  \; ; \label{eq:weak ly poly} \\
\|\cL_g^n f\|_s & \leqslant \frac{C}{\delta_0} e^{n P_*(T,g)}  \|f\|_s   \, , \quad \forall n\geqslant 0 \; ; \label{cheapeq:stable ly poly} \\
\| \cL_g^n f \|_u & \leqslant \frac{C}{\delta_0} (\| f \|_u + \| f \|_s)  e^{n P_*(T,g)}  \, , \quad \forall n \geqslant 0 \; . \label{eq:unstable ly poly}
\end{align}
It follows that the spectral radius of $\cL_g$ on $\cB$ and $\cB_w$ is at most $e^{P_*(T,g)}$.
\end{proposition}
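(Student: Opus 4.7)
The plan is to prove the three bounds first for $f \in C^1(M)$ and then extend by density to all $f \in \cB$ (respectively $f \in \cB_w$ for the weak bound). The starting point is the identity $\cL_g^n f = (e^{S_n g} f / J^s T^n) \circ T^{-n}$, which, after a change of variables, gives for any $W \in \cW^s$ and any bounded test function $\psi$ on $W$:
\[
\int_W \cL_g^n f \cdot \psi \, dm_W = \sum_{W_i \in \cG_n^{\delta_0}(W)} \int_{W_i} f \cdot e^{S_n g} \cdot (\psi \circ T^n) \, dm_{W_i}.
\]
Since $T^n$ uniformly contracts stable curves, the composite test function $\bar\psi_i := e^{S_n g}\,(\psi \circ T^n)$ on $W_i$ is controlled, via Lemma~\ref{lemma:norm_C0_exp_birkhoff}, by $|\bar\psi_i|_{C^\alpha(W_i)} \leqslant C |e^{S_n g}|_{C^0(W_i)} |\psi|_{C^\alpha(W)}$ (and similarly in $C^\beta$). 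Everything thus reduces to estimating a weighted sum of $|e^{S_n g}|_{C^0(W_i)}$ over $\cG_n^{\delta_0}(W)$, which is exactly what Lemma~\ref{lemma:Growth_lemma}(b) controls, combined with Proposition~\ref{prop:almost_exponential_growth} to replace $\sum_{A \in \cM_0^{n-j}}|e^{S_{n-j} g}|_{C^0(A)}$ by $\tfrac{2}{c_1} e^{(n-j) P_*(T,g)}$.

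For \eqref{eq:weak ly poly}, I bound each integral by $|f|_w |\bar\psi_i|_{C^\alpha(W_i)}$ and apply Lemma~\ref{lemma:Growth_lemma}(b) at $\gamma = 0$, producing the factor $\sum_{A \in \cM_0^n} |e^{S_n g}|_{C^0(A)} \lesssim e^{n P_*(T,g)}$ directly. For \eqref{cheapeq:stable ly poly}, the integral on $W_i$ is bounded by $\|f\|_s \cdot |\bar\psi_i|_{C^\beta(W_i)}/|\log|W_i||^\gamma$, producing precisely the weighted sum on the left-hand side of Lemma~\ref{lemma:Growth_lemma}(b). Taking the second term in the minimum provided by the lemma and applying Proposition~\ref{prop:almost_exponential_growth} yields a bound of the form
\[
\frac{C}{\delta_0}\|f\|_s \, |\log|W||^\gamma \, e^{n P_*(T,g)} \sum_{j \geqslant 1} \Bigl[ 2^{s_0 \gamma}(Km+1)^{1/m} e^{\sup g - P_*(T,g)} \Bigr]^j.
\]
The condition $2^{s_0 \gamma} < e^{P_*(T,g) - \sup g}$ together with the possibility (allowed by $P_*(T,g) - \sup g > s_0 \log 2$) of tightening the choice of $m$ so that $\tfrac{1}{m}\log(Km+1) < P_*(T,g) - \sup g - s_0 \gamma \log 2$ makes this geometric series converge; dividing by $|\log|W||^\gamma$ yields \eqref{cheapeq:stable ly poly}.

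The main obstacle is \eqref{eq:unstable ly poly}, which I handle by the standard matched/unmatched decomposition. Given $W_1, W_2 \in \cW^s$ with $d_{\cW^s}(W_1,W_2) \leqslant \epsilon$ and compatible test functions $\psi_1, \psi_2$, I pair elements of $\cG_n^{\delta_0}(W_1)$ with elements of $\cG_n^{\delta_0}(W_2)$ into matched pairs $(W_{i,1}, W_{i,2})$ with $d_{\cW^s}(W_{i,1},W_{i,2}) \leqslant C\Lambda^{-n}\epsilon$, leaving a family of unmatched short pieces. On a matched pair, transporting $\psi_1 \circ T^n$ to $W_{i,2}$ via the graph representation, the difference of integrals splits into a piece absorbed by $\|f\|_u$ at scale $C\Lambda^{-n}\epsilon$, producing a gain $|\log(\Lambda^{-n}\epsilon)|^{-\zeta}$, plus a residual piece whose $C^0$-discrepancy is $O((\Lambda^{-n}\epsilon)^{\alpha-\beta})$ and is absorbed by $|f|_w$. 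The ratio $|\log\epsilon|^\zeta / |\log(\Lambda^{-n}\epsilon)|^\zeta$ remains uniformly bounded. Summing matched contributions via Lemma~\ref{lemma:Growth_lemma}(b) with $\gamma=0$ and Proposition~\ref{prop:almost_exponential_growth} produces a term $(\|f\|_u + |f|_w) e^{n P_*(T,g)}$. The unmatched pieces are treated exactly as in \eqref{cheapeq:stable ly poly}, contributing $\|f\|_s \, e^{n P_*(T,g)}$. After multiplying by $|\log\epsilon|^\zeta$ (as required by the definition of $\|\cdot\|_u$), one obtains \eqref{eq:unstable ly poly}.

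All three bounds extend from $C^1(M)$ to $\cB$ by density of $C^1(M)$ and the boundedness of the norms, and \eqref{eq:weak ly poly} extends further to $\cB_w$. The spectral radius estimate on each space then follows from Gelfand's formula applied to $\cL_g$.
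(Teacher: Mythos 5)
Your overall architecture matches the paper's: change of variables to sum over $\cG_n^{\delta_0}(W)$, bound the pulled-back test functions via the contraction of $T^n$ and Lemma~\ref{lemma:norm_C0_exp_birkhoff}, then invoke Lemma~\ref{lemma:Growth_lemma}(b) together with Proposition~\ref{prop:almost_exponential_growth} to turn the thermodynamic sums into $O(e^{nP_*(T,g)})$. Your treatments of \eqref{eq:weak ly poly} and \eqref{cheapeq:stable ly poly} are essentially the paper's proof. The matched-pieces step of \eqref{eq:unstable ly poly} is also broadly right, though a small slip: the residual (second) matched term must be bounded using $\|f\|_s$ with the $C^\beta$ norm of the discrepancy, not $|f|_w$; the $\tilde\ve^{\alpha-\beta}$ smallness comes precisely from interpolating $C^0$-smallness against $C^{\alpha}$-boundedness into a $C^\beta$ estimate, and $|f|_w$ (which tests against $C^\alpha$ functions) gains nothing from that interpolation.

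The genuine gap is the unmatched-pieces estimate in \eqref{eq:unstable ly poly}. You assert they are ``treated exactly as in \eqref{cheapeq:stable ly poly}, contributing $\|f\|_s e^{nP_*(T,g)}$,'' and then multiply by $|\log\ve|^\zeta$. That gives $\|f\|_s\,|\log\ve|^\zeta e^{nP_*(T,g)}$, which blows up as $\ve \to 0$: a flat bound on the unmatched contribution cannot survive the $|\log\ve|^\zeta$ weight in the definition of $\|\cdot\|_u$. What must be used — and what the paper does through its auxiliary Lemma~\ref{lemma:very_short_curves} and the choice of $q(j)$ — is that an unmatched piece created at time $j$ has length at most $C\Lambda^{-j}\tilde\ve$, so iterating $q(j)$ more steps keeps all its descendants in $\cI_{q(j)}^{\delta_0}$ (condition (a)) while ensuring the weight $|\log|V_l||^{-\gamma}$ is already smaller than $|\log\tilde\ve|^{-\zeta}$ (condition (b)). Only after this does the sum over $j$ converge with the $\varepsilon_1 = P_*(T,g)-\sup g - \tfrac1m\log(Km+1) - \tfrac{\gamma}{\gamma-\zeta}s_0\log 2 > 0$ margin and yield an overall factor $|\log\tilde\ve|^{-\zeta}$ that cancels $|\log\ve|^\zeta$. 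Without this quantitative exploitation of the shortness of unmatched pieces the argument does not close.
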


\begin{remark}
It is possible to obtain similar estimates without the assumption SSP.1, however an additional factor $e^{n\varepsilon}$ appears on the right hand sides, for any arbitrary $\varepsilon >0$. We indicate places in the proof where it happens and how to correct for it. The conclusion about the upper bound of the spectral radius still holds. Nonetheless, in order to construct nontrivial maximal eigenvectors, we will need the estimates from Proposition~\ref{prop:upper_bounds_norms}.
\end{remark}

\begin{theorem}\label{thm:spectral_radius}
Let $g$ be a $(\cM_0^1,\alpha_g)$-H\"older continuous potential. Assume that $P_*(T,g) - \sup g > s_0 \log 2$ and that SSP.1 holds. Then there exists $C$ such that 
\[ ||\cL_g^n 1 ||_s \geqslant |\cL_g^n 1|_w \geqslant C \frac{\delta_1}{2} e^{n P_*(T,g)}. \]
\end{theorem}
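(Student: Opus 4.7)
The first inequality $\|\cL_g^n 1\|_s \geq |\cL_g^n 1|_w$ is immediate from the relation noted right after the norm definitions, $|f|_w \leq \max\{1, |\log \delta_0|^{-\gamma}\}\|f\|_s$: assuming $\delta_0 \leq 1/e$ (as we may), the factor equals $1$. The real content is the lower bound on the weak norm, which I will prove by testing $\cL_g^n 1$ against the constant function on a single, suitably long piece of a stable manifold.

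Pick any $W \in \cW^s$ with $\delta_1/3 \leq |W| \leq \delta_0$ and take $\psi \equiv 1$, so that $|\psi|_{C^\alpha(W)} = 1$ and $|\cL_g^n 1|_w \geq \int_W \cL_g^n 1\, dm_W$. Since $\cW^s$ consists of pieces of honest stable manifolds, the arclength Jacobian of $T^n$ along $T^{-n}W$ equals $J^s T^n$; under the identification $f \leftrightarrow f\musrb$ used in Section~\ref{sect:banach_spaces_and_transfer_operators}, a direct change of variables therefore gives
\[
\int_W \cL_g^n 1 \, dm_W \;=\; \sum_{W_i \in \cG_n^{\delta_1}(W)} \int_{W_i} e^{S_n g}\, dm_{W_i}.
\]

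I then bound the right-hand side from below by restricting to long pieces, using $\int_{W_i} e^{S_n g}\, dm_{W_i} \geq (\delta_1/3)\inf_{W_i} e^{S_n g}$ for $W_i \in L_n^{\delta_1}(W)$, and replacing $\inf$ by $C_g^{-1}|\cdot|_{C^0(W_i)}$ via Lemma~\ref{lemma:sup_less_poly_inf}. The SSP.1 estimate~\eqref{eq:delta1} (applicable since $|W| \geq \delta_1/3$) then yields $\sum_{L_n^{\delta_1}(W)} |e^{S_n g}|_{C^0(W_i)} \geq \tfrac{2}{3}\sum_{\cG_n^{\delta_1}(W)} |e^{S_n g}|_{C^0(W_i)}$, and Proposition~\ref{prop:GnW_geq_M0n} converts the latter sum into
\[
\sum_{W_i \in \cG_n^{\delta_1}(W)} |e^{S_n g}|_{C^0(W_i)} \;\geq\; c_0 \sum_{A \in \cM_{-n}^0} |e^{S_n^{-1} g}|_{C^0(A)}.
\]

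To finish, I use that $T^n$ induces a bijection $\cM_0^n \to \cM_{-n}^0$ under which $S_n g$ on $A$ becomes $S_n^{-1} g$ on $T^n A$, so $\sum_{\cM_{-n}^0} |e^{S_n^{-1} g}|_{C^0} = \sum_{\cM_0^n} |e^{S_n g}|_{C^0}$, and then produce a matching lower bound at rate $e^{nP_*(T,g)}$: submultiplicativity in Theorem~\ref{thm:pressure} gives $\sum_{\cP_0^n}\sup e^{S_n g} \geq e^{nP_*(T,g)}$; the linear bound on isolated cells from \cite[Lemma~3.2]{BD2020MME} together with the hypothesis $P_*(T,g) > \sup g$ gives $\sum_{\mathring\cP_0^n}\sup e^{S_n g} \geq \tfrac12 e^{nP_*(T,g)}$ for large $n$; and the inequality $\sum_{\cM_0^{n+1}} \geq (\#\cM_{-1}^0)^{-1} e^{\inf g}\sum_{\mathring\cP_0^n}$ from the proof of Theorem~\ref{thm:pressure}(ii) yields $\sum_{\cM_0^n}|e^{S_n g}|_{C^0} \geq c\, e^{nP_*(T,g)}$ for $n$ large. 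Chaining everything produces $|\cL_g^n 1|_w \geq C\,\delta_1\, e^{nP_*(T,g)}$ for all sufficiently large $n$, and the finitely many remaining values are absorbed by adjusting $C$.

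The only subtle step is the transfer-operator identity: one must verify that after passing through $f \leftrightarrow f\musrb$, pairing with a test function on $W$ against arclength, and changing variables along $T^{-n}W$, no residual $\cos\varphi$ factors remain. This rests on the fact that $\cW^s$ is built from \emph{true} stable manifolds, so the Jacobian along the curve equals $J^s T^n$ exactly; all other steps are bookkeeping over previously established growth and SSP.1 estimates.
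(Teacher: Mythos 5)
Your proof is correct and follows the same overall strategy as the paper's: test $\cL_g^n 1$ against $\psi\equiv1$ on a stable curve $W$ with $|W|\geqslant\delta_1/3$, push the integral through the change of variables to a sum over $\cG_n^{\delta_1}(W)$, feed this into Proposition~\ref{prop:GnW_geq_M0n}, and finish with the exact exponential lower bound on $\sum_{\cM_{-n}^0}|e^{S_n^{-1}g}|_{C^0}$ coming from the subadditivity in Theorem~\ref{thm:pressure}.

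The one place your argument deviates is a small but genuine refinement: the paper bounds $\int_{W_i}e^{S_n g}\,dm_{W_i}\geqslant\tfrac{\delta_1}{2}\inf_{W_i}e^{S_n g}$ for \emph{all} $W_i\in\cG_n^{\delta_1}(W)$, which tacitly requires every piece in the $\delta_1$-scaled subdivision to have length at least $\delta_1/2$ --- not true in general, since short pieces of $\cG_n^{\delta_1}(W)$ are not further subdivided and can be arbitrarily short. You avoid this by restricting the integral to $L_n^{\delta_1}(W)$ (where a length lower bound of $\delta_1/3$ genuinely holds), then recovering the full sum over $\cG_n^{\delta_1}(W)$ via the SSP.1 estimate~\eqref{eq:delta1}, which is exactly the hypothesis built for this purpose. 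The cost is an extra factor $2/3$ absorbed in the constant. Your explicit passage from $\cM_{-n}^0$ to $\cM_0^n$ via $T^n$, and your derivation of $\sum_{\cM_0^n}|e^{S_n g}|_{C^0}\geqslant c\,e^{nP_*(T,g)}$ from the subadditivity statement in Theorem~\ref{thm:pressure} together with the isolated-cells bound, are steps the paper elides but which you correctly fill in. Both arguments require $n\geqslant n_1$, which you and the paper both handle by enlarging $C$ for the finitely many remaining $n$ (valid since each $|\cL_g^n1|_w>0$).
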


\begin{proof}[Proof of Proposition~\ref{prop:upper_bounds_norms}]
Let $\delta_0$ be the scale associated to $g$ as in the beginning of Section~\ref{subsect:Frag_lemmas--round_1}. The  set $\cW^s$ is defined with respect to the scale $\delta_0$.

We start with the weak norm estimate \eqref{eq:weak ly poly}. Let $f \in C^1(M)$, $W \in \cW^s$ and $\psi \in C^{\alpha}(W)$ be such that $|\psi|_{C^{\alpha}(W)} \leqslant 1$. For $n\geqslant 0$ we use the definition of the weak norm on each $W_i \in \cG_n^{\delta_0}(W)$ to estimate
\[ \int_W \cL_g^n f \psi \, \mathrm{d}m_W = \sum\limits_{W_i \in \cG_n^{\delta_0}(W)} \int_{W_i} f e^{S_n g} \psi\circ T^n \mathrm{d}m_{W_i} \leqslant |f|_w \sum\limits_{W_i \in \cG_n^{\delta_0}(W)} | e^{S_n g} |_{C^{\alpha}(W_i)} | \psi \circ T^n |_{C^{\alpha}(W_i)}. \]

Clearly, $\sup |\psi\circ T^n|_{W_i}\leqslant \sup_W |\psi|$.
For $x,y \in W_i$, we have,
\begin{align}\label{eq:Calpha_norm_observable_Tn}
\frac{|\psi (T^nx) - \psi (T^ny)|}{d_W(T^nx,T^ny)^\alpha}
\cdot \frac{d_W(T^nx,T^ny)^\alpha}{d_W(x,y)^\alpha} &\leqslant C |\psi|_{C^\alpha(W)} 
|J^sT^n|^\alpha_{C^0(W_i)} 
\leqslant C\Lambda^{-\alpha n} |\psi|_{C^\alpha(W)}\, ,
\end{align}
so that  $H_{W_i}^\alpha(\psi\circ T^n)
\leqslant C\Lambda^{-\alpha n} H_{W}^\alpha(\psi)$ and thus $|\psi \circ T^n|_{C^\alpha(W_i)} \leqslant C |\psi|_{C^\alpha(W)}$.
By Lemma~\ref{lemma:norm_C0_exp_birkhoff}, we get 
\begin{align*}
\int_W \cL_g^n f \psi \, \mathrm{d}m_W &\leqslant C |f|_w |\psi|_{C^\alpha(W)} \sum\limits_{W_i \in \cG_n^{\delta_0}(W)} | e^{S_n g} |_{C^{0}(W_i)} 
\leqslant \frac{2C}{\delta_0} |f|_w |\psi|_{C^\alpha(W)} \sum\limits_{A \in \cM_0^n} | e^{S_n g} |_{C^{0}(A)}, \\
&\leqslant \frac{2C}{c_1 \delta_0} |f|_w |\psi|_{C^\alpha(W)} e^{nP_*(T,g)},
\end{align*}
where the second inequality uses that there are no more than $2\delta_0^{-1}$ curves $W_i$ of $\cG_n^{\delta_0}(W)$ per element of $\cM_0^n$, and the third inequality uses the Exact Exponential Growth from Proposition~\ref{prop:almost_exponential_growth}\footnote{without the assumption SSP.1, Proposition~\ref{prop:almost_exponential_growth} might not hold. Still, for $\varepsilon >0$ and all $n \geqslant 1$, $\sum_{A \in \cM_0^n} |e^{S_n g}|_{C^0(A)} \leqslant C_\varepsilon \, e^{n(P_*(T,g) + \varepsilon)}$ because of the subadditivity from Theorem~\ref{thm:pressure}. }. Taking the appropriate $\sup$ over $\psi$ and $W$ we obtain \eqref{eq:weak ly poly}.

\vspace{1em}

Now we prove the strong stable norm estimate \eqref{cheapeq:stable ly poly}. We can choose $m$ so large that $2^{s_0\gamma}(Km+1)^{1/m} < e^{P_*(T,g) - \sup g}$. Let $W \in \cW^s$, $\psi \in C^{\beta}(W)$ such that $|\psi|_{C^{\beta}(W)} \leqslant |\log |W| |^{\gamma}$. Then, by definition of the strong norm
\begin{align*}
\int_W  \!\! \cL_g^n f \psi \mathrm{d}m_W 
&= \!\!\!\!\!\!\! \sum\limits_{W_i \in \cG_n^{\delta_0}(W)} \!\!\!\!\! \int_{W_i} \!\! f \psi \circ T^n e^{S_n g} \mathrm{d}m_{W_i} 
\leqslant \!\!\!\!\!\!\! \sum\limits_{W_i \in \cG_n^{\delta_0}(W)} \!\!\!\!\!\!\! ||f||_s |\psi \circ T^n|_{C^{\beta}(W_i)} |e^{S_n g}|_{C^{\beta}(W_i)} |\log |W_i| |^{-\gamma} \\
&\leqslant C ||f||_s \sum\limits_{W_i \in \cG_n^{\delta_0}(W)} \left( \frac{\log |W|}{ \log |W_i|} \right)^{\gamma} |e^{S_n g}|_{C^{\beta}(W_i)}\\
&\leqslant C ||f||_s 2^{2\gamma +1} \delta_0^{-1} \sum\limits_{j=1}^n 2^{js_0 \gamma}(Km+1)^{j/m} e^{j \sup g} \sum\limits_{A \in \cM_0^{n-j}} |e^{S_{n-j} g}|_{C^{0}(A)}
\end{align*}
where for the last line we used Lemma~\ref{lemma:Growth_lemma}(b) and Lemma~\ref{lemma:norm_C0_exp_birkhoff}. Let 
\[
 D_n \coloneqq C 2^{2\gamma +1} \delta_0^{-1} \sum\limits_{j=1}^n 2^{js_0 \gamma}(Km+1)^{j/m} e^{j \sup g} \sum\limits_{A \in \cM_0^{n-j}} |e^{S_{n-j} g}|_{C^{0}(A)}. 
\]
Let $\varepsilon_1 = P_*(T,g) - \sup g - \log (2^{s_0 \gamma} (Km+1)^{1/m}) > 0$. Using Proposition~\ref{prop:almost_exponential_growth}, we obtain\footnote{Here, again, conclusion from Proposition~\ref{prop:almost_exponential_growth} can be replaced.}

\begin{align*}
D_n \leqslant 2^{2\gamma +1} \frac{C}{c_1 \delta_0} \sum\limits_{j=1}^n e^{(P_*(T,g) - \varepsilon_1)j} e^{(n-j)P_*(T,g)} 
\leqslant 2^{2\gamma +1} \frac{1}{1 - e^{-\varepsilon_1}}  \frac{C}{c_1 \delta_0} e^{n P_*(T,g)}.
\end{align*} 
Combining this with the previous estimate and taking appropriate $\sup$ yield \eqref{cheapeq:stable ly poly}.

\vspace{1em}

Finally, we now prove the strong unstable norm estimate (\ref{eq:unstable ly poly}). Fix $\tilde{\varepsilon} < \varepsilon_0$, and consider two curves $W^1$, $W^2 \in \cW^s$ with $d_{\cW^s}(W^1,W^2) < \tilde{\varepsilon}$. For $n \geqslant 1$, we recall how, as described in \cite[\S~6.2]{BD2020MME}, $T^{-n}W^\ell$ is partitioned into ``matched'' pieces $U^\ell_j$ and ``unmatched'' pieces $V^\ell_i$, $\ell=1,2$.

More precisely, to each $x \in T^{-n}\omega$, where $\omega$ is a connected component of $W^1 \smallsetminus \cS_{-n}$, we associate a vertical line segment $\gamma_x$ of length most $C \Lambda^{-n} \tilde \ve$ (so that its image $T^n \gamma_x$, if not cut by a singularity, has length at most $C \tilde \ve$). By \cite[\S~4.4]{chernov2006chaotic}, we get that the $T^i \gamma_x$ are made of cone-unstable curves, and thus enjoy minimum expansion given by $\Lambda$.

Doing this for all such $x$, we partition of $W^1 \smallsetminus \cS_{-n}$ into countably many subintervals of points for which $T^n \gamma_x$ intersects $W^2 \smallsetminus \cS_{-n}$ (but not $\cS_{-n}$), and subintervals of points for which this is not the case. This induces a corresponding partition of $W^2 \smallsetminus \cS_{-n}$.

Denote by $V_i^\ell \subset T^{-n} W^\ell$ the subintervals that are not matched. Note that the $T^n V_i^\ell$ occur either at the endpoints of $W^\ell$ or because the vertical segment $\gamma_x$ cuts $\cS_n$. In both cases, due to the uniform transversality of $\cS_{-n}$ and $C^u$ with $C^s$, one must have that $|T^n V_i^\ell| \leqslant C \tilde \ve$.

In the remaining subintervals the foliation $\{ T^n\gamma_x \}_{x \in T^{-n}W^1}$ provides a one-to-one correspondence between points in $W^1$ and $W^2$. We cut these subintervals into pieces such that the lengths of their images under $T^{-i}$ are less than $\delta_0$ for each $0 \leqslant i \leqslant n$ and the pieces are pairwise matched by the foliation $\{\gamma_x\}$. We call these matched pieces $U^\ell_j \subset T^{-n} W^\ell$, such that 
\begin{equation}
\label{eq:match}
\begin{split}
\mbox{If } \; & U^1_j = \{ G_{U_j^1}(r) \coloneqq (r, \vf_{U^1_j}(r)) \mid r \in I_j \}\, \, \mbox{then } U^2_j = \{ G_{U_j^2}(r) \coloneqq (r, \vf_{U^2_j}(r)) \mid r \in I_j \} \, ,
\end{split}
\end{equation}
so that the point $x = (r, \vf_{U^1_j}(r))$ is associated with the point $\bar x = (r, \vf_{U^2_j}(r))$ by the vertical segment $\gamma_x \subset \{(r,s)\}_{s\in[-\pi/2, \pi/2]}$, for each $r \in I_j$. Furthermore, since the stable cone is bounded away from the vertical direction, we can adjust the elements of $\cG_n^{\delta_0}(W^\ell)$ created by artificial subdivisions (the ones due to length) so that $U^\ell_j \subset W^\ell_i$ and $V^\ell_k \subset W^\ell_{i'}$ for some $W^\ell_i, W^\ell_{i'} \in \cG_n^{\delta_0}(W^\ell)$ for all $j,k \geqslant 1$ and $\ell = 1,2$, without changing the bounds on sums over $\cG_n^{\delta_0}(W^\ell)$. There is at most one $U^\ell_j$ and two $V^\ell_j$ per $W^\ell_i \in \cG_n^{\delta_0}(W^\ell)$.

Thus we have the decompositions $W^\ell = (\cup_j T^nU^\ell_j) \cup (\cup_i T^nV^\ell_i)$, $\ell = 1, 2$.

Given $\psi_\ell$ on $W^\ell$ with $|\psi_\ell|_{C^\alpha(W^\ell)} \leqslant 1$ and $d(\psi_1, \psi_2) \leqslant \tilde{\ve}$, we must estimate

\begin{align}\label{eq:strong_unstable_norm_matched+unmatched}
\begin{split}
\left| \int_{W^1} \cL_g^n f \psi_1 \mathrm{d}m_{W_1} - \int_{W^2} \cL_g^n f \psi_2 \mathrm{d}m_{W_2} \right| &\leqslant \sum\limits_{l,i} \left| \int_{V_i^l} f \psi_l\circ T^n e^{S_n g} \mathrm{d}m \right|  \\ &\, + \sum\limits_{j} \left| \int_{U_j^1} f \psi_1 \circ T^n e^{S_n g} \mathrm{d}m - \int_{U_j^2} f \psi_2 \circ T^n e^{S_n g} \mathrm{d}m    \right|.
\end{split}
\end{align}
First, we estimate the differences of matched pieces $U_j^l$. The function $\phi_j= ( \psi_1 \circ T^n e^{S_n g}) \circ G_{U_j^1} \circ G_{U_j^2}^{-1}$ is well defined on $U_j^2$, and we can estimate each difference by
\begin{align}\label{eq:matched_pieces}
\begin{split}
\left| \int_{U_j^1} f \psi_1 \circ T^n e^{S_n g} \mathrm{d}m - \int_{U_j^2} f \psi_2 \circ T^n e^{S_n g} \mathrm{d}m    \right| &\leqslant \left| \int_{U_j^1} f \psi_1 \circ T^n e^{S_n g} \mathrm{d}m - \int_{U_j^2} f \phi_j \mathrm{d}m \right| \\ &+ \left| \int_{U_j^2} f ( \phi_j - \psi_2 \circ T^n e^{S_n g} ) \mathrm{d}m \right|.
\end{split}
\end{align}
We bound the first term in equation \eqref{eq:matched_pieces} using the strong unstable norm. We have that $|G_{U^1_j} \circ G_{U^2_j}^{-1}|_{C^1} \leqslant C_g$, for some $C_g > 0$ due to the fact that each curve $U_j^l$ has uniformly bounded curvature and slopes bounded away from infinity. Thus $|\phi_j|_{C^{\alpha}(U_j^2)} \leqslant C C_g |\psi_1|_{C^\alpha(W^1)} |e^{S_n g}|_{C^\alpha(W^1)}$. Moreover, $d(\psi_1 \circ T^n e^{S_n g}, \phi_j) = |\psi_1 \circ T^n e^{S_n g} \circ G_{U^1_j}  - \phi_j \circ G_{U^2_j}| = 0$ by definition of $\phi_j$. To complete the bound on the first term, we use the following estimate from \cite[Lemma 4.2]{DZ1}: There exists $C > 0$, independent of $W^1$ and $W^2$, such that 
\begin{align}\label{eq:closeness_matched_pieces}
d_{\cW^s}(U_j^1,U_j^2) \leqslant C \Lambda^{-n} n \tilde \varepsilon \eqqcolon \varepsilon_1, \quad \forall j.
\end{align}
Then, applying the definition of the strong unstable norm with $\varepsilon_1$ instead of $\tilde{\varepsilon}$
\begin{align}\label{eq:bound_diff_int_matched_pieces}
\sum\limits_{j} \left| \int_{U_j^1} f \psi_1 \circ T^n e^{S_n g} \mathrm{d}m - \int_{U_j^2} f \phi_j \mathrm{d}m \right|  \leqslant 2 \delta_0^{-1} C C_g^2 |\log \varepsilon_1|^{-\zeta} ||f||_u \sum\limits_{A \in \cM_0^n } |e^{S_n g}|_{C^0(A)},
\end{align}
where we used Lemmas~\ref{lemma:norm_C0_exp_birkhoff} and~\ref{lemma:Growth_lemma}(b) with $\gamma =0$ since there is at most one matched piece $U^1_j$ corresponding to each component $W^1_i \in \cG_n^{\delta_0}(W^1)$ of $T^{-n} W^1$.

It remains to estimate the second term of \eqref{eq:matched_pieces} using the strong stable norm.
\[ \left| \int_{U_j^2} f ( \phi_j - \psi_2 \circ T^n e^{S_n g} ) \mathrm{d}m \right| \leqslant ||f||_s |\log |U^2_j| |^{-\gamma} |\phi_j - \psi_2 \circ T^n e^{S_n g}|_{C^{\beta}(U_j^2)}. \]
In order to estimate this last $C^{\beta}$-norm, we use that $|G_{U_j^2}|_{C^1} \leqslant C_g$ and $|G_{U_j^2}^{-1}|_{C^1} \leqslant C_g$. Hence
\begin{align}\label{eq:diff_phi_psi_matched_pieces}
\begin{split}
|\phi_j - \psi_2 \circ T^n e^{S_n g}|_{C^{\beta}(U_j^2)} &\leqslant C | (\psi_1 \circ T^n e^{S_n g}) \circ G_{U_j^1} - (\psi_2 \circ T^n e^{S_n g}) \circ G_{U_j^2} |_{C^{\beta}(I_j)} \\
&\leqslant C | (\psi_1 \circ T^n \circ G_{U_j^1} - \psi_2 \circ T^n \circ G_{U_j^2}) |_{C^{\beta}(I_j)} \, | e^{S_n g} |_{C^{0}(U_j^1)} \\
&\qquad\quad + C |\psi_2|_{C^{0}(U_j^2)} \, |e^{S_n g} \circ G_{U_j^1} - e^{S_n g} \circ G_{U_j^2} |_{C^{\beta}(I_j)}.
\end{split}
\end{align}
It follows from \cite[Lemma 4.4]{DZ1} that 
\[ | \psi_1 \circ T^n \circ G_{U_j^1} - \psi_2 \circ T^n \circ G_{U_j^2} |_{C^{\beta}(I_j)} \leqslant C \tilde{\varepsilon}^{\alpha - \beta}.\]
Now, we need to estimate $|e^{S_n g} \circ G_{U_j^1} - e^{S_n g} \circ G_{U_j^2} \, |_{C^{\beta}(I_j)}$. Since $d(T^i (G_{U_j^1}(r)),T^i (G_{U_j^2}(r))) \leqslant C \Lambda^{-(n-i)} \tilde{\ve}$ for all $r \in I_j$ and $0 \leqslant i \leqslant n$, we get
\begin{align}\label{eq:C0_norm_matched_pieces}
\begin{split}
&|e^{S_n g} \circ G_{U_j^1}(r) - e^{S_n g} \circ G_{U_j^2}(r) | 
= e^{S_n g(G_{U_j^1}(r))} \, \left| 1 - e^{S_n g(G_{U_j^2}(r))-S_n g(G_{U_j^1}(r))} \right| \\
&\,\,\,\, \leqslant 2 |e^{S_n g}|_{C^0(U_j^1)} |S_n g(G_{U_j^2}(r))-S_n g(G_{U_j^1}(r))| 
\leqslant 2C \frac{\Lambda^{\alpha_g}}{\Lambda^{\alpha_g} -1} |g|_{C^{\alpha_g}} (C\tilde{\ve})^{\alpha_g} |e^{S_n g}|_{C^0(U_j^1)}
\end{split}
\end{align} 
We estimate the $\beta$-H\"older constant in two ways. First, using \eqref{eq:C0_norm_matched_pieces} twice, we have for all $r,\, s \in I_j$ that
\begin{align*}
|e^{S_n g} \circ G_{U_j^1}(r) - e^{S_n g} \circ G_{U_j^2}(r) - e^{S_n g} \circ G_{U_j^1}(s) + e^{S_n g} \circ G_{U_j^2}(s)| \leqslant C \tilde \ve^{\alpha_g} |e^{S_n g}|_{C^0(U_j^1)}.
\end{align*}
On the other hand, using that $G_{U_j^\ell}(r)$ and $G_{U_j^\ell}(s)$ lie on the same stable curve,
\begin{align*}
|e^{S_n g} &\circ G_{U_j^1}(r) - e^{S_n g} \circ G_{U_j^2}(r) - e^{S_n g} \circ G_{U_j^1}(s) + e^{S_n g} \circ G_{U_j^2}(s)| \\
&\leqslant |e^{S_n g} \circ G_{U_j^1}(r) - e^{S_n g} \circ G_{U_j^1}(s) + e^{S_n g} \circ G_{U_j^2}(s) - e^{S_n g} \circ G_{U_j^2}(r)| \\
&\leqslant |e^{S_n g}|_{C^{\alpha_g}(U_j^1)} d(G_{U_j^1}(r),G_{U_j^1}(s))^{\alpha_g} + |e^{S_n g}|_{C^{\alpha_g}(U_j^2)} d(G_{U_j^2}(r),G_{U_j^2}(s))^{\alpha_g} \\
&\leqslant C |e^{S_n g}|_{C^{0}(U_j^1)} |r-s|^{\alpha_g}.
\end{align*}
Thus, this quantity constant is bounded by the $\min$ of the two estimates. This $\min$ is maximal when the two upperbounds are equal, that is when $\tilde \ve = C |r-s|$. Therefore, dividing by $|r-s|^{\beta}$, the $\beta$-H\"older constant satisfies 
\[ H^\beta_{I_j} (  e^{S_n g} \circ G_{U_j^1} - e^{S_n g} \circ G_{U_j^2} ) \leqslant C \tilde \ve ^{\alpha_g - \beta} |e^{S_n g}|_{C^{0}(U_j^1)} \, . \]
Therefore, we have proved that 
\[ |e^{S_n g} \circ G_{U_j^1} - e^{S_n g} \circ G_{U_j^2} |_{C^{\beta}(I_j)} \leqslant C \tilde \ve ^{\alpha_g - \beta} |e^{S_n g}|_{C^{0}(U_j^1)} \, . \]
Combining the above estimates inside \eqref{eq:diff_phi_psi_matched_pieces}, we finally have
\[ |\phi_j - \psi_2 \circ T^n e^{S_n g}|_{C^{\beta}(U_j^2)} \leqslant C \tilde \ve ^{\alpha - \beta} |e^{S_n g}|_{C^{0}(U_j^1)} \, . \]
Summing over $j$ yields
\[ \sum\limits_{j} \left| \int_{U_j^2} f ( \phi_j - \psi_2 \circ T^n e^{S_n g} ) \mathrm{d}m \right| \leqslant C |\log \delta_0 |^{-\gamma} ||f||_s \tilde{\varepsilon}^{\alpha - \beta} 2 \delta_0^{-1} \sum\limits_{A \in \cM_0^n} |e^{S_n g}|_{C^0(A)},
\]
where we used Lemma~\ref{lemma:Growth_lemma}(b) with $\gamma =0$ since there is at most one matched piece $U_j^l$ corresponding to each component $W^l_i \in \cG_n^{\delta_0}(W^l)$ of $T^{-n}W^l$. Since $\delta_0 < 1$ is fixed, this completes the estimate on the second term of \eqref{eq:matched_pieces}. Hence the contribution of matched pieces in \eqref{eq:strong_unstable_norm_matched+unmatched} is controlled. 

\medskip
We now turn to the estimate of the first sum in \eqref{eq:strong_unstable_norm_matched+unmatched} concerning the unmatched pieces.

We say an unmatched curve $V^1_i$ is created at time $j$, $1 \leqslant j \leqslant n$, if  $j$ is the first time that $T^{n-j}V^1_i$ is not part of a matched element of $\cG_j^{\delta_0}(W^1)$.  Indeed, there may be several curves $V^1_i$ (in principle exponentially many in $n-j$) such that $T^{n-j}V^1_i$ belongs to the same unmatched element of $\cG_j^{\delta_0}(W^1)$. Define
\begin{align*}
A_{j,k} = \{ i \mid V^1_i &\mbox{ is created at time $j$} 
\\ &
\mbox{and $T^{n-j}V^1_i$ belongs to the
unmatched curve $W^1_k \subset T^{-j}W^1$} \} \, .
\end{align*}
Due to the uniform hyperbolicity of $T$,  and, again, uniform transversality of $\cS_{-n}$ with the stable cone and the one of $C^s(x)$ with $C^u(x)$, we have $|W^1_k| \leqslant C \Lambda^{-j} \tilde \ve$.

Recall that from Lemma~\ref{lemma:Growth_lemma}(a) for $\bar \gamma = 0$, if for a certain time $q$, every element of $\cG_q^{\delta_0}(W_k^1)$ have length less than $\delta_0/3$ -- that is, if $\cG_q^{\delta_0}(W_k^1) = \cI_q^{\delta_0}(W_k^1)$ -- then we have the subexponential growth
\begin{align}\label{eq:slow grow}
\sum_{V \in \cG_q^{\delta_0}(W^1_k)} |e^{S_q g}|_{C^0(V)} \leqslant 2 (Km +1)^{q/m} e^{q \sup g} \, .
\end{align}
We would like to establish a lower bound on the value of $q$ as a function of $j$.

More precisely, we want to find $q(j)$, as large as possible, so that 

	(a) $\cG_{q(j)}^{\delta_0}(W_k^1) = \cI_{q(j)}^{\delta_0}(W_k^1)$; \qquad
	
	(b) $\displaystyle \frac{|\log |V||^{-\gamma}}{|\log \tilde \ve|^{-\varsigma}} \leqslant 1$, for all $V \in \cI_{q(j)}^{\delta_0}(W_k^1)$.\\
This is the content of the next two lemmas.

\begin{lemma}\label{lemma:very_short_curves}
If $W \in \hW^s$ is such that $\tilde C^2 |W|^{2^{-kn_0s_0}} < \delta_0/3$ for some $k \geqslant 1$, where $\tilde C$ is the constant from \eqref{eq:sqrt_bound}. Then $\cG_{kn_0}^{\delta_0}(W) = \cI_{kn_0}^{\delta_0}(W)$, and for all $1 \leqslant l \leqslant k$, and all $W_i \in \cG_{ln_0}^{\delta_0}(W)$, $|W_i| \leqslant \tilde C^2 |W|^{2^{-ln_0s_0}}$.
\end{lemma}

\begin{proof}
We prove the lemma by induction on $k$. We start with the case $k=1$. Let $1 \leqslant l \leqslant n_0$ and $W_i \in \cG_{l}^{\delta_0}(W)$. Denote $V = T^l W_i \subset W$. Then, for all $0 \leqslant j \leqslant l$, $|T^j W_i| \leqslant \delta_0$. Decomposing $T^{-l}V = W_i$ as in the beginning of the proof of Lemma~\ref{lemma:Growth_lemma}, we get that $|W_i| \leqslant \tilde C |W|^{2^{-n_0s_0}}$, which is less than $\delta_0/3$ by assumption. Thus, $\cG_{l}^{\delta_0}(W) = S_{l}^{\delta_0}(W)$ for each $0 \leqslant l \leqslant n_0$. Therefore $\cG_{n_0}^{\delta_0}(W) = \cI_{n_0}^{\delta_0}(W)$, with the claimed estimate.

Consider now the case $k>1$. Notice that, by construction, we have 
\[
\cG_{(k+1)n_0}^{\delta_0}(W) = \bigcup_{W_i \in \cG_{kn_0}^{\delta_0}(W)} \cG_{n_0}^{\delta_0}(W).
\]
Thus, we can apply the same method to estimate the length of an element $W_i \in \cG_{kn_0+l}^{\delta_0}(W)$ from the length of its \emph{parent} in $\cG_{kn_0}^{\delta_0}(W)$, iterating the estimates as for \eqref{eq:sqrt_bound}.
\end{proof}

\begin{lemma}
The above conditions (a) and (b) are satisfied for $q(j) \coloneqq \frac{(\gamma-\zeta) \log(j-j_0)}{\gamma s_0 \log2} -1$, for all $j \geqslant j_1$, where $j_1 > j_0 \geqslant 0$ are constants (uniform in $\tilde \ve$ and $W^1_k$). For $j < j_1$, set $q(j)=0$.
\end{lemma}
\begin{proof}
Since $|W_k^1| \leqslant C \tilde \ve \Lambda^{-j}$ and using Lemma~\ref{lemma:very_short_curves}, the condition (a) will be satisfied whenever $\tilde C^2 (C \tilde \ve \Lambda^{-j})^{2^{-q s_0}} \leqslant \delta_0/3$. Let $j_0$ be such that $C \Lambda^{-j_0} < 1$. Then (a) is satisfied whenever $\tilde C^2 \Lambda^{-(j-j_0)2^{-q}} \leqslant \delta_0/3$, that is
\begin{align}\label{cond_1_q(j)}
q \leqslant \frac{\log(j-j_0)}{s_0 \log 2} - C_2\, , \text{ with } C_2 \coloneqq \frac{1}{s_0 \log 2} \log \frac{\log \frac{3 \tilde C^2}{\delta_0}}{\log \Lambda} .
\end{align}
Note that $C_2$ is uniform, and that the right-hand-side of \eqref{cond_1_q(j)} is larger than $q(j)$ for all $j$ large enough, say $j \geqslant j_1$.

Using the estimate from Lemma~\ref{lemma:very_short_curves}, condition (b) is satisfied whenever $|\log \tilde C^2 (C\tilde \ve \Lambda^{-j})^{2^{-q}} |^{\gamma} > |\log \tilde \ve |^{\zeta}$. Now, we have that
\[
|\log \tilde C^2 (C\tilde \ve \Lambda^{-j})^{2^{-q}} | = |\log \tilde C^2  + 2^{-q} \log (C\tilde \ve \Lambda^{-j}) | > \frac{1}{2} |2^{-q} \log (C\tilde \ve \Lambda^{-j}) |,
\]
whenever
\begin{align}\label{cond_2_q(j)}
q +1 \leqslant \frac{\log(j-j_0)}{s_0 \log 2} + C_3 \, , \text{ with } C_3 = \frac{1}{s_0 \log 2} \log \frac{ \log \Lambda}{\log \tilde C^2}
\end{align}
Note that $C_3$ is uniform, and that the right-hand-side of \eqref{cond_2_q(j)} is larger than $q(j)$ for all $j$ large enough, say $j \geqslant j_1$ (up to increasing the value of $j_1$).

We thus have to prove that $|\log C \tilde \ve \Lambda^{-j}|^\gamma > 2^{(q+1)\gamma}|\log \tilde \ve|^\zeta$ (which implies (b)). Notice that, from the definition of $q(j)$, we have $2^{(q(j)+1)\gamma} \leqslant (j-j_0)^{\gamma - \zeta}$. We distinguish two cases.

Assume first that $(j-j_0)\log \Lambda \geqslant | \log \tilde \ve |$. Therefore
\begin{align*}
2^{(q(j)+1)\gamma}&|\log \tilde \ve|^\zeta \leqslant (j-j_0)^{\gamma - \zeta} |\log \tilde \ve|^\zeta \leqslant (j-j_0)^\gamma (\log \Lambda)^\zeta \leqslant ((j-j_0)\log \Lambda)^\gamma \\
&\leqslant ((j-j_0)\log \Lambda + |\log \tilde \ve| + |\log C\Lambda^{-j_0}| )^\gamma 
\leqslant |-(j-j_0)\log \Lambda + \log \tilde \ve + \log C\Lambda^{-j_0} |^\gamma \\
&\leqslant |\log C \tilde \ve \Lambda^{-j}|^\gamma.
\end{align*}
On the other hand, if $(j-j_0)\log \Lambda \leqslant | \log \tilde \ve |$, then
\begin{align*}
2^{(q(j)+1)\gamma}|\log \tilde \ve|^\zeta &\leqslant (j-j_0)^{\gamma - \zeta} |\log \tilde \ve|^\zeta \leqslant \frac{|\log \tilde \ve|^{\gamma-\zeta}}{(\log \Lambda)^{\gamma-\zeta} } |\log \tilde \ve |^\zeta \leqslant |\log \tilde \ve|^\gamma \\
&\leqslant |-(j-j_0)\log \Lambda + \log \tilde \ve + \log C\Lambda^{-j_0} |^\gamma 
\leqslant |\log C \tilde \ve \Lambda^{-j}|^\gamma.
\end{align*}
Thus, the choice $q(j)$ satisfies (a) and (b) for all $j \geqslant j_1$.
\end{proof}

We now resume the proof of Proposition~\ref{prop:upper_bounds_norms}. 

We next estimate\footnote{For the $4^{\rm th}$ and $6^{\rm th}$ inequalities, we use Proposition~\ref{prop:almost_exponential_growth}. Here again, $P_*(T,g)$ can be replaced by $P_*(T,g)+\varepsilon$ up to a larger multiplicative constant.} over the unmatched pieces $V_i^l$ in (\ref{eq:strong_unstable_norm_matched+unmatched}), using the strong stable norm. Since cases $l=1$ and $l=2$ are similar here, we only deal with the case $l=1$.
\begin{align*}
& \sum\limits_{V^1_i} \left| \int_{V^1_i} f \psi_1 \circ T^n e^{S_n g} \mathrm{d}m_{V_i^1} \right| 
= \sum\limits_{j=1}^n \sum\limits_{k} \sum\limits_{i \in A_{j,k}} \left| \int_{T^{n-j}V_i^1} (\cL_g^{n-j} f) \psi_1 \circ T^j e^{S_j g} \right| \\
&\leqslant \sum\limits_{j=1}^n \sum\limits_{k} \sum\limits_{V_l \in \cG_{q(j)}^{\delta_0}(W_k^1)} \left| \int_{V_l} (\cL_g^{n-j-q(j)} f ) \psi_1 \circ T^{j + q(j)} e^{S_{j+q(j)} g} \right| \\
&\leqslant \sum\limits_{j=1}^n \sum\limits_{k} \sum\limits_{V_l \in \cG_{q(j)}^{\delta_0}(W_k^1)} ||\cL_g^{n-j-q(j)} f ||_s C |\log |V_l| |^{-\gamma} |\psi_1 \circ T^{j + q(j)}|_{C^{\beta}(V_l)} | e^{S_{j+q(j)} g} |_{C^{\beta}(V_l) } \\
&\leqslant C ||f||_s \sum\limits_{j=1}^n  \frac{C}{c_1 \delta_0} e^{(n -j-q(j))P_*(T,g)} |\log \tilde \ve |^{-\zeta} \sum\limits_{k} \sum\limits_{V_l \in \cG_{q(j)}^{\delta_0}(W_k^1)}  | e^{S_{j+q(j)} g} |_{C^{\beta}(V_l) } \\
&\leqslant \frac{C}{c_1 \delta_0} ||f||_s \sum\limits_{j=1}^n e^{(n -j-q(j))P_*(T,g)} |\log \tilde \ve |^{-\zeta} \sum\limits_{W^1_k \subset T^{-j}W^1 } \sum\limits_{ V_l \in \cG_{q(j)}^{\delta_0}(W^1_k)} |e^{S_j g \circ T^{q(j)} + S_{q(j)} g} |_{C^{\beta}(V_l)} \\
&\leqslant \frac{C}{c_1 \delta_0} ||f||_s \sum\limits_{j=1}^n e^{(n -j-q(j))P_*(T,g)} |\log \tilde \ve |^{-\zeta} \sum\limits_{W^1_k \subset T^{-j}W^1 } |e^{S_j g} |_{C^{0}(W^1_k)}  \sum\limits_{ V_l \in \cG_{q(j)}^{\delta_0}(W^1_k)} |e^{S_{q(j)} g} |_{C^{0}(V_l)} \\
&\leqslant \frac{C}{c_1 \delta_0} ||f||_s \sum\limits_{j=1}^n e^{(n -j-q(j))P_*(T,g)} |\log \tilde \ve |^{-\zeta} \frac{4C}{c_1 \delta_0} e^{j P_*(T,g)}  e^{q(j) \sup g} (Km+1)^{q(j)/m} \\
&\leqslant \frac{C}{(c_1 \delta_0)^2} ||f||_s |\log \tilde{\varepsilon}|^{-\zeta} e^{n P_*(T,g)} \sum\limits_{j=1}^n e^{-q(j)( P_*(T,g) - \sup g - \frac{1}{m}\log(Km+1))}.
\end{align*}
Now, for $\tilde \varepsilon > 0$, fixed, since we assume that $ P_*(T,g) - \sup g > s_0 \log 2 $, we can chose $m$ large enough and $\zeta$ small enough such that $ \varepsilon_1 \coloneqq P_*(T,g) - \sup g - \frac{1}{m}\log(Km+1) - \frac{\gamma}{\gamma - \zeta}s_0 \log 2 > 0$. By definition of $q(j)$, we obtain that
\begin{align*}
\sum\limits_{j=j_1}^n \!\! e^{-q(j)( P_*(T,g) - \sup g - \frac{1}{m}\log(Km+1))} 
\! &= \!\! \sum\limits_{j=j_1}^n \!\! e^{-  \frac{(\gamma - \zeta) \log (j-j_0)}{\gamma s_0 \log 2} ( \varepsilon_1 +  \frac{\gamma}{\gamma - \zeta}s_0 \log 2)} 
\! = \!\! \sum\limits_{j=j_1}^n (j-j_0)^{-1 -  \frac{\gamma - \zeta}{\gamma s_0 \log 2}  \varepsilon_1 },
\end{align*}
is bounded. The bound (\ref{eq:unstable ly poly}) then follows by combining all the above estimates into (\ref{eq:strong_unstable_norm_matched+unmatched}) and taking the appropriate suprema.
\end{proof}

We now deduce the bounds of Theorem~\ref{thm:spectral_radius} from the growth rate of stable curves proved in Proposition~\ref{prop:GnW_geq_M0n}.

\begin{proof}[Proof of Theorem~\ref{thm:spectral_radius}]
To prove this lower bound on $|\cL_g^n 1|_w$, recall the choice of $\delta_1 > 0$ from Lemma~\ref{lemma:short_curves_rare} for $\varepsilon = 1/4$. Let $W \in \cW^s$ with $|W| \geqslant \delta_1/3$ and set the test function $\psi \equiv 1$. For $n\geqslant n_1$,
\begin{align*}
\int_W \cL_g^n 1 \mathrm{d}m_W = \sum\limits_{W_i \in \cG_n^{\delta_1}(W)} \int_{W_i} e^{S_n g} \mathrm{d}m_{W_i} \geqslant \sum\limits_{W_i \in \cG_n^{\delta_1}(W)} \frac{\delta_1}{2} \inf\limits_{W_i} e^{S_n g} \geqslant \frac{\delta_1}{2} C^{-1} \sum\limits_{W_i \in \cG_n^{\delta_1}(W)} \sup\limits_{W_i} e^{S_n g},
\end{align*} 
where we used Lemma~\ref{lemma:sup_less_poly_inf} for the second inequality, since for each $W_i \in \cG_n^{\delta_1}(W)$ there exists $A \in \cM_0^n$ such that $W_i \subset A$ and $\sup_{W_i} e^{S_n g} \leqslant \sup_{A} e^{S_n g} \leqslant C \inf_{A} e^{S_n g} \leqslant C \inf_{W_i} e^{S_n g}$.
We can now use Proposition~\ref{prop:GnW_geq_M0n} to get
\begin{align}\label{eq:seed_grows_exp}
\int_W \cL_g^n 1 \mathrm{d}m_W \geqslant \frac{\delta_1}{2C} c_0 \sum\limits_{A \in \cM_{-n}^0} |e^{S_n^{-1} g}|_{C^0(A)} \geqslant \frac{\delta_1}{2C} c_0 e^{n P_*(T,g)}.
\end{align}
Thus \[ ||\cL_g^n 1 ||_s \geqslant |\cL_g^n 1|_w \geqslant \frac{\delta_1}{2} c_0 e^{n P_*(T,g)}.\]
Letting $n$ tend to infinity, one obtains $\lim\limits_{n \to \infty} ||\cL_g^n 1 ||_{\cB}^{1/n} \geqslant e^{P_*(T,g)}$.
\end{proof}

\begin{remark}\label{remark:discussion_assumption}
In the case $g = - h_{\rm top}(\phi_1)\tau$, the assumption $P_*(T,g)-\sup g > s_0 \log 2$ in Proposition~\ref{prop:upper_bounds_norms} is implied by the condition $h_{\rm top}(\phi_1)\tau_{\min} > s_0 \log 2$, which is itself implied by $\tau_{\min} h_{\musrb}(T)/\musrb(\tau) > s_0 \log 2$ thanks to the Abramov formula. This latter condition appears to be satisfied for billiards studied by Baras and Gaspard \cite{Baras95} and by Garrido \cite{Garrido97}, as long as $\tau_{\min}$ is \emph{not too small}.

\begin{figure}[ht]
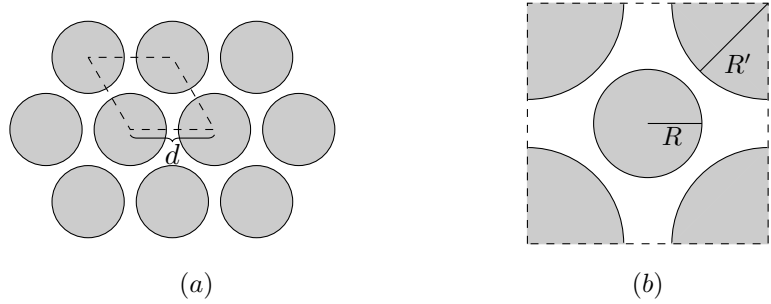

\tikz[x=8mm,y=8mm]
{
 \filldraw[fill=black!20!white, draw=black] (4.7,.7) circle (.6);
\filldraw[fill=black!20!white, draw=black] (6.1,.7) circle (.6);
\filldraw[fill=black!20!white, draw=black] (7.5,.7) circle (.6);
\filldraw[fill=black!20!white, draw=black] (4,1.9) circle (.6);
\filldraw[fill=black!20!white, draw=black] (5.4,1.9) circle (.6);
 \filldraw[fill=black!20!white, draw=black] (6.8,1.9) circle (.6);
  \filldraw[fill=black!20!white, draw=black] (8.2,1.9) circle (.6); 
\filldraw[fill=black!20!white, draw=black] (4.7,3.1) circle (.6);
\filldraw[fill=black!20!white, draw=black] (6.1,3.1) circle (.6);
\filldraw[fill=black!20!white, draw=black] (7.5,3.1) circle (.6);

\draw[dashed] (4.7,3.1) -- (5.4,1.9) -- (6.8,1.9) -- (6.1,3.1) -- cycle;

\draw[decoration={brace,mirror,raise=2pt},decorate]
  (5.4,1.9) -- node[below=1.6pt] {$d$} (6.8,1.9);

\node at (6.5,-.7){\small$(a)$};

\filldraw[fill=black!20!white, draw=black]  (13.6,0) arc (0:90:1.6);
\filldraw[fill=black!20!white, draw=black!20!white]  (13.6,0) -- (12,1.6) -- (12,0) -- cycle;
\filldraw[fill=black!20!white, draw=black]  (16,1.6) arc (90:180:1.6);
\filldraw[fill=black!20!white, draw=black!20!white]  (16,1.6) -- (14.4,0) -- (16,0) -- cycle;

\filldraw[fill=black!20!white, draw=black]  (14.4,4) arc (180:270:1.6);
\filldraw[fill=black!20!white, draw=black!20!white]  (14.4,4) -- (16,2.4) -- (16,4) -- cycle;

\filldraw[fill=black!20!white, draw=black]  (12,2.4) arc (270:360:1.6);
\filldraw[fill=black!20!white, draw=black!20!white]  (12,2.4) -- (13.6,4) -- (12,4) -- cycle;

\draw[dashed] (12,0) rectangle (16,4) ;

\filldraw[fill=black!20!white, draw=black]  (14,2) circle (.9);
\draw (14,2) -- (14.9,2);
\node at (14.4,1.8){\small $R$};

\draw (16,4) -- (14.87,2.87);
\node at (15.5, 3) {\small$R'$};

\node at (14,-.7){\small$(b)$}; 

}
\caption{(a) The Sinai billiard on a triangular lattice studied in \cite{Baras95} with angle $\pi/3$, scatterer of radius 1, and distance $d$ between the centers of adjacent scatterers. (b) The Sinai billiard on a square lattice with scatterers of radius $R < R'$ studied in \cite{Garrido97}. The boundary of a single cell is indicated by dashed lines in both tables. (Figure taken from \cite[Figure~2]{BD2020MME}.)}
\label{fig:tables}
\end{figure}

Indeed, Garrido \cite{Garrido97} studied the Sinai billiard corresponding to the periodic Lorentz gas with two scatterers of radius $R<R'$ on the unit square lattice (Figure~\ref{fig:tables}(b)). Setting $R'=0.4$, Garrido computed $h_{\musrb}(T)$ and $\musrb(\tau)$ for about 20 values of $R$ ranging from $R=0.1$ (when the horizon becomes infinite) to $R=\tfrac{\sqrt{2}}{2}-0.4$ (when the scatterers touch: $\tau_{\min}=0$). According to \cite[\S~2.4]{BD2020MME}, in those examples we can always find $\varphi_0$ and $n_0$ such that $s_0 \leqslant \tfrac{1}{2}$. Furthermore, $\tau_{\min}=\tfrac{\sqrt{2}}{2} - 0.4 - R$. Now, for $R=0.1^+$, we find that \[\tau_{\min} h_{\musrb}(T)/\musrb(\tau) \geqslant (\tfrac{\sqrt{2}}{2} - 0.5) \tfrac{1.7}{0.5} \geqslant 0.7 > \tfrac{1}{2} \log 2 \geqslant s_0 \log 2 \, , \]
and for $R=0.2$, we find that \[\tau_{\min} h_{\musrb}(T)/\musrb(\tau) \geqslant (\tfrac{\sqrt{2}}{2} - 0.6) \tfrac{1.4}{0.3} \geqslant 0.48 > \tfrac{1}{2} \log 2 \geqslant s_0 \log 2 \, . \] 
Since for $R \in (0.1,\, 0.2]$, $R \mapsto \tau_{\min}(R)$ is a linear function, and according to Garrido Figures~6 and 8, $R \mapsto \musrb(\tau)(R)$ is well approximated by an affine function and $R \mapsto h_{\musrb}(T)(R)$ is lower bounded by an affine function joining the values at $R=0.1$ and $0.2$, it appears that the condition $\tau_{\min} h_{\musrb}(T)/\musrb(\tau) > s_0 \log 2$ is satisfied for all $R \in (0.1 , \, 0.2]$.

Baras and Gaspard studied the Sinai billiard corresponding to the Lorentz gas with disks of radius $1$ centered in a triangular lattice (Figure~\ref{fig:tables}(a)). The distance $d$ between points on the lattice is varied from $d=2$ (when the scatterers touch: $\tau_{\min} = 0$) to $d=4/\sqrt{3}$ (when the horizon becomes infinite). We have that $\tau_{\min} = d-2$ and, still according to \cite[\S~2.4]{BD2020MME}, in those examples we can always find $\varphi_0$ and $n_0$ such that $s_0 \leqslant \tfrac{1}{2}$. The computed values are the average Lyapunov exponent of the billiard flows given in \cite{Baras95}, providing a lower bound directly on $h_{\musrb}(T)/\musrb(\tau)$. For $d=0.2$, we find
\[ \tau_{\min} h_{\musrb}(T)/\musrb(\tau) \geqslant (\tfrac{4}{\sqrt{3}} -2) \, 1.8 \geqslant 0.55 > \tfrac{1}{2} \log 2 \geqslant s_0 \log 2 \, . \]

The condition $h_{\rm top}(\phi_1)\tau_{\min} > s_0 \log 2$ is a little bit more restrictive than the one used by Baladi and Demers in \cite{BD2020MME} since, by the Abramov formula, $h_* \geqslant h_{\rm top}(\phi_1)\tau_{\min}$. (Also, we do not know any  example of billiard for which the condition $h_* > s_0 \log 2$ is not satisfied.)

We now turn to the condition SSP.1. Unfortunately, we don't know any billiard table such that the potential $ g = -h_{\rm top}(\phi_1) \tau $ satisfies a \emph{simple} condition implying SSP.1. By \emph{simple}, we mean a sufficient condition that does not involve topological entropies, since they are notoriously hard to estimate numerically. First, recall from Lemmas~\ref{lemma:short_curves_rare} and \ref{lemma:Lu_geq_M-n0} that $\log \Lambda > h_{\rm top}(\phi_1)(\tau_{\max} - \tau_{\min})$ implies SSP.1. Remark that since $g$ and $\tfrac{1}{n} S_n g$ are cohomologous, they would give rise to the same equilibrium states. It is then advantageous to work with the Birkhoff average instead of $g$ because $\max \tfrac{1}{n} S_n g \leqslant \tau_{\max}$ and $\min \tfrac{1}{n} S_n g = \tau_{\min}$ (notice that $\tau_{\min}$ is achieved on an orbit of period $2$). Now, taking advantage of the Abramov formula and of the variational principle, we get that $ \max \tfrac{1}{n} S_n g < 2 \tau_{\min} $ implies $h_* > h_{\rm top}(\phi_1)(\max \tfrac{1}{n} S_n g - \tau_{\min})$ (recall that $h_* > \log \Lambda$ is the topological entropy of $T$, as defined in \cite{BD2020MME}). The condition $ \max \tfrac{1}{n} S_n g < 2 \tau_{\min} $ involves quantities that are easy to estimate numerically, however, we don't know any billiard table satisfying this condition.
\end{remark}

\section{The measure $\mu_g$}\label{sect:measure_mu_g}

This section is devoted to the construction, the properties and the uniqueness of an equilibrium state $\mu_g$ for $T$, associated to a potential $g$. 

We will assume throughout that $g$ is a $(\cM_0^1,\alpha_g)$-H\"older potential such that $P_*(T,g) - \sup g > s_0 \log 2$ and that the conditions SSP.1 and SSP.2 are satisfied.

\subsection{Construction of the measure $\mu_g$ -- Measure of Singular Sets}

In this section, we construct left and right maximal eigenvectors for $\cL_g$ (Proposition~\ref{prop:exist}) as well as a $T$-invariant measure $\mu_g$ by pairing them as in \eqref{eq:def_mu_g}. From this particular structure, we deduce an estimate on the measure of the neighbourhood of singular curves (Lemma~\ref{lemma:measure_Sk}) which yields that $\mu_g$ is $T$-adapted and has no atoms (Corollary~\ref{corol:mug-has_no_atoms-is_adapted-sees_manifolds}).

\begin{proposition}\label{prop:exist}
If $g$ is a $(\cM_0^1,\alpha_g)$-H\"older continuous potential such that $P_*(T,g) - \sup g >s_0 \log 2$ and $\log \Lambda > \sup g - \inf g$, then there exist $\nu \in \cB_w$ and $\tilde{\nu} \in \cB^*_w$ such that $\cL_g \nu = e^{P_*(T,g)} \nu$ and $\cL_g^* \tilde{\nu} = e^{P_*(T,g)} \tilde{\nu}$. In addition, $\nu$ and $\tilde\nu$ take nonnegative values on nonnegative $C^1$ functions on $M$ and are thus nonnegative Radon measures. Finally, $\tilde\nu (\nu) \neq 0$ and $||\nu||_u$ is finite.
\end{proposition}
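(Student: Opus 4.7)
The plan is to construct $\nu$ via a Krylov--Bogolyubov averaging scheme, following the approach used for $g \equiv 0$ in \cite{BD2020MME}. First I would set
\[
\nu_N := \frac{1}{N}\sum_{n=0}^{N-1} e^{-nP_*(T,g)}\cL_g^n 1,
\]
which lies in $\cB$ since $1 \in C^1(M)$. Under the hypotheses, Lemmas~\ref{lemma:short_curves_rare} and \ref{lemma:Lu_geq_M-n0} yield SSP.1, so Proposition~\ref{prop:upper_bounds_norms} applies and (using $\|1\|_s,\|1\|_u < \infty$) gives a uniform bound on $\|\nu_N\|_{\cB}$. The compactness of $\cB \hookrightarrow \cB_w$ then delivers a subsequence $\nu_{N_k} \to \nu$ in $\cB_w$. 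The telescoping identity
\[
(e^{-P_*(T,g)}\cL_g - \mathrm{Id})\nu_N = \frac{1}{N}\bigl(e^{-NP_*(T,g)}\cL_g^N 1 - 1\bigr),
\]
combined with \eqref{eq:weak ly poly}, shows the right-hand side tends to $0$ in $\cB_w$, so continuity of $\cL_g$ on $\cB_w$ forces $\cL_g\nu = e^{P_*(T,g)}\nu$. Nontriviality comes from Theorem~\ref{thm:spectral_radius}: one has $|\nu_N|_w \geq c\delta_1/2$ uniformly, whence $|\nu|_w > 0$ by continuity of $|\cdot|_w$ on $\cB_w$. Positivity of each $\nu_N$ on $C^1_+$ (since $\cL_g$ preserves positivity and $1\geq 0$) passes to the $\cB_w$-limit, so $\nu$ extends to a nonnegative Radon measure by Riesz representation. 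The bound $\|\nu\|_u < \infty$ will follow from lower semicontinuity of $\|\cdot\|_{\cB}$ under $\cB_w$-convergence, each defining integral $\int_{W_i} f\,\psi_i\,dm_{W_i}$ being a continuous linear functional on $\cB_w$.

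Next I would construct the left eigenvector by dualizing. Fix a curve $W_0 \in \cW^s$ of length $|W_0| \geq \delta_1/3$ and let $\mu_0 \in \cB_w^*$ be the positive continuous functional $f \mapsto \int_{W_0} f\, dm_{W_0}$ (continuity is built into the definition of $|\cdot|_w$). Setting
\[
\tilde\nu_N := \frac{1}{N}\sum_{n=0}^{N-1} e^{-nP_*(T,g)}(\cL_g^*)^n \mu_0,
\]
Proposition~\ref{prop:upper_bounds_norms} together with duality yields a uniform bound on $\|\tilde\nu_N\|_{\cB_w^*}$. Since $\cB_w$ is separable (being the completion of $C^1(M)$), Banach--Alaoglu extracts a weak-$*$ convergent subsequence with limit $\tilde\nu \in \cB_w^*$, which is positive and satisfies $\cL_g^*\tilde\nu = e^{P_*(T,g)}\tilde\nu$ by the same telescoping argument applied in the dual.

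The hard part will be showing $\tilde\nu(\nu) \neq 0$, as mere positivity yields only $\tilde\nu(\nu) \geq 0$. The plan is to establish a uniform lower bound on the joint average $\tilde\nu_N(\nu_M)$ before passing to limits. Expanding,
\[
\tilde\nu_N(\nu_M) = \frac{1}{NM}\sum_{n=0}^{N-1}\sum_{m=0}^{M-1} e^{-(n+m)P_*(T,g)} \int_{W_0}\cL_g^{n+m}1\, dm_{W_0}.
\]
By the estimate \eqref{eq:seed_grows_exp} from the proof of Theorem~\ref{thm:spectral_radius}, $\int_{W_0}\cL_g^k 1\, dm_{W_0} \geq \tfrac{\delta_1}{2C}\,c_0\, e^{kP_*(T,g)}$ for all $k \geq n_1$, and so $\tilde\nu_N(\nu_M) \geq c' > 0$ uniformly for $N,M$ sufficiently large. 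Passing to the weak-$*$ limit first in $N$ gives $\tilde\nu(\nu_M) \geq c'$, and then to the $\cB_w$-limit in $M$ (exploiting continuity of $\tilde\nu \in \cB_w^*$) yields $\tilde\nu(\nu) \geq c' > 0$.
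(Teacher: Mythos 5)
Your proposal is correct and follows the same Krylov--Bogolyubov strategy as the paper; the construction of $\nu$, the telescoping argument, the inheritance of $\|\nu\|_u<\infty$ from the uniform $\cB$-bound on the averages, and the passage of nonnegativity to the limit are all exactly what the paper does. Where you differ is in the construction of $\tilde\nu$ and the nontriviality argument. You seed the dual Ces\`aro average with the single-curve functional $\mu_0:f\mapsto\int_{W_0}f\,\mathrm{d}m_{W_0}$ for a fixed $W_0\in\cW^s$ with $|W_0|\geqslant\delta_1/3$, and extract a weak-$*$ limit in $\cB_w^*$ by Banach--Alaoglu, using separability of $\cB_w$ (which does hold, since $C^1(M)$ is $C^1$-separable and the inclusion $C^1(M)\hookrightarrow\cB_w$ is continuous with dense range). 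The paper instead seeds with $\mathrm{d}\musrb$, extracts the limit in $\cB^*$ via compactness of $\cB\hookrightarrow\cB_w$, and then verifies after the fact that $\tilde\nu$ extends to $\cB_w^*$; your route saves that last verification. For $\tilde\nu(\nu)>0$, you expand $\tilde\nu_N(\nu_M)$ into a double average, lower-bound the generic term directly by \eqref{eq:seed_grows_exp} (each term with $n+m\geqslant n_1$ is $\geqslant\tfrac{\delta_1}{2C}c_0$, and the finitely many remaining terms are nonnegative), and pass to the two limits using weak-$*$ convergence in $N$ and norm convergence plus $\tilde\nu\in\cB_w^*$ in $M$; the paper instead first uses the eigenvector identity to collapse $\tilde\nu(\nu)=\tilde\nu(1)$, then lower-bounds $\tilde\nu(1)$ by disintegrating $\musrb$ along homogeneous stable manifolds. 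Your variant avoids the disintegration altogether, which your single-curve seed already sidestepped; the paper's variant is more in line with the disintegration machinery reused later. One piece of wording to tighten: ``lower semicontinuity of $\|\cdot\|_{\cB}$ under $\cB_w$-convergence'' is not quite the right phrasing -- what you actually use (and correctly state in your parenthetical) is that each pairing $\int_{W_1}f\psi_1\,\mathrm{d}m_{W_1}-\int_{W_2}f\psi_2\,\mathrm{d}m_{W_2}$ in the definition of $\|\cdot\|_u$ is continuous on $\cB_w$, so the uniform bound $\|\nu_N\|_u\leqslant\overline C$ carries over one test-pair at a time before taking the supremum.
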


From the definition of $|\cdot |_w$, we can see that since for $\varphi, f  \in C^1(M)$, $| \varphi f |_w \leqslant |\varphi|_{C^1(M)} |f|_w$, the multiplication by $\varphi$ can be extended to $f \in \cB_w$. Therefore, if $P_*(T,g) - \sup g > s_0 \log 2$ and both SSP conditions are satisfied, a bounded linear map  $\mu_g$ from $C^1(M)$ to  $\mathbb{C}$ can be defined by taking $\nu$ and $\tilde \nu$ from Proposition~\ref{prop:exist} and setting
\begin{equation}\label{eq:def_mu_g}
\mu_g(\varphi)= \frac{ \tilde \nu (\varphi \nu)}{ \tilde \nu (\nu)}\, .
\end{equation}
This quantity is nonnegative for all nonnegative $\varphi \in C^1(M)$ and thus $\mu_g$ can be extended into a nonnegative Radon measure $\mu_g \in (C^0(M))^*$, with  $\mu_g(1)=1$. Clearly, $\mu_g$ is a $T$-invariant probability measure since for every $\varphi \in C^1(M)$ we have
$$
\tilde \nu(\varphi \nu)=e^{-P_*(T,g)}\tilde \nu(\varphi \cL_g(\nu))=e^{-P_*(T,g)} \tilde \nu(\cL_g((\varphi \circ T)\nu))=
\tilde \nu ((\varphi \circ T)\nu )=\tilde \nu(\nu)\mu_g(\varphi \circ T) \, .
$$

\begin{proof}
Let $1$ denote the constant function equal to $1$ on $M$. We will construct $\nu$ out of this seed. By Theorem~\ref{thm:spectral_radius}, recall that $\| \cL_g^n 1 \|_{\cB} \geqslant \| \cL_g^n 1 \|_s \geqslant | \cL_g^n 1 |_w  \geqslant C  e^{nP_*(T,g)}$. Now consider 
\begin{align}\label{eq:def_nu_n}
\nu_n \coloneqq \frac{1}{n} \sum\limits_{k=0}^{n-1}  e^{- kP_*(T,g)} \cL_g^k 1 \, , \quad n\geqslant 1.
\end{align}
By construction, the $\nu_n$ are nonnegative, and thus Radon measures. By Proposition~\ref{prop:upper_bounds_norms}, they satisfy $||\nu_n||_{\cB} \leqslant \overline C$, so using the relative compactness of $\cB$ in $\cB_w$ (\cite[Proposition~6.1]{BD2020MME}), we extract a subsequence $(n_j)$ such that $\lim_j \nu_{n_j} = \nu$ is a nonnegative Radon measure, and the convergence is in $\cB_w$. Since $\cL_g$ is continuous on $\cB_w$, we may write,
\begin{align*}
\cL_g \nu &= \lim\limits_{j \to \infty} \frac{1}{n_j} \sum\limits_{k=0}^{n_j-1}  e^{- kP_*(T,g)} \cL_g^{k+1} 1 \\
&=\lim\limits_{j \to \infty} \frac{e^{P_*(T,g)}}{n_j} \sum\limits_{k=0}^{n_j-1} e^{- kP_*(T,g)} \cL_g^{k} 1 - \frac{1}{n_j} e^{P_*(T,g)}1 + \frac{1}{n_j} e^{(n_j-1)P_*(T,g)} \cL_g^{n_j}1 
= e^{P_*(T,g)}\nu \, ,
\end{align*}
where we used that the second and third terms go to $0$ (in the $\cB$-norm). We thus obtain a nonnegative measure $\nu \in \cB_w$ such that $\cL_g \nu = e^{P_*(T,g)} \nu$.

Although $\nu$ is not a priori an element of $\cB$, it does inherit bounds on the unstable norm from the sequence $\nu_n$. The convergence of $(\nu_{n_j})$ to $\nu$ in $\cB_w$ implies that
\[ \lim\limits_{j \to \infty} \sup\limits_{W\in \cW^s} \sup\limits_{\substack{\psi \in C^\alpha(W) \\ |\psi|_{C^\alpha(W)} \leqslant 1 }} \left( \int_W \nu \psi \, \mathrm{d}m_W - \int_W \nu_{n_j} \psi \, \mathrm{d}m_W \right) = 0 \, .\]
Since $(\nu_n)$ is bounded in $\cB$, there is a constant $\overline{C}>0$ such that $||\nu_{n_j}||_u \leqslant \overline{C}$. It follows that $||\nu||_u \leqslant \overline{C}$, as claimed.

Next, recalling the bound $|\int f \, \mathrm{d}\musrb| \leqslant \hat{C} |f|_w$ from \cite[Proposition 4.2]{BD2020MME}, setting $\mathrm{d}\musrb \in (\cB_w)^*$ to be the functional defined on $C^1(M) \subset \cB_w$ by $\mathrm{d}\musrb(f) = \int f \, \mathrm{d}\musrb$ and extended by density, we define
\begin{align}\label{eq:def_nu_tilde_n}
\tilde \nu_n \coloneqq \frac{1}{n} \sum\limits_{k=0}^{n-1} e^{-kP_*(T,g)} (\cL_g^*)^k(\mathrm{d}\musrb).
\end{align}
Then, we have $|\tilde{\nu}_n(f)| \leqslant C |f|_w$ for all $n$ and all $f \in \cB_w$. So $\tilde \nu_n$ is bounded in $(\cB_w)^* \subset \cB^*$. By compactness of the embedding (\cite[Proposition 6.1]{BD2020MME}), we can find a subsequence $\tilde \nu_{\tilde n_j}$ converging to $\tilde \nu \in \cB^*$. By the argument above, we have $\cL_g^* \tilde \nu = e^{P_*(T,g)} \tilde \nu$.

We next check that $\tilde \nu$, which in principle lies in the dual of $\cB$, is in fact an element of $(\cB_w)^*$. For this, it suffices to find $\tilde C < \infty$ so that for any $f \in \cB$ we have \begin{align}\label{eq:nu_tilde_in_weak_dual}
\tilde{\nu}(f) \leqslant \tilde C |f|_w.
\end{align} 
Now, for $f \in \cB$ and any $\tilde n_j \geqslant 1$, we have
\[ |\tilde \nu(f)| \leqslant |(\tilde \nu_{\tilde n_j} - \tilde \nu)(f)| + |\tilde \nu_{\tilde n_j}(f)| \leqslant |(\tilde \nu_{\tilde n_j} - \tilde \nu)(f)| + |f|_w. \] 
Since $\tilde \nu_{\tilde n_j} \to \tilde \nu$ in $\cB^*$, we conclude $|\tilde \nu(f)| \leqslant |f|_w$ for all $f \in \cB$. Since $\cB$ is dense in $\cB_w$, by \cite[Thm~I.7]{Reed80methods} $\tilde \nu$ extends uniquely to a bounded linear functional on $\cB_w$ satisfying (\ref{eq:nu_tilde_in_weak_dual}). It only remains to prove that $\tilde \nu(\nu) >0$.

Since $\tilde \nu$ is continuous on $\cB_w$, we have on the one hand
\begin{align*}
\tilde{\nu}(\nu) = \lim\limits_{j \to \infty} \tilde \nu (\nu_{n_j}) = \lim\limits_{j \to \infty} \frac{1}{n_j} \sum\limits_{k=0}^{n_j-1} e^{-kP_*(T,g)} \tilde \nu(\cL_g^k 1) = \lim\limits_{j \to \infty} \frac{1}{n_j} \sum\limits_{k=0}^{n_j-1} \tilde{\nu}(1)=\tilde{\nu}(1),
\end{align*}
where we have used that $\tilde \nu$ is an eigenvector of $\cL_g^*$. On the other hand,
\begin{align*}
\tilde{\nu}(1) = \lim\limits_{j \to \infty} \frac{1}{\tilde n_j} \sum\limits_{k=0}^{\tilde n_j-1} e^{-kP_*(T,g)} ((\cL_g^*)^k \mathrm{d}\musrb)(1) = \lim\limits_{j \to \infty} \frac{1}{\tilde n_j} \sum\limits_{k=0}^{\tilde n_j-1} e^{-kP_*(T,g)} \int \cL_g^k 1 \,\mathrm{d}\musrb.
\end{align*}
Next, we disintegrate $\musrb$ as in the proof of \cite[Lemma 4.4]{BD2020MME} into conditional measure $\musrb^{W_\xi}$ on maximal homogeneous stable manifolds $W_\xi \in \cW^s_{\mathbbm{H}}$ and a factor measure $\mathrm{d} \hatmusrb(\xi)$ on the index set $\Xi$ of stable manifolds. Recall that $\musrb^{W_\xi} = |W_\xi|^{-1} \rho_\xi \mathrm{d}m_W$, where $\rho_\xi$ is uniformly log-H\"older continuous so that 
\begin{align}\label{eq:bounds_density}
0 < c_\rho \leqslant \inf\limits_{\xi \in \Xi} \inf\limits_{W_\xi} \rho_\xi \leqslant \sup\limits_{\xi \in \Xi} |\rho_\xi|_{C^\alpha(W_\xi)} \leqslant C_\rho < \infty .
\end{align}
Let $\Xi^{\delta_1}$ denote those $\xi \in \Xi$ such that $|W_\xi| \geqslant \delta_1/3$ and note that $ \hatmusrb(\Xi^{\delta_1}) > 0$. Then, disintegrating as usual, we get by (\ref{eq:seed_grows_exp}) for $k \geqslant n_1$,
\begin{align*}
\int \cL_g^k 1 \, \mathrm{d}\musrb &= \int_{\Xi} \int_{W_\xi} \cL_g^k 1 \rho_\xi |W_\xi|^{-1} \, \mathrm{d}m_{W_\xi} \mathrm{d}\hatmusrb(\xi) \\
&\geqslant \int_{\Xi^{\delta_1}} \int_{W_\xi} \cL_g^k 1 \, \mathrm{d}m_{W_\xi} c_\rho 3 \delta_1^{-1} \, \mathrm{d}\hatmusrb(\xi) \geqslant c_\rho \frac{2c_0}{3} e^{kP_*(T,g)} \hatmusrb(\Xi^{\delta_1}) >0.
\end{align*}
Thus $\tilde \nu(\nu) = \tilde \nu(1) \geqslant c_\rho \frac{2c_0}{3} \hatmusrb(\Xi^{\delta_1}) > 0$ as required.
\end{proof}

\begin{lemma}\label{lemma:measure_Sk}
For any $\gamma >0$ such that $2^{s_0\gamma}< e^{P_*(T,g) - \sup g}$ and any $k \in \mathbbm{Z}$ there exists $C_k>0$ such that 
\begin{align}\label{eq:estimate_mug_neighbourhood_Sk}
\mu_g(\cN_\ve(\cS_k)) \leqslant C_k |\log \varepsilon|^{-\gamma}, \quad \forall \varepsilon>0.
\end{align}
In particular, for any $p > 1/\gamma$ (one can choose $p<1$ for $\gamma >1$), $\eta >0$, and $k \in \mathbbm{Z}$, for $\mu_g$-almost every $x \in M$, there exists $C>0$ such that
\begin{align}\label{eq:Borel_Cantelli_seed}
d(T^n x,\cS_k) \geqslant C e^{-\eta n^p}, \quad \forall n\geqslant 0.
\end{align}
\end{lemma}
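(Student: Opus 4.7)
My plan is to first establish \eqref{eq:estimate_mug_neighbourhood_Sk} for $k=0$ directly from the Banach space construction of $\mu_g$, then reduce general $k$ to this case via $T$-invariance and hyperbolicity, and finally derive \eqref{eq:Borel_Cantelli_seed} by a standard Borel--Cantelli argument.

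For $k=0$, since $\cS_0=\{|\varphi|=\pi/2\}$ is a union of horizontal segments and the stable cone $\cC^s$ is uniformly bounded away from horizontal, every $W\in\cW^s$ intersects $\cN_\ve(\cS_0)$ in at most $O(1)$ stable subcurves $U_l$ of length at most $C\ve$. Using the definition $\mu_g(\varphi)=\tilde{\nu}(\varphi\nu)/\tilde{\nu}(\nu)$ and outer regularity of $\mu_g$, it suffices to control $|\varphi\nu|_w$ for $\varphi\in C^1(M)$ with $0\leqslant\varphi\leqslant\mathbbm{1}_{\cN_{2\ve}(\cS_0)}$. For $W\in\cW^s$ and any test function $\psi$ with $|\psi|_{C^\alpha(W)}\leqslant 1$, using that $\nu$ is a nonnegative Radon measure with finite strong stable norm (Proposition~\ref{prop:exist}) and that $|\varphi\psi|_{C^0(W)}\leqslant 1$,
\[
\int_W \varphi\psi\,\mathrm{d}\nu \;=\; \sum_l \int_{U_l}\varphi\psi\,\mathrm{d}\nu \;\leqslant\; \sum_l \int_{U_l}\mathrm{d}\nu \;\leqslant\; \sum_l \|\nu\|_s\, |\log|U_l||^{-\gamma} \;\leqslant\; C_0|\log\ve|^{-\gamma},
\]
where the strong stable norm has been applied on each $U_l\in\cW^s$ with the constant test function $1$. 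Since $\tilde{\nu}\in\cB^*_w$, this gives $\mu_g(\cN_\ve(\cS_0))\leqslant C_0|\log\ve|^{-\gamma}$.

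For general $k\geqslant 0$ one has $\cN_\ve(\cS_k)\subseteq\bigcup_{i=0}^k\cN_\ve(T^{-i}\cS_0)$, and each smooth branch $\gamma$ of $T^{-i}\cS_0$ is a stable curve to which the unstable cone is uniformly transverse. Working locally in stable/unstable coordinates along $\gamma$, the map $T^i$ is smooth up to the boundary $T^i\gamma\subset\cS_0$, and it dilates transverse (unstable) distances by a factor controlled by the hyperbolicity constants; call it $C_i$. Hence $T^i(\cN_\ve(\gamma))\subseteq \cN_{C_i\ve}(\cS_0)$ modulo the $\mu_g$-null set $\cS_{i-1}$, and invariance of $\mu_g$ yields
\[
\mu_g(\cN_\ve(T^{-i}\cS_0)) \;\leqslant\; \mu_g(\cN_{C_i\ve}(\cS_0)) \;\leqslant\; C\,|\log(C_i\ve)|^{-\gamma} \;\leqslant\; 2^\gamma C\,|\log\ve|^{-\gamma}
\]
for $\ve$ small enough, and summing over $i$ proves \eqref{eq:estimate_mug_neighbourhood_Sk} for $k\geqslant 0$. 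For $k\leqslant 0$, the identity $\cS_{-n}=T^n\cS_n$ reduces to the previous case by replacing $T^i$ with $T^{-i}$, the stable direction now being the expanding one.

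Finally, for \eqref{eq:Borel_Cantelli_seed} I would set $B_n = T^{-n}\cN_{e^{-\eta n^p}}(\cS_k)$; by $T$-invariance and \eqref{eq:estimate_mug_neighbourhood_Sk},
\[
\mu_g(B_n) \;\leqslant\; C_k(\eta n^p)^{-\gamma} \;=\; C_k\eta^{-\gamma}n^{-p\gamma},
\]
so $\sum_n\mu_g(B_n)<\infty$ whenever $p\gamma>1$. Borel--Cantelli then gives some $N(x)$, $\mu_g$-almost everywhere, such that $d(T^n x,\cS_k)\geqslant e^{-\eta n^p}$ for $n\geqslant N(x)$. Letting $\ve\to 0$ in \eqref{eq:estimate_mug_neighbourhood_Sk} shows $\mu_g(\cS_k)=0$, hence $\mu_g$-a.e.\ $d(T^n x,\cS_k)>0$ for every $n\geqslant 0$, and taking $C(x) = \min(1,\min_{0\leqslant n<N(x)}e^{\eta n^p}d(T^n x,\cS_k))>0$ yields \eqref{eq:Borel_Cantelli_seed}. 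The most delicate step of the plan is the reduction $T^i(\cN_\ve(T^{-i}\cS_0))\subseteq\cN_{C_i\ve}(\cS_0)$: the derivative of $T^i$ blows up near $\cS_{i-1}$, so one cannot argue via a global Lipschitz bound and instead must localize to each smooth branch of $T^{-i}\cS_0$ and parametrize a tubular neighbourhood by the stable/unstable foliations, where smoothness of $T^i$ up to $\cS_0$ provides the required distortion estimate.
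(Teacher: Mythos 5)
Your argument for $k\leqslant 0$ is essentially correct: $\cS_0$ is horizontal and $\cS_{-n}$ ($n\geqslant 1$) consists of unstable curves, both uniformly transverse to every $W\in\cW^s$, so $W\cap\cN_\ve(\cS_k)$ really is a bounded number of subcurves of length $O(\ve)$, and no reduction is needed at all. One technical caveat: Proposition~\ref{prop:exist} asserts only that $\|\nu\|_u$ is finite, not $\|\nu\|_s$. You can avoid invoking $\|\nu\|_s<\infty$ by performing the estimate on the approximating sequence $\nu_{n_j}$ (which does satisfy $\|\nu_{n_j}\|_s\leqslant\overline C$) and passing to the limit against the fixed $C^\alpha$ test function $\varphi\psi$. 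The Borel--Cantelli step matches the paper's.

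The reduction for $k>0$ has a genuine gap. You assert that, near a smooth branch $\gamma\subset T^{-i}\cS_0$, the map $T^i$ is ``smooth up to the boundary $T^i\gamma\subset\cS_0$'' and dilates transverse distances by a bounded factor $C_i$, yielding $T^i(\cN_\ve(\gamma))\subseteq\cN_{C_i\ve}(\cS_0)$. This is false. The last factor in $D_xT^i=DT_{T^{i-1}x}\cdots DT_x$ has norm of order $1/\cos\varphi(T^ix)$, which diverges as $x\to\gamma$; so $\|D_xT^i\|$ is unbounded on every deleted neighbourhood of $\gamma$, and the transverse dilation is not controlled by the hyperbolicity constants. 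Even for $i=1$ the correct scaling is $d(Tx,\cS_0)\sim d(x,T^{-1}\cS_0)^{1/2}$, not linear; and for $i\geqslant 2$ the inclusion fails outright: a point $x\in\cN_\ve(T^{-i}\cS_0)$ lying also within, say, $\ve^{3/4}$ of $T^{-(i-1)}\cS_0$ passes near a grazing collision at an intermediate time and is then dispersed, so $T^ix$ can sit at order-one distance from $\cS_0$. Removing the null set $\cS_{i-1}$ does not cure this: it is a positive-$\mu_g$-measure tube around $\cS_{i-1}$, not $\cS_{i-1}$ itself, that spoils the inclusion, and handling it would require excising that tube and setting up an induction on $k$, none of which appears in your proposal. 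The paper avoids the issue entirely: it quotes \cite[Lemma~7.3]{BD2020MME}, which bounds $|1_{k,\ve}\nu|_w$ directly and uniformly over $W\in\cW^s$ for each fixed $k$, working inside the Banach space rather than pulling $\ve$-neighbourhoods back to $\cS_0$, and then applies $\tilde\nu(f)\leqslant\tilde C|f|_w$ from \eqref{eq:nu_tilde_in_weak_dual}.
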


\begin{proof}
The proof is the same as the one of \cite[Lemma~7.3]{BD2020MME} where $\mu_*$ should be replaced by $\mu_g$, and the $\nu$ from there by the $\nu$ from here. We sketch the proof: \eqref{eq:estimate_mug_neighbourhood_Sk} is obtained by showing \begin{align}\label{eq:estimate_nu_neighbourhood_Sk}
|\nu(\cN_\varepsilon(\cS_k))| \leqslant C |1_{k,\varepsilon} \nu|_w \leqslant C_k |\log \varepsilon|^{-\gamma} \, ,
\end{align}
where $1_{k,\ve} = \mathbbm{1}_{\cN_\ve(\cS_k)}$, and then by applying \eqref{eq:nu_tilde_in_weak_dual}. Then, \eqref{eq:Borel_Cantelli_seed} is deduced from \eqref{eq:estimate_mug_neighbourhood_Sk} using the Borel--Cantelli Lemma and $\ve$ of the form $e^{- \eta n^p}$.
\end{proof}

\begin{corollary}\label{corol:mug-has_no_atoms-is_adapted-sees_manifolds}
\begin{enumerate}
\item[a)] For any $\gamma>0$ so that $P_*(T,g) - \sup g > \gamma s_0 \log 2$ and any $C^1$ curve $S$ uniformly transverse to the stable cone, there exists $C>0$ such that $\nu(\cN_\ve(S)) \leqslant C |\log \ve|^{-\gamma}$ and $\mu_g(\cN_\ve(S)) \leqslant C |\log \ve|^{-\gamma}$ for all $\ve >0$.
\item[b)] The measures $\nu$ and $\mu_g$ have no atoms, and $\mu_g(W)=0$ for all $W \in \cW^s$ and $W \in \cW^u$.
\item[c)] The measure $\mu_g$ is adapted: $\int |\log d(x,\cS_{\pm 1})| \,\mathrm{d}\mu_g < \infty$.
\item[d)] $\mu_g$-almost every point in $M$ has a stable and unstable manifold of positive length.
\end{enumerate}
\end{corollary}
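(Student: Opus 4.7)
\textbf{Proof plan for Corollary~\ref{corol:mug-has_no_atoms-is_adapted-sees_manifolds}.}

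\emph{Part (a).} The plan is to repeat the argument sketched for Lemma~\ref{lemma:measure_Sk} but with $\cS_k$ replaced by a general $C^1$ curve $S$ uniformly transverse to the stable cone. Specifically, I would approximate $\mathbbm{1}_{\cN_{\ve}(S)}$ by $C^\alpha$ bump functions and bound
\[
|\mathbbm{1}_{\cN_\ve(S)}\,\nu|_{w}\;=\;\sup_{W,\psi}\int_{W}\mathbbm{1}_{\cN_{\ve}(S)}\,\psi\,\mathrm{d}\nu,
\]
using that by transversality any $W\in\cW^s$ meets $\cN_{\ve}(S)$ in an arc of length $O(\ve)$, together with the bound $\|\nu\|_{u}<\infty$ from Proposition~\ref{prop:exist} and the growth estimate of Lemma~\ref{lemma:Growth_lemma}(b). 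This yields $\nu(\cN_\ve(S))\leqslant C|\log\ve|^{-\gamma}$. Since $\tilde\nu\in(\cB_w)^*$ (estimate \eqref{eq:nu_tilde_in_weak_dual}), applying $\tilde\nu$ to $\mathbbm{1}_{\cN_\ve(S)}\,\nu$ and dividing by $\tilde\nu(\nu)\neq 0$ transfers the bound to $\mu_g$.

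\emph{Part (b).} For atoms, for any $x\in M$ one picks a $C^1$ curve $S\ni x$ transverse to the stable cone; then $\{x\}\subset S\subseteq\bigcap_{\ve>0}\cN_\ve(S)$, so $\nu(\{x\})\leqslant\lim_{\ve\to 0}\nu(\cN_\ve(S))=0$ by (a) (and likewise for $\mu_g$). For $W\in\cW^u$, part (a) applies directly, since unstable curves are uniformly transverse to the stable cone. The case $W\in\cW^s$ is the most delicate: stable curves are not transverse to the stable cone, so (a) does not apply. Here I would exploit $T$-invariance: $\mu_g(W)=\mu_g(T^n W)$ for every $n$, and since $T$ contracts along stable manifolds, $|T^nW|\leqslant C\Lambda^{-n}|W|\to 0$. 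Choose any point $p_n\in T^nW$ and any transverse $C^1$ curve $S_n\ni p_n$ of unit length (chosen from a compact family with uniform transversality constant, so the $C$ in part (a) is independent of $n$); then $T^nW\subseteq B(p_n,|T^nW|)\subseteq \cN_{|T^nW|}(S_n)$, and (a) gives $\mu_g(W)\leqslant C|\log|T^nW||^{-\gamma}\to 0$.

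\emph{Parts (c) and (d).} Part (c) follows from the layer-cake formula combined with the bound $\mu_g(\cN_\ve(\cS_{\pm 1}))\leqslant C_{\pm1}|\log\ve|^{-\gamma}$ from Lemma~\ref{lemma:measure_Sk}: writing $t=|\log\ve|$,
\[
\int|\log d(x,\cS_{\pm 1})|\,\mathrm{d}\mu_g=\int_0^\infty\mu_g(\{d(\cdot,\cS_{\pm 1})<e^{-t}\})\,\mathrm{d}t\leqslant C_0+\int_1^\infty C\,t^{-\gamma}\,\mathrm{d}t<\infty,
\]
since the assumption $P_*(T,g)-\sup g>\gamma s_0\log 2$ of Theorem~\ref{Main_theorem_1}(i) is with $\gamma>1$. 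For (d), the Borel--Cantelli statement \eqref{eq:Borel_Cantelli_seed} of Lemma~\ref{lemma:measure_Sk} gives, for $\mu_g$-a.e.\ $x$ and any fixed $\eta>0$, $p\in(1/\gamma,1)$, the bound $d(T^n x,\cS_{\pm 1})\geqslant Ce^{-\eta n^{p}}$ for all $n\geqslant 0$. Since this sub-exponential decay is dominated by $\Lambda^{-n}$, the classical construction of local stable and unstable manifolds in the presence of singularities (as in \cite[Theorems~4.17 and 5.52]{chernov2006chaotic}) applies to produce a stable manifold of positive length at $x$; the unstable case uses $T^{-n}$ and $\cS_{-1}$.

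\emph{Main obstacle.} The principal subtlety lies in the case $W\in\cW^s$ of (b): one must ensure that the constant $C$ from part (a) can be chosen uniformly over the varying transverse curves $S_n$ through $p_n\in T^nW$. This requires checking that the proof of (a) produces a bound depending only on the transversality angle (which is bounded away from zero for any fixed transverse class) and on the $C^1$-norm and length of $S$, both of which can be controlled by restricting to a compact family of transverse curves covering $M$.
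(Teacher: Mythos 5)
Your plan is sound and reproduces the argument the paper invokes by reference (the paper's own ``proof'' consists solely of a citation to Corollary~7.4 of \cite{BD2020MME}, with $\mu_*$ replaced by $\mu_g$). Parts (c) and (d) are exactly the layer-cake and Borel--Cantelli arguments intended, and your treatment of $W\in\cW^u$ and of atoms in (b) is correct. For $W\in\cW^s$, the contraction argument $\mu_g(W)=\mu_g(T^nW)$ with $|T^nW|\to 0$ is the right mechanism, and you correctly identify the one point needing care (uniformity of the constant in (a) over the transverse curves $S_n$); that uniformity does hold because the estimate depends only on the transversality angle, the $C^1$ norm of $S$, and the curvature bound of stable curves, all of which can be fixed a priori.

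Two small imprecisions in part (a) are worth noting. First, the essential ingredient that converts ``\,$W$ meets $\cN_\ve(S)$ in arcs of total length $O(\ve)$\,'' into the bound $|\log\ve|^{-\gamma}$ is not $\|\nu\|_u<\infty$, but rather the eigenvector identity $\nu=e^{-nP_*(T,g)}\cL_g^n\nu$ combined with the uniform bound $\|\nu_{n_j}\|_{\cB}\leqslant\overline C$ on the approximating Ces\`aro averages: it is the logarithmic weight $|\log|W||^{-\gamma}$ built into the strong stable norm (applied to the approximants, then passed to the limit) that produces the $|\log\ve|^{-\gamma}$. The unstable norm $\|\nu\|_u$ plays no role in this particular estimate. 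Second, $W\cap\cN_\ve(S)$ is in general a union of a uniformly bounded number of arcs, each of length $O(\ve)$, not a single arc; the bounded multiplicity is what requires the uniform transversality hypothesis. Neither point affects the validity of the plan.
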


\begin{proof}
The proof is identical to the one of \cite[Corollary~7.4]{BD2020MME}, replacing $\mu_*$ by $\mu_g$.
\end{proof}

\subsection{$\nu$-Almost Everywhere Positive Length of Unstable Manifolds}\label{sect:positive_length_unstable_manifolds}

In this section, we establish almost everywhere positive length of unstable manifolds in the sense of the measure $\nu$ -- the maximal eigenvector of $\cL_g$ in $\cB_w$, extended into a measure since it is a nonnegative distribution. To do so, we will view elements of $\cB_w$ as \emph{leafwise measure} (Definition~\ref{def:leafwise_measure}). Indeed, in Lemma~\ref{lemma:disintegration_nu}, we make a connection between the disintegration of $\nu$ as a measure, and the family of leafwise measures on the set of stable manifolds $\cW^s$.

\begin{definition}[Leafwise distribution and leafwise measure]\label{def:leafwise_measure}
For $f \in C^1(M)$ and $W \in \cW^s$, the map defined on $C^\alpha(W)$ by
\[ \psi \mapsto \int_W f \psi \,\mathrm{d}m_W, \]
can be viewed as a distribution of order $\alpha$ on $W$. Since $|\int_W f \psi \,\mathrm{d}m_W| \leqslant |f|_w |\psi|_{C^\alpha(W)}$, we can extend the map sending $f \in C^1(M)$ to this distribution of order $\alpha$, to $f \in \cB_w$. We denote this extension by $\int_W f \psi \,\mathrm{d}m_W$ or $\int_W \psi f$, and we call the corresponding family of distributions $(f,W)_{W \in \cW^s}$ the leafwise distribution associated to $f \in \cB_w$. \\
Note that if $\int_W f \psi \, \mathrm{d}m_W \geqslant 0$ for all $\psi \geqslant 0$, then the leafwise distribution on $W$ can be extended into a bounded linear functional on $C^0(W)$, or in other words, a Radon measure. If this holds for all $W \in \cW^s$, the leafwise distribution is called a leafwise measure.
\end{definition}

\begin{lemma}[Almost Everywhere Positive Length of Unstable Manifolds, for $\nu$]\label{lemma:positive_length_unstable_nu}
For $\nu$-almost every $x \in M$ the stable and unstable manifolds have positive length. Moreover, viewing $\nu$ as a leafwise measure, for every $W \in \cW^s$, $\nu$-almost every $x \in W$ has an unstable manifold of positive length.
\end{lemma}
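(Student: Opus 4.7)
The plan is to deduce both claims from a Borel--Cantelli argument applied to the sets $\cN_\ve(\cS_k)$, combined with the classical Katok--Strelcyn criterion for the existence of invariant manifolds through stretched-exponential non-recurrence to the singular set.

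\emph{Measure estimates on singular neighborhoods.} The inequality \eqref{eq:estimate_nu_neighbourhood_Sk} appearing inside the proof of Lemma~\ref{lemma:measure_Sk} is valid for every $k \in \mathbbm{Z}$: for any $\gamma > 0$ with $2^{s_0 \gamma} < e^{P_*(T,g) - \sup g}$ there exists $C_k > 0$ such that $\nu(\cN_\ve(\cS_k)) \leqslant C_k |\log \ve|^{-\gamma}$. The standing assumption $P_*(T,g) - \sup g > s_0 \log 2$ allows me to fix $\gamma > 1$. Following the proof of \cite[Lemma~7.3]{BD2020MME} and using the linear complexity bound \eqref{eq:complex}, one checks that $C_k$ grows at most polynomially in $|k|$, say $C_k \leqslant C|k|^d$.

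\emph{Global statement.} The geometric input is the inclusion
\[
T^{\mp n}\bigl(\cN_\ve(\cS_0)\bigr) \subset \cN_{C \Lambda^n \ve}(\cS_{\pm n}),
\]
which follows from the uniform hyperbolicity of $T$ together with the uniform transversality of $\cS_0$ to the stable and unstable cones. Combining it with the measure estimate, for any $\eta > 0$ and $p > (d+1)/\gamma$,
\[
\nu\bigl\{x : d(T^{\pm n}x, \cS_0) < e^{-\eta n^p}\bigr\} \leqslant C_n \bigl|\log(C \Lambda^n e^{-\eta n^p})\bigr|^{-\gamma} \leqslant C' n^{d - p\gamma},
\]
which is summable in $n$. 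A Borel--Cantelli argument then yields, for $\nu$-a.e.~$x$, a constant $C(x) > 0$ with $d(T^n x, \cS_0) \geqslant C(x) e^{-\eta |n|^p}$ for all $n \in \mathbbm{Z}$. This stretched-exponential non-recurrence condition is exactly the assumption required by the Katok--Strelcyn construction of invariant manifolds, as used in the proof of \cite[Corollary~7.4(d)]{BD2020MME}, which produces both a stable (from the forward iterates) and an unstable (from the backward iterates) manifold of positive length at $x$.

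\emph{Leafwise statement.} Fix $W \in \cW^s$ and denote by $\nu_W$ the leafwise measure of $\nu$ on $W$ from Definition~\ref{def:leafwise_measure}, which is a genuine Radon measure on $W$ because $\nu \geqslant 0$. Because the bound \eqref{eq:estimate_nu_neighbourhood_Sk} is obtained by testing $\nu$ against $C^\alpha$ approximations of $\mathbbm{1}_{\cN_\ve(\cS_k)}$ on stable curves, its proof localizes verbatim to a bound of the form
\[
\nu_W\bigl(\cN_\ve(\cS_k) \cap W\bigr) \leqslant C_{k,W}|\log\ve|^{-\gamma}, \qquad k \in \mathbbm{Z},
\]
with $C_{k,W}$ polynomial in $|k|$. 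Since $x \in W$ already lies on a stable manifold, only positive length of $W^u(x)$ is required. Running the same Borel--Cantelli argument on $W$ via the inclusion $T^n \cN_\ve(\cS_0) \cap W \subset W \cap \cN_{C\Lambda^n \ve}(\cS_{-n})$ yields $d(T^{-n}x, \cS_0) \geqslant C(x) e^{-\eta n^p}$ for $\nu_W$-a.e.~$x \in W$, whence Katok--Strelcyn in the backward direction produces an unstable manifold of positive length at such $x$.

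\emph{Main obstacle.} The principal technical point is the leafwise localization: one must retrace the proof of \cite[Lemma~7.3]{BD2020MME} with the supremum over all stable curves in the definition of $|\cdot|_w$ replaced by integration against a fixed curve $W$, and check that the resulting constant $C_{k,W}$ is polynomial in $|k|$ and uniformly controlled in $W$. A secondary issue is that, in the absence of $T$-invariance for $\nu$, the passage from static to dynamical estimates goes through the hyperbolic inclusion $T^{\mp n}\cN_\ve(\cS_0) \subset \cN_{C\Lambda^n\ve}(\cS_{\pm n})$ rather than through exact invariance; however, the factor $\Lambda^n$ enters only logarithmically and is absorbed by the stretched-exponential decay of $\ve$, so no additional hypothesis is needed.
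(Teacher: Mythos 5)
Your proposal breaks at the passage from the static estimate on $\nu(\cN_\ve(\cS_k))$ to the dynamical estimate, and the ``secondary issue'' you flag at the end is in fact the central obstruction. Since $\nu$ is not $T$-invariant, you route through the inclusion $T^{\mp n}\cN_\ve(\cS_0)\subset\cN_{C\Lambda^n\ve}(\cS_{\pm n})$. But for the Chernov--Markarian criterion (\cite[Lemma~4.67]{chernov2006chaotic}, invoked implicitly via the reference to \cite[Cor.~7.4(d)]{BD2020MME}) to produce an unstable manifold of positive length, one needs $d_u(T^{-j}x,\cS_1)\geqslant \varepsilon C_e\hat\Lambda^{-j}$ for all $j\geqslant 1$; a stretched-exponential lower bound $Ce^{-\eta n^p}$ only implies this when $p<1$. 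When $p<1$, however, the radius $\ve'=C\Lambda^n e^{-\eta n^p}$ appearing after the inclusion is \emph{not small}: for large $n$ it exceeds $1$ and $\cN_{\ve'}(\cS_n)=M$, so the bound $\nu(\cN_{\ve'}(\cS_n))\leqslant C_n|\log\ve'|^{-\gamma}$ is vacuous (it would even force $\nu(M)\to 0$). Put differently, $|\log(C\Lambda^n e^{-\eta n^p})|\sim n\log\Lambda$, so the alleged summand is $\sim C_n n^{-\gamma}$; with $C_n$ growing polynomially you need $\gamma$ to exceed the degree plus one, which is not supplied by $P_*(T,g)-\sup g>s_0\log 2$ (that hypothesis only gives $\gamma>1$). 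Choosing instead $p>1$ makes the Borel--Cantelli series converge, but then the resulting lower bound $Ce^{-\eta n^p}$ decays faster than any exponential and fails the Chernov--Markarian criterion. So neither regime of $p$ works, and the claim that ``the factor $\Lambda^n$ is absorbed by the stretched-exponential decay of $\ve$'' is precisely where the argument fails.

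Two further issues compound this. First, your polynomial growth $C_k\leqslant C|k|^d$ is asserted but not established; it would anyway have to feed into the summability condition above and does not help resolve the regime problem. Second, the proof of the global statement about stable manifolds is both simpler and structurally different in the paper: it is read off directly from the disintegration of $\nu$ over $\cW^s_{\mathbbm H}$ in Lemma~\ref{lemma:disintegration_nu} together with the fact that $\hatmusrb$-almost every homogeneous stable manifold has positive length, with no non-recurrence argument needed.

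The paper's actual route sidesteps non-invariance by exploiting the eigenvector identity $\cL_g\nu=e^{P_*(T,g)}\nu$ directly: one fixes $W\in\cW^s$, defines $O_n$ as the set of $x\in W$ for which $n$ is the first backward time at which $d_u(T^{-j}x,\cS_1)<\ve C_e\hat\Lambda^{-j}$ (exactly the Chernov--Markarian obstruction), writes $\nu=\lim_j\nu_{n_j}$ with $\nu_{n_j}=\frac 1{n_j}\sum_k e^{-kP_*(T,g)}\cL_g^k 1$, and estimates $\int_W\mathbbm 1_{O_n}\cL_g^k 1\,\mathrm dm_W$ by changing variables along $T^{-k}W$ and splitting into $k<n$ and $k\geqslant n$. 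The $k<n$ case uses the growth lemma and the time-reversal of Chernov's estimate on small stable manifold length; the $k\geqslant n$ case pays the penalty in the strong stable norm via the factor $|\log|W_i\cap\{r^s_1\leqslant C\ve^{1/2}\hat\Lambda^{-n/2}\}||^{-\gamma}$. The resulting bound $\nu(O)\leqslant C'|\log\ve|^{1-\gamma}$ tends to $0$ as $\ve\to 0$, using only $\gamma>1$. This is the mechanism your proposal would need to replace the naive Borel--Cantelli passage, and it cannot be reduced to a neighborhood estimate for $\nu$ because the whole point is to trade the dynamics against the eigenvector property rather than against invariance.
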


\begin{lemma}\label{lemma:disintegration_nu}
Let $\nu^{W_\xi}$ and $\hat{\nu}$ denote the conditional measures and factor measure obtained by disintegrating $\nu$ on the set of homogeneous stable manifolds $W_\xi \in \cW^s_{\mathbbm{H}}$, $\xi \in \Xi$. Then for any $\psi \in C^\alpha(M)$,
\[ \int_{W_\xi} \psi \,\mathrm{d}\nu^{W_\xi} = \frac{\int_{W_\xi} \psi \rho_\xi \,\nu }{\int_{W_\xi} \rho_\xi \,\nu} \quad \forall \xi \in \Xi, \text{ and } \mathrm{d}\hat{\nu}(\xi) = |W_\xi|^{-1} \left( \int_{W_\xi} \rho_\xi \nu \right) \mathrm{d} \hatmusrb (\xi). \]
Moreover, viewed as a leafwise measure, $\nu(W)>0$ for all $W \in \cW^s$.
\end{lemma}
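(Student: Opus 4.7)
The plan is to first establish a leafwise representation formula
\[
\nu(\psi) \;=\; \int_\Xi |W_\xi|^{-1} \int_{W_\xi} \psi\,\rho_\xi\, \nu \;\mathrm{d}\hatmusrb(\xi) , \qquad \psi \in C^\alpha(M),
\]
and then read off the disintegration identities by uniqueness of disintegration. The final statement, leafwise positivity, will follow by iterating the eigenvalue equation $\cL_g \nu = e^{P_*(T,g)}\nu$ on stable curves and combining the lower bound from Theorem~\ref{thm:spectral_radius} with the SSP conditions.

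\textbf{Leafwise representation.} On the dense subspace $C^1(M) \subset \cB_w$, with the identification $f \leftrightarrow f\musrb$, the disintegration $\musrb^{W_\xi} = |W_\xi|^{-1}\rho_\xi\, \mathrm{d}m_{W_\xi}$ immediately yields
\[
(f\musrb)(\psi) \;=\; \int_\Xi |W_\xi|^{-1} \int_{W_\xi} f\,\psi\,\rho_\xi\, \mathrm{d}m_{W_\xi}\,\mathrm{d}\hatmusrb(\xi),
\]
which is the claimed formula with the leafwise distribution $(f\musrb, W_\xi)$ evaluated at $\psi\rho_\xi$. Apply this to the approximating sequence $\nu_n$ from \eqref{eq:def_nu_n}: each $\nu_n$ is a nonnegative measure with bounded density on each homogeneous stable manifold (since $\cL_g^k 1$ is), so the identity holds for $\nu_n$. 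The left-hand side converges to $\nu(\psi)$ as $n\to\infty$, and for each fixed $\xi \in \Xi$ the continuity of the leafwise-distribution map $\cB_w \to (C^\alpha(W_\xi))^*$ gives pointwise convergence of the integrand. To pass the limit inside the $\xi$-integral despite the unbounded weight $|W_\xi|^{-1}$, write $\psi = (\psi + |\psi|_\infty) - |\psi|_\infty$ and apply Fatou's lemma separately to the nonnegative function $\psi + |\psi|_\infty$ and to the constant $|\psi|_\infty$ (using also that $\int_\Xi |W_\xi|^{-1}(\nu, W_\xi)(\rho_\xi)\, \mathrm{d}\hatmusrb(\xi) = \nu(1)$ is finite, by Fatou applied with $\psi = 1$). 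This yields the representation formula for $\nu$.

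\textbf{Disintegration.} Comparing the representation formula with the defining identity $\nu(\psi) = \int_\Xi \int_{W_\xi} \psi\, \mathrm{d}\nu^{W_\xi}\, \mathrm{d}\hat\nu(\xi)$ and invoking uniqueness of disintegration, we read off, on each $\xi$ for which $(\nu, W_\xi)(\rho_\xi) \neq 0$, that the probability conditional is $\nu^{W_\xi}(\psi) = (\int_{W_\xi} \psi \rho_\xi\,\nu)/(\int_{W_\xi}\rho_\xi\,\nu)$ and $\mathrm{d}\hat\nu(\xi) = |W_\xi|^{-1}(\int_{W_\xi} \rho_\xi\,\nu)\, \mathrm{d}\hatmusrb(\xi)$, which are exactly the claimed formulas.

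\textbf{Leafwise positivity.} The eigenvalue equation $\cL_g\nu = e^{P_*(T,g)}\nu$, unfolded via the change of variable $y = T^{-1}x$, gives
\[
e^{nP_*(T,g)} \int_W \psi\,\nu \;=\; \sum_{W_i \in \cG_n^{\delta_0}(W)} \int_{W_i} e^{S_n g}\,(\psi\circ T^n)\,\nu, \qquad \psi \in C^\alpha(W).
\]
For $W$ with $|W| \geqslant \delta_1/3$, setting $\psi = 1$ and applying the lower bound $\int_W \cL_g^k 1\, \mathrm{d}m_W \geqslant C\delta_1 e^{kP_*(T,g)}$ from Theorem~\ref{thm:spectral_radius} shows that $\int_W \nu_n \cdot 1\, \mathrm{d}m_W \geqslant c > 0$ uniformly for $n$ large; since $1 \in C^\alpha(W)$, convergence in $\cB_w$ implies $\int_W \nu \geqslant c > 0$. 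For shorter $W \in \cW^s$, apply SSP.1 (or SSP.2 when $|W|$ is small, via $\bar n_1(|W|)$) to find $n$ with $L_n^{\delta_1}(W) \neq \emptyset$; picking any $W^* \in L_n^{\delta_1}(W)$, the previous case yields $\int_{W^*} \nu > 0$, and nonnegativity of every summand together with $e^{S_n g} \geqslant e^{n\inf g}$ gives
\[
e^{nP_*(T,g)}\int_W \nu \;\geqslant\; e^{n\inf g}\int_{W^*} \nu \;>\; 0.
\]
The main technical obstacle is the extension of the leafwise representation from $C^1$ to $\cB_w$ in the face of the unbounded weight $|W_\xi|^{-1}$; it is the nonnegativity of the approximants $\nu_n$, combined with a two-sided Fatou argument, that legitimizes the passage to the limit.
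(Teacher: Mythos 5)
Your structure is essentially the one the paper uses (the paper cites the analogous lemma in Baladi--Demers and supplies the weighted lower bound on $\int_W\nu$). Two issues, one of which is a genuine gap.

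The gap is in the passage to the limit in the leafwise representation. Fatou's lemma, applied to the nonnegative sequences $|W_\xi|^{-1}(\nu_{n_j},W_\xi)\big((\psi+|\psi|_\infty)\rho_\xi\big)$ and $|W_\xi|^{-1}(\nu_{n_j},W_\xi)\big(|\psi|_\infty\rho_\xi\big)$, produces only $\leqslant$ in each case, and one cannot subtract two upper bounds to obtain an equality. In particular, your claim that $\int_\Xi |W_\xi|^{-1}(\nu,W_\xi)(\rho_\xi)\,\mathrm{d}\hatmusrb(\xi) = \nu(1)$ \emph{by Fatou} is unjustified: Fatou gives $\leqslant$ only. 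The right tool here is dominated convergence. The approximants $\nu_{n_j}$ are uniformly bounded in $\cB$ (hence in $\cB_w$), so $|(\nu_{n_j},W_\xi)(\psi\rho_\xi)| \leqslant |\nu_{n_j}|_w\,|\psi\rho_\xi|_{C^\alpha(W_\xi)} \leqslant \overline{C}\,C_\rho\,|\psi|_{C^\alpha}$ uniformly in $j$ and $\xi$, by \eqref{eq:bounds_density}; therefore the $\hatmusrb$-integrable function $\xi\mapsto \overline{C}\,C_\rho\,|\psi|_{C^\alpha}\,|W_\xi|^{-1}$ (integrability being \cite[Exercise~7.22]{chernov2006chaotic}) dominates, and the DCT gives the equality. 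After that, the disintegration step by uniqueness is fine.

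On the positivity step, your route --- establish $\int_W\nu \geqslant c>0$ for $|W|\geqslant\delta_1/3$ from the Cesàro sums and Theorem~\ref{thm:spectral_radius}, then reduce short $W$ to a long piece via the iterated eigenvalue equation --- works and proves what the lemma states. Two small imprecisions: the display uses $\cG_n^{\delta_0}(W)$ but you then pick $W^*\in L_n^{\delta_1}(W)$; since $\delta_1\leqslant\delta_0$, each such $W^*$ lies inside some $W_j\in\cG_n^{\delta_0}(W)$ and $\int_{W_j}\nu\geqslant\int_{W^*}\nu$ by monotonicity of the nonnegative leafwise measure, so the chain of inequalities holds, but this should be said. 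Also, the existence of an $n$ with $L_n^{\delta_1}(W)\neq\emptyset$ needs neither SSP.1 nor SSP.2: it follows from uniform hyperbolicity and linear complexity \eqref{eq:complex}, exactly as in the proof of Corollary~\ref{corol:long_over_all}, which gives the quantitative bound $n_2\leqslant\overline{C}_2|\log(|W|/\delta)|$. The paper exploits this quantitative $n_2$ to obtain the stronger polynomial lower bound $\int_W\nu\geqslant\overline{C}|W|^{c}$ (used later, e.g. in Lemma~\ref{lemma:lower_bound_mug_on_good_sets}); your version yields only qualitative positivity, which suffices for the present lemma but is weaker than what the paper records.
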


\begin{proof}
The proof is formally the same the one for \cite[Lemma~6.6]{BD2020MME} replacing $\mu_*$ and their $\nu$ by $\mu_g$ and the $\nu$ from here, as well as their $n_2$ by the one from Corollary~\ref{corol:long_over_all}, and \cite[eq (7.13)]{BD2020MME} by its weighted counterpart: there exists $\overline{C} > 0$ such that for all $W \in \cW^s$,
\begin{align}\label{eq:estimate_nu_on_leaf__lower_bound}
\int_W \nu \geqslant \overline{C}|W|^{(P_*(T,g) - \sup g)\overline{C}_2}.
\end{align}
This estimate holds since, recalling \eqref{eq:def_nu_n}, we have for $\overline{C} = \frac{c_0}{2C}\delta_1^{1-(P_*(T,g)-\inf g)\overline{C}_2}$,
\begin{align*}
\int_W \nu &= \lim\limits_{n_j} \frac{1}{n_j} \sum\limits_{k=0}^{n_j-1} e^{-kP_*(T,g)} \int_W \cL_g^k 1 \,\mathrm{d}m_W \\
&\geqslant \lim\limits_{n_j} \frac{1}{n_j} \sum\limits_{k=n_2}^{n_j-1} e^{-kP_*(T,g)} \sum\limits_{W_i \in \cG_{n_2}^{\delta_1}(W)} \int_{W_i} e^{S_{n_2} g} \cL_g^{k-n_2} 1 \,\mathrm{d}m_{W_i} \\
&\geqslant \lim\limits_{n_j} \frac{1}{n_j} \sum\limits_{k=n_2}^{n_j-1} e^{-kP_*(T,g)} e^{n_2 \inf g} \frac{C\delta_1}{2}c_0 e^{P_*(T,g)(k-n_2)} \\
&\geqslant \frac{C\delta_1}{2}c_0 e^{-n_2 (P_*(T,g)-\inf g) } \geqslant \overline{C} |W|^{(P_*(T,g) - \sup g)\overline{C}_2},
\end{align*}
where we used Theorem~\ref{thm:spectral_radius} for the second inequality.
\end{proof}

\begin{proof}[Proof of Lemma~\ref{lemma:positive_length_unstable_nu}.]
The statement about stable manifolds of positive length follows from the characterization of $\hat \nu$ in Lemma~\ref{lemma:disintegration_nu}, since the set of points with stable manifolds of zero length has zero $\hatmusrb$-measure \cite{chernov2006chaotic}.

The rest of the proof follows closely the one of the analogous result \cite[Lemma~7.6]{BD2020MME} (corresponding to $g \equiv 0)$, but with more general computations.

We fix $W \in \cW^s$ and prove the statement about $\nu$ as a leafwise measure. This will imply the statement regarding unstable manifolds for the measure $\nu$ by Lemma~\ref{lemma:disintegration_nu}.

Fix $\varepsilon>0$ and $\hat \Lambda \in (1,\Lambda)$, and define $O = \cup_{n\geqslant 1} O_n$, where
\[ O_n \coloneqq \{ x \in W \mid n= \min\{ j\geqslant 1 \mid d_u(T^{-j}x, \cS_1) < \varepsilon C_e \hat \Lambda^{-j} \} \}, \]
and $d_u$ denotes distance restricted to the unstable cone. By \cite[Lemma 4.67]{chernov2006chaotic}, any $x \in W\smallsetminus O$ has an unstable manifold of length at least $2\varepsilon$. We now estimate $\nu(O) = \sum_{n\geqslant 1} \nu(O_n)$, where equality holds since the $O_n$ are disjoint. Since each $O_n$ is a finite union of open subcurves of $W$, we have
\begin{align}\label{eq:O_n}
\int_W \mathbbm{1}_{O_n} \nu = \lim\limits_{j \to \infty} \int_W \mathbbm{1}_{O_n} \nu_{n_j} = \lim\limits_{j \to \infty} \frac{1}{n_j} \sum\limits_{k=0}^{n_j-1} e^{-kP_*(T,g)} \int_W  \mathbbm{1}_{O_n} \cL_g^k 1 \,\mathrm{d}m_W.
\end{align}
We give estimates in two cases.

\smallskip
\noindent
{\em Case I: $k < n$.}
Write $\int_{W\cap O_n} \cL_g^k 1 \,\mathrm{d}m_W = \sum_{W_i \in \cG_k^{\delta_0}(W)} \int_{W_i \cap T^{-k}O_n} e^{S_k g} \,\mathrm{d}m_{W_i}$.

If $x \in T^{-k}O_n$, then $y = T^{-n+k}x$ satisfies $d_u(y, \cS_1) < \ve C_e \hLambda^{-n}$ and thus we have $d_u(Ty, \cS_{-1}) \leqslant C \ve^{1/2} \hLambda^{-n/2}$.  Due to the uniform transversality of stable and unstable cones, as well as the fact that elements of $\cS_{-1}$ are uniformly transverse to the stable cone, we have $d_s(Ty, \cS_{-1}) \leqslant C \ve^{1/2} \hLambda^{-n/2}$ as well, with possibly a larger constant $C$. 

Let $r^s_{-j}(x)$ denote the distance from $T^{-j}x$ to the nearest endpoint of $W^s(T^{-j}x)$, where $W^s(T^{-j}x)$ is the maximal local stable manifold containing $T^{-j}x$.  From the above analysis, we see that $W_i \cap T^{-k}O_n \subseteq \{ x \in W_i : r^s_{-n+k+1}(x) \leqslant C \ve^{1/2} \hLambda^{-n/2} \}$. The time reversal of the growth lemma \cite[Thm~5.52]{chernov2006chaotic} gives $m_{W_i}(r^s_{-n+k+1}(x) \leqslant C \ve^{1/2} \hLambda^{-n/2} ) \leqslant C' \ve^{1/2} \hLambda^{-n/2}$ for a constant $C'$ that is uniform in $n$ and $k$.  Thus, using Proposition~\ref{prop:almost_exponential_growth}, we find
\[
\int_{W \cap O_n} \cL_g^k 1 \, \mathrm{d}m_W \leqslant C' \ve^{1/2} \hLambda^{-n/2} \sum\limits_{W_i \in \cG_k^{\delta_0}(W)} |e^{S_k g}|_{C^0(W_i)} \leqslant C e^{k P_*(T,g)} \ve^{1/2} \hLambda^{-n/2}\,  .
\]

\smallskip
\noindent
{\em Case II: $k \geqslant n$.}
Using the same observation as in Case I, if $x \in T^{-n+1}O_n$, then $x$ satisfies $d_s(x, \cS_{-1}) \leqslant C \ve^{1/2} \hLambda^{-n/2}$.  We change variables to estimate the integral precisely at time $-n+1$, and then use Propositions~\ref{prop:upper_bounds_norms} and~\ref{prop:almost_exponential_growth}, and Lemma~\ref{lemma:norm_C0_exp_birkhoff},
\begin{align*}
&\int_{W \cap O_n} \cL^k1 \, \mathrm{d}m_W  = \sum_{W_i \in \cG_{n-1}^{\delta_0}(W)} \int_{W_i \cap T^{-n+1}O_n} e^{S_{n-1}g} \cL_g^{k-n+1} 1 \, \mathrm{d}m_{W_i}  \\
&\quad \leqslant \sum_{W_i \in \cG_{n-1}^{\delta_0}(W)} \int_{W_i \cap ( r^s_1 \leqslant C \varepsilon^{1/2} \hLambda^{-n/2})} e^{S_{n-1}g} \cL_g^{k-n+1} 1 \, \mathrm{d}m_{W_i}  \\
&\quad\leqslant \sum_{W_i \in \cG_{n-1}^{\delta_0}(W)} |\log |W_i \cap ( r^s_1 \leqslant C \varepsilon^{1/2} \hLambda^{-n/2})||^{-\gamma} |e^{S_{n-1} g}|_{C^\beta(W_i)}  \| \cL_g^{k-n+1} 1 \|_s \\
& \quad\leqslant \sum_{W_i \in \cG_{n-1}^{\delta_0}(W)} |\log (C \ve^{1/2} \hLambda^{-n/2})|^{-\gamma} C |e^{S_{n-1}g}|_{C^0(W_i)} e^{(k-n+1)P_*(T,g)} \\
&\quad \leqslant |\log (C \ve^{1/2} \hLambda^{-n/2})|^{-\gamma} C e^{k P_*(T,g)} \, .
\end{align*}
Using the estimates of Cases I and II in \eqref{eq:O_n} and using the weaker bound, we see that,
\[
\int_W \mathbbm{1}_{O_n} \, \nu_{n_j} \leqslant C |\log (C \ve^{1/2} \hLambda^{-n/2})|^{-\gamma}\, .
\]
Summing over $n$, we have, $\int_W 1_O \, \nu_{n_j} \leqslant C' |\log \ve|^{1-\gamma}$, uniformly in $j$.  Since $\nu_{n_j}$ converges to $\nu$ in the weak norm, this bound carries over to $\nu$.  Since $\ve>0$ was arbitrary and $\gamma>1$, this implies $\nu(O) = 0$, completing the proof of the lemma.
\end{proof}

\subsection{Absolute Continuity of $\mu_g$ -- Full Support}\label{sect:absolute_continuity}

In this subsection, we will assume that $\gamma > 1$, which is possible since $P_*(T,g) - \sup g > s_0 \log 2$. In the next subsection, we prove that $\mu_g$ is Bernoulli. This proof relies on showing first that $\mu_g$ is K-mixing. As a first step, we will prove that $\mu_g$ is ergodic, using a Hopf-type argument. This will require the absolute continuity of the stable and the unstable foliations for $\mu_g$, which will be deduce from SSP.2 and the following absolute continuity for $\nu$:

\begin{proposition}\label{prop:abs_c0_holonomy_wrt_nu}
Let $R$ be a Cantor rectangle. Fix $W^0 \in \cW^s(R)$ and for $W \in \cW^s(R)$, let $\Theta_W$ denote the holonomy map from $W^0 \cap R$ to $W \cap R$ along unstable manifolds in $\cW^u(R)$. Then for any $(\cM_0^1,\alpha_g)$-H\"older potential with $P_*(T,g)-\sup g > s_0 \log 2$ and having SSP.1, $\Theta_W$ is absolutely continuous with respect to the leafwise measure $\nu$.
\end{proposition}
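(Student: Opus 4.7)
The plan is to establish a two-sided bound, for every nonnegative continuous test function $\psi$ on $W\cap R$,
\[
C_R^{-1}\int_{W^0\cap R}\psi\circ\Theta_W\,d\nu\;\leqslant\;\int_{W\cap R}\psi\,d\nu\;\leqslant\;C_R\int_{W^0\cap R}\psi\circ\Theta_W\,d\nu,
\]
with $C_R>0$ depending on $R$, $W$, $W^0$ but not on $\psi$. Since by Lemma~\ref{lemma:positive_length_unstable_nu} the leafwise measure $\nu$ is a genuine Radon measure on each stable leaf, this will give that $(\Theta_W)_{*}(\nu|_{W^0\cap R})$ and $\nu|_{W\cap R}$ are equivalent with densities uniformly bounded away from $0$ and $\infty$, which is the desired absolute continuity.

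The comparison will be carried out by iterating the invariance $\cL_g\nu=\lambda\nu$, with $\lambda=e^{P_*(T,g)}$, backward in time. From the definition of $\cL_g$ in Section~\ref{sect:banach_spaces_and_transfer_operators} together with the leafwise description of $\nu$ (Lemma~\ref{lemma:disintegration_nu}), one gets the identity
\[
\int_V\varphi\,d\nu\;=\;\lambda^{-n}\sum_{V_i\in\cG_n^{\delta_0}(V)}\int_{V_i}(\varphi\circ T^n)\,e^{S_ng}\,d\nu,\qquad\forall V\in\cW^s,\ \varphi\in C^0(V).
\]
Applying this to $V=W$ with $\varphi=\psi$, and to $V=W^0$ with $\varphi=\tilde\psi$ (any continuous extension of $\psi\circ\Theta_W$ to $W^0$), reduces the problem to comparing the sums over $\cG_n^{\delta_0}(W)$ and $\cG_n^{\delta_0}(W^0)$. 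I will pair the terms via the matching procedure from the proof of~\eqref{eq:unstable ly poly}, now pairing pieces along the unstable foliation of $R$ rather than along vertical segments, so that matched pairs $(U_\ell^W,U_\ell^{W^0})\subset(T^{-n}W,T^{-n}W^0)$ correspond, under $T^n$, to pieces of $W$ and $W^0$ related exactly by the holonomy $\Theta_W$. Since the unstable direction contracts uniformly under $T^{-1}$, matched pieces satisfy $d_{\cW^s}(U_\ell^W,U_\ell^{W^0})\leqslant C\Lambda^{-n}$.

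On matched pairs, bounded distortion of the unstable Jacobian (classical, see \cite{chernov2006chaotic}) gives a uniform upper bound on $\prod_{k=0}^{n-1}J^uT(T^kx)/J^uT(T^ky)$, and combining the $(\cM_0^1,\alpha_g)$-H\"older regularity of $g$ with the argument used in Lemma~\ref{lemma:sup_less_poly_inf} yields $|S_ng(x)-S_ng(y)|\leqslant K$ uniformly in $n$ for $x,y$ in a matched pair. Transferring these classical distortion bounds to the leafwise measure $\nu$ rather than Lebesgue arclength is done by combining Lemma~\ref{lemma:disintegration_nu} (which identifies the conditional $\nu^{W_\xi}$ with a normalised restriction of $\nu$) with the lower bound \eqref{eq:estimate_nu_on_leaf__lower_bound}, applied on each matched sub-curve. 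This furnishes the desired multiplicative comparison on the matched part of the thermodynamic sum, with constant uniform in $n$.

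The remaining, and hardest, step is to show that the unmatched contribution vanishes as $n\to\infty$. Unmatched pieces come from two sources: (i) pieces of $T^{-n}W$ whose $T^n$-image touches $\cN_{C\Lambda^{-n}}(\partial R)\cap W$, whose $\nu$-measure is controlled by Corollary~\ref{corol:mug-has_no_atoms-is_adapted-sees_manifolds}(a) applied to the $C^1$ boundary curves of $R$; and (ii) pieces cut by $\cS_{-n}$ whose neighbours in $T^{-n}W^0$ are absent, contributing at most $\nu(\cN_{C\Lambda^{-n}}(\cS_{-n}\cap R))$, again bounded by Corollary~\ref{corol:mug-has_no_atoms-is_adapted-sees_manifolds}(a). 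In both cases the pointwise $\nu$-bound is of order $|\log\Lambda^{-n}|^{-\gamma}=O(n^{-\gamma})$ with $\gamma>1$. The role of SSP.1 is now essential: it enters through Proposition~\ref{prop:almost_exponential_growth}, ensuring that the factor $\lambda^{-n}$ multiplying the thermodynamic sum over unmatched pieces (which is itself controlled by Lemma~\ref{lemma:Growth_lemma}(b)) gives a contribution that decays polynomially in $n$ rather than growing. The main obstacle is precisely to quantify this decay on unmatched pieces against the measure $\nu$ (not Lebesgue), as the leafwise $\nu$ is only defined by duality and one cannot directly restrict it to these pieces; I will handle this by a duality argument approximating $\mathbbm{1}_{\text{unmatched}}$ by $C^\alpha$ test functions and using the weak-norm bound~\eqref{eq:weak ly poly} on $\cL_g^n\mathbf 1$ together with \eqref{eq:nu_tilde_in_weak_dual}. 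Letting $n\to\infty$ then yields the two-sided comparison with constant $C_R$ depending only on the Cantor rectangle, and hence the claimed absolute continuity.
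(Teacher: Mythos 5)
Your proposal aims at a stronger conclusion than the proposition states: you try to prove a two-sided multiplicative comparison
$C_R^{-1}\int_{W^0\cap R}\psi\circ\Theta_W\,\nu \leqslant \int_{W\cap R}\psi\,\nu \leqslant C_R\int_{W^0\cap R}\psi\circ\Theta_W\,\nu$
with a uniform constant, i.e.\ equivalence of the pushforward and $\nu$ with bounded densities. The proposition only claims absolute continuity, and the paper's argument establishes only the one-sided implication $\nu(E)=0 \Rightarrow \nu(\Theta_W(E))=0$, via an \emph{additive} error term of size $|\log\ve|^{-\varsigma}$ that tends to $0$ as the open neighbourhood of $E$ shrinks. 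Nothing in the paper's machinery yields the multiplicative bound, and it is not needed.

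The decisive gap in your argument is the step where you ``transfer classical distortion bounds to the leafwise measure $\nu$'' by invoking Lemma~\ref{lemma:disintegration_nu} and the mass lower bound \eqref{eq:estimate_nu_on_leaf__lower_bound}. Those facts give a lower bound on the total $\nu$-mass of a stable curve in terms of a power of its length; they do \emph{not} say that $\nu$ restricted to a stable leaf is comparable to arclength, and indeed it is not. Consequently there is no pointwise ``Jacobian'' allowing you to compare $\int_{U_j^W}\psi\,\nu$ with $\int_{U_j^{W^0}}\psi\,\nu$ on a matched pair by a bounded-distortion estimate of the type you cite from~\cite{chernov2006chaotic} — those estimates concern $m_W$, not the anisotropic distribution $\nu$. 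The only device in the paper for comparing $\nu$-integrals over two $\cW^s$-close stable curves is the finiteness of the strong unstable norm $\|\nu\|_u$ established in Proposition~\ref{prop:exist}: by definition of $\|\cdot\|_u$, for $d_{\cW^s}(W_1,W_2)\leqslant\ve$ and matched test functions one gets $\bigl|\int_{W_1}\nu\,\psi_1-\int_{W_2}\nu\,\psi_2\bigr|\leqslant|\log\ve|^{-\varsigma}\|\nu\|_u$. You never invoke $\|\nu\|_u$. Moreover, this control is additive and logarithmically small, not multiplicative: when the matched integral is itself of order $|\log\ve|^{-\varsigma}$ (which is exactly the relevant regime when $\nu(E)=0$), the error term does not yield a uniform density bound. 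This is why the paper's proof stops at the qualitative zero-to-zero statement.

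A secondary point: your handling of unmatched pieces (bounding their $\nu$-mass via Corollary~\ref{corol:mug-has_no_atoms-is_adapted-sees_manifolds}(a) and weak-norm duality) is more delicate than necessary and does not obviously close. The paper instead uses the structural observation that unmatched pieces of $T^{-n}W^\ell$ do not meet unstable manifolds in $\cW^u(R)$ — since unstable manifolds are not cut under $T^{-n}$ — so the test functions $\psi_\ve$ and $\tilde\psi_\ve$ may simply be set to $0$ there without losing the sets $E$ and $\Theta_W(E)$. That removes the unmatched contribution entirely, rather than trying to bound it. I would recommend adopting the paper's set-to-zero-on-unmatched device, invoking $\|\nu\|_u<\infty$ via \eqref{eq:bound_diff_int_matched_pieces} for the matched difference, and aiming only for the additive estimate $\int_W\tilde\psi_\ve\,\nu \leqslant \int_{W^0}\psi_\ve\,\nu + C'|\log\ve|^{-\varsigma}$, which is what absolute continuity requires.
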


\begin{proof}
Since by Lemma~\ref{lemma:positive_length_unstable_nu} unstable manifolds comprise a set of full $\nu$-measure, it suffices to fix a set $E \subset W^0 \cap R$ with $\nu$-measure zero, and prove that the $\nu$-measure of $\Theta_W(E) \subset W$ is also zero.

Here again, the proof follows closely the one of the analogous result \cite[Proposition~7.8]{BD2020MME} (corresponding to $g \equiv 0)$, but with more general computations.

Since $\nu$ is a regular measure on $W^0$, for $\ve > 0$, there exists an open set $O_\ve \subset W^0$, $O_\ve \supset E$, such that $\nu(O_\ve) \leqslant \ve$. Indeed, since $W^0$ is compact, we may choose $O_\ve$ to be a finite union of intervals. Let $\psi_\ve$ be a smooth function which is 1 on $O_\ve$ and 0 outside of an $\ve$-neighbourhood of $O_\ve$.  We may choose $\psi_\ve$ so that $\int_{W^0} \psi_\varepsilon \, \nu < 2 \varepsilon$.

Using \eqref{eq:Calpha_norm_observable_Tn}, we choose $n = n(\ve)$ such that $| \psi_\ve \circ T^n |_{C^1(T^{-n}W^0)} \leqslant 1$ and $\Lambda^{-n} \leqslant \ve$. Following the procedure described in the proof of the estimate on the unstable norm in Proposition~\ref{prop:upper_bounds_norms}, we subdivide $T^{-n}W^0$ and $T^{-n}W$ into matched pieces $U^0_j$, $U_j$ and unmatched pieces $V^0_i$, $V_i$.  With this construction, none of the unmatched pieces $T^nV^0_i$ intersect an unstable manifold
in $\cW^u(R)$ since unstable manifolds are not cut under $T^{-n}$.

Indeed, on matched pieces, we may choose a foliation 
$\Gamma_j = \{ \gamma_x \}_{x \in U^0_j}$ such that:

i) $T^n\Gamma_j$ contains all unstable manifolds in $\cW^u(R)$ that intersect $T^nU^0_j$;

ii) between unstable manifolds in $\Gamma_j \cap T^{-n}(\cW^u(R))$, we interpolate via unstable curves;

iii) the resulting holonomy $\Theta_j$ from $T^nU^0_j$ to $T^nU_j$ has uniformly bounded Jacobian\footnote{Indeed,
  \cite{BDL2018ExpDecay} shows the Jacobian is H\"older continuous, but we shall not need this here.} with
  respect to arc-length, with bound depending on the
  unstable diameter of $D(R)$, by \cite[Lemmas~6.6, 6.8]{BDL2018ExpDecay};  

iv)  pushing forward $\Gamma_j$ to $T^n\Gamma_j$ in $D(R)$, we interpolate in the gaps using unstable curves;
  call $\bGamma$ the resulting foliation of $D(R)$;

v)  the associated holonomy map $\bTheta_W$ extends $\Theta_W$ and has uniformly bounded Jacobian,
  again by \cite[Lemmas~6.6 and 6.8]{BDL2018ExpDecay}.

Using the map $\bTheta_W$, we define $\tpsi_\ve = \psi_\ve \circ \bTheta_W^{-1}$, and note that
$| \tpsi_\ve|_{C^1(W)} \leqslant C |\psi_\ve |_{C^1(W^0)}$, where we write $C^1(W)$ for the set of Lipschitz functions on
$W$, i.e., $C^\alpha$ with $\alpha =1$.

Next, we modify $\psi_\ve$ and $\tpsi_\ve$ as follows:  We set them
equal to $0$ on the images of unmatched pieces, $T^nV^0_i$ and $T^nV_i$, respectively.  Since these curves do not intersect
unstable manifolds in $\cW^u(R)$, we still have $\psi_\ve = 1$ on $E$ and $\tpsi_\ve = 1$ on $\Theta_W(E)$.  Moreover, the
set of points on which $\psi_\ve > 0$ (resp. $\tpsi_\ve > 0$) is a finite union of open intervals that cover $E$ (resp. $\Theta_W(E)$).

Since $\int_{W^0} \psi_\varepsilon \, \nu < 2\varepsilon$, in order to estimate $\int_{W} \tpsi_\varepsilon \, \nu$, we estimate the following difference, using matched pieces 
\begin{equation}
\label{eq:match split}
\begin{split}
\int_{W^0}  \psi_\ve \, \nu &- \int_W \tpsi_\ve \, \nu  = e^{-n P_*(T,g)} \left( \int_{W^0} \psi_\ve \, \cL^n \nu - \int_W \tpsi_\ve \, \cL^n \nu \right)  \\
& = e^{-n P_*(T,g)} \sum_j \int_{U^0_j} \psi_\ve \circ T^n \, e^{S_n g} \, \nu - \int_{U_j} \phi_j \, \nu + \int_{U_j} (\phi_j - \tpsi_\ve \circ T^n \, e^{S_n g}) \, \nu \,  , 
\end{split}
\end{equation}
where $\phi_j = (\psi_\ve \circ T^n \, e^{S_n g}) \circ G_{U^0_j} \circ G_{U_j}^{-1}$, and $G_{U^0_j}$ and $G_{U_j}$ represent the functions defining $U^0_j$ and $U_j$, respectively, defined as in \eqref{eq:match}.  
Next, since $d(\psi_\ve \circ T^n \, e^{S_n g}, \phi_j) = 0$ by construction, and using \eqref{eq:closeness_matched_pieces} and the assumption that $\Lambda^{-n} \leqslant \ve$, we have by \eqref{eq:bound_diff_int_matched_pieces},
\begin{equation}
\label{eq:match one}
e^{-n P_*(T,g)} \left| \sum_j \int_{U^0_j} \psi_\ve \circ T^n \, \nu - \int_{U_j} \phi_j \, \nu \right| \leqslant C 
|\log \ve|^{-\varsigma} \| \nu \|_u \, .
\end{equation}

It remains to estimate the last term in \eqref{eq:match split}. This we do using the weak norm,
\begin{equation}
\label{eq:match two}
\int_{U_j} (\phi_j - \tpsi_\ve \circ T^n \, e^{S_n g}) \, \nu \leqslant |\phi_j - \tpsi_\ve \circ T^n \, e^{S_n g}|_{C^\alpha(U_j)} \, |\nu|_w \, .
\end{equation}
By \eqref{eq:diff_phi_psi_matched_pieces}, we have
\[
|\phi_j - \tpsi_\ve \circ T^n \, e^{S_n g}|_{C^\alpha(U_j)} \leqslant C |(\psi_\ve \circ T^n \, e^{S_n g}) \circ G_{U^0_j} - (\tpsi_\ve \circ T^n \, e^{S_n g}) \circ G_{U_j} |_{C^\alpha(I_j)} \, ,
\]
where $I_j$ is the common $r$-interval on which $G_{U^0_j}$ an $G_{U_j}$ are defined.

Fix $r \in I_j$, and let $x = G_{U^0_j}(r) \in U_j$ and $\bx = G_{U_j}(r)$.  Since $U^0_j$ and $U_j$ are matched, there
exist $y \in U^0_j$ and an unstable curve $\gamma_y \in \Gamma_j$ such that $\gamma_y \cap U_j = \bx$.  By definition
of $\tpsi_\ve$, we have $\tpsi_\ve \circ T^n(\bx) = \psi_\ve \circ T^n(y)$.  Thus,
\[
\begin{split}
|(\psi_\ve \circ T^n \, e^{S_n g}) &\circ G_{U^0_j}(r) - (\tpsi_\ve \circ T^n \, e^{S_n g}) \circ G_{U_j} (r)|\\
&\leqslant |\psi_\ve \circ T^n(x) - \tpsi_\ve \circ T^n(\bx)| | e^{S_n g (x)}| + |\tpsi_\varepsilon \circ T^n (\bx)| |e^{S_n g (x)} - e^{S_n g (\bx)}| \\
& \leqslant (|\psi_\ve \circ T^n (x) - \psi_\ve \circ T^n(y)| + |\psi_\ve \circ T^n(y) - \tpsi_\ve \circ T^n(\bx)|)e^{n \sup g} + |e^{S_n g (x)} - e^{S_n g (\bx)}| \\
& \leqslant \left( |\psi_\ve \circ T^n|_{C^1(U^0_j)} d(x,y) + |g|_{C^{\alpha_g}} \frac{\Lambda^{\alpha_g}}{\Lambda^{\alpha_g} - 1} (C \ve)^{\alpha_g} \right) e^{n\sup g} \\
& \leqslant (C \Lambda^{-n} + C\varepsilon^{\alpha_g})e^{n\sup g}  \leqslant C (\ve + \ve^{\alpha_g})e^{n\sup g} \, ,
\end{split}
\] 
where we have used the fact that $d(x,y) \leqslant C\Lambda^{-n}$ due to the uniform transversality of stable and unstable curves. We also used the fact that, by definition, the vertical segment $\gamma_x$ connecting $x$ to $\bx$ is such that $|T^n \gamma_x|< C \ve$. Since each $T^i \gamma_x$ lies in the extended unstable cone, for all $0 \leqslant i \leqslant n$, we get that $d(T^i(x), T^i(\bx)) \leqslant C \Lambda^{-(n-i)} \ve$, hence the bound 
\begin{align*}
|e^{S_n g (x)} - e^{S_n g (\bx)}| &\leqslant |e^{S_n g (x)}| \cdot | 1 - e^{S_n g(\bx) - S_n g (x)}| \leqslant 2 e^{n \sup g} |S_n g(\bx) - S_n g (x)| \\
&\leqslant \frac{\Lambda^{\alpha_g}}{\Lambda^{\alpha_g} - 1} (C \ve)^{\bar\alpha} |g|_{C^{\alpha_g}} e^{n\sup g}
\end{align*}
where we used that $|1-e^x| \leqslant 2|x|$ when $x$ is near $0$.

Now given $r, s \in I_j$, we have on the one hand,
{\small
\begin{align}\label{eq:holder_const_part1}
\begin{split}
|(\psi_\ve \circ T^n \, e^{S_n g}) \circ G_{U^0_j}(r) &- (\tpsi_\ve \circ T^n \, e^{S_n g}) \circ G_{U_j} (r) \\
&- (\psi_\ve \circ T^n \, e^{S_n g}) \circ G_{U^0_j}(s) + (\tpsi_\ve \circ T^n \, e^{S_n g}) \circ G_{U_j} (s)| 
 \leqslant 2C\ve^{\bar \alpha} e^{n\sup g} \, ,
\end{split}
\end{align}
}
\hspace{-1ex}while on the other hand,
{\small
\begin{align}\label{eq:holder_const_part2}
\begin{split}
&|(\psi_\ve \circ T^n \, e^{S_n g}) \circ G_{U^0_j}(r) - (\tpsi_\ve \circ T^n \, e^{S_n g}) \circ G_{U_j} (r) -
(\psi_\ve \circ T^n \, e^{S_n g}) \circ G_{U^0_j}(s) + (\tpsi_\ve \circ T^n \, e^{S_n g}) \circ G_{U_j} (s)| \\
&= |(\psi_\ve \circ T^n \, e^{S_n g}) \circ G_{U^0_j}(r) -
(\psi_\ve \circ T^n \, e^{S_n g}) \circ G_{U^0_j}(s) -( (\tpsi_\ve \circ T^n \, e^{S_n g}) \circ G_{U_j} (r) - (\tpsi_\ve \circ T^n \, e^{S_n g}) \circ G_{U_j} (s))| \\
&\leqslant |\psi_\ve|_{C^0(W^0)} |e^{S_n g}|_{C^{\alpha_g}} d(G_{U^0_j}(r),G_{U^0_j}(s))^{\alpha_g} + |\psi_\ve \circ T^n |_{C^1(W^0)} d(G_{U^0_j}(r),G_{U^0_j}(s)) |e^{S_n g}|_{C^0} \\
&\quad\quad + |\tpsi_\ve|_{C^0(W)} |e^{S_n g}|_{C^{\alpha_g}} d(G_{U^0_j}(r),G_{U^0_j}(s))^{\alpha_g} + |\tpsi_\ve \circ T^n |_{C^1(W)} d(G_{U^0_j}(r),G_{U^0_j}(s)) |e^{S_n g}|_{C^0} \\
&\leqslant (C |r-s| + C' |r-s|^{\alpha_g}) e^{n \sup g} \leqslant C |r-s|^{\alpha_g} e^{n \sup g} \, ,
\end{split}
\end{align}
}
\hspace{-1ex}where we have used Lemma~\ref{lemma:norm_C0_exp_birkhoff} and the fact that $G_{U^0_j}^{-1}$ and $G_{U_j}^{-1}$ have bounded derivatives since the
stable cone is bounded away from the vertical.

The difference between evaluation at $r$ and $s$, divided by $|r-s|^\alpha$, is bounded by the minimum of \eqref{eq:holder_const_part1} and \eqref{eq:holder_const_part2}, both divided by $|r-s|^\alpha$. This is greatest when the two are equal, i.e.,
when $|r-s| = C \ve$.  Thus $H^\alpha((\psi_\ve \circ T^n \, e^{S_n g}) \circ G_{U^0_j} - (\tpsi_\ve \circ T^n \, e^{S_n g}) \circ G_{U_j}) \leqslant C \ve^{\alpha_g -\alpha} e^{n \sup g}$,
and so
$
|\phi_j - \tpsi_\ve \circ T^n \, e^{S_n g}|_{C^\alpha(U_j)} \leqslant C \ve^{\alpha_g -\alpha} e^{n \sup g}
$.
Putting this estimate together with \eqref{eq:match one} and \eqref{eq:match two} in \eqref{eq:match split}, we conclude,
\begin{equation}\label{eq:weak small2}
\left| \int_{W^0}  \psi_\ve \, \nu - \int_W \tpsi_\ve \, \nu \right| \leqslant C |\log \ve|^{-\varsigma} \| \nu \|_u + C \ve^{\alpha_g-\alpha} |\nu|_w e^{-n(P_*(T,g)- \sup g)} \, .
\end{equation}
Now since $\int_{W^0} \psi_\ve \, \nu \leqslant 2\ve$, we have
\begin{equation}
\label{eq:weak small}
\int_W \tpsi_\ve \, \nu \leqslant C' |\log \ve|^{-\varsigma}\,  ,
\end{equation}
where $C'$ depends on $\nu$.  Since $\tpsi_\ve = 1$ on $\Theta_W(E)$ and $\tpsi_\ve > 0$ on an open set containing 
$\Theta_W(E)$ for every $\ve > 0$, we have $\nu(\Theta_W(E)) = 0$, as required.
\end{proof}

\begin{corollary}[Absolute Continuity of $\mu_g$ with Respect to Unstable Foliations]\label{corol:abs_c0_mug_unst_fol}
Let $R$ be a Cantor rectangle with $\mu_g(R)>0$. Fix $W^0 \in \cW^s(R)$ and for $W \in \cW^s(R)$, let $\Theta_W$ denote the holonomy map from $W^0\cap R$ to $W\cap R$ along unstable manifolds in $\cW^u(R)$. Then $\Theta_W$ is absolutely continuous with respect to the measure $\mu_g$.
\end{corollary}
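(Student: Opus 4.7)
The plan is to transfer the absolute continuity for the leafwise measure $\nu$ proved in Proposition~\ref{prop:abs_c0_holonomy_wrt_nu} to the probability measure $\mu_g$, exploiting the pairing formula $\mu_g(\varphi) = \tilde\nu(\varphi \nu)/\tilde\nu(\nu)$. The first step is to disintegrate $\mu_g|_R$ along the measurable partition of $R$ into full stable slices $W \cap R$, $W \in \cW^s(R)$. Testing $\mu_g$ against functions $\varphi$ supported in $R$ that are constant along unstable pieces inside $R$, the product $\varphi\nu$ is described, as a leafwise measure, by $\varphi \cdot \nu|_{W \cap R}$ on each slice, while $\tilde \nu$ performs the transverse integration. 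This should yield a factor measure $\hat\mu_g$ on the index set of $\cW^s(R)$ together with conditional measures $\mu_g^W$ that are equivalent to $\nu|_{W\cap R}$ on each stable slice (up to a density coming from the action of $\tilde\nu$).

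The second step is to verify that the conditionals are genuinely comparable to $\nu|_{W \cap R}$. Lemma~\ref{lemma:positive_length_unstable_nu} combined with \eqref{eq:estimate_nu_on_leaf__lower_bound} already guarantees that $\nu$ is a nonzero Radon measure on every $W \cap R$ with $W \in \cW^s(R)$, so the conditionals are nontrivial. The assumption $\mu_g(R) > 0$, together with SSP.2, ensures that $\hat \mu_g$ is supported on a large enough collection of stable slices so that the disintegration really captures $\mu_g|_R$ and not just a lower-dimensional slice of it.

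With Steps 1 and 2 in place, the corollary follows at once. Given $E \subset W^0 \cap R$ with $\mu_g^{W^0}(E) = 0$, equivalence on $W^0$ yields $\nu(E) = 0$ as a leafwise measure on $W^0$. Proposition~\ref{prop:abs_c0_holonomy_wrt_nu} then gives $\nu(\Theta_W(E)) = 0$, and equivalence on the target slice $W$ concludes $\mu_g^W(\Theta_W(E)) = 0$.

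The main obstacle is Step 1: rigorously constructing the disintegration of $\mu_g$ out of the pairing $\tilde\nu(\,\cdot\, \nu)$ and identifying its conditionals with the leafwise measure $\nu$. The delicate point is that $\tilde \nu$ lives in $(\cB_w)^*$ and is a priori only a distribution, not a measure on the space of stable leaves; hence one must approximate indicator functions of Borel subsets of $\cW^s(R)$ by $C^\alpha$ test functions in a controlled manner. The logarithmic modulus of continuity encoded in $\|\cdot\|_s$ and the estimates coming from SSP.2 should make such an approximation effective, in parallel with the construction of $\mu_*$ in \cite{BD2020MME}.
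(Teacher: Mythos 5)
Your high-level strategy matches the paper's: establish that the conditional measures of $\mu_g$ on the stable slices of $R$ are equivalent to the leafwise measure $\nu$, then transport the absolute continuity of holonomy from Proposition~\ref{prop:abs_c0_holonomy_wrt_nu}. But Steps 1--2 only \emph{assert} the equivalence, and the route you sketch for closing the gap points in the wrong direction. You propose to realize $\tilde\nu$ as a (near-)measure on the transversal by approximating indicator functions of Borel subsets of $\cW^s(R)$; the paper never does this. Instead, given $E \subset W^0$ with $\nu(E) = 0$, it picks $\psi_\ve\in C^1(W^0)$ with $\int_{W^0}\psi_\ve\,\nu < 2\ve$, uses the interpolating foliation $\bGamma$ of $D(R)$ (built in the proof of Proposition~\ref{prop:abs_c0_holonomy_wrt_nu}) to extend $\psi_\ve$ to a function $\tpsi_\ve$ on $M$ that equals $1$ on the unstable saturation $\bar E$, and then bounds $\mu_g(\tpsi_\ve)$. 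The bound comes from three ingredients you do not mention: the weak-norm estimate $|\tpsi_\ve\nu|_w\le C'|\log\ve|^{-\varsigma}$, which is a byproduct of the Proposition~\ref{prop:abs_c0_holonomy_wrt_nu} estimates (cf.\ \eqref{eq:weak small}); the identity $\mu_g(\tpsi_\ve)=\tilde\nu(\tpsi_\ve\nu)/\tilde\nu(\nu)$ rewritten through the Ces\`aro representation $\tilde\nu=\lim_j\frac{1}{n_j}\sum_k e^{-kP_*(T,g)}(\cL_g^*)^k\,\mathrm{d}\musrb$; and the disintegration of $\musrb$ along homogeneous stable manifolds, which converts the weak-norm bound into $\mu_g(\tpsi_\ve)\le C|\log\ve|^{-\varsigma}\to0$. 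This gives $\mu_g(\bar E)=0$, hence $\mu_g^{W}\ll\nu$ for $\hat\mu_g$-a.e.\ $W$.

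Your Step~3 also needs the reverse comparison (so that $\mu_g^{W^0}(E)=0\Rightarrow\nu(E)=0$ on the source slice), i.e.\ genuine equivalence, and your appeal to Lemma~\ref{lemma:positive_length_unstable_nu} plus \eqref{eq:estimate_nu_on_leaf__lower_bound} does not deliver that: pointwise positivity of $\nu$ on each curve says nothing about comparing $\mu_g^W$ and $\nu$ on subsets of a single $W$. The paper proves the complementary bound $\mu_g(\bar E)\ge C''\nu(E)$ (eq.~\eqref{eq:mug_geq_nu}) via \eqref{eq:weak small2} and a crossing argument carried out in \eqref{eq:counting}: SSP.2 guarantees a definite thermodynamic proportion of long pieces in $\cG_k^{\delta_1}(W_\xi)$ once $k$ passes a uniform threshold, and topological mixing forces a fixed fraction of those long pieces to properly cross $R$ after $n_0$ further steps, so the $\musrb$-disintegration produces a lower bound proportional to $\nu(\psi_\ve)$. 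Your informal remark that SSP.2 ensures ``$\hat\mu_g$ is supported on a large enough collection of stable slices'' does not capture this mechanism; SSP.2 enters through the long-piece proportion, not through a support condition on $\hat\mu_g$.
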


In order to deduce the corollary from the Proposition~\ref{prop:abs_c0_holonomy_wrt_nu}, we introduce the set $M^{\mathrm{reg}}$, as in \cite{BD2020MME}, of regular points and a countable cover of this set by Cantor rectangles. The set $M^{\mathrm{reg}}$ is defined by
\begin{align*}
M^{\mathrm{reg}} = \{ x \in M \mid d(x, \partial W^s(x)) > 0 \, , \,\, d(x, \partial W^u(x)) > 0 \}.
\end{align*}
At each $x \in M^{\mathrm{reg}}$, we can apply \cite[Prop~7.81]{chernov2006chaotic} and construct a closed locally maximal Cantor rectangle $R_x$ containing $x$, which is the direct product of local stable and unstable manifolds. Furthermore, by trimming the sides, we may arrange it so that $\tfrac{1}{2} \mathrm{diam}^s(R_x) \leqslant \diam^u(R_x) \leqslant 2\,\diam^s(R_x)$.

\begin{lemma}[Countable Cover of $M^{\mathrm{reg}}$ by Cantor Rectangle]\label{lemma:countable_cover_with_Cantor_rectangles}
There exists a countable set $\{x_j\}_{j \in \mathbbm{N}} \subset M^{\mathrm{reg}}$ such that $\cup_j R_{x_j} = M^{\mathrm{reg}}$ and each $R_j \coloneqq R_{x_j}$ satisfies \eqref{eq:fat_Cantor_rectangle}.
\end{lemma}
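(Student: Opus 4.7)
The plan is to construct, for each $x \in M^{\mathrm{reg}}$, a Cantor rectangle $R_x \ni x$ satisfying both \eqref{eq:fat_Cantor_rectangle} and the aspect-ratio condition, and then extract a countable subcover by stratifying $M^{\mathrm{reg}}$ according to the minimal length of local invariant manifolds.

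First, I would fix $x \in M^{\mathrm{reg}}$ and set $\ell_x \coloneqq \min\{d(x,\partial W^s(x)), d(x,\partial W^u(x))\} > 0$. I would take a small solid rectangle $D_x$ centered at $x$ bounded by two stable arcs and two unstable arcs, each of length less than $\ell_x/10$, and define $R_x$ as the locally maximal Cantor rectangle in $D_x$ via \cite[Prop.~7.81]{chernov2006chaotic}. By sliding the two stable sides along $W^u(x) \cap D_x$, I would adjust the lengths so that $\tfrac{1}{2}\diam^s(R_x) \leqslant \diam^u(R_x) \leqslant 2\diam^s(R_x)$. Note $x \in R_x$ by construction, since $x$'s local manifolds have length exceeding the sides of $D_x$.

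Second, to enforce \eqref{eq:fat_Cantor_rectangle}, I would use that $m_{W^u(x)}$-a.e.\ point of $W^u(x) \cap R_x$ is a Lebesgue density point of $W^u(x) \cap R_x$, and then propagate this density via absolute continuity of the stable holonomy. Precisely, as in \cite[Lemma~7.87]{chernov2006chaotic}, by selecting the two unstable sides of $D_x$ to pass through Lebesgue density points and then shortening the stable sides appropriately (keeping $x$ inside), the density of $W^u(y) \cap R_x$ inside $W^u(y) \cap D(R_x)$ can be made uniformly larger than $0.9$ for every $y \in R_x$.

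Third, I would extract a countable cover. Decompose $M^{\mathrm{reg}} = \bigcup_{n,m \in \mathbbm{N}} M^{\mathrm{reg}}_{n,m}$ with
\[
M^{\mathrm{reg}}_{n,m} \coloneqq \{ x \in M^{\mathrm{reg}} \mid d(x, \partial W^s(x)) \geqslant 1/n,\ d(x, \partial W^u(x)) \geqslant 1/m\}.
\]
On $M^{\mathrm{reg}}_{n,m}$, the sides of $D_x$ can be chosen of common length $\eta_{n,m} \ll \min(1/n, 1/m)$. For $y \in M^{\mathrm{reg}}_{n,m}$ sufficiently close to $x$ (in the Euclidean distance on $M$), uniform transversality between $\cC^s$ and $\cC^u$ together with the uniform lower bound $1/n, 1/m$ on the lengths of $y$'s local manifolds forces $W^s_{\mathrm{loc}}(y)$ and $W^u_{\mathrm{loc}}(y)$ to cross $D(R_x)$ entirely, so that $y \in R_x$. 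Thus $R_x$ is a relatively open neighborhood of $x$ in $M^{\mathrm{reg}}_{n,m}$. Since $M$ is second countable, so is $M^{\mathrm{reg}}_{n,m}$, and a countable subfamily of $\{R_x\}_{x \in M^{\mathrm{reg}}_{n,m}}$ suffices to cover it. Enumerating over $n, m$ gives the desired countable family.

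The main obstacle is the third step: establishing that $R_x$ is a relative neighborhood of $x$ in $M^{\mathrm{reg}}_{n,m}$. The key is that restricting to a stratum with uniform lower bounds on manifold lengths removes the pathological behaviour near $\partial W^{s/u}$ and lets the transversality of $\cC^s$ and $\cC^u$ be exploited uniformly. Steps one and two are essentially routine applications of \cite[Prop.~7.81, Lemma~7.87]{chernov2006chaotic} and absolute continuity of the unstable foliation.
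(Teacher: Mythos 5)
Your proposal follows the same strategy as the cited reference \cite[Lemma~7.10]{BD2020MME}, which the paper's one-line proof simply defers to: construct $R_x$ via \cite[Prop.~7.81, Lemma~7.87]{chernov2006chaotic}, stratify $M^{\mathrm{reg}}$ according to lower bounds on the lengths of the local invariant manifolds, observe that any nearby point of the same stratum has stable and unstable manifolds long enough to cross $D(R_x)$ completely (and thus belongs to $R_x$ by local maximality and by the fact that disjoint stable, resp.\ unstable, manifolds cannot cross the stable, resp.\ unstable, sides of $D(R_x)$), and extract a countable subcover by Lindel\"of. Two small points should be phrased more carefully, though neither changes the argument: (i) $R_x$ is a Cantor set, not open, so the relevant fact is that $R_x \cap M^{\mathrm{reg}}_{n,m}$ contains a relatively open set of the form $B(x,\epsilon_x)\cap M^{\mathrm{reg}}_{n,m}$, and it is to this relatively open cover that second countability is applied; (ii) because the density requirement \eqref{eq:fat_Cantor_rectangle} may force $D(R_x)$ to be much smaller than $\ell_x/10$, the quantity $\eta_{n,m}$ should be read only as a uniform \emph{upper} bound on the side lengths of $D_x$ over the stratum, which is all that is needed (the radius $\epsilon_x$ is allowed to depend on $x$, since Lindel\"of does not require uniformity).
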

\begin{proof}
This lemma is exactly the content of \cite[Lemma~7.10]{BD2020MME}.
\end{proof}

Let $\{R_j \mid j \in \mathbbm{N} \}$ be the family of Cantor rectangles constructed in Lemma~\ref{lemma:countable_cover_with_Cantor_rectangles}, discarding the ones with zero $\mu_g$-measure. Then $\mu_g (\cup_j R_j) = \mu_g(M^{\rm reg}) =1$, by Corollary~\ref{corol:mug-has_no_atoms-is_adapted-sees_manifolds}(d). In the rest of the paper, we shall work with this countable collection of rectangles.

Given a Cantor rectangle $R$, define $\cW^s(R)$ to be the set of stable manifolds that completely cross $D(R)$, and similarly for $\cW^u(R)$. 

\begin{proof}[Proof of Corollary~\ref{corol:abs_c0_mug_unst_fol}]
In order to prove absolute continuity of the unstable foliation with respect to $\mu_g$, we will show that the conditional measures $\mu_g^W$ of $\mu_g$ are equivalent to $\nu$ on $\mu_g$-almost every $W \in \cW^s(R)$. 

The proof here follows closely the one of \cite[Corollary~7.9]{BD2020MME} (corresponding to $g \equiv 0)$, but with more general computations.

Fix a  Cantor rectangle $R$ satisfying \eqref{eq:fat_Cantor_rectangle} with $\mu_g(R)>0$, and $W^0$ as in the statement of Corollary~\ref{corol:abs_c0_mug_unst_fol}. Let $E \subset W^0 \cap R$ satisfy $\nu(E) = 0$, for the leafwise measure $\nu$.

For any $W \in \cW^s(R)$, we have the holonomy map $\Theta_W: W^0 \cap R \to W \cap R$ as in the proof of Proposition~\ref{prop:abs_c0_holonomy_wrt_nu}. For $\ve > 0$, we approximate $E$, choose $n$ and construct a foliation $\bGamma$ of the solid rectangle $D(R)$ as before. Define $\psi_\ve$ and use the foliation $\bGamma$ to define $\tpsi_\ve$ on $D(R)$.  We have $\tpsi_\ve = 1$ on $\bar E = \cup_{x \in E} \bar\gamma_x$, where $\bar\gamma_x$ is the element of $\bGamma$ containing $x$. We extend $\tpsi_\ve$ to $M$ by setting it equal to $0$ on $M \setminus D(R)$.

It follows from the proof of Proposition~\ref{prop:abs_c0_holonomy_wrt_nu}, in particular \eqref{eq:weak small}, that $\tpsi_\ve \nu \in \cB_w$, and
$
|\tpsi_\ve \nu|_w \leqslant C' |\log \ve|^{-\varsigma} 
$.
Now,
\begin{equation}
\label{eq:limit mu}
\begin{split}
\tilde\nu (\nu) \, \mu_g(\tpsi_\ve) & = \tilde\nu(\tpsi_\ve\nu) = \lim_{j \to \infty} \frac{1}{n_j} \sum_{k=0}^{n_j-1} e^{-k P_*(T,g)} (\cL_g^*)^k \, \mathrm{d}\musrb(\tpsi_\ve \nu) \\
& = \lim_{j \to \infty} \frac{1}{n_j} \sum_{k=0}^{n_j-1} e^{-k P_*(T,g)} \musrb(\cL_g^k(\tpsi_\ve \nu)) \, .
\end{split}
\end{equation}
For each $k$, using the disintegration of $\musrb$ as in the proof of Lemma~\ref{lemma:disintegration_nu} with the same notation as there, and \eqref{eq:bounds_density}, we estimate, 
\begin{align*}
\musrb(\cL_g^k(\tpsi_\ve \nu)) & = \int_{\Xi} \int_{W_\xi}  \cL_g^k(\tpsi_\ve \nu) \, \rho_\xi \, \mathrm{d}m_{W_\xi} \, |W_\xi|^{-1} \, \mathrm{d}{\hatmusrb}(\xi) \\
& \leqslant C \int_{\Xi} |\cL_g^k(\tpsi_\ve \nu) |_w \, |W_\xi|^{-1} \, \mathrm{d}{\hatmusrb}(\xi)
\leqslant C_\rho e^{k P_*(T,g)} |\tpsi_\ve \nu|_w \leqslant C_\rho e^{k P_*(T,g)} |\log \ve|^{-\varsigma} \, ,
\end{align*}
where we have used \eqref{eq:weak ly poly} in the last line, as well as the $\hatmusrb$-integrability of $|W_\xi|^{-1}$ from \cite[Exercise~7.22]{chernov2006chaotic}. Thus $\mu_g(\tpsi_\ve) \leqslant C |\log \ve|^{-\varsigma}$, for each $\ve >0$, so that $\mu_g(\bar E) = 0$.  

Disintegrating $\mu_g$ into conditional measures $\mu_g^{W_\xi}$ on $W_\xi \in \cW^s$ and a factor measure $d\hat\mu_g(\xi)$ on the index set $\Xi_R$ of stable manifolds in $\cW^s(R)$, it follows that $\mu_g^{W_\xi}(\bar E) = 0$ for $\hat\mu_g$-almost every $\xi \in \Xi_R$. Since $E$ was arbitrary, the conditional measures of $\mu_g$ on $\cW^s(R)$ are absolutely continuous with respect to the leafwise measure $\nu$.

\medskip
To show that in fact $\mu_g^W$ is equivalent to $\nu$, suppose now that $E \subset W^0$ has $\nu(E)>0$. For any $\ve >0$ such that $C'|\log \ve|^{-\varsigma} < \nu(E)/2$, where $C'$ is from \eqref{eq:weak small}, choose $\psi_\ve \in C^1(W^0)$ such that $\nu(|\psi_\ve - 1_E|) < \ve$, where $1_E$ is the indicator function of the set $E$. 
As above, we extend $\psi_\ve$ to a function $\tpsi_\ve$ on $D(R)$ via the foliation $\bGamma$, and then to $M$ by setting $\tpsi_\ve = 0$ on $M \setminus D(R)$.

We have $\tpsi_\ve \nu \in \cB_w$ and by \eqref{eq:weak small2}
\begin{equation}
\label{eq:lower psi}
\nu(\tpsi_\ve \, 1_W) \geqslant \nu(\psi_\ve \, 1_{W^0}) - C'|\log \ve|^{-\varsigma} ,\qquad \mbox{for all } W \in \cW^s(R) \, .
\end{equation}

Now following \eqref{eq:limit mu} and disintegrating $\musrb$ as usual, we obtain,
\begin{equation}
\label{eq:counting}
\begin{split}
\mu_g(\tpsi_\ve) 
& = \lim_n \frac 1n \sum_{k=0}^{n-1} e^{-k P_*(T,g)} \int_{\Xi} \int_{W_\xi} \cL_g^k(\tpsi_\ve \nu) \, \rho_\xi \, \mathrm{d}m_{W_\xi} \, \mathrm{d}\hatmusrb(\xi) \\
& = \lim_n \frac 1n \sum_{k=0}^{n-1} e^{-k P_*(T,g)} \int_\Xi \left( \sum_{W_{\xi, i} \in \cG_k^{\delta_1}(W_\xi)}  \int_{W_{\xi, i}} \tpsi_\ve \, \rho_\xi \circ T^k \, e^{S_k g} \, \nu \right) \, \mathrm{d}\hatmusrb(\xi) \,  .
\end{split}
\end{equation}
To estimate this last expression, we estimate the thermodynamic sum over the curves $W_{\xi, i}$ which properly cross the rectangle $R$.

By SSP.2 and the choice of $\delta_1$ in \eqref{eq:delta1}, there exists $k_0$, depending only on the minimum length of $W \in \cW^s(R)$, such that 

\[ \sum\limits_{W_i \in L_k^{\delta_1}(W_\xi)} |e^{S_k g}|_{C^0(W_i)} \geqslant \frac 13 \sum\limits_{W_i \in \cG_k^{\delta_1}(W_\xi)} |e^{S_k g}|_{C^0(W_i)} \, , \quad \text{for all} \, k \geqslant k_0. \]

By choice of our covering $\{ R_i \}$ from Lemma~\ref{lemma:countable_cover_with_Cantor_rectangles}, all $W_{\xi, j} \in L_k^{\delta_1}(W_\xi)$ properly cross one of finitely many $R_i$.  By the topological mixing property of $T$, there exists $n_0$, depending only on the length scale $\delta_1$, such that some smooth component of $T^{-n_0}W_{\xi, j}$ properly crosses $R$.  Thus, letting $\cC_k(W_\xi)$ denote those $W_{\xi, i} \in \cG_k^{\delta_1}(W_\xi)$ which properly cross $R$, we have
\begin{align*}
&\sum\limits_{W_i \in \cC_{k+n_0}(W_\xi)} |e^{S_k g}|_{C^0(W_i)} \geqslant \sum\limits_{W_{\xi,i} \in L^{\delta_1}_k(W_\xi) } \sum\limits_{\tilde W \subset \cG_{n_0}^{\delta_1}(W_{\xi,i})\cap \cC_{k+n_0}(W_\xi) } e^{n_0 \inf g} |e^{S_k g}|_{C^0(W_{\xi,i})} \\
&\qquad \geqslant e^{n_0 \inf g} \!\!\!\!\!\! \sum\limits_{W_{\xi,i} \in L^{\delta_1}_k(W_\xi) } \!\!\!\!\!\! |e^{S_k g}|_{C^0(W_{\xi,i})} 
\geqslant \frac 13 e^{n_0 \inf g} \!\!\!\! \sum\limits_{W_{\xi,i} \in \cG^{\delta_1}_k(W_\xi) } \!\!\!\! |e^{S_k g}|_{C^0(W_{\xi,i})}   \geqslant  \frac 13 c \,  e^{n_0 \inf g} \, e^{k P_*(T,g)} \, ,
\end{align*}
for all $k \geqslant k_0$, where $c>0$ depends on $c_0$ from Proposition~\ref{prop:GnW_geq_M0n} as well as the minimum length of $W \in \cW^s(R)$.

Using this lower bound on the sum together with \eqref{eq:lower psi} yields,
\[
\mu_g(\tpsi_\ve) \geqslant \tfrac 13 c e^{-n_0 P_*(T,g)} \big( \nu(\psi_\ve) - C' |\log \ve|^{-\varsigma}\big)
\geqslant C'' \big( \nu(E) - |\log \ve|^{-\varsigma}\big)\,  .
\]
Taking $\ve \to 0$, we have 
\begin{align}\label{eq:mug_geq_nu}
\mu_g(\bar E) \geqslant C'' \nu(E) \, ,
\end{align}
and so $\mu_g^W(\bar E) > 0$ for almost every $W \in \cW^s(R)$.
\end{proof}

\begin{proposition}[Full Support]\label{prop:full_support}
We have $\mu_g(O)>0$ for any open set $O$.
\end{proposition}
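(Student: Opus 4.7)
The strategy is to combine the topological mixing of $T$ with the leafwise positivity of $\nu$ (from~\eqref{eq:estimate_nu_on_leaf__lower_bound}) and the equivalence $\mu_g^W \sim \nu^W$ on Cantor rectangles of positive $\mu_g$-measure (from the proof of Corollary~\ref{corol:abs_c0_mug_unst_fol}). The heuristic is that some forward iterate $T^{n_0} O$ must contain an unstable curve properly crossing a Cantor rectangle $R$ with $\mu_g(R) > 0$, and $\mu_g$ will then see such a crossing through its conditional measures on the stable leaves of $R$.

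I would first fix a Cantor rectangle $R = R_{j_0}$ from the countable family of Lemma~\ref{lemma:countable_cover_with_Cantor_rectangles} with $\mu_g(R) > 0$; such an index $j_0$ exists because $\mu_g\big(\bigcup_j R_j\big) = \mu_g(M^{\mathrm{reg}}) = 1$ by Corollary~\ref{corol:mug-has_no_atoms-is_adapted-sees_manifolds}(d). Given any nonempty open set $O \subset M$, I would pick a regular point $x_0 \in O \cap M^{\mathrm{reg}}$ (the set $M^{\mathrm{reg}}$ has full Lebesgue measure and hence is dense in $M$) together with a short subarc $V_0 \subset W^u(x_0) \cap O$ of positive length. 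Iterating $V_0$ forward, the growth lemma for unstable curves under $T$ (the time-reversal of Lemma~\ref{lemma:Growth_lemma}) combined with the topological mixing of the billiard map (see \cite[Lemma~7.90]{chernov2006chaotic}) produces $n_0 \geqslant 0$ and a smooth component $V \subset T^{n_0} V_0$ which properly crosses $R$ in the unstable direction. Since $T^{n_0}$ is a diffeomorphism from the smooth cell of $\cM_0^{n_0}$ containing the preimage of $V$ onto its image in $\cM_{-n_0}^0$, the set $T^{n_0} O$ is open in $M$ and $V$ admits an open $M$-neighborhood $U \subset T^{n_0} O$.

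To conclude, I would disintegrate $\mu_g|_R = \int \mu_g^W \, d\hat\mu_g(W)$ on $\cW^s(R)$. Proper crossing ensures that for every $W \in \cW^s(R)$ the intersection $U \cap W$ contains a nontrivial open subinterval $I_W \subset W$ about the unique point $V \cap W$. By the two-sided comparison proved in Corollary~\ref{corol:abs_c0_mug_unst_fol} (in particular~\eqref{eq:mug_geq_nu}), the measures $\mu_g^W$ and $\nu^W$ are equivalent on $W \cap R$ for $\hat\mu_g$-almost every $W \in \cW^s(R)$. On the other hand, applying~\eqref{eq:estimate_nu_on_leaf__lower_bound} to the short stable curve $I_W \in \cW^s$ gives leafwise positivity $\nu(I_W) > 0$, so $\mu_g^W(I_W) > 0$ for $\hat\mu_g$-a.e.\ $W$. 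Integrating yields $\mu_g(U) > 0$, and $T$-invariance of $\mu_g$ gives $\mu_g(O) = \mu_g(T^{n_0} O) \geqslant \mu_g(U) > 0$.

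The hard step will be the mixing argument: producing a smooth component of $T^{n_0} V_0$ which both properly crosses the prescribed rectangle $R$ and is the image of an open subpiece of $V_0$ (so that it admits an open $M$-neighborhood inside $T^{n_0} O$). This should follow by combining the time-reversed growth lemma, which ensures that a positive fraction of components of $T^n V_0$ eventually reaches length at least $\delta_0/3$, with the standard Chernov--Markarian topological mixing statement applied to one such long component.
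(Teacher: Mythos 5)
Your overall strategy — push an unstable arc from $O$ forward until it properly crosses a Cantor rectangle $R$ with $\mu_g(R)>0$, then convert the leafwise positivity of $\nu$ into positivity of $\mu_g$ via the holonomy estimates — is sound, and the topological-mixing step (growth of short unstable curves, then \cite[Lemma~7.90]{chernov2006chaotic}) is the right ingredient. However, the final step is not justified as written. You assert that ``$\mu_g^W$ and $\nu^W$ are equivalent on $W \cap R$'' and deduce $\mu_g^W(I_W)>0$ from $\nu(I_W)>0$. Even granting such an equivalence, the conditional measures $\mu_g^W$ in the disintegration of $\mu_g|_R$ are carried by the Cantor set $W \cap R$, so the inference needs $\nu(I_W \cap R)>0$, not merely $\nu(I_W)>0$; nothing in the paper (nor in \eqref{eq:estimate_nu_on_leaf__lower_bound}, which bounds $\nu$ on whole subcurves of $\cW^s$) rules out $\nu(W\cap R)=0$. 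Moreover, what Corollary~\ref{corol:abs_c0_mug_unst_fol} actually establishes is one-sided absolute continuity $\mu_g^W \ll \nu$ plus the \emph{saturated-set} estimate \eqref{eq:mug_geq_nu}, $\mu_g(\bar E) \geqslant C'' \nu(E)$ for $E\subset W^0$, which is a statement about the two-dimensional union of leaves of $\bGamma$, not a leafwise equivalence.

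The fix is to use \eqref{eq:mug_geq_nu} directly rather than through a leafwise claim: fix a reference leaf $W^0 \in \cW^s(R)$, take $E = I_{W^0}$ a short stable subinterval of $W^0$ about the intersection point $V \cap W^0$ (which lies in $R$ since $V$ is an unstable manifold crossing $D(R)$ and $W^0$ a stable manifold crossing $D(R)$), note $\nu(I_{W^0})>0$ by \eqref{eq:estimate_nu_on_leaf__lower_bound}, conclude $\mu_g(\bar{I_{W^0}})>0$, and then check that the foliated saturation $\bar{I_{W^0}}$ (a thin band around the unstable leaf $V$) is contained in $U$ once $I_{W^0}$ is short enough, by continuity of $\bGamma$. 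With this change the argument closes. Note also that this route is somewhat different in flavour from the one the paper follows (via \cite[Proposition~7.11]{BD2020MME}), which works directly with the eigenfunction pairing $\mu_g(\psi)=\tilde\nu(\psi\nu)/\tilde\nu(\nu)$, disintegrates $\musrb$, and replays the lower-bound estimate \eqref{eq:counting} with a bump function supported in $O$; your approach is more geometric, fixing a good rectangle a priori and transporting the open set forward, but it ends up leaning on the same absolute-continuity machinery.
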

\begin{proof}
The proof is the same as the one of \cite[Proposition~7.11]{BD2020MME}, replacing $\mu_*$ by $\mu_g$, as well as results used from \cite[\S~7]{BD2020MME} by their counterparts from the present section.
\end{proof}

\subsection{Bernoulli property of $\mu_g$ and Variational Principle}\label{sect:bernoulli_var_principle}

In this section, we use the absolute continuity results on the holonomy map from Section~\ref{sect:absolute_continuity} to establish that $\mu_g$ is K-mixing. We also prove an upper bound on the $\mu_g$-measure of weighted dynamical Bowen balls. Using these estimates, we are able to prove that $\mu_g$ is an equilibrium state for $T$ under the potential $g$ -- that is, $\mu_g$ realizes the $\sup$ in the definition of $P(T,g)$ -- and $\mu_g$ satisfies the variational principle: $P_*(T,g) = P(T,g)$. Finally, using again the absolute continuity along side with Cantor rectangles and the bound~\eqref{eq:estimate_mug_neighbourhood_Sk} on the neighbourhoods of the singular sets, we can bootstrap from the K-mixing to prove that $\mu_g$ is Bernoulli.

\begin{lemma}[Single Ergodic Component]
If $R$ is a Cantor rectangle with $\mu_g(R)>0$, then all the stable manifolds $\cW^s(R)$ are contained in a single ergodic component of $\mu_g$.
\end{lemma}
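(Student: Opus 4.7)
The plan is a classical Hopf-type argument, in which the absolute continuity of the unstable holonomy established in Corollary~\ref{corol:abs_c0_mug_unst_fol} plays the central role. It suffices to show that for every $\phi \in C^0(M)$, the forward Birkhoff average
\[
\phi^+(x) = \lim_{n \to \infty} \frac{1}{n}\sum_{i=0}^{n-1} \phi(T^i x)
\]
is $\mu_g$-almost everywhere constant on $\bigcup_{W \in \cW^s(R)} W$; a standard $L^1$-approximation argument (continuous functions are dense in $L^1(\mu_g)$, and $T$-invariant $L^\infty$ functions coincide a.e.\ with $\phi^+$ for some $\phi\in C^0(M)$ after approximation) then upgrades this to all bounded $T$-invariant functions, giving the claim.

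So fix $\phi \in C^0(M)$. By Birkhoff's theorem applied to $T$ and to $T^{-1}$, $\phi^+$ and the analogous backward average $\phi^-(x) = \lim_n \tfrac{1}{n}\sum_{i=0}^{n-1}\phi(T^{-i}x)$ both exist and are equal on a set $\Omega$ of full $\mu_g$-measure. Two elementary observations, which I would verify first: (a) $\phi^+$ is constant on every stable manifold intersecting $\Omega$, since for $x,y$ in the same stable manifold $d(T^ix,T^iy)\to 0$ and $\phi$ is continuous; symmetrically (b) $\phi^-$ is constant on every unstable manifold intersecting $\Omega$. Now disintegrate $\mu_g$ along the stable foliation in $\cW^s(R)$: for $\hat\mu_g$-a.e.\ $W \in \cW^s(R)$ the conditional measure $\mu_g^W$ gives full mass to $\Omega$, and by (a) the function $\phi^+$ takes a single value $c(W)$ on a $\mu_g^W$-conull subset of $W\cap R$.

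The heart of the argument is to show that $c(W)$ is independent of $W$ for $\hat\mu_g$-a.e.\ $W\in\cW^s(R)$. Fix a reference $W^0\in\cW^s(R)$ for which $c(W^0)$ is defined, and let $E_0\subset W^0\cap R$ be a $\mu_g^{W^0}$-conull subset on which $\phi^+=\phi^-=c(W^0)$. For any other $W\in\cW^s(R)$, the unstable holonomy $\Theta_W: W^0\cap R \to W\cap R$ moves each point along its unstable manifold, so by (b) we have $\phi^-\circ \Theta_W \equiv c(W^0)$ on $E_0$. Now Corollary~\ref{corol:abs_c0_mug_unst_fol} asserts that $\mu_g^W$ is absolutely continuous with respect to $\nu$ along stable manifolds in $\cW^s(R)$, and the inequality $\mu_g(\bar E)\geqslant C''\nu(E)$ obtained in its proof combined with Proposition~\ref{prop:abs_c0_holonomy_wrt_nu} gives the matching inequality in the reverse direction; together, $\mu_g^W$ and $\nu$ are equivalent, and $\Theta_W$ is absolutely continuous in both directions with respect to $\mu_g$. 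Hence $\Theta_W(E_0)$ is a $\mu_g^W$-conull subset of $W\cap R$, on which $\phi^-\equiv c(W^0)$. On the other hand, we already know $\phi^+ = \phi^- = c(W)$ $\mu_g^W$-a.e.\ on $W$, so $c(W)=c(W^0)$ for $\hat\mu_g$-a.e.\ $W\in\cW^s(R)$, as desired.

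The step I expect to require the most care is the \emph{two-sided} absolute continuity of $\Theta_W$ with respect to $\mu_g$: Corollary~\ref{corol:abs_c0_mug_unst_fol} is stated as absolute continuity, but what is actually needed here is that $\Theta_W$ preserves $\mu_g^{W^0}$-null sets \emph{and} $\mu_g^W$-null sets. This follows from combining the inequality proved in Corollary~\ref{corol:abs_c0_mug_unst_fol} (namely $\mu_g^W$-absolute continuity with respect to $\nu$), Proposition~\ref{prop:abs_c0_holonomy_wrt_nu} (absolute continuity of $\Theta_W$ with respect to the leafwise measure $\nu$), and the equivalence of $\mu_g^W$ and $\nu$ on $W\cap R$ established in the proof of Corollary~\ref{corol:abs_c0_mug_unst_fol} via the inequality~\eqref{eq:mug_geq_nu}. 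Once this two-sided absolute continuity is secured, the remaining steps are formal.
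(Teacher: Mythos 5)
Your proof is correct and follows the same Hopf strategy that the paper invokes (by citing the verbatim argument of \cite[Lemma~7.15]{BD2020MME}): you correctly isolate the two-sided absolute continuity of the unstable holonomy as the key input, obtain it from Proposition~\ref{prop:abs_c0_holonomy_wrt_nu} together with the equivalence of $\mu_g^W$ and the leafwise measure $\nu$ established in the proof of Corollary~\ref{corol:abs_c0_mug_unst_fol}, and close the argument with the standard Birkhoff/Ces\`aro observations. The only thing worth flagging is cosmetic: the conclusion you obtain is that $c(W)=c(W^0)$ for $\hat\mu_g$-a.e.\ $W\in\cW^s(R)$, which is the measure-theoretic content actually needed downstream (and what the cited proof delivers as well), even though the lemma's phrasing nominally says ``all.''
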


\begin{proof}
Replacing $\mu_*$ by $\mu_g$, the proof of the analogous result \cite[Lemma~7.15]{BD2020MME} can be applied verbatim. The proof there follows the Hopf strategy.
\end{proof}

\begin{proposition}\label{prop:mug_is_ergodic}
For all $(\cM_0^1, \alpha_g)$-H\"older potential $g$ such that $P_*(T,g) - \sup g > s_0 \log 2$ and having SSP.1 and SSP.2, $(T,\mu_g)$ is K-mixing.
\end{proposition}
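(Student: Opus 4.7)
The plan is to follow the now classical Hopf--Sinai strategy, exploiting the previous lemma together with the topological mixing of $T$, to first establish ergodicity, and then to bootstrap to the K-property by showing that every power $T^n$ is ergodic with respect to $\mu_g$.

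First, I would use the countable cover $\{R_j\}_{j \in \bN}$ from Lemma~\ref{lemma:countable_cover_with_Cantor_rectangles}, discarding those with $\mu_g(R_j)=0$. By the previous lemma, each remaining $R_j$ lies in a single ergodic component. To merge these components, I would invoke the topological mixing of the billiard map $T$ on the nonwandering set (a standard consequence of finite horizon and hyperbolicity, cf.~\cite{chernov2006chaotic}): for any two rectangles $R_i$, $R_j$ with positive $\mu_g$-measure, there exists $n\geqslant 1$ such that a smooth component of $T^{-n} W$ properly crosses $R_j$ for some $W \in \cW^s(R_i)$. Combined with the absolute continuity of the unstable holonomy with respect to $\mu_g$ (Corollary~\ref{corol:abs_c0_mug_unst_fol}) and the full support of $\mu_g$ (Proposition~\ref{prop:full_support}), this forces $R_i$ and $R_j$ to lie in the same ergodic component. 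Since $\mu_g(\cup_j R_j) = \mu_g(M^{\mathrm{reg}}) = 1$ by Corollary~\ref{corol:mug-has_no_atoms-is_adapted-sees_manifolds}(d), this yields ergodicity of $(T,\mu_g)$.

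To promote ergodicity to K-mixing, I would verify that $(T^n,\mu_g)$ is ergodic for every $n\geqslant 1$, and then apply the general theorem for hyperbolic systems with singularities (see \cite[Thm.~6.3]{chernov2006chaotic} or the original Pesin--Sinai argument) stating that a hyperbolic, $T$-adapted probability measure whose every power is ergodic, and whose stable/unstable foliations are absolutely continuous, is K-mixing. The hyperbolicity of $\mu_g$ follows from the invariance of the cones $\cC^s$, $\cC^u$ together with $T$-adaptedness (Corollary~\ref{corol:mug-has_no_atoms-is_adapted-sees_manifolds}(c)). Absolute continuity of both foliations relative to $\mu_g$ is given by Corollary~\ref{corol:abs_c0_mug_unst_fol} and its time-reversal (which follows by running all the arguments for $T^{-1}$, using the time-reversal of SSP.1 and SSP.2). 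Ergodicity of each $T^n$ is established by repeating the Hopf-type argument above, but at scale $n$: the Cantor rectangles cover $M^{\mathrm{reg}}$ regardless of the map, and topological mixing of $T$ implies topological mixing of $T^n$ on each ergodic component, so each $R_j$ still belongs to a single $T^n$-ergodic component and any two positive-measure rectangles can be linked via an appropriate forward iterate.

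The main obstacle is verifying that the Hopf chaining procedure goes through cleanly despite the singularities of $T$: one must ensure that the unstable holonomies used to chain rectangles are defined on sets of full $\mu_g$-measure, and that iteration by $T^n$ does not destroy the proper-crossing property encoded in the rectangles $R_j$. This is handled by combining Proposition~\ref{prop:abs_c0_holonomy_wrt_nu}, the equivalence $\mu_g^W \sim \nu$ on stable manifolds (equation~\eqref{eq:mug_geq_nu} in the proof of Corollary~\ref{corol:abs_c0_mug_unst_fol}), and the growth lemma together with SSP.2, which guarantee that a definite fraction of the thermodynamic weight in $\cG_k^{\delta_1}(W)$ sits on curves properly crossing the chosen rectangles. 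Once this chaining is set up, the passage from ergodicity of all powers to K-mixing is abstract and uses only the hyperbolic structure already present.
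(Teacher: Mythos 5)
Your proposal follows the same Hopf--Pinsker strategy that the paper outlines (deferring to the argument in \cite[Prop.~7.16]{BD2020MME}): first use the Cantor-rectangle cover, absolute continuity of the holonomy, and topological mixing to chain rectangles and obtain ergodicity of every power $T^n$, then deduce the K-property. The one place where your plan glosses over real work is the final step: there is no off-the-shelf theorem of the form ``hyperbolic $+$ $T$-adapted $+$ absolutely continuous foliations $+$ all powers ergodic $\Rightarrow$ K'' that applies directly to $\mu_g$. The results you gesture at (Chernov--Markarian, Katok--Strelcyn, Pesin--Sinai) are stated for the smooth/SRB measure, and what the paper actually does (following \cite{BD2020MME}) is re-run the Pinsker partition argument: build a partition from stable and unstable manifolds that refines $\pi(T)$, show via absolute continuity that each Cantor rectangle sits inside a single element of $\pi(T)$, conclude that $\pi(T)$ is finite and $T$-permuted, and then kill it using ergodicity of all $T^n$. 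Your ingredients are exactly the ones this argument needs, so this is a matter of presentation rather than a gap, but in the writeup you should replace the appeal to an abstract theorem by the explicit Pinsker partition construction.
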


\begin{proof}
Replacing $\mu_*$ by $\mu_g$, the proof of the analogous result \cite[Proposition~7.16]{BD2020MME} can be applied verbatim. We outline the steps of the proof. 

First, Baladi and Demers show that $(T^n,\mu_*)$ is ergodic for all $n \geqslant 1$. To do so, they use the topological mixing of $T$ to prove that any two Cantor rectangles belong to the same ergodic component of $T^n$. 

Then, they prove that $T$ is K-mixing. To do so, they construct a measurable partition out of the stable and unstable manifolds, that is finer than the Pinsker partition $\pi(T)$. Using the covering of $M^{\rm reg}$ by Cantor rectangles $\{R_i\}$, and the absolute continuity of the holonony map, they prove that each $R_i$ belongs to a single component of $\pi(T)$. From this, they deduce that $\pi(T)$ contains finitely many elements on which $T$ acts by permutation. Since $\pi(T)$ is $T$-invariant and $(T^n,\mu_*)$ is ergodic for all $n \geqslant 1$, $\pi(T)$ must be trivial.
\end{proof}

\begin{proposition}[Upper Bounds on Weighted Dynamical Balls]\label{prop:measure_bowen_balls}
Assume that $P_*(T,g)- \sup g > s_0 \log 2$ and that SSP.1 holds. There exists $A < \infty$ such that for all $\varepsilon >0$ sufficiently small, $x \in M$, and $n \geqslant 1$, the measure $\mu_g$ constructed in \eqref{eq:def_mu_g} satisfies
\[ \mu_g( e^{-S_n^{-1} g} \mathbbm{1}_{B_n^{-1}(x,\varepsilon)}) \leqslant A e^{-n P_*(T,g)} \, ,\]
where $B_n^{-1}(x,\ve)$ is the Bowen ball at $x$ of length $n$ for $T^{-1}$.
\end{proposition}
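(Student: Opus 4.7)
The key reduction will be to convert the integral over a backward Bowen ball into a Gibbs-type upper bound on the $\mu_g$-measure of a single dynamical partition element. First I would use $T^n$-invariance of $\mu_g$ together with the identity $S_n^{-1}g \circ T^n = S_n g$ to convert the backward statement into its forward counterpart. The time-reversal of the classical observation \cite[(3.3)]{BD2020MME} provides $\ve_0 > 0$ such that for $\ve < \ve_0$ the ball $B_n^{-1}(x,\ve)$ lies in a single element $A \in \cM_{-(n-1)}^0$; Lemma~\ref{lemma:sup_less_poly_inf} guarantees that $e^{-S_n^{-1}g}$ oscillates only by a bounded factor on $A$. Thus the proposition will follow from the Gibbs upper bound
\[
\mu_g(A_0) \leqslant C\, e^{-(n-1) P_*(T,g)} \sup_{A_0} e^{S_{n-1} g} \quad \text{for all } A_0 \in \cM_0^{n-1},
\]
combined with $T^{n-1}$-invariance to identify $\mu_g(A) = \mu_g(T^{-(n-1)}A)$.

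To establish this Gibbs bound, the plan is to transport to the image $B := T^{n-1}A_0 \in \cM_{-(n-1)}^0$, whose crucial geometric feature is exponential thinness in the stable direction (stable diameter $\lesssim \Lambda^{-(n-1)}$). Writing $\mu_g(A_0) = \mu_g(B) = \tilde\nu(\mathbbm{1}_B \nu)/\tilde\nu(\nu)$ and invoking $\tilde\nu \in (\cB_w)^*$ from \eqref{eq:nu_tilde_in_weak_dual} --- after a standard approximation of $\mathbbm{1}_B$ from outside by $C^1$ cutoffs in a small neighbourhood --- will yield
\[
\mu_g(B) \leqslant C_1 \sup_{W \in \cW^s} \nu^W(W \cap B),
\]
where $\nu^W$ denotes the leafwise measure of $\nu$ on $W$. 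The passage to the limit is licit because each $\cL_g^k \mathbbm{1}$ has a continuous density along stable manifolds, so $\nu^W$, being a Cesàro limit, is atomless.

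The core of the argument is the leafwise specialisation of $\cL_g^{n-1}\nu = e^{(n-1)P_*(T,g)}\nu$: unwinding the action of $\cL_g$ on distributions gives, for every $\psi \in C^\alpha(W)$,
\[
e^{(n-1)P_*(T,g)} \int_W \psi \, d\nu^W = \sum_{W_i \in \cG_{n-1}^{\delta_0}(W)} \int_{W_i} e^{S_{n-1}g} \,(\psi \circ T^{n-1}) \, d\nu^{W_i}.
\]
Approximating $\psi$ by $\mathbbm{1}_{W \cap B}$, only the branches $W_i \subset A_0$ contribute, since $T^{n-1}:A_0 \to B$ is a bijection and $\cM_{-(n-1)}^0$ is a partition, so $T^{-(n-1)}(W \cap B) \subset A_0$. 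Bounding $e^{S_{n-1}g}$ by $\sup_{A_0} e^{S_{n-1}g}$ on each such branch will give the desired estimate, provided
\[
\sum_{W_i \in \cG_{n-1}^{\delta_0}(W),\, W_i \subset A_0} \nu^{W_i}(W_i) \leqslant C_2
\]
uniformly in $n$, $A_0$ and $W$. Combining everything and absorbing the factors $e^{P_*(T,g)}$ and $e^{\sup|g|}$ (arising from the discrepancy between $S_n^{-1}g$ and $S_{n-1} g$) into the final constant $A$ will complete the proof.

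The principal obstacle will be this uniform bound on $\sum_{W_i \subset A_0} \nu^{W_i}(W_i)$. The branches subdivide a single preimage stable curve $T^{-(n-1)}_{A_0}W$ which, due to backward expansion along stable manifolds, may have length as large as $\Lambda^{n-1}|W|$. However this curve is confined to $A_0 \subset M$ and hence has length bounded above by the maximum stable extent of $M$, namely $O(1)$; covering it by at most $O(\delta_0^{-1})$ elements of $\cW^s$, each carrying $\nu$-mass at most $|\nu|_w < \infty$ by definition of the weak norm, furnishes a uniform $C_2$. A secondary technical hurdle --- routine but requiring care --- will be to justify the leafwise restriction of the eigenvalue relation and the approximation of $\mathbbm{1}_{W \cap B}$ by $C^\alpha$ test functions in the leafwise pairing, exploiting the atomlessness of $\nu^W$.
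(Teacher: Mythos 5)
Your proposal is correct and reproduces, in more detail, the argument the paper defers to in \cite[Proposition~7.12]{BD2020MME} (of which it only gives a one-line citation). The chain you describe — reduce to a Gibbs bound on a single element $A_0\in\cM_0^{n}$ via $T$-invariance and Lemma~\ref{lemma:sup_less_poly_inf}; write $\mu_g(B)\leqslant \tilde C\,\tilde\nu(\nu)^{-1}|\mathbbm{1}_B\nu|_w\leqslant C_1\sup_W\nu^W(W\cap B)$ from \eqref{eq:nu_tilde_in_weak_dual}; expand $\cL_g^{n-1}\nu=e^{(n-1)P_*(T,g)}\nu$ leafwise and observe that only branches $W_i\subset A_0$ contribute after pulling back $\mathbbm{1}_B$ — is exactly the mechanism that makes the estimate close. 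Two minor points worth flagging explicitly: (i) the bound $\sum_{W_i\subset A_0}\nu^{W_i}(W_i)\leqslant C_2$ is not just a heuristic ``covering by $O(\delta_0^{-1})$ curves''; it is literally the combinatorial fact used in the proof of \eqref{eq:G_n_leqs_M_0^n}, namely that at most $2C\delta_0^{-1}$ elements of $\cG_{n-1}^{\delta_0}(W)$ lie in a single cell of $\cM_0^{n-1}$, together with $\int_{W_i}\nu\leqslant|\nu|_w$; (ii) in the approximation of $\mathbbm{1}_{W\cap B}$ from outside by $\psi_\eta\in C^\alpha(W)$, the branches $W_i$ that fall outside $A_0$ but intersect $T^{-(n-1)}(\operatorname{supp}\psi_\eta)$ do contribute, but since $n$ is fixed while $\eta\to 0$, those preimage arcs shrink to points and their $\nu$-mass vanishes because $\nu$ is atomless (Corollary~\ref{corol:mug-has_no_atoms-is_adapted-sees_manifolds}(b)); this is the precise sense in which the approximation step is ``routine but requiring care.'' Modulo the off-by-one bookkeeping in the indices, which only changes the final constant $A$, the proof is sound.
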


\begin{proof}
The inequality follows from the beginning of the proof of \cite[Proposition~7.12]{BD2020MME}, where $\mu_*, \cL$ and $h_*$ should be replaced by respectively $\mu_g, \cL_g$ and $P_*(T,g)$.
\end{proof}

\begin{corollary}\label{thm:equilibrium states}
For all $(\cM_0^1, \alpha_g)$-H\"older potential $g$ such that $P_*(T,g) - \sup g > s_0 \log 2$ and having SSP.1 and SSP.2, the measure $\mu_g$ is an equilibrium state of $T$ under the potential $g$: we have $P_*(T,g) = h_{\mu_g}(T) + \int g \, \mathrm{d}\mu_g$.
\end{corollary}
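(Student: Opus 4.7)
The plan is to establish the reverse inequality $P_*(T,g) \leq h_{\mu_g}(T) + \int g \, d\mu_g$, which combined with Lemma~\ref{lemma:var_ppl_easy_dir} gives the claimed equality. I would follow the route announced at the beginning of Section~\ref{sect:measure_mu_g}: combine the weighted Bowen--ball upper bound (Proposition~\ref{prop:measure_bowen_balls}), the ergodicity of $\mu_g$ (Proposition~\ref{prop:mug_is_ergodic}), the Birkhoff ergodic theorem, and the Shannon--McMillan--Breiman (SMB) theorem. Fix any finite Borel partition $\cQ$ of $M$ with $\diam \cQ < \ve_0$, where $\ve_0$ is the small constant of Proposition~\ref{prop:measure_bowen_balls}; since $h_{\mu_g}(T) \geq h_{\mu_g}(T,\cQ)$, it is enough to prove $h_{\mu_g}(T,\cQ) + \int g \, d\mu_g \geq P_*(T,g)$.

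Applying SMB to the ergodic system $(T^{-1}, \mu_g)$ and the partition $\cQ$, for $\mu_g$-a.e.\ $x$ one has $-\frac{1}{n}\log \mu_g(P_n(x)) \to h_{\mu_g}(T^{-1},\cQ) = h_{\mu_g}(T,\cQ)$, where $P_n(x) = \{ y : T^{-i}y \in \cQ(T^{-i}x) \text{ for } 0 \leq i \leq n-1\}$ is the element of $\bigvee_{i=0}^{n-1} T^i\cQ$ containing $x$. The choice of $\cQ$ gives $P_n(x) \subseteq B_n^{-1}(x,\ve)$ with $\ve = \diam\cQ$. Writing $\omega_n(x) \coloneqq \sup\{ |S_n^{-1}g(y) - S_n^{-1}g(x)| : y \in B_n^{-1}(x,\ve) \}$ and applying Proposition~\ref{prop:measure_bowen_balls} yields
\[
\mu_g(P_n(x)) \leq \mu_g(B_n^{-1}(x,\ve)) \leq e^{\omega_n(x)}\, e^{S_n^{-1}g(x)}\, \mu_g\big( e^{-S_n^{-1}g}\, \mathbbm{1}_{B_n^{-1}(x,\ve)}\big) \leq A\, e^{\omega_n(x) + S_n^{-1}g(x) - n P_*(T,g)}.
\]
Taking $-\frac{1}{n}\log$ and combining with SMB and Birkhoff $\frac{1}{n} S_n^{-1}g(x) \to \int g \, d\mu_g$, the entire argument reduces to the claim that $\omega_n(x) = o(n)$ for $\mu_g$-a.e.\ $x$.

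This oscillation estimate is the main obstacle. By uniform hyperbolicity and strict invariance of the cones $\cC^s, \cC^u$, decomposing $y - x$ for $y \in B_n^{-1}(x,\ve)$ along $\cC^u$ (contracted by $T^{-1}$) and $\cC^s$ (expanded by $T^{-1}$, with the constraint at $i = n-1$ forcing a small initial $\cC^s$-component) gives the standard bound
\[
d(T^{-i}y, T^{-i}x) \leq C\, \Lambda^{-\min(i,\, n-1-i)}\, \ve, \quad 0 \leq i \leq n-1.
\]
The H\"older bound $|g(T^{-i}y) - g(T^{-i}x)| \leq |g|_{C^{\alpha_g}}\, d(T^{-i}y, T^{-i}x)^{\alpha_g}$ is valid provided $T^{-i}y$ and $T^{-i}x$ lie in the same atom of $\cM_0^1$, which holds whenever $C\Lambda^{-\min(i,n-1-i)}\ve < d(T^{-i}x, \cS_1)$. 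The assumption $P_*(T,g) - \sup g > s_0 \log 2$ lets us pick $\gamma > 1$, so the backward-iterate analogue of Lemma~\ref{lemma:measure_Sk}, valid $\mu_g$-a.e.\ by the $T$-invariance identity $\mu_g(\{x : d(T^{-n}x, \cS_1) < \ve\}) = \mu_g(\cN_\ve(\cS_1))$ and Borel--Cantelli, furnishes for $\mu_g$-a.e.\ $x$ some $p \in (1/\gamma, 1)$ and $C_x > 0$ with $d(T^{-i}x, \cS_1) \geq C_x\, e^{-i^p}$ for all $i \geq 0$. A direct comparison shows that the ``bad'' indices $i$ for which the two points could lie in different atoms of $\cM_0^1$ form a subset of $\{0,\dots,i_0\} \cup \{n - 1 - O_x(n^p),\dots,n-1\}$ of cardinality $O_x(n^p)$. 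On ``good'' indices the H\"older bound and the geometric decay $\Lambda^{-\alpha_g \min(i,n-1-i)}$ give a summable total contribution of $O(1)$, whereas on ``bad'' indices the crude estimate $|g(T^{-i}y) - g(T^{-i}x)| \leq 2\sup|g|$ contributes $O_x(n^p) = o(n)$. This proves $\omega_n(x) = o(n)$ $\mu_g$-a.e., closing the chain of estimates and yielding $h_{\mu_g}(T,\cQ) + \int g \, d\mu_g \geq P_*(T,g)$.
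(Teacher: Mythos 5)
Your argument follows the same skeleton as the paper's proof: Shannon--McMillan--Breiman for the entropy, Birkhoff for $\int g\,\mathrm{d}\mu_g$, and the weighted Bowen-ball bound (Proposition~\ref{prop:measure_bowen_balls}) for the pressure. The essential difference is how the oscillation $\omega_n(x)$ of $S_n^{-1}g$ over backward Bowen balls is controlled: the paper applies Lemma~\ref{lemma:sup_less_poly_inf} directly (in time-reversed form, to elements of $\cP_{-n}^0$) and gets a uniform $O(1)$ distortion bound, whereas you re-derive a weaker $o(n)$ bound via a Borel--Cantelli argument on the proximity of the backward orbit to $\cS_1$.

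This detour is correct but unnecessary and somewhat obscures the mechanism. Once $\ve < \ve_0$ (expansivity scale), $B_n^{-1}(x,\ve)$ lies in a single element $A \in \cM_{-(n-1)}^0$, and for each intermediate index $1 \leqslant i \leqslant n-1$ the set $T^{-i}A$ is contained in a single element of $\cM_{-(n-1-i)}^{i}$, whose boundary already contains $\cS_0 \cup T^{-1}\cS_0 = \cS_1$, so $T^{-i}A$ lies in a single element of $\cM_0^1$. Hence the $\alpha_g$-H\"older modulus of $g$ applies \emph{automatically} at every such $i$ -- no a.e.\ lower bound on $d(T^{-i}x,\cS_1)$, no Borel--Cantelli, no accounting of ``bad'' indices is needed -- and combined with $d(T^{-i}x,T^{-i}y) \leqslant C\Lambda^{-\min(i,n-1-i)}\ve$ the oscillation sums to a geometric series yielding $\omega_n(x) \leqslant C$ uniformly in $n$. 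This is exactly what Lemma~\ref{lemma:sup_less_poly_inf} packages, and it is cleaner to cite it. Your other variation -- working with an arbitrary fine partition $\cQ$ and the inequality $h_{\mu_g}(T) \geqslant h_{\mu_g}(T,\cQ)$ instead of the generating partition $\cP$ and the Kolmogorov--Sinai theorem -- is a legitimate and slightly more self-contained choice, since it avoids invoking expansivity to identify $h_{\mu_g}(\cP,T)$ with $h_{\mu_g}(T)$; the trade-off is that your argument is tied to the specific direction of the inequality, whereas the paper's identification is exact.
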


\begin{proof}
For all $x \in M$, let $\cP_{-n}^0(x)$ denotes the element of $\cP_{-n}^0$ containing $x$. By the Shannon--MacMillan--Breiman theorem, we have
\[
- \lim\limits_{n \to \infty} \frac 1n \log \mu_g(\cP_{-n}^0(x)) = h_{\mu_g}(\cP,T) = h_{\mu_g}(T) \quad \text{for  $\mu_g$-a.e. $x \in M$} \, ,
\]
where the last equality follows from the Kolmogorov--Sinai theorem (because $T$ is expansive \cite[Lemma~3.4]{BD2020MME}). Furthermore, since by Proposition~\ref{prop:mug_is_ergodic} $\mu_g$ is ergodic, then $\frac 1n \log e^{-S_n^{-1} g}$ converges to $-\mu_g(g)$ as $n$ goes to infinity. Thus
\begin{align}\label{eq:shannon}
- \lim\limits_{n \to \infty} \frac 1n \log \left( e^{-S_n^{-1} g (x)} \mu_g(\cP_{-n}^0(x)) \right) = h_{\mu_g}(T) + \int g \, \mathrm{d}\mu_g \quad \text{for  $\mu_g$-a.e. $x \in M$} \, .
\end{align}
Now, by Lemma~\ref{lemma:sup_less_poly_inf}, there exists a constant $C$ such that for all $x \in M$ and all $y,z \in \cP_{-n}^0(x)$, we have $|{S_n^{-1} g(y)} - {S_n^{-1} g(z)}| \leqslant C$. Thus 
\begin{align*}
e^{-C} \leqslant \frac{\mu_g \left( e^{-S_n^{-1} g} \mathbbm{1}_{\cP_{-n}^0(x)} \right)}{ e^{-S_n^{-1}g(x)} \mu_g(\cP_{-n}^0(x)) } \leqslant e^C \, ,
\end{align*}
and so we can replace $e^{-S_n^{-1} g (x)} \mu_g(\cP_{-n}^0(x))$ in \eqref{eq:shannon} by $\mu_g \left( e^{-S_n^{-1} g} \mathbbm{1}_{\cP_{-n}^0(x)} \right)$.

Now, we want to replace $\cP_{-n}^0(x)$ with a dynamical ball and use Proposition~\ref{prop:measure_bowen_balls}. To do so, recall that for all $\varepsilon < \varepsilon_0$, the dynamical ball $B_n(x,\varepsilon)$ is included in a single element of $\cM_0^n$, which is itself included in at most $C$ elements of $\cP_0^n$, for some $C$ independent of $x$. Thus, using time reversals
\begin{align*}
- \liminf_{n \to \infty} \frac{1}{n} \log \mu_g \left( e^{-S_n^{-1}g} \mathbbm{1}_{B_n^{-1}(x,\varepsilon)} \right) \leqslant h_{\mu_g}(T) + \int g \, \mathrm{d}\mu_g, 
\end{align*}
On the other hand, for $\varepsilon$ small enough, we get by Proposition~\ref{prop:measure_bowen_balls},
\begin{align*}
- \liminf_{n \to \infty} \frac{1}{n} \log \mu_g \left( e^{-S_n^{-1}g} \mathbbm{1}_{B_n^{-1}(x,\varepsilon)} \right) \geqslant P_*(T,g).
\end{align*}
Combining these last two inequalities, we get 
$ h_{\mu_g}(T) + \int g \, \mathrm{d}\mu_g \geqslant P_*(T,g)$, which ends the proof.
\end{proof}

\begin{proposition}\label{prop:mu_g_is_bernoulli}
Under the assumptions of Proposition~\ref{prop:mug_is_ergodic}, $(T,\mu_g)$ is Bernoulli.
\end{proposition}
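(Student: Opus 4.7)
The plan is to adapt the Chernov--Haskell scheme \cite{ChH} as implemented in \cite{BD2020MME} for the unweighted case $g\equiv 0$. Since $(T,\mu_g)$ is already K-mixing by Proposition~\ref{prop:mug_is_ergodic}, and $T$ is uniformly hyperbolic on the set of regular points, the standard strategy is to verify the \emph{very weak Bernoulli} (VWB) property for a sufficiently fine generating partition of $M$. By Ornstein theory, a K-automorphism of a Lebesgue space that is VWB with respect to a generating partition is isomorphic to a Bernoulli shift. The key technical ingredients needed are (i) a local product structure for $\mu_g$ on Cantor rectangles, (ii) absolute continuity of the stable and unstable holonomies with respect to $\mu_g$, (iii) a quantitative control of the measure of neighbourhoods of singularities, and (iv) adaptedness of $\mu_g$. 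All four ingredients are already in hand: (ii) is Corollary~\ref{corol:abs_c0_mug_unst_fol}, (iii) is Lemma~\ref{lemma:measure_Sk}, (iv) is Corollary~\ref{corol:mug-has_no_atoms-is_adapted-sees_manifolds}(c), and (i) follows by combining the absolute continuity of both foliations with the fact that $\mu_g$ gives positive measure to the countable cover $\{R_j\}$ of $M^{\rm reg}$ from Lemma~\ref{lemma:countable_cover_with_Cantor_rectangles}.

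First I would fix a Cantor rectangle $R$ with $\mu_g(R)>0$ satisfying \eqref{eq:fat_Cantor_rectangle}, and construct a finite measurable partition $\cQ$ of $M$ whose atoms have piecewise smooth boundaries transverse to the stable cone (e.g.\ by slightly refining a covering by rectangles $R_j$). Using Corollary~\ref{corol:mug-has_no_atoms-is_adapted-sees_manifolds}(a), the boundary of $\cQ$ has zero $\mu_g$-measure and its $\ve$-neighbourhood has $\mu_g$-measure at most $C|\log\ve|^{-\gamma}$ for some $\gamma > 1$. The adaptedness of $\mu_g$ ensures that $\bigvee_{n\in\Z}T^{-n}\cQ$ separates points $\mu_g$-a.e., so $\cQ$ is a generator. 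I would then follow the blueprint of \cite[Theorem~8.1]{BD2020MME} (itself modeled on \cite{ChH}): given $\ve>0$ and a large $n$, verify that for $\mu_g\otimes\mu_g$-almost every pair $(x,y)$ in a ``good'' set of large measure inside $R\times R$, the distributions of $\cQ$-names of $T^nx$ and $T^ny$ along length-$k$ blocks, $k$ arbitrary, are close in $\bar d$-distance once $n$ is large.

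The matching is produced by the product structure of $R$: for a.e.\ pair $(x,y)\in R\times R$, the point $z=W^s(x)\cap W^u(y)$ lies in $R$, and the pairs $(T^nx,T^nz)$ and $(T^ny,T^nz)$ are exponentially close in the unstable, resp.\ stable, direction. Using Corollary~\ref{corol:abs_c0_mug_unst_fol} together with its time-reversed counterpart (obtained by applying the same construction to $g\circ \cI\circ T$), I would show that the stable/unstable holonomies transport the conditional measures of $\mu_g$ with bounded, and in fact H\"older, Jacobians; this gives the equivalence of the push-forwards needed to replace the pair $(x,y)$ by the pair $(x,z)$ and then $(z,y)$ at comparable $\bar d$-cost. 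K-mixing of $(T,\mu_g)$ then forces the asymptotic independence of the $\cQ$-names from the initial conditioning, as in \cite[\S 8]{BD2020MME}. The residual contribution coming from orbits that approach the singular set $\cS_{\pm 1}$ within the first $n$ iterates is controlled by the quantitative bound \eqref{eq:estimate_mug_neighbourhood_Sk} with $\gamma>1$, together with the Borel--Cantelli statement \eqref{eq:Borel_Cantelli_seed}, which is exactly what is needed to make the ``good set'' of pairs have measure tending to $1$.

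The main obstacle is to verify that the modifications of \cite{ChH} carried out in \cite{BD2020MME} are insensitive to the weight $e^{S_n g}$. Concretely, the arguments in \cite{BD2020MME} only use of the measure $\mu_*$ the four structural properties (i)--(iv) above; since our $\mu_g$ enjoys the same properties with the \emph{same} rate $|\log\ve|^{-\gamma}$ on neighbourhoods of singularities, the bookkeeping of matched versus unmatched orbits, the choice of scales $(n,k,\ve)$, and the estimate on the Jacobian of the holonomy along Cantor rectangles go through mutatis mutandis. One must also note that $\mu_g$ charges every Cantor rectangle obtained in Lemma~\ref{lemma:countable_cover_with_Cantor_rectangles} of positive $\musrb$-measure, thanks to the lower bound \eqref{eq:mug_geq_nu} and to the full support (Proposition~\ref{prop:full_support}). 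Once VWB is established, the classical Ornstein isomorphism theorem implies the Bernoulli property.
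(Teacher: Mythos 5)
Your proposal is correct and follows essentially the same route as the paper: the paper's proof is a one-paragraph citation of the Chernov--Haskell scheme as implemented in \cite[Proposition~7.19]{BD2020MME}, replacing $\mu_*$ by $\mu_g$, and the four structural inputs you identify (absolute continuity of the holonomies, the $|\log\ve|^{-\gamma}$ bound on singular neighbourhoods, adaptedness, and the Cantor-rectangle covering with positive $\mu_g$-measure) are exactly what makes that substitution legitimate. Two small imprecisions that do not affect the argument: the generating partition used is $\cM_{-1}^1$ (with boundary in $\cS_1\cup\cS_{-1}$) rather than a refinement of the Cantor rectangles $R_j$, and the H\"older regularity of the holonomy Jacobian is explicitly not needed (only absolute continuity and the near-$1$ Jacobian on a large-measure subset enter the $\delta$-regular covering argument).
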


\begin{proof}
The proof follows the arguments in Section~5 and 6 in \cite{ChH}, relying on the of notion vwB partitions introduced by Ornstein in \cite{ornstein70vwb}. Actually, we can apply the same modifications as in the proof of the analogous result \cite[Proposition~7.19]{BD2020MME}, replacing $\mu_*$ by $\mu_g$.
\end{proof}

\subsection{Uniqueness of the equilibrium state}\label{sect:uniqueness}

This subsection is devoted to the uniqueness of the equilibrium state $\mu_g$ (Proposition~\ref{prop:uniqueness}). The proof relies on exploiting the fact that while the lower bound on weighted Bowen balls (or the thermodynamic sums over the elements of $\cM_{-n}^0$) cannot be improved for $\mu_g$-almost every $x$, however, if one fixes $n$, most elements of $\cM_{-n}^0$ (in the sense of thermodynamic sums) should either have unstable diameter of a fixed length, or have previously been contained in an element of $\cM_{-j}^0$ having this property, for some $j< n$ (Lemma~\ref{lemma:cardinality_B2n}). Such elements collectively satisfy stronger lower bounds on their measure, when weighted accordingly (Lemma~\ref{lemma:lower_bound_mug_on_good_sets}). As we have established a good control over the sums on $\cM_{-n}^0$ and $\cM_0^n$ in Section~\ref{sect:growth_lemma}, we will work with these partitions instead of Bowen balls.

The strategy we adopt here is similar to that deployed in \cite[\S~7.7]{BD2020MME}, where Baladi and Demers proved uniqueness for the case $g \equiv 0$.

Recalling \eqref{eq:complex}, choose $m_1$ large enough so that $(Km_1+1)^{1/m_1} < e^{\frac{1}{4}(P_*(T,g) - \sup g)}$. Now, choose $\delta_2 > 0$ sufficiently small that for all $n,\, k \in \mathbbm{N}$, if $A \in \cM_{-n}^k$ is such that 
\[
\max \{ \diam^u(A),\, \diam^s(A) \} \leqslant \delta_2,
\]
then $A \smallsetminus \cS_{\pm m_1}$ consists of at most $Km_1 + 1$ connected components.

For $n \geqslant 1$, define
\begin{align*}
B_{-2n}^0 \coloneqq \{ A \in \cM_{-2n}^0 \mid \,\, &\forall \, 0 \leqslant j \leqslant n/2 \, , \,  T^{-j}A \subset E \in \cM_{-2n+j}^0 \text{ such that } \diam^u(E) < \delta_2 \}, \\
\end{align*}
and its time reversal
\begin{align*}
B_0^{2n} \coloneqq \{ A \in \cM_0^{2n} \mid \,\, &\forall \, 0 \leqslant j \leqslant n/2 \, , \, T^{j}A \subset E \in \cM_{0}^{2n-j} \text{ such that } \diam^s(E) < \delta_2 \}. 
\end{align*}
Next, set $B_{2n} = \{ A \in \cM_{-2n}^0 \mid \text{either } A \in B_{-2n}^0 \text{ or } T^{-2n}A \in B_0^{2n} \}$. Define $G_{2n} = \cM_{-2n}^0 \smallsetminus B_{2n}$.

Our first lemma shows that the thermodynamic sum over elements of $B_{2n}$ is small relative to the one over elements of $\cM_{-2n}^0$, for large $n$. Let $n_1 \geqslant 2m_1$ be chosen so that for all $A \in \cM_{-n}^0$, $\diam^s(A) \leqslant C \Lambda^{-n} \leqslant \delta_2$ for all $n \geqslant n_1$.  

\begin{lemma}\label{lemma:cardinality_B2n}
There exists $C> 0$ such that for all $n \geqslant n_1$, 
\[ \sum_{A \in B_{2n}} |e^{S_{2n}^{-1} g}|_{C^0(A)} \leqslant C e^{\frac{3}{2}nP_*(T,g)} e^{\frac{1}{2}n\sup g} (Km_1+1)^{\frac{n}{m_1}+1} \leqslant C e^{\frac{7}{4} nP_*(T,g) + \frac{1}{4} n\sup g } \, . \]
\end{lemma}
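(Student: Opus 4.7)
The plan is to decompose $B_{2n} \subseteq B_{-2n}^0 \cup \tilde B_0^{2n}$, where $\tilde B_0^{2n} := \{A \in \cM_{-2n}^0 : T^{-2n}A \in B_0^{2n}\}$, and estimate the two thermodynamic sums separately. Since the time reversal $T \leftrightarrow T^{-1}$ exchanges the two pieces together with the roles of $\diam^u$ and $\diam^s$, it suffices to handle $\sum_{A \in B_{-2n}^0} |e^{S_{2n}^{-1}g}|_{C^0(A)}$; the companion sum reduces via the identity $|e^{S_{2n}^{-1}g}|_{C^0(A)} = |e^{S_{2n}g}|_{C^0(T^{-2n}A)}$ to $\sum_{B \in B_0^{2n}} |e^{S_{2n}g}|_{C^0(B)}$ and is treated by the mirror argument in forward time.

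For $B_{-2n}^0$, I would use the cocycle identity $S_{2n}^{-1}g = S_{n/2}^{-1}g + S_{3n/2}^{-1}g \circ T^{-n/2}$ together with the trivial bound $|e^{S_{n/2}^{-1}g}|_{C^0(A)} \leq e^{(n/2)\sup g}$, so that after grouping by the ancestor cell $E_{n/2}(A) \in \cM_{-3n/2}^0$ containing $T^{-n/2}A$,
\[
\sum_{A \in B_{-2n}^0} |e^{S_{2n}^{-1}g}|_{C^0(A)} \leq e^{(n/2)\sup g} \sum_{E \in \cM_{-3n/2}^0} N(E)\, |e^{S_{3n/2}^{-1}g}|_{C^0(E)},
\]
with $N(E) := \#\{A \in B_{-2n}^0 : E_{n/2}(A) = E\}$. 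The outer sum is bounded by $(2/c_1) e^{(3n/2)P_*(T,g)}$ by Proposition~\ref{prop:almost_exponential_growth} applied to $T^{-1}$, using the partition bijection $E \mapsto T^{-3n/2}E \in \cM_0^{3n/2}$ which preserves norms of Birkhoff exponentials via $|e^{S_{3n/2}^{-1}g}|_{C^0(E)} = |e^{S_{3n/2}g}|_{C^0(T^{-3n/2}E)}$.

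The combinatorial heart of the proof is the uniform estimate $N(E) \leq (Km_1+1)^{n/m_1+1}$, which I would obtain by iterating the $\delta_2$-complexity property in blocks of length $m_1$ along the nested chain $A = E_0 \subset E_{m_1} \subset \cdots \subset E_{n/2} = E$, where $E_{km_1} \in \cM_{-(2n-km_1)}^0$ is the cell containing $T^{-km_1}A$. The defining condition of $B_{-2n}^0$ forces $\diam^u E_{km_1} < \delta_2$ at every block level, and since $n \geq n_1 \geq 2m_1$ the uniform hyperbolicity of $T$ guarantees $\diam^s E_{km_1} \leq C\Lambda^{-(2n-km_1)} \leq C\Lambda^{-3n/2} < \delta_2$ automatically — so both diameters are small. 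At each transition from $E_{km_1}$ down to $E_{(k-1)m_1}$, the relative refinement inside $E_{km_1}$ is cut by $m_1$ singularity curves $T^{2n-km_1+r}\cS_0$, $r = 1,\ldots,m_1$; after conjugating by a suitable iterate of $T$ these become $\cS_{\pm m_1}$ cuts of a diffeomorphic image of $E_{km_1}$ with diameters still under $\delta_2$, and the defining property of $\delta_2$ then caps the number of children at $Km_1+1$. Iterating over the $n/(2m_1)$ block transitions yields $N(E) \leq (Km_1+1)^{n/(2m_1)+1} \leq (Km_1+1)^{n/m_1+1}$. Assembling the three estimates gives the first inequality of the lemma, and the second is immediate from the choice $(Km_1+1)^{1/m_1} < e^{(P_*(T,g)-\sup g)/4}$.

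The main obstacle will be the conjugation step in the combinatorial count. The natural pullback $T^{-km_1}$ applied to $E_{km_1}$ preserves the small unstable diameter but expands the stable direction, while the alternative pullback $T^{-(2n-km_1)}$ — which maps the relevant cutting curves exactly into $\cS_{-m_1}$ — generally inflates the stable diameter beyond $\delta_2$. A careful choice of intermediate iterate balancing expansion and contraction, or a direct complexity argument recognising that the curves $T^i\cS_0$ for $i$ close to $2n-km_1$ form a bounded-complexity family on small cells, is needed to keep the $\delta_2$-complexity bound effective uniformly across all $n/(2m_1)$ blocks. This mirrors the corresponding combinatorial argument in \cite{BD2020MME} for $g \equiv 0$, with Birkhoff weights inserted by means of Lemma~\ref{lemma:sup_less_poly_inf}.
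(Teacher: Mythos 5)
Your overall strategy is exactly the paper's: decompose $B_{2n}$ into $B_{-2n}^0$ and its time reversal, split $S_{2n}^{-1}g$ at the midpoint, group by the coarse ancestor cell $A_0 \in \cM_{-\lceil 3n/2\rceil}^0$, invoke Proposition~\ref{prop:almost_exponential_growth} for the outer sum, bound the Birkhoff factor on the recent past by $e^{(n/2)\sup g}$, and cap $N(A_0)$ by iterating the $\delta_2$-complexity in blocks of $m_1$. The final reduction of $(Km_1+1)^{n/m_1+1}$ via $(Km_1+1)^{1/m_1} < e^{\frac 14(P_*(T,g)-\sup g)}$ is also the same.

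However, the \emph{main obstacle} you flag in the last paragraph is not real; it comes from a mis-identification of the cutting curves. Your cells $E_{km_1} \in \cM_{-(2n-km_1)}^0$ each contain a \emph{different} iterate $T^{-km_1}A$, so they are not literally nested in $M$ (the claim $A = E_0 \subset E_{m_1}$ is false), and the passage from $E_{km_1}$ to $E_{(k-1)m_1}$ is not effected by the unstable curves $T^{2n-km_1+r}\cS_0$. The correct relation is $T^{-m_1}E_{(k-1)m_1} \subset E_{km_1}$: the cell $T^{-m_1}E_{(k-1)m_1}$ lies in $\cM_{-(2n-km_1)}^{m_1}$, and the elements of $\cM_{-(2n-km_1)}^{m_1}$ sitting inside $E_{km_1}$ are obtained by cutting $E_{km_1}$ with the ordinary singularity set $\cS_{m_1}$, i.e.\ the stable curves $T^{-r}\cS_0$, $1 \leqslant r \leqslant m_1$. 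Since $E_{km_1}$ is a subset of $M$ with $\diam^u E_{km_1} < \delta_2$ (the $B_{-2n}^0$-condition) and $\diam^s E_{km_1} \leqslant C\Lambda^{-(2n-km_1)} < \delta_2$ (hyperbolicity and $2n-km_1 \geqslant 3n/2 \geqslant n_1$), the defining property of $\delta_2$ applies \emph{verbatim} to $E_{km_1}\smallsetminus\cS_{\pm m_1}$, giving at most $Km_1+1$ children; each child, pushed forward by $T^{m_1}$, is a candidate for $E_{(k-1)m_1} \in \cM_{-(2n-(k-1)m_1)}^0$. No conjugation, intermediate iterate, or bounded-complexity argument for high iterates of $\cS_0$ is needed. (The paper's $A_j$ is precisely this recentered cell — your $E_m$ with $m = \lfloor n/2\rfloor - j$ — and the count builds the chain $A_0 \to A_{m_1} \to \cdots$ by this cut-and-push-forward step.) Once this is corrected, your argument closes and coincides with the paper's proof.
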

Notice that since $P_*(T,g) - \sup g > 0$, we have that $\frac{7}{4}P_*(T,g) + \frac{1}{4} \sup g < 2P_*(T,g)$.

\begin{proof}
Let $n \geqslant n_1$ and $A \in B_{-2n}^0 \subset \cM_{-2n}^0$. For all $0 \leqslant j \leqslant \lfloor n/2 \rfloor$, call $A_j \in \cM_{-\lceil 3n/2 \rceil -j}^0$ the unique element containing $T^{-\lfloor n/2 \rfloor +j} A$. By definition of $B_{-2n}^0$, we have that $\diam^u(A_j) \leqslant \delta_2 $, meanwhile $\diam^s(A_j) \leqslant \delta_2$ by choice of $n_1$.

By choice of $\delta_2$, we have that $A_0$ is the union of at most $Km_1+1$ elements of $\cM_{-\lceil 3n/2 \rceil}^{m_1}$. Thus the number of connected components of $T^{m_1}A_0$ is at most $Km_1+1$. Notice that this fact not only applies to $A_0$, but also to $A_{m_1}, \ldots, A_{lm_1}, A_{\lfloor n/2 \rfloor}$, where $\lfloor n/2 \rfloor = lm_1 + i$, $0 \leqslant i < m_1$. Thus, we get
\[ \# \{ A' \in B_{-2n}^0 \mid T^{-\lfloor n/2 \rfloor} A' \subset A_0 \} \leqslant (Km_1 +1)^{l+1} \leqslant (Km_1 +1)^{\frac{n}{m_1}+1} \, . \] 
We are now able to estimate the thermodynamic sum over $B_{-2n}^0$:
\begin{align*}
\sum_{A \in B_{-2n}^0} &|e^{S_{2n}^{-1} g} |_{C^0(A)} = \!\!\!\!\!\! \sum_{A_0 \in \cM_{- \lceil 3n/2 \rceil}^0} \sum_{\substack{A' \in B_{-2n}^0 \\ T^{-\lfloor n/2 \rfloor }A' \subset A_0}} \!\!\!\!\!\! |e^{S_{2n}^{-1} g} |_{C^0(A')} = \sum_{A_0} \sum_{A'} \left|e^{S_{\lceil 3n/2 \rceil}^{-1} g \circ T^{\lfloor n/2 \rfloor } + S_{\lfloor n/2 \rfloor}^{-1} g} \right|_{C^0(A')} \\
&\leqslant \sum_{A_0} \left|e^{S_{\lceil 3n/2 \rceil}^{-1} g } \right|_{C^0(A_0)} \sum_{A'} \left|e^{ S_{\lfloor n/2 \rfloor}^{-1} g} \right|_{C^0(A')} 
\leqslant e^{\frac{1}{2}n \sup g } (Km_1 +1)^{\frac{n}{m_1}+1} \sum_{A_0} \left|e^{S_{\lceil 3n/2 \rceil}^{-1} g } \right|_{C^0(A_0)} \\
&\leqslant C e^{\frac{7}{4}nP_*(T,g) + \frac{1}{4}n \sup g},
\end{align*}
where we used Proposition~\ref{prop:almost_exponential_growth} for the last inequality.

Now, notice that $B_0^{2n}$ is the time reversal of $B_{-2n}^0$, thus
\[ \sum_{A \in B_0^{2n}} |e^{S_{2n} g}|_{C^0(A)} \leqslant C e^{\frac{7}{4}nP_*(T^{-1},g)+\frac{1}{2}n \sup g} = C e^{\frac{7}{4}nP_*(T,g) + \frac{1}{4}n \sup g}. \]
Hence
\[ \sum_{A \in B_0^{2n}} |e^{S_{2n}^{-1} g}|_{C^0(T^{-2n}A)} = \sum_{A \in B_0^{2n}} |e^{S_{2n} g}|_{C^0(A)} \leqslant C e^{\frac{7}{4}nP_*(T,g) + \frac{1}{4}n \sup g} . \]
Finally, we get
\[ \sum_{A \in B_{2n}} |e^{S_{2n}^{-1} g} |_{C^0(A)} \leqslant 2C e^{\frac{7}{4}nP_*(T,g) + \frac{1}{4}n \sup g} . \]
\end{proof}

Next, the following lemma establishes the importance of long pieces in providing good lower bounds on the measure of weighted elements of the partition.

\begin{lemma}\label{lemma:lower_bound_mug_on_good_sets}
There exists $C_{\delta_2}>0$ such that for all $j \geqslant 1$ and all $A \in \cM_{-j}^0$ such that $\diam^u(A) \geqslant \delta_2$ and $\diam^s(T^{-j}A) \geqslant \delta_2$, we have
\[ \mu_g(e^{-S_j^{-1} g} \mathbbm{1}_A) \geqslant C_{\delta_2} e^{-j P_*(T,g)} \, . \]
\end{lemma}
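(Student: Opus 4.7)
The plan is to combine Lemma~\ref{lemma:sup_less_poly_inf}, the leafwise eigenvector identity $\cL_g^j\nu = e^{jP_*(T,g)}\nu$, the uniform leafwise lower bound \eqref{eq:estimate_nu_on_leaf__lower_bound}, and the absolute continuity inequality \eqref{eq:mug_geq_nu} from the proof of Corollary~\ref{corol:abs_c0_mug_unst_fol}. First, since $A\in\cM_{-j}^0$, Lemma~\ref{lemma:sup_less_poly_inf} gives $\mu_g(e^{-S_j^{-1}g}\mathbbm{1}_A)\geqslant C_g^{-1}(\sup_A e^{S_j^{-1}g})^{-1}\mu_g(A)$, so it is enough to prove
\[
\mu_g(A)\;\geqslant\;C(\delta_2)\,e^{-jP_*(T,g)}\,\sup_A e^{S_j^{-1}g}.
\]

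Set $B:=T^{-j}A\in\cM_0^j$: by hypothesis $B$ contains a stable manifold $W_s\in\cW^s$ with $|W_s|\geqslant\delta_2/3$. Choose a stable manifold $W_0\in\cW^s$ containing $T^jW_s$, so that $W_s$ is the component of $T^{-j}W_0$ lying in $B$ and $W_s\in\cG_j^{\delta_0}(W_0)$. Pairing the leafwise identity $(\cL_g^j\nu)_{W_0}=e^{jP_*(T,g)}\nu_{W_0}$ with the test function $\mathbbm{1}_A$, and using that $\mathbbm{1}_A\circ T^j=\mathbbm{1}_B$ on every $U\in\cG_j^{\delta_0}(W_0)$, one obtains
\[
e^{jP_*(T,g)}\,\nu_{W_0}(W_0\cap A)=\sum_{U\in\cG_j^{\delta_0}(W_0)}\int_{U\cap B}e^{S_jg}\,\mathrm d\nu_U\;\geqslant\;\int_{W_s}e^{S_jg}\,\mathrm d\nu_{W_s}.
\]
On $W_s\subset B$, Lemma~\ref{lemma:sup_less_poly_inf} gives $\inf_{W_s}e^{S_jg}\geqslant C_g^{-1}\sup_B e^{S_jg}$, and the identity $S_jg(x)=S_j^{-1}g(T^jx)$ for $x\in B$ implies $\sup_B e^{S_jg}=\sup_A e^{S_j^{-1}g}$. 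Combined with $\nu_{W_s}(W_s)\geqslant\overline{C}\,(\delta_2/3)^{(P_*(T,g)-\sup g)\overline{C}_2}=:C(\delta_2)$ from \eqref{eq:estimate_nu_on_leaf__lower_bound}, this yields
\[
\nu_{W_0}(W_0\cap A)\;\geqslant\;C_g^{-1}C(\delta_2)\,e^{-jP_*(T,g)}\,\sup_A e^{S_j^{-1}g}.
\]
To transfer this leafwise $\nu$-estimate to a lower bound on $\mu_g(A)$, I would apply \eqref{eq:mug_geq_nu} to a Cantor rectangle $R^*$ properly crossed by $W_0$, with $\mu_g(R^*)>0$ and unstable extent at most $\diam^u(A)$: such an $R^*$ is produced from the cover $\{R_i\}$ of Lemma~\ref{lemma:countable_cover_with_Cantor_rectangles} through a point of $T^jW_s$, shrinking its unstable sides if needed. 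Setting $E:=W_0\cap A\cap R^*$, the saturation $\bar E$ along the unstable foliation $\bGamma$ within $D(R^*)$ lies inside $A$ since $A$ is bounded by unstable-type curves of extent at least $\delta_2$, and for $j$ large enough $W_0\cap A\subset R^*$ because the stable width of $A$ is $O(\Lambda^{-j})$. Hence $\mu_g(A)\geqslant C''\,\nu_{W_0}(W_0\cap A)$, which combined with the preceding display gives the required bound with $C_{\delta_2}=C''C_g^{-2}C(\delta_2)$.

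The main technical difficulty is making the choice of $W_0$ and $R^*$ uniform in $j$: since $T^jW_s$ may be as short as $\Lambda^{-j}|W_s|$, the stable manifold $W_0$ containing $T^jW_s$ can itself be short for large $j$, and one must verify that a Cantor rectangle of positive $\mu_g$-mass from the cover still fits around it, that $W_0$ properly crosses such a rectangle, and that the unstable saturation does not exit $A$. The finitely many small values of $j$ for which the geometric constants do not line up can be absorbed into $C_{\delta_2}$ since the lemma only asserts the existence of some constant.
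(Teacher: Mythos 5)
Your overall strategy — eigenvector identity to pull the Birkhoff weight through, leafwise lower bound \eqref{eq:estimate_nu_on_leaf__lower_bound}, then absolute continuity \eqref{eq:mug_geq_nu} — is the right one and matches the paper. But the place you anchor the argument is wrong, and that is not a ``finitely many small $j$'' nuisance: it is the whole difficulty.

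You choose $W_s\subset T^{-j}A$ of length at least $\delta_2/3$ (using $\diam^s(T^{-j}A)\geqslant\delta_2$), and then take $W_0\in\cW^s$ to be a stable manifold containing $T^jW_s$. But $T^jW_s$ has length of order $\Lambda^{-j}$, and the maximal stable manifold $W_0$ through it has no lower bound on its length — in a dispersing billiard, points with arbitrarily short stable manifolds are dense. To invoke \eqref{eq:mug_geq_nu} you then need a Cantor rectangle $R^*$ with $\mu_g(R^*)>0$ that $W_0$ properly crosses, which forces the stable extent of $R^*$ to be at most $|W_0|$. The constant $C''$ in \eqref{eq:mug_geq_nu}, however, depends on the rectangle through the minimum length of $W\in\cW^s(R^*)$ (it comes from the lower bound $\ell_{\delta_1}^{\bar C_2(P_*(T,g)-\sup g)}$ in Proposition~\ref{prop:GnW_geq_M0n} applied to that scale). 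So as $j\to\infty$ your $R^*$ is forced to shrink and $C''$ degenerates: you obtain a bound $\mu_g(A)\geqslant C''(j)\,\nu_{W_0}(W_0\cap A)$ with $C''(j)\to 0$, and the conclusion is lost. Shrinking only the unstable sides, as you suggest, does not help, because it is the stable sides that must shorten for $W_0$ to cross.

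The paper sidesteps this by \emph{swapping} the roles of the two hypotheses. It uses $\diam^u(A)\geqslant\delta_2$ to find, among the fixed finite family $R_1,\dots,R_k$ of Cantor rectangles at scale $\delta_2$ (all with $\mu_g(R_i)>0$), one $R_i$ that $A$ crosses in the unstable direction. The anchor curve is then $W_\xi$, a full stable manifold of $R_i$, of uniform length $\geqslant l_{\delta_2}$ independent of $j$. The subcurve $W_\xi\cap A$ is indeed short, but that is harmless: \eqref{eq:mug_geq_nu} is applied on the fixed rectangle $R_i$ with $W^0=W_\xi$ and $E=W_\xi\cap A$, so the constant $C''$ is uniform. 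The second hypothesis $\diam^s(T^{-j}A)\geqslant\delta_2$ is then used where you used the first one: it guarantees that $T^{-j}(W_\xi\cap A)$ (a single smooth curve, since $T^{-j}$ is continuous on $A$) contains a stable manifold of a rectangle $R_{i'}$ that $T^{-j}A$ crosses, hence has length $\geqslant l_{\delta_2}$, and \eqref{eq:estimate_nu_on_leaf__lower_bound} gives $\int_{T^{-j}(W_\xi\cap A)}\nu\geqslant\bar C\,l_{\delta_2}^{\bar C_2(P_*(T,g)-\sup g)}$, uniformly in $j$. Combining via $\int_{W_\xi\cap A}e^{-S_j^{-1}g}\,\nu=e^{-jP_*(T,g)}\int_{T^{-j}(W_\xi\cap A)}\nu$ and the approximate constancy of $e^{-S_j^{-1}g}$ on $A$ (Lemma~\ref{lemma:sup_less_poly_inf}) then gives the claim with a constant depending only on $\delta_2$.
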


\begin{proof}
Let $R_1, \ldots, R_k$ be Cantor rectangles such that $\mu_g(R_i) > 0$ for all $1 \leqslant i \leqslant k$, and such that any unstable or stable curve of length more than $\delta_2$ crosses at least one of them. Note $\cR_{\delta_2} = \{ R_1, \ldots, R_k \}$ this family.

Let $j >0$ and $A \in \cM_{-j}^0$ such that $\diam^u(A) \geqslant \delta_2$ and $\diam^s(T^{-j}A) \geqslant \delta_2$. By choice of $\cR_{\delta_2}$, $A$ crosses some rectangle $R_i$ and $T^{-j}A$ also crosses some rectangle $R_{i'}$. Note $\Xi_i$ the index set for the family of stable manifolds $W_{\xi}$ of $R_i$. For $\xi \in \Xi_i$, let $W_{\xi,A} \coloneqq W_\xi \cap A$. Since $T^{-j}A$ properly crosses $R_{i'}$ in the stable direction, and that $T^{-j}$ is smooth on $A$, it follows that $T^{-j} W_{\xi,A}$ is a single curve containing a stable manifold of $R_{i'}$.

Let $l_{\delta_2}$ denote the length of the smallest stable manifold among the ones in the family of Cantor rectangles $\cR_{\delta_2}$. Thus, for all $\xi \in \Xi_i$
\begin{align*}
\int_{W_{\xi,A}} e^{-S_j^{-1} g} \nu = e^{-jP_*(T,g)} \int_{T^{-j}W_{\xi,A}} \nu \geqslant e^{-jP_*(T,g)} \bar{C}l_{\delta_2}^{\bar{C_2}(P_*(T,g) - \sup g)}.
\end{align*}
Finally, let $D(R_i)$ be the smallest solid rectangle containing $R_i$. Since $\mu_g^{W}$ and $\nu$ are equivalent on $\mu_g$-a.e. $W \in \hW^s$, we get
\begin{align*} 
\mu_g(e^{-S_{j}^{-1} g} \mathbbm{1}_A) &\geqslant \mu_g(e^{-S_{j}^{-1} g} \mathbbm{1}_{A \cap D(R_i)})
\geqslant C''  \nu (e^{-S_{j}^{-1} g} \mathbbm{1}_{A \cap W_\xi})  \geqslant C_{\delta_2}' e^{-jP_*(T,g)}.
\end{align*}
where we used \eqref{eq:mug_geq_nu} (with $\bar E = A$ and $E = A \cap W_\Xi$) for the second inequality. Since the family $\cR_{\delta_2}$ is finite, this proves the lemma.
\end{proof}

\begin{proposition}\label{prop:uniqueness}
If $g$ is a $(\cM_0^1,\alpha_g)$-H\"older potential with $P_*(T,g) - \sup g > s_0 \log 2$, having SSP.1 and SSP.2, then the measure $\mu_g$ is the unique equilibrium state for $T$ under the potential $g$.
\end{proposition}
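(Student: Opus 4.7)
The plan, following the strategy of \cite[\S~7.7]{BD2020MME}, is to suppose for contradiction that there is an equilibrium state $\mu \neq \mu_g$ and derive $\mu \ll \mu_g$; combined with the ergodicity of both measures (Proposition~\ref{prop:mug_is_ergodic}) this forces $\mu = \mu_g$. A standard ergodic decomposition together with the affinity of $\mu \mapsto h_\mu(T)$ and of $\mu \mapsto \int g \, d\mu$ reduces the problem to the case where $\mu$ itself is ergodic.

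\smallskip

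\emph{Step 1 (Concentration on $G_{2n}$).} Apply the elementary Gibbs-type inequality
\[
H_\mu(\xi) + \int \varphi \, d\mu \leqslant \log \sum_{A \in \xi} |e^{\varphi}|_{C^0(A)}
\]
with $\xi = \cM_{-2n}^0$ and $\varphi = S_{2n}^{-1}g$, but refined via the binary partition $\{B_{2n},G_{2n}\}$: split the logarithm of the thermodynamic sum as a convex combination weighted by $\mu(B_{2n})$ and $\mu(G_{2n})$, at the cost of an additive $\log 2$. Using Lemma~\ref{lemma:cardinality_B2n} to bound the $B$-sum by $Ce^{\beta' n}$ with $\beta' < 2P_*(T,g)$, the exact exponential growth of the $G$-sum from Proposition~\ref{prop:almost_exponential_growth}, and the equilibrium identity $h_\mu + \int g\,d\mu = P_*(T,g)$, dividing by $n$ and sending $n\to\infty$ forces $\mu(\bigcup_{A\in B_{2n}} A) \to 0$.

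\smallskip

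\emph{Step 2 (Pointwise lower bound on $\mu_g$ over $G_{2n}$).} Extend Lemma~\ref{lemma:lower_bound_mug_on_good_sets} from its literal hypothesis to all $A \in G_{2n}$: by definition of $G_{2n}$, there exist $0 \leqslant j_0, j_1 \leqslant n/2$ such that the element $E \in \cM_{-(2n-j_0)}^0$ containing $T^{-j_0}A$ has $\diam^u(E) \geqslant \delta_2$, while the element $E' \in \cM_0^{2n-j_1}$ containing $T^{j_1-2n}A$ has $\diam^s(E') \geqslant \delta_2$. Apply Lemma~\ref{lemma:lower_bound_mug_on_good_sets} to the intermediate cylinder $A_* := T^{-j_0}A \cap T^{j_1-2n}A$ (of length $\geqslant n$), then propagate the bound back to $A$ using $T$-invariance of $\mu_g$ and the uniform distortion estimate on $S_{2n}^{-1}g$ supplied by Lemma~\ref{lemma:sup_less_poly_inf}. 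This yields a constant $C''>0$ such that
\[
\mu_g(A) \geqslant C'' \, e^{-2nP_*(T,g)} \, \bigl|e^{S_{2n}^{-1}g}\bigr|_{C^0(A)}, \qquad \forall\, A \in G_{2n}.
\]

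\smallskip

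\emph{Step 3 (Absolute continuity and conclusion).} Let $F$ be Borel with $\mu_g(F) = 0$, and for each $n$ let $\mathcal{U}_n = \{A \in \cM_{-2n}^0 : A \cap F \neq \emptyset\}$, which covers $F$. Split $\mathcal{U}_n = \mathcal{U}_n^B \sqcup \mathcal{U}_n^G$ along $\{B_{2n},G_{2n}\}$. By Step 1, $\mu(\bigcup \mathcal{U}_n^B) \to 0$. For the good part, Step 2 gives
\[
\sum_{A \in \mathcal{U}_n^G} e^{-2nP_*(T,g)} \bigl|e^{S_{2n}^{-1}g}\bigr|_{C^0(A)} \leqslant (C'')^{-1} \mu_g\Bigl(\bigcup \mathcal{U}_n^G\Bigr) \xrightarrow[n\to\infty]{} 0,
\]
since $\mu_g(F)=0$ and the boundaries of $\mathcal{U}_n^G$ can be controlled using Corollary~\ref{corol:mug-has_no_atoms-is_adapted-sees_manifolds}. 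Combining with the SMB/Birkhoff upper bound $\mu(A) \leqslant e^{-2n(P_*(T,g) - \int g\,d\mu - \varepsilon)}$ for $\mu$-a.e.\ typical $A$ yields $\mu(\bigcup \mathcal{U}_n^G) \to 0$, hence $\mu(F) = 0$ and $\mu \ll \mu_g$. As both measures are ergodic $T$-invariant probabilities, the Radon--Nikodym density $d\mu/d\mu_g$ is $T$-invariant, a.e.\ constant by ergodicity of $\mu_g$, and equal to $1$ by normalisation, so $\mu = \mu_g$, contradicting $\mu \neq \mu_g$.

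\smallskip

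The main obstacle is \emph{Step 2}: transferring Lemma~\ref{lemma:lower_bound_mug_on_good_sets} from its restrictive diameter hypothesis to \emph{all} of $G_{2n}$ requires one to locate, purely from the definition of $G_{2n}$, an intermediate orbit piece simultaneously meeting the unstable and stable diameter conditions, and then to convert the $\mu_g$-bound on that intermediate cylinder into a bound on $A$ itself with only an $n$-independent multiplicative loss. The linear-complexity bound~\eqref{eq:complex}, the uniform hyperbolicity, and the bounded-distortion estimate of Lemma~\ref{lemma:sup_less_poly_inf} together make this reduction possible.
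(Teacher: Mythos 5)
Your high-level plan (show that any ergodic equilibrium state $\mu$ is absolutely continuous with respect to $\mu_g$ and conclude by ergodicity) is a priori a legitimate route, and Step~1 is a correct and rather pleasant observation — the convexity split over $\{B_{2n},G_{2n}\}$ does force $\mu(\bigcup_{A\in B_{2n}}A)\to 0$ for any equilibrium state $\mu$. However, Steps~2 and~3 each contain a genuine gap, and neither can be repaired with the tools you invoke.

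In Step~2 you assert the pointwise Gibbs lower bound $\mu_g(A)\geqslant C''\,e^{-2nP_*(T,g)}\,|e^{S_{2n}^{-1}g}|_{C^0(A)}$ for \emph{every} $A\in G_{2n}$. This does not follow from Lemma~\ref{lemma:lower_bound_mug_on_good_sets} plus $T$-invariance and bounded distortion. The lemma gives a lower bound on $\mu_g$ of the ancestor $\bar E\in\cM_{-(2n-j-k)}^0$; but since $A\subset T^{j}\bar E$ and $\mu_g(T^{j}\bar E)=\mu_g(\bar E)$, what you actually get is $\mu_g(A)\leqslant\mu_g(\bar E)$ — the wrong direction. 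The $\mu_g$-mass of $\bar E$ is shared among all its (possibly exponentially many) descendant elements of $\cM_{-2n}^0$, and nothing established in the paper controls how that mass is apportioned among the children; the pointwise lower bound you want is precisely the full Gibbs property, which the construction does not deliver and which the paper is careful not to need. The paper's proof sidesteps this by \emph{not} asking for per-$A$ control: it groups elements of $G_{2n}$ by their unique maximal admissible triple $(\bar E,j,k)\in\cE_{2n}$, proves the \emph{summed} estimate~\eqref{eq:upper_bound_cA_M} via the supermultiplicativity of Lemma~\ref{lemma:supermultiplicativity}, and relies on a sublemma guaranteeing that the parents $\bar E$ attached to distinct maximal triples are disjoint, so that $\sum_{(\bar E,j,k)}\mu_g(\bar E)$ can be bounded. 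Replacing this by a per-$A$ inequality would count $\mu_g(\bar E)$ once for each of $\bar E$'s children.

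Step~3 has a second, independent, gap: the covering $\mathcal{U}_n=\{A\in\cM_{-2n}^0: A\cap F\neq\emptyset\}$ does not shrink to $F$. The partition $\cM_{-2n}^0$ refines only in the stable direction, so its elements keep macroscopic unstable extent; if $F$ is a dense $\mu_g$-null set (as the singular component of an ergodic $\mu\neq\mu_g$ typically is), then $\mathcal{U}_n=\cM_{-2n}^0$ for all $n$ and $\mu_g(\bigcup\mathcal{U}_n^G)$ does not tend to zero — the invocation of Corollary~\ref{corol:mug-has_no_atoms-is_adapted-sees_manifolds} to ``control boundaries'' does not address this. The paper handles this by exploiting the $T$-invariance of $F$ together with the two-sided cylinders $T^{-\lfloor n/2\rfloor}\cQ_n$, whose diameter shrinks in \emph{both} directions, and Sublemma~\ref{sublemma:sym_diff} then produces unions of such cylinders $\cC_n$ with $(\mu+\mu_g)(F\,\triangle\,\cC_n)\to 0$. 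Finally, the paper reaches the contradiction by bounding the pressure gap $2n(P_\mu(T,g)-P_{\mu_g}(T,g))$ directly — it never tries to establish $\mu\ll\mu_g$ — which is exactly the formulation in which the $\cC_n$-approximation and the maximal-triple combinatorics fit together.
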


\begin{proof}
Usually, given a known equilibrium state (thus ergodic) $\mu_g$, in order to prove uniqueness it suffices to check that for all $T$-invariant measure $\mu$ singular with respect to $\mu_g$, we have $h_{\mu}(T) + \mu(g) < h_{\mu_g}(T) +\mu_g(g)$ -- see for example \cite[Section 20.3]{katok1997introduction}. This is the strategy we adopt.

Let $\mu$ be a $T$-invariant Borel probability measure, singular with respect to $\mu_g$, that is there exists a Borel set $F \subset M$ with $T^{-1}F =F$ and $\mu_g(F) = 0$ but $\mu(F)=1$.

For each $n \in \mathbbm{N}$, we consider the partition $\cQ_n$ of maximal connected components of $M$ on which $T^{-n}$ is continuous. By \cite[Lemma 3.2 and 3.3]{BD2020MME}, $\cQ_n$ is $\cM_{-n}^0$ plus isolated points whose cardinality grows at most linearly with $n$. Thus $G_{2n} \subset \cQ_{2n}$ for each $n$. Define $\tilde B_{2n} = \cQ_{2n} \smallsetminus G_{2n}$. The set $\tilde B_{2n}$ contains $B_{2n}$ plus isolated points, and so its associated thermodynamic sum is bounded by the expression in Lemma~\ref{lemma:cardinality_B2n} plus $\#\{ \text{isolated points} \}e^{2n \sup g}$. Since $P_*(T,g) - \sup g > 0$, we have that $\tfrac{7}{4}P_*(T,g) + \tfrac{1}{4} \sup g > 2\sup g$, and thus the contribution of isolated points is small compared to the upper bound from Lemma~\ref{lemma:cardinality_B2n}.

By uniform hyperbolicity of $T$, the diameters of the elements of $T^{\lfloor n/2 \rfloor} \cQ_n$ tend to zero as $n$ goes to infinity. This implies the following fact.

\begin{sublemma}\label{sublemma:sym_diff}
For each $n \geqslant n_1$, there exists a finite union $\cC_n$ of elements of $\cQ_n$ such that 
\[ \lim_{n \to + \infty} (\mu + \mu_g)(F \, \triangle \,  T^{-\lfloor n/2 \rfloor} \cC_n) =0 \, .\]
\end{sublemma}

\begin{proof}
The proof is essentially the same as \cite[Sublemma~7.24]{BD2020MME} where the role of $\mu_*$ is played by $\mu_g$. Since notations are introduced in this proof, we write it down for completeness and latter use.

Let $\bar \mu = \mu + \mu_g$ and $\tilde \cQ_n = T^{- \lfloor n/2 \rfloor} \cQ_n$. For $\delta >0$, by regularity of Radon measures, pick compact sets $K_1 \subset F$ and $K_2 \subset M \smallsetminus F$ such that $\max \{ \bar \mu( F \smallsetminus K_1), \, \bar \mu ((M \smallsetminus F) \smallsetminus K_2) \} < \delta$. Since $K_1$ and $K_2$ are disjoint and compact, we have $\eta = \eta_\delta \coloneqq d(K_1, K_2) > 0$. If $\diam(\tilde Q) < \eta /2$, then either $\tilde Q \cap K_1 = \emptyset$ or $\tilde Q \cap K_2 = \emptyset$. Let $n_\delta$ be large enough so that the diameter of $\tilde \cQ_k$ is smaller than $\eta_\delta /2$ for all $k \geqslant n_\delta$. Fix $n=2 n_\delta$ and set $\tilde{ \cC_n}$ to be the union of $\tilde Q \in \tilde \cQ_n$ such that $\tilde Q \cap K_1 \neq \emptyset$. By construction, $K_1 \subset \tilde \cC_n$ and $\tilde \cC_n \cap K_2 = \emptyset$. Hence $\bar \mu(F \, \triangle \, \tilde \cC_n) \leqslant \delta + \bar \mu (K_1 \, \triangle \, \tilde \cC_n) \leqslant \delta + \bar \mu (M \smallsetminus (K_1 \cup K_2)) \leqslant 3 \delta$. Defining $\cC_n = T^{\lfloor n/2 \rfloor} \tilde \cC_n$ completes the proof.
\end{proof}

Remark that since $T^{-1}F=F$, it follows that $(\mu + \mu_g)(\cC_n \, \triangle \, F)$ also tends to zero as $n \to + \infty$.

Since $\cQ_{2n}$ is generating for $T^{2n}$, we have
\begin{align*}
h_{\mu}(T^{2n}) = h_\mu(T^{2n}, \cQ_{2n}) \leqslant H_\mu(\cQ_{2n}) = - \sum_{Q \in \cQ_{2n}} \mu(Q) \log \mu(Q) \, .
\end{align*}
Thus,
\begin{align*}
2n P_\mu(T,g) &= 2n h_\mu(T) + 2n \mu(g) = h_\mu(T^{2n}) + \mu(S_{2n}^{-1}g) \leqslant H_\mu(\cQ_{2n}) + \mu(S_{2n}^{-1}g) \\
&\leqslant \sum_{Q \in \cQ_{2n}} \mu(Q) \left( -\log \mu(Q) + S_{2n}^{-1} g(x_Q) + C_g  \right),
\end{align*}
where $x_Q \in Q$ and $C_g$ is the constant from Lemma~\ref{lemma:sup_less_poly_inf}.

Now, we want to distinguish elements of $\cQ_{2n}$. From the proof of Sublemma~\ref{sublemma:sym_diff}, for each $n$, there exists a compact set $K_1(n)$ that defines $\tilde \cC_n = T^{-\lfloor n/2 \rfloor} \cC_n$, and satisfying $(\mu + \mu_g)(\cup_n K_1(n)) = (\mu + \mu_g)(F)$. We group elements $Q \in \cQ_{2n} \subset \cQ_n$ according to whether $T^{-n}Q \subset \tilde \cC_{n}$ or $T^{-n}Q \cap \tilde \cC_{n} = \emptyset$. This dichotomy is well defined because if $Q$ is not an isolated point, and if $T^{-n}Q \cap \tilde \cC_{n} \neq \emptyset$, then $T^{-n}Q \in \cM_{-n}^n$ is contained in an element of $\cM_{-\lceil n/2 \rceil}^{\lfloor n/2 \rfloor}$ that intersect $K_1(n)$. Thus $Q \subset T^n \tilde \cC_{n} = T^{\lceil n/2 \rceil} \cC_{n}$ -- the case where $Q$ is an isolated point is obvious. Therefore,
\begin{align*}
2n &P_\mu(T,g) \leqslant C_g + \!\!\!\! \sum_{Q \subset T^n \tilde \cC_{n}} \!\! \mu(Q)\left( -\log \mu(Q) + S_{2n}^{-1} g(x_Q) \right) + \!\!\!\!\!\!\!\!\!\! \sum_{Q \in \cQ_{2n} \smallsetminus T^n \tilde \cC_{n}} \!\!\!\!\!\!\!\! \mu(Q)\left( -\log \mu(Q) + S_{2n}^{-1} g(x_Q) \right) \\
&\leqslant C_g + \frac{2}{e} + \mu(T^n \tilde \cC_{n}) \log \left( \sum_{Q \subset T^n \tilde \cC_{n}} e^{S_{2n}^{-1} g(x_Q)} \right) + \mu(M \smallsetminus T^n \tilde \cC_{n}) \log \left( \sum_{Q \in \cQ_{2n} \smallsetminus T^n \tilde \cC_{n}} e^{S_{2n}^{-1} g(x_Q)} \right) 
\end{align*}
where we used in the last line that the convexity of $x \log x$ implies that for all $p_j > 0$ with $\sum_{j=1}^N p_j \leqslant 1$, and all $a_j \in \mathbbm{R}$, we have (see \cite[(20.3.5)]{katok1997introduction})
\[ \sum_{j=1}^N p_j ( - \log p_j + a_j) \leqslant \frac{1}{e} + \sum_{j=1}^N p_j \log \sum_{i=1}^N e^{a_i} \, . \]

Then, since $-2n P_{\mu_g} = ( \mu(T^n \tilde \cC_{n}) + \mu(M \smallsetminus T^n \tilde \cC_{n}) ) \log e^{-2n P_*(T,g)}$, we write for $n \geqslant n_1$

\begin{align}\label{eq:uniqueness-final_estimate}
\begin{split}
&2n(P_{\mu}(T,g) - P_{\mu_g}(T,g)) - \frac{2}{e} - C_g \\
&\leqslant \mu(T^{-n} \tilde \cC_{n}) \left( \log \!\!\!\! \sum_{Q \subset T^n \tilde \cC_{n}} \!\!\!\! e^{S_{2n}^{-1}g(x_Q) - 2n P_*(T,g)} \right) + \mu(M \smallsetminus T^{-n} \tilde \cC_{n}) \left( \log \!\!\!\!\!\!\!\!\! \sum_{Q \in \cQ_{2n} \smallsetminus T^n \tilde \cC_{n}} \!\!\!\!\!\!\!\!\!\! e^{S_{2n}^{-1}g(x_Q) - 2n P_*(T,g)} \right) \\
&\leqslant \mu(\cC_{n}) \log \left( \sum_{\substack{Q \subset T^n \tilde \cC_{n} \\ Q \in G_{2n}}} e^{S_{2n}^{-1}g(x_Q) - 2n P_*(T,g)} + \sum_{\substack{Q \subset T^n \tilde \cC_{n} \\ Q \in \tilde B_{2n}}} e^{S_{2n}^{-1}g(x_Q) - 2n P_*(T,g)} \right) \\
&\quad + \mu(M \smallsetminus \cC_{2n}) \log \left( \sum_{Q \in G_{2n} \smallsetminus T^n \tilde \cC_{n} } e^{S_{2n}^{-1}g(x_Q) - 2n P_*(T,g)} + \sum_{Q \in \tilde B_{2n} \smallsetminus T^n \tilde \cC_{n} } e^{S_{2n}^{-1}g(x_Q) - 2n P_*(T,g)} \right)
\end{split}
\end{align}
where we used that $\cQ_{2n} = G_{2n} \sqcup \tilde B_{2n}$. By Lemma~\ref{lemma:cardinality_B2n} (and the remark concerning the contribution of isolated points), both sums over elements of $\tilde B_{2n}$ are bounded by $C e^{-\frac{1}{4}n(P_*(T,g) - \sup g)}$. 

It remains to estimate both sums over elements of $G_{2n}$. To do so, we want to use Lemma~\ref{lemma:lower_bound_mug_on_good_sets}, that is for each $Q \in G_{2n}$, we want to assign a set $\bar{E}$ satisfying the assumptions of the lemma. Let $Q \in G_{2n}$. Thus $Q \notin B^0_{-2n}$, and so there exists $0 \leqslant j \leqslant \lfloor n/2 \rfloor$ such that $T^{-j}Q \subset E_j \in \cM_{2n + j}^0$ with $\diam^u(E_j) \geqslant \delta_2$. Also, since $T^{-2n}Q \notin B_0^{2n}$, there exists $0 \leqslant k \leqslant \lfloor n/2 \rfloor$ such that $T^{-2n +k} Q \subset \tilde E_k \in \cM_{0}^{2n-k}$ with $\diam^s(\tilde E_k) \geqslant \delta_2$. Thus, both $\tilde E_k \in \cM_0^{2n-k}$ and $T^{-2n +j+k}E_j \in \cM_{-k}^{2n-j-k}$ contain $T^{-2n +k}Q$. In particular, there exists $\tilde E \in \cM_0^{2n-j-k}$ containing both $\tilde E_k$ and $T^{-2n +j+k}E_j$. Let $\bar E = T^{2n-j-k} \tilde E \in \cM_{-2n+j+k}^0$. Notice that by construction $E_j \subset \bar E$ and $\tilde E_k \subset T^{-2n+j+k} \bar E$, therefore $\bar E$ satisfies $\diam^u(\bar E) \geqslant \delta_2$ and $\diam^s(T^{-2n+j+k} \bar E) \geqslant \delta_2$, the assumption from Lemma~\ref{lemma:lower_bound_mug_on_good_sets}. Thus,
\[ \mu_g (e^{-S^{-1}_{2n-j-k}g} \mathbbm{1}_{\bar E}) \geqslant C_{\delta_2} e^{-(2n-j-k)P_*(T,g)} \, . \]

We call $(\bar E, j, k)$ an \emph{admissible triple} for $Q \in G_{2n}$ if $0 \leqslant j,k \leqslant \lfloor n/2 \rfloor$ and $\bar E \in \cM_{-2n + j + k}^0$, with $T^{-j}Q \subset \bar E$ and $\min \{ \diam^u(\bar E), \diam^s(T^{-2n+j+k} \bar E) \} \geqslant \delta_2$. By the above construction, such admissible triples always exist, but there may be many associated to a given $Q \in G_{2n}$. However, we can define the unique \emph{maximal triple} for $Q$ by taking first the maximum $j$, and then the maximum $k$ over all admissible triples for $Q$.

Let $\cE_{2n}$ be the set of maximal triples obtained in this way from elements of $G_{2n}$. For $(\bar E,j,k) \in \cE_{2n}$, let $\cA_M(\bar E,j,k)$ denote the set of $Q \in G_{2n}$ for which the maximal triple is $(\bar E,j,k)$. The importance of the set $\cE_{2n}$ lies in \cite[Sublemma~7.25]{BD2020MME}, which we state, and prove, as follows for completeness.
\begin{sublemma}
Suppose that $(\bar E_1,j_1,k_1),\, (\bar E_2,j_2,k_2) \in \cE_{2n}$ with $j_2 \geqslant j_1$ and $(\bar E_1,j_1,k_1) \neq (\bar E_2,j_2,k_2)$. Then $T^{-(j_2-j_1)} \bar E_1 \cap \bar E_2 = \emptyset$.
\end{sublemma}
\begin{proof}
By contradiction, let $(\bar E_1,j_1,k_1),\, (\bar E_2,j_2,k_2) \in \cE_{2n}$ with $j_2 \geqslant j_1$, $(\bar E_1,j_1,k_1) \neq (\bar E_2,j_2,k_2)$ and $T^{-(j_2-j_1)} \bar E_1 \cap \bar E_2 \neq \emptyset$. Notice that $T^{-(j_2-j_1)} \bar E_1 \in \cM_{-2n + j_2 + k_1}^{j_2-j_1}$ while $\bar E_2 \in \cM_{-2n+j_2+k_2}^0$.

Consider first the case $k_1 \leqslant k_2$. Therefore $T^{-(j_2-j_1)} \bar E_1 \subset E_2$. In particular, any element $Q \in \cA_M(\bar E_1,j_1,k_1)$ satisfies $T^{-j_2} Q \subset \bar E_2$, and so $Q \in \cA_M(\bar E_2,j_2,k_2)$, a contradiction.

Consider now the case $k_1 > k_2$. Therefore $T^{-(j_2-j_1)} \bar E_1$ and $\bar E_2$ are both contained in an element $\bar E' \in \cM_{-2n + j_2+k_1}^0$. Since $\bar E_2 \subset \bar E'$, we have that $\diam^u(\bar E') \geqslant \delta_2$. Also, since $T^{-2n+j_1+k_1} \bar E_1 \subset T^{-2n + j_2+k_1} \bar E'$, we must have that $\diam^s(T^{-2n + j_2+k_1} \bar E') \geqslant \delta_2$. Note that if $Q \in \cA_M(\bar E_1,j_1,k_1) \cup \cA_M(\bar E_2,j_2,k_2)$, then $(\bar E', j_2,k_1)$ is an admissible triple for $Q$. Thus, if $j_1=j_2$, then $\bar E' = \bar E_1$. For $Q \in \cA_M(\bar E_2,j_2,k_2)$, then $Q \subset \bar E_1$ and so $(\bar E_1,j_1,k_1)$ is an admissible triple for $Q$, which contradicts the maximality of $(\bar E_2,j_2,k_2)$ since $k_1 > k_2$. Similarly, if $j_2 > j_1$, then for $Q \in \cA_M(\bar E_1,j_1,k_1)$, the triple $(\bar E',j_2,k_1)$ is admissible for $Q$, which contradicts the maximality of $(\bar E_1,j_1,k_1)$.
\end{proof}

We now prove that if $T^{n} \tilde \cC_{n} \cap \cA_M(\bar E, j,k) \neq \emptyset$, then $\cA_M(\bar E, j,k) \subset T^{n} \tilde \cC_{n}$ and $\bar E \subset T^{n-j} \tilde \cC_n$. Let $Q \in \cA_M(\bar E, j,k)$ be such that $Q \cap T^{n} \tilde \cC_{n} \neq \emptyset$. Then, by definition of $(\bar E,j,k)$, $T^{-n}Q \subset T^{-n+j}\bar E \in \cM_{-n+k}^{n-j}$. Since $0 \leqslant j,k \leqslant \lfloor n/2 \rfloor$, there exists $E' \in \cM_{-\lfloor n/2 \rfloor}^{\lfloor n/2 \rfloor}$ such that $T^{-n+j}\bar E \subset E'$. In particular, we have $E' \in \tilde \cQ_n$ and $E' \cap \tilde \cC_n \neq \emptyset$. Thus, by construction of $\tilde \cC_n$, we have $\tilde \cC_n \supset E' \supset T^{-n+j}\bar E  \supset T^{-n}Q$. In particular, we get $Q \subset T^{n} \tilde \cC_n$, and thus $\cA_m(\bar E,j,k) \subset T^n \tilde \cC_n$. We also get $\bar E \subset T^{n-j} \tilde \cC_n$.

On the other hand, we prove that if $T^{n} \tilde \cC_{n} \cap \cA_M(\bar E, j,k) = \emptyset$, then $\cA_M(\bar E, j,k) \subset M \smallsetminus T^{n} \tilde \cC_{n}$ and $T^{-n+j} \bar E \subset M \smallsetminus \tilde \cC_n$. Let $Q \in \cA_M(\bar E, j,k)$. Then, by assumption, $T^{-n}Q \cap \tilde \cC_n = \emptyset$. As above, there exists $E' \in \cM_{-\lfloor n/2 \rfloor}^{\lfloor n/2 \rfloor}$ containing both $T^{-n}Q$ and $T^{-n+j}\bar E$. In particular, $E' \in \tilde \cQ_n$ and $E' \cap \tilde \cC_n = \emptyset$. By construction of $\tilde \cC_n$, we get that $E' \in  M \smallsetminus \tilde \cC_n$. Thus $Q \in M \smallsetminus T^n \tilde \cC_n$, and so $\cA_M(\bar E, j,k) \subset M \smallsetminus T^n \tilde \cC_n$. Also, $T^{-n+j} \bar E \subset M \smallsetminus \tilde \cC_n$.

The only last step we have to do before estimating the sums over $G_{2n}$ is to prove that for each $(\bar E,j,k) \in \cE_{2n}$, we have 
\begin{align}\label{eq:upper_bound_cA_M}
\sum_{Q \in \cA_M(\bar E,j,k)} |e^{S_{2n}^{-1} g}|_{C^0(Q)}  \leqslant C e^{(j+k)P_*(T,g)} |e^{S_{2n-j-k}^{-1}g}|_{C^0(\bar E)}
\end{align}
where $C>0$ is a constant depending only on the potential $g$. To do so, notice that if $Q \in \cA_M(\bar E,j,k)$, then by construction, $T^{-j} Q \subset \bar E$. Thus $T^{-n}Q \in \cM_{-n}^n$ is a subset of $T^{-(n-j)} \bar E \in \cM_{-n+k}^{n-j}$. Decomposing $T^{-n}Q = Q_- \cap Q_+$ with $Q_- \in \cM_{-n}^0$ and $Q_+ \in \cM_{0}^{n}$, and $T^{-(n-j)} \bar E = E_- \cap E_+$ with $E_- \in \cM_{-n+k}^0$ and $E_+ \in \cM_{0}^{n-j}$, we see that $Q_- \subset E_-$ and $Q_+ \subset E_+$. Thus

\begin{align*}
\sum_{Q \in \cA_M(\bar E,j,k)} \!\!\! |e^{S_{2n}^{-1} g}&|_{C^0(Q)} = \!\!\! \sum_{Q \in \cA_M(\bar E,j,k)} \!\!\! |e^{S_{2n}^{-1} g \circ T^n}|_{C^0(T^{-n}Q)} \leqslant \!\!\! \sum_{\substack{Q_- \in \cM_{-n}^0 \\ Q_- \subset E_-}}  \! \sum_{\substack{Q_+ \in \cM_{0}^n \\ Q_+ \subset E_+}} \!\!\! |e^{S_n^{-1}g + S_n g \circ T} |_{C^0(Q_- \cap Q_+)} \\
&\leqslant \sum_{\substack{Q_- \in \cM_{-n}^0 \\ Q_- \subset E_-}} |e^{S_n^{-1}g}|_{C^0(Q_-)}  \sum_{\substack{Q_+ \in \cM_{0}^n \\ Q_+ \subset E_+}} |e^{S_n g \circ T} |_{C^0(Q_+)} \\
&\leqslant \sum_{\substack{Q_- \in \cM_{-n}^0 \\ Q_- \subset E_-}} |e^{S_n^{-1}g \circ T^{n-k} }|_{C^0(T^{-n+k}Q_-)}  \sum_{\substack{Q_+ \in \cM_{0}^n \\ Q_+ \subset E_+}} |e^{S_n g \circ T \circ T^{-(n-j)}} |_{C^0(T^{n-j}Q_+)} \, .
\end{align*}
Now, notice that $T^{-n+k} Q_- \in \cM_{-k}^{n-k}$ is a subset of $T^{-n+k}E_- \in \cM_{0}^{n-k}$. Thus $T^{-n+k} Q_-$ must be of the form $\tilde Q_- \cap T^{-n+k}E_-$ for some $\tilde Q_- \in \cM_{-k}^0$. Similarly, $T^{n-j}Q_+$ must be of the form $\tilde Q_+ \cap T^{n-j}E_+$ for some $\tilde Q_+ \in \cM_0^j$. Thus
\begin{align*}
&\sum_{Q \in \cA_M(\bar E,j,k)} \!\!\!\! |e^{S_{2n}^{-1} g}|_{C^0(Q)} 
\leqslant \!\!\!\! \sum_{\tilde Q_- \in \cM_{-k}^0} \!\!\!\! |e^{S_n^{-1}g \circ T^{n-k} }|_{C^0(\tilde Q_- \cap T^{-n+k}E_-)}  \!\!\! \sum_{\tilde Q_+ \in \cM_0^j} \!\!\! |e^{S_n g \circ T \circ T^{-(n-j)}} |_{C^0(\tilde Q_+ \cap T^{n-j}E_+)} \\
&\qquad \leqslant \!\!\! \sum_{\tilde Q_- \in \cM_{-k}^0} \!\!\! |e^{S_k^{-1}g}|_{C^0(\tilde Q_-)} |e^{S_{2n-j-k}^{-1}g - S_{n-j}^{-1}g}|_{C^0(T^{n-j}E_-)} \!\!\! \sum_{\tilde Q_+ \in \cM_{0}^j} \!\!\! |e^{S_{j}g \circ T}|_{C^0(\tilde Q_+)} |e^{S_{n-j}^{-1}g}|_{C^0(T^{n-j}E_+)} \, .
\end{align*}
Now, using Lemma~\ref{lemma:sup_less_poly_inf}, the supermultiplicativity from Lemma~\ref{lemma:supermultiplicativity} and the exact exponential growth from Proposition~\ref{prop:almost_exponential_growth}, we get the upper bound \eqref{eq:upper_bound_cA_M} with $C = 2 \, C_g \, e^{\sup g - \inf g}$.

We can now estimate the sums over elements of $G_{2n}$.
\begin{align*}
&\sum_{\substack{Q \in G_2n \\ Q \subset T^n \tilde \cC_{n}}} e^{S_{2n}^{-1}g(x_Q) - 2n P_*(T,g)} 
\leqslant \sum_{\substack{ (\bar E,j,k) \in \cE_{2n} \\ \bar E \subset T^{n-j} \tilde \cC_n }} \sum_{Q \in \cA_M(\bar E,j,k)} e^{S_{2n}^{-1}g(x_Q) - 2n P_*(T,g)} \\
&\leqslant \!\!\! \sum_{\substack{ (\bar E,j,k) \in \cE_{2n} \\ \bar E \subset T^{n-j} \tilde \cC_n }} \!\!\! C e^{-(2n-j-k)P_*(T,g)} |e^{S_{2n-j-k}^{-1}g}|_{C^0(\bar E)} 
\leqslant \!\!\! \sum_{\substack{ (\bar E,j,k) \in \cE_{2n} \\ \bar E \subset T^{n-j} \tilde \cC_n }} \!\!\! C C_{\delta_2}^{-1} \mu_g( e^{-S_{2n-j-k}^{-1}g} \mathbbm{1}_{\bar E}) |e^{S_{2n-j-k}^{-1}g}|_{C^0(\bar E)} \\
&\leqslant C C_{\delta_2}^{-1} C_g  \sum_{\substack{ (\bar E,j,k) \in \cE_{2n} \\ \bar E \subset T^{n-j} \tilde \cC_n }} \mu_g(\bar E) \leqslant C C_{\delta_2}^{-1} C_g  \sum_{\substack{ (\bar E,j,k) \in \cE_{2n} \\ \bar E \subset T^{n-j} \tilde \cC_n }} \mu_g(T^{-n+j}\bar E) 
\leqslant C' \,  \mu_g(\tilde \cC_n)
\end{align*}
where $C'=C C_{\delta_2}^{-1} C_g$.

Similarly, 
\begin{align*}
&\sum_{Q \in G_{2n} \smallsetminus T^n \tilde \cC_{n} } e^{S_{2n}^{-1}g(x_Q) - 2n P_*(T,g)} 
\leqslant \sum_{\substack{(\bar E,j,k) \in \cE_{2n} \\ \bar E \subset M \smallsetminus T^{n-j} \tilde \cC_n }} \sum_{Q \in \cA_M(\bar E,j,k)} e^{S_{2n}^{-1}g(x_Q) - 2n P_*(T,g)} \\
&\qquad \leqslant \sum_{\substack{(\bar E,j,k) \in \cE_{2n} \\ \bar E \subset M \smallsetminus T^{n-j} \tilde \cC_n }} C e^{-(2n-j-k)P_*(T,g)} |e^{S_{2n-j-k}^{-1}g}|_{C^0(\bar E)} \\
&\qquad \leqslant \sum_{\substack{(\bar E,j,k) \in \cE_{2n} \\ \bar E \subset M \smallsetminus T^{n-j} \tilde \cC_n }} C C_{\delta_2}^{-1} \mu_g( e^{-S_{2n-j-k}^{-1}g} \mathbbm{1}_{\bar E}) |e^{S_{2n-j-k}^{-1}g}|_{C^0(\bar E)} \\
&\qquad \leqslant C C_{\delta_2}^{-1} C_g \sum_{\substack{(\bar E,j,k) \in \cE_{2n} \\ \bar E \subset M \smallsetminus T^{n-j} \tilde \cC_n }} \mu_g(\bar E) \leqslant C C_{\delta_2}^{-1} C_g  \sum_{\substack{ (\bar E,j,k) \in \cE_{2n} \\ \bar E \subset M \smallsetminus T^{n-j} \tilde \cC_n }} \mu_g(T^{-n+j}\bar E)
\leqslant C' \,  \mu_g(M \smallsetminus \tilde \cC_n) \, .
\end{align*}

Putting these bounds together allows us to complete our estimate of \eqref{eq:uniqueness-final_estimate},
\begin{align*}
2n(P_\mu(T,g) - P_{\mu_g}(T,g)) - \frac{2}{e} - C_g &\leqslant \mu(\cC_n) \log \left( C' \mu_g(\cC_n) + Ce^{-\frac{1}{4} n(P_*(T,g)- \sup g)} \right) \\
&\,\, + \mu(M\smallsetminus \cC_n) \log \left( C' \mu_g(M \smallsetminus \cC_n) + Ce^{-\frac{1}{4} n(P_*(T,g)- \sup g)} \right).
\end{align*}
Since by Sublemma~\ref{sublemma:sym_diff} $\mu(\cC_n)$ tends to $1$ as $n \to +\infty$, while $\mu_g(\cC_n)$ tends to $0$ as $n \to +\infty$, the limit of the right-hand side tends to $-\infty$. This yields a contradiction unless $P_{\mu}(T,g) < P_{\mu_g}(T,g)$.
\end{proof}

\section{The Billiard Flow}\label{sect:billiard_flow}

Throughout this section, we see the billiard flow $\phi_t$ as the vertical flow in the space 
\[ \tilde\Omega = \{ (x,t) \in M\times \mathbbm{R} \mid 0 \leqslant t \leqslant \tau(x) \} /\sim, \]
where the equivalence relation is defined by $(x,\tau(x))\sim(T(x),0)$. In other words, we see $\phi_t$ as the suspension flow over $T$ under the return time $\tau$. Furthermore, transporting the Euclidean metric on $\cQ \times \mathbbm{S}^1$ onto $\tilde \Omega$, the flow $\phi_t$ is uniformly hyperbolic.

\begin{proposition}\label{prop:billiard_flow_k-system}
Let $g$ be a $(\cM_0^1,\alpha_g)$-H\"older potential such that $P_*(T,g) - \sup g > s_0 \log 2$, with SSP.1 and SSP.2. Let $\bar \mu_g = (\mu_g(\tau))^{-1} \mu_g \otimes \lambda$, where $\lambda$ is the Lebesgue measure. Then $(\phi_t, \bar \mu_g)$ is K-mixing.
\end{proposition}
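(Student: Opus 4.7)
The plan is to adapt the argument of \cite[Chapter~6]{chernov2006chaotic}, which establishes K-mixing of the billiard flow under the SRB measure, to $\bar\mu_g$. All the required inputs for the measure $\mu_g$ have already been established in the previous sections: K-mixing of the base system $(T,\mu_g)$ (Proposition~\ref{prop:mug_is_ergodic}), absolute continuity of stable and unstable foliations with respect to $\mu_g$ (Corollary~\ref{corol:abs_c0_mug_unst_fol}), $\mu_g$-almost sure existence of stable and unstable manifolds of positive length (Corollary~\ref{corol:mug-has_no_atoms-is_adapted-sees_manifolds}(d)), and the estimate $\mu_g(\cN_\ve(\cS_k)) \leqslant C_k |\log \ve|^{-\gamma}$ of Lemma~\ref{lemma:measure_Sk}. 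Each of these properties transfers cleanly to $\bar\mu_g$ through the product structure $\bar\mu_g = (\mu_g(\tau))^{-1}\mu_g \otimes \lambda$.

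First, I would check that $(\phi_t,\bar\mu_g)$ is ergodic. Any $\phi_t$-invariant $F\in L^2(\bar\mu_g)$ is in particular invariant under the small vertical translations $(x,s)\mapsto(x,s+h)$ and thus descends to a function on $M$ which is $T$-invariant via the identification $(x,\tau(x))\sim(Tx,0)$; this descended function is $\mu_g$-almost everywhere constant by Proposition~\ref{prop:mug_is_ergodic}. Second, to upgrade ergodicity to K-mixing, I would follow \cite[\S~6.10]{chernov2006chaotic} and show that the Pinsker $\sigma$-algebra $\pi(\phi_1)$ of $\phi_1$ is trivial. The K-property is characterized by every $\pi(\phi_1)$-measurable function being $\bar\mu_g$-almost everywhere constant along flow stable and flow unstable manifolds; combined with a Hopf-chain argument based on absolute continuity of the stable and unstable holonomies for $\bar\mu_g$ (inherited from Corollary~\ref{corol:abs_c0_mug_unst_fol}, using that flow holonomies factor through $T$-holonomies composed with a shift in the flow direction that is absolutely continuous with respect to $\lambda$), this forces such functions to be $\bar\mu_g$-almost everywhere globally constant.

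The main obstacle is handling the singularities of the flow: the Hopf-chain argument requires that typical orbits avoid fixed neighborhoods of the singular set with controlled frequency. This is obtained by combining Lemma~\ref{lemma:measure_Sk} --- integrated over the flow direction using $\mu_g(\tau)<\infty$ --- with a standard Borel--Cantelli argument, essentially the same computation carried out for flow-adaptedness in Proposition~\ref{prop:flow_adapted}. Once these recurrence estimates are in place, the Hopf-type arguments of \cite[\S~6.10--6.11]{chernov2006chaotic} apply with only cosmetic modifications (replacing $\musrb$ by $\mu_g$ and $\tilde\musrb$ by $\bar\mu_g$), yielding K-mixing of $(\phi_t,\bar\mu_g)$.
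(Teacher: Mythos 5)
Your overall route is correct at a high level — ergodicity of $(\phi_t,\bar\mu_g)$ descends from $(T,\mu_g)$, and K-mixing is obtained by running the argument of \cite[\S\S~6.9--6.11]{chernov2006chaotic} with the SRB measure replaced by $\mu_g$ and $\bar\mu_g$. However, you have misidentified the step that actually requires new work, and the step you propose to do is not the one that matters here.

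The paper points out that, after the cosmetic substitution $\musrb\mapsto\mu_g$, \emph{every} argument of \cite[\S\S~6.9--6.11]{chernov2006chaotic} goes through verbatim \emph{except for one}: the analogue of \cite[Exercise~6.35]{chernov2006chaotic}, namely that $\mu_g$-almost every point is \emph{rich}, i.e., admits the local product structure needed to close the $4$-loop $Y_1,\ldots,Y_5$ and hence to run the Hopf chain. For the SRB measure, richness is automatic because the unstable foliation carries absolutely continuous conditional measures that are equivalent to Lebesgue on the leaves; for $\mu_g$ this is not free. Verifying it is the entire content of the paper's proof: one covers $M^{\rm reg}$ by Cantor rectangles $\{R_j\}$ of positive $\mu_g$-measure, disintegrates $\mu_g$ restricted to a rectangle $R$ over both the stable and the unstable foliations, and uses Corollary~\ref{corol:abs_c0_mug_unst_fol} together with full support (Proposition~\ref{prop:full_support}) to argue, by contradiction on the sets $C_2^R$, $C_4^R$, $C_{2,n}^R$, that the set of non-rich points has $\mu_R$-measure zero for every $R$, hence $\mu_g(E_2)=1$. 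Your proposal mentions absolute continuity only as an ambient hypothesis; it never identifies richness as the thing to prove, and so the main technical verification is missing.

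Conversely, the singularity-recurrence / Borel--Cantelli argument you flag as ``the main obstacle'' is not used in this proposition at all. The bound $\mu_g(\cN_\ve(\cS_k))\leqslant C_k|\log\ve|^{-\gamma}$ of Lemma~\ref{lemma:measure_Sk} and the Borel--Cantelli step you describe are the engine of Proposition~\ref{prop:flow_adapted} (flow-adaptedness) and of the estimate on the bad sets $\hat F_3,\hat F_4$ in the Bernoulli argument of Proposition~\ref{prop:billiard_flow_bernoulli} — not of the K-mixing proof. As written, your proof would spend its effort on estimates that are not needed here while leaving the genuinely new step (the $\mu_g$-fullness of the set of rich points) unproved.
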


\begin{proof}
The ergodicity of $(\phi_t,\bar \mu_g)$ follows directly from the ergodicity of $(T,\mu_g)$ proved in Proposition~\ref{prop:mug_is_ergodic}.

To prove the K-mixing, we follow closely the method used in Sections~6.9, 6.10 and 6.11 from \cite{chernov2006chaotic}. In fact, replacing $\mu$ and $\mu_\Omega$ with $\mu_g$ and $\bar \mu_g$ throughout these sections, we only have to check that \cite[Exercise~6.35]{chernov2006chaotic} is still true in order to apply verbatim the arguments. This is what we prove here.

To do so, we first need to recall some of the construction done in \cite[Section~6.9]{chernov2006chaotic}. If $x_1$ and $x_3$ are two nearby points in $M$ such that 
\begin{align}\label{eq:condition_x1}
\{ x_2 \} \coloneqq W^u(x_1) \cap W^s(x_3) \neq \emptyset \, , \quad \{ x_4 \} \coloneqq W^s(x_1) \cap W^u(x_3) \neq \emptyset \, ,
\end{align}
we then construct the $4$-loop $Y_1$, $Y_2$, $Y_3$, $Y_4$, $Y_5 \in \Omega$ as follow. Let $Y_1 = X_1 = (x_1,t)$ and $X_3=(x_3,t)$. Define
\begin{align*}
Y_2 = W^u(Y_1) \cap W^{ws}_{\rm loc}(X_3), \quad Y_3 = W^s(Y_2) \cap W^{wu}_{\rm loc}(X_3), \\
Y_4 = W^u(Y_3) \cap W^{ws}_{\rm loc}(X_1), \quad Y_5 = W^s(Y_4) \cap W^{wu}_{\rm loc}(X_1),
\end{align*}
where $W^u$ and $W^s$ are unstable and stable manifolds for the flow, and $W^{wu}_{loc}$ and $W^{ws}_{loc}$ are local weak unstable and local weak stable manifolds for the flow. We always assume that this construction stays under the ceiling function $\tau$. Actually, as proven in \cite[Lemma~6.40]{chernov2006chaotic} there exists $\sigma$ such that $Y_5 = \phi_\sigma (Y_1)$, with $|\sigma|=\musrb(K)$ where $K$ is the rectangle in $M$ with corners $x_1$, $x_2$, $x_3$, $x_4$. Thus the $4$-loops are always open.

For $x \in M$, let $\cL_x = \{ \phi_t(x) \mid 0<t<\tau(x) \}$. Then the partition $\{ \cL_x \mid x \in M \}$ of $\tilde \Omega$ is measurable and the conditional measures of $\bar \mu_g$ on $\cL_x$ are uniform. Call $\lambda_x$ the Lebesgue probability measure on $\cL_x$. Let $D \subset \Omega$ be such that $\bar \mu_g(D) =1$ and let $E_1 = \{ x \in M \mid \lambda_x(\cL_x \smallsetminus D) = 0 \}$. Clearly, $\mu_g(E_1)=1$. 

We call a point $x_1 \in E_1$ \emph{rich} if for any $\varepsilon > 0$ there exists another point $x_3 \in E_1$ such that $0 < d(x_1,x_3)<\varepsilon$ and \eqref{eq:condition_x1} holds with $x_2$ and $x_4 \in E_1$. Denote $E_2 \subset E_1$ the set of rich points.

The analogous of \cite[Exercise~6.35]{chernov2006chaotic} is to prove that $\mu_g(E_2)=1$. Let $\{R_j\}_{j \geqslant 1}$ be the cover of $M^{\mathrm{reg}}$ into Cantor rectangles (discarding the ones with zero $\mu_g$-measure). Let $R$ be one of those Cantor rectangle and denote $\mu_R$ the conditional measure of $\mu_g$ on $R$. It is enough to prove that $\mu_R(E_2)=1$. Since $\mu_g(E_1)=1$ we have that $\mu_R(E_1)=1$. Furthermore, since the partition of $R$ into stable manifolds is measurable, we can disintegrate $\mu_R$ with respect to this partition, with conditional measure $\mu_s^W$ on $W \in R \cap \cW^s$. It follows that for $\mu_R$-a.e. point $x \in E_1 \cap R$, if $W=W(x) \in \cW^s$ contains $x$ then $\mu_s^W(W \cap E_1) = 1$. Similarly, for $\mu_R$-a.e. point $x \in E_1 \cap R$, if $W=W(x) \in \cW^u$ contains $x$ then $\mu_u^W(W \cap E_1) = 1$, where $\mu_u^W$ is the conditional measure on $W$ in the disintegration of $\mu_R$ with respect to the measurable partition $R \cap \cW^u$ of $R$. Then $\mu_R(E_R)=1$, where $E_R$ denotes the set of points $x$ in $R$ such that both stable and unstable conditional measure on leaves containing $x$ give measure $1$ to $E_1$. 

Let $E_2^R \subset E_2$ be the set of rich points $x_1$ such that $x_3$ belongs to $R \cap E_1$ (and therefore $x_2$ and $x_4$ also belong to $R \cap E_1$ by the properties of a Cantor rectangle). By contradiction, assume that $\mu_R(E_2^R) \neq 1$. Define the sets 
\begin{align*}
C_2^R = \{ x_1 \in E_1 \cap R \mid \exists \varepsilon >0, \forall x_3 \in E_1 \cap R, \,\text {if } 0<d(x_1,x_3)<\varepsilon \text{ then } x_2 \notin E_1 \cap R \}, \\
C_4^R = \{ x_1 \in E_1 \cap R \mid \exists \varepsilon >0, \forall x_3 \in E_1 \cap R, \,\text {if } 0<d(x_1,x_3)<\varepsilon \text{ then } x_4 \notin E_1 \cap R \}.
\end{align*}
Note that we don't have to introduce in these definitions the condition \eqref{eq:condition_x1} since it is automatically satisfied by the construction of Cantor rectangles. Thus, we have $(E_1 \cap R) \smallsetminus E_2^R = C_2^R \cup C_4^R$, so that $\mu_R(C_2^R \cup C_4^R) >0$. Assume first that $\mu_R(C_2^R) > 0$. Define the family of sets
\begin{align*}
C_{2,n}^R = \{ x_1 \in C_2^R \mid \varepsilon \geqslant \tfrac{1}{n} \}.
\end{align*}
Since $\bigcup_{n \geqslant 1} C_{2,n}^R = C_{2}^R$ is an increasing union, there is some $n$ such that $\mu_R(C_{2,n}^R)>0$. Let $x_1 \in C_{2,n}^R \cap E_R$ and $W \in \cW^u$ be such that $x_1 \in W$. Let $x_3 \in E_1 \cap R \cap E_R$ be such that $0 < d(x_1,x_3) < \tfrac{1}{n}$. Let $W_0 \in \cW^u$ be the unstable manifold containing $x_3$. By construction of $E_R$, we have $\mu_u^{W_0}(W_0 \cap E_1)=1$, and since $\mu_u^{W_0}$ have support $W_0$ (otherwise, $\mu_g$ would not have total support because of the absolute continuity of the holonomy), in fact we have that
\begin{align*}
\mu_u^{W_0}(W_0 \cap E_1 \cap B(x_1,\tfrac{1}{n})) > 0.
\end{align*}
Thus $\mu_u^W(\Theta_W(W_0 \cap E_1 \cap B(x_1,\tfrac{1}{n}))) >0$. Now, if $\tilde x_3 \in W_0 \cap E_1 \cap B(x_1,\tfrac{1}{n})$, then $\tilde x_2 \notin E_1$. In other words, $E_1 \cap \Theta_W(W_0 \cap E_1 \cap B(x_1,\tfrac{1}{n})) = \emptyset$. Since $x_1 \in E_R$, we have that $\mu_u^W(W \cap E_1) = 1$, so that $\mu_u^W(\Theta_W(W_0 \cap E_1 \cap B(x_1,\tfrac{1}{n}))) = 0$, a contradiction. Thus $E_R \cap C_{2,n}^R = \emptyset$, so that $\mu_R(C_2^R)=0$. We proceed similarly, exchanging the role of $\cW^s$ and $\cW^u$, in order to prove that $\mu_R(C_4^R)=0$. Finally, we get that $\mu_R(E_2^R)=1$, the contradiction closing the proof.
\end{proof}

\begin{proposition}\label{prop:billiard_flow_bernoulli}
Under the assumptions of Proposition~\ref{prop:billiard_flow_k-system}, $(\phi_t, \bar \mu_g)$ is Bernoulli.
\end{proposition}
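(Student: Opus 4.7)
The plan is to bootstrap from Proposition~\ref{prop:billiard_flow_k-system} to the Bernoulli property by adapting to our setting the Chernov--Haskell argument \cite{ChH}, exactly as the map-side Bernoulli property (Proposition~\ref{prop:mu_g_is_bernoulli}) was obtained from Proposition~\ref{prop:mug_is_ergodic}. Recall that Ornstein's theory reduces the Bernoulli property of a measure-preserving flow to the Bernoulli property of the time-one map $\phi_1$, and, for K-automorphisms, this is characterised by Ornstein's very weak Bernoulli (vwB) criterion applied to a generating partition. So the first thing I would do is fix a finite measurable partition $\xi$ of $\tilde\Omega$ refining the section $M\times\{0\}$ by small flow boxes built out of rectangles $R_j$ from Lemma~\ref{lemma:countable_cover_with_Cantor_rectangles}, times small intervals in the flow direction; with $\bar\mu_g$-trivial boundary (no atoms along boundaries is guaranteed by Corollary~\ref{corol:mug-has_no_atoms-is_adapted-sees_manifolds}, and $\bar\mu_g$ has no atoms along stable/unstable walls either), this $\xi$ is generating for $\phi_1$ because the Pinsker $\sigma$-algebra is trivial by K-mixing of the flow.

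Next I would verify the vwB property of $\xi$ under $\phi_1$. Following \cite{ChH}, the core step is to show that for $\bar\mu_g$-almost every pair $(X_1,X_3)$ lying in the same element of a sufficiently refined local stable partition, the conditional distributions of $(\xi\vee\phi_1^{-1}\xi\vee\cdots\vee\phi_1^{-n+1}\xi)$ under $\phi_1$, restricted to the strong unstable leaves through $X_1$ and $X_3$, become arbitrarily close in the $\bar d$-metric as $n\to\infty$. This is implemented by the four-loop construction recalled in the proof of Proposition~\ref{prop:billiard_flow_k-system}: given $X_1,X_3$ close on a weak stable leaf, one interpolates via a point $Y_2\in W^u(X_1)\cap W^{ws}_{\rm loc}(X_3)$ and matches points on the corresponding local unstable pieces through the holonomy map along $W^{ws}_{\rm loc}$. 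Absolute continuity of the stable holonomy with respect to $\mu_g$ was established in Corollary~\ref{corol:abs_c0_mug_unst_fol}, and lifts directly to absolute continuity of the weak stable holonomy for $\bar\mu_g$ via the product structure $\bar\mu_g=(\mu_g(\tau))^{-1}\mu_g\otimes\lambda$; contraction along $W^s$ together with the vwB property of $(T,\mu_g)$ established in Proposition~\ref{prop:mu_g_is_bernoulli} transfer the $\bar d$-closeness of conditional distributions from the base to the flow.

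The estimates producing the $\bar d$-closeness require controlling excursions near the singular set: the matching may be destroyed by singular collisions, and one must know that the density of such bad times along an orbit is negligible. This is the point where I would invoke the flow-adapted property that will be proved in Proposition~\ref{prop:flow_adapted} (the integrability of $\log d(\cdot,\cS_{\pm 1})$ against $\bar\mu_g$ follows from Corollary~\ref{corol:mug-has_no_atoms-is_adapted-sees_manifolds}(c) and boundedness of $\tau$), together with the Borel--Cantelli consequence \eqref{eq:Borel_Cantelli_seed} of Lemma~\ref{lemma:measure_Sk}: for $\bar\mu_g$-a.e.\ $X$ the distance from $\phi_n X$ to $\cS_{\pm 1}$ decays at most subexponentially, so singular cuts are asymptotically sparse and the matched four-loop construction goes through on a set of orbits of $\bar\mu_g$-measure tending to one.

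The principal obstacle is the last point: verifying the vwB estimate in the presence of singularities. Unlike in \cite{ChH}, the hyperbolicity constants and local manifold sizes depend in a delicate way on the orbit's closeness to $\cS_{\pm n}$, and the decomposition of a flow box into matched and unmatched pieces must be tracked quantitatively. The modifications from \cite{BD2020MME} (used there to adapt \cite{ChH} to the map $T$) provide the template: one partitions the flow box according to the first time of a near-singular collision, uses Lemma~\ref{lemma:measure_Sk} to ensure the $\bar\mu_g$-measure of the bad part shrinks, and applies the holonomy absolute continuity on the good part. Carrying this scheme through on $\tilde\Omega$ with the cross-section structure of $\bar\mu_g$ gives vwB for $\xi$ under $\phi_1$, hence $\phi_1$ is Bernoulli by Ornstein, and hence $(\phi_t,\bar\mu_g)$ is Bernoulli.
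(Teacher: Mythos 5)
Your overall strategy matches the paper's: reduce to Ornstein's vwB criterion, construct Cantor boxes (flow boxes over Cantor rectangles) via Lemma~\ref{lemma:countable_cover_with_Cantor_rectangles}, use absolute continuity of the holonomy and the singular-set neighbourhood bound from Lemma~\ref{lemma:measure_Sk} to control the bad pieces. However, there are two substantive problems.

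First, the reasoning ``this $\xi$ is generating for $\phi_1$ because the Pinsker $\sigma$-algebra is trivial by K-mixing of the flow'' is incorrect: triviality of the Pinsker $\sigma$-algebra does not make a given finite partition generating, and a billiard flow has no reason to possess a finite generating partition. What the paper uses instead is an \emph{increasing sequence} $\bar\alpha_i$ of partitions whose union generates (sets of the form $\tilde\Omega_0\cap(A\times[l 2^{-i},(l+1)2^{-i}))$ with $A\in\cM_{-1}^1$); one then shows each $\bar\alpha_i$ is vwB, which by \cite{ChH} suffices. Note also that the $\bar\alpha_i$ are not built from Cantor rectangles; the Cantor boxes enter in the separate role of the $\delta$-regular covering $\mathfrak{B}$ used inside the vwB estimate.

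Second, and more seriously, you propose to ``transfer'' the $\bar d$-closeness of conditional distributions from the base $(T,\mu_g)$ to the flow by invoking Proposition~\ref{prop:mu_g_is_bernoulli}. This shortcut does not work: the suspension flow is not a simple product of the base with a rotation unless $\tau$ is constant, and the Bernoulli property of the base map does not by itself yield vwB for the flow partitions $\bar\alpha_i$. The paper's proof does not invoke Proposition~\ref{prop:mu_g_is_bernoulli} at all. Instead, it redoes the Chernov--Haskell argument directly in $\tilde\Omega$: it constructs the Cantor boxes $B$ via the projection map $P_-$, identifies the product measure $\bar\mu_B^p$ on $B$, deduces absolute continuity $\bar\mu_B^p\ll\bar\mu_B$ from the product structure $\bar\mu_g=(\mu_g(\tau))^{-1}\mu_g\otimes\lambda$ together with the map-level absolute continuity (Corollary~\ref{corol:abs_c0_mug_unst_fol}), and then runs the vwB decomposition into bad sets $\hat F_1,\ldots,\hat F_4$. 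The only genuinely new estimate compared to \cite{BD2020MME} is the bound on $\hat F_3$ via \eqref{eq:condition_C'}, where one sums $\mu_g(\cN_{C\tilde\Lambda^{-i}\delta}(\cS_{\pm1}))$ over a time window and uses $\gamma>1$ from Lemma~\ref{lemma:measure_Sk} to make the tail small. Your proposal omits this explicit computation and instead appeals to Proposition~\ref{prop:flow_adapted}, which is a separate integrability statement and is not used in the paper's Bernoulli proof.
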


\begin{proof}
The idea of the proof is to bootstrap from the K-mixing following the techniques of \cite{ChH} with modifications similar to those in \cite[Proposition~7.19]{BD2020MME}. The proof in \cite{ChH} proceeds in two steps.

\smallskip
\noindent
\emph{Step 1. Construction of $\delta$-regular coverings.} Given $\delta >0$, the idea is to cover $\tilde \Omega$, up to a set of $\bar \mu_g$-measure at most $\delta$, by small Cantor boxes -- essentially a set of the form Cantor rectangle times interval -- such that $\bar \mu_g$ restricted to each Cantor box is arbitrarily close to a product measure. The basis of the boxes will be very similar to the covering $\{R_i\}_{i \in \mathbbm{N}}$ from Lemma~\ref{lemma:countable_cover_with_Cantor_rectangles}, however, some adjustments must be made in order to guarantee uniform properties of the Jacobian of the relevant holonomy map. 

Above a Cantor rectangle $R$ with $\mu_g(R)>0$, we construct a Cantor box $B$ following the construction of \emph{P-sets} from \cite[Section~3]{ornstein73}. Let $W_1^s$ and $W_2^s$ be the stable sides of the smallest solid rectangle $D(R)$ containing $R$. Let $W$ be a stable manifold for $\phi_t$ projecting on $W^s_1$ through the map $P_-: (x,t) \in \Omega \mapsto x \in M$ if $t<\tau(x)$, and being such that $W \subset \tilde \Omega_0 \coloneqq \{(x,t) \mid 0<t<\tau(x) \}$. Consider the set $W_R \subset W$ of points $(x,t) \in W$ such that $x \in R$. Let $t_0$ be small enough so that $S = \bigcup_{t=0}^{t_0} \phi_t(W_R) \subset \Omega_0$. Now, $B_0$ is obtained by moving $S$ along the unstable manifolds of $\phi_t$ to another surface of that type, spanned by $W_2^s$. That is, for each $(x,t) \in S$, take the unstable manifold $W(x,t)$ of $\phi_t$ passing by $(x,t)$, and projecting on the unstable manifold for $T$ passing by $x \in R$. Let $B_0 = \bigcup_{(x,t) \in S} W(x,t)$ and let $B \subset B_0$ be the set of points $(x,t) \in B_0$ such that $x \in R$. Notice that, up to subdividing $R$ into smaller rectangles taking a smaller $t_0$, we can assume that $B \subset \tilde \Omega_0$. Thus, by construction, the set $B$ has the property that for all $x,y \in B$, the local unstable manifold of $x$ and the local weakly stable manifold of $y$ intersect each other at a single point which lies in $B$. This is the crucial property of what Ornstein and Weiss, in \cite{ornstein73}, called a \emph{rectangle}.

Since $\mu_g(R)>0$, we have $\bar \mu_g(B)=t_0 \mu_g(R)>0$, so that the conditional measure $\bar \mu_B$ of $\bar \mu_g$ restricted to $B$ makes sense. Now, we want to equip $B$ with a product measure, absolutely continuous with respect to $\bar \mu_B$. We proceed as follows. 
Since the partition of $B$ into unstable manifolds is measurable, we can disintegrate $\bar \mu_B$ into conditional measures $\bar \mu ^{W_\xi}$, on $W_\xi \cap B$ with $\xi \in Z_\phi$, and a factor measure $\hat{\bar\mu}$ on the set $Z_\phi$ parametrizing the unstable manifolds of $B$. Fix a point $z \in B$, and consider $B$ as the product of $W^u(z) \cap B$ with $W^{ws}(z) \cap B$, where $W^u(z)$ is the local unstable manifold of $z$ and $W^{ws}(z)$ is the local weak stable manifold of $z$. Define $\bar \mu_B^p = \bar \mu ^{W^u(z)} \otimes \hat{\bar\mu}$, and note that we can view $\hat{\bar\mu}$ as inducing a measure on $W^{ws}(z)$. We still have to prove that $\bar \mu_B^p \ll \bar \mu_B$.

Similarly, let $\mu_R$ be the conditional measure of $\mu_g$ restricted to $R$. Since the partition into unstable manifolds $W_\xi$, $\xi \in Z$, is measurable, we can disintegrate $\mu_R$ into the conditional measures $\mu^W$ on $W \cap R$ and a factor measure $\hat \mu$ on $Z$. We want to relate the disintegration $\bar \mu_B$ with the one of $\mu_R$. 
Notice that we can view $Z_\phi$ as the set $Z \times [0,t_0]$, where $Z$ parametrize the set of unstable manifolds of $R$ through the map associating $\xi_\phi \in Z_\phi$ with the pair $(\xi,t)$ where $\xi \in Z$ is such that $P_-(W_{\xi_\phi}) = W_\xi \subset D(R)$ and $t$ is the value in the definition of $S$ where $W_{\xi_\phi}$ and $S$ intersect. Considering sets $A \subset B$ of the form $A = P_-(A) \times [t_-,t_+]$, we get that
\begin{align*}
\int_{\xi_\phi \in Z_\phi} \bar \mu ^{W_{\xi_\phi}} (A) \,\mathrm{d}\hat{\bar\mu}(\xi_\phi) &= \bar \mu_B(A) = \int_{t_-}^{t_+} \mu_R(P_-(A)) \,\mathrm{d}t = \int_{t_-}^{t_+} \int_{\xi \in Z} \mu ^{W_\xi}(P_-(A)) \,\mathrm{d}\hat\mu(\xi) \,\mathrm{d}t.
\end{align*}
Thus, we can identify $\bar \mu ^{W_{\xi_\phi}}$ with $\mu ^{P_-(W_{\xi_\phi})}$, and $\mathrm{d}\hat{\bar\mu}$ with $\mathrm{d}\hat{\mu} \mathrm{d}t$. From this identifications, we deduce that the projection map $P_{W,-}$ from some $W$ to $P_-(W)$, and its inverse are absolutely continuous. The absolute continuity of the holonomy map $\bar \Theta_W$ between unstable manifolds $W_0$ and $W$ in $B$ thus follows directly from the absolute continuity of the holonomy map between unstable manifolds in $R$ since $\bar \Theta_W = P_{W,-}^{-1} \circ \theta_{P_-(W)} \circ P_{W,-}$. This implies that $\bar\mu_B^p$ is absolutely continuous with respect to $\bar \mu_B$, and thus, also with respect to $\bar \mu_g$. The following definition is taken from \cite{ChH}.

\begin{definition}\label{def:delta-regular_covering}
For $\delta > 0$, a $\delta$-regular covering of $\Omega$ is a finite collection $\mathfrak{B}$ of disjoint Cantor boxes for which\footnote{The corresponding definition in \cite{ChH} has a third condition, but it is satisfied in our setting since the stable and unstable manifolds are one-dimensional and have bounded curvature.},
\begin{enumerate}
\item[a)] $\bar \mu_g (\bigcup_{B \in \mathfrak{B}} B) \geqslant 1-\delta$.
\item[b)] Every $B \in \mathfrak{B}$ satisfies $\left| \frac{\bar\mu _{B}^p(B)}{\bar\mu_g(B)} -1 \right| < \delta$. Moreover, there exists $G \subset B$, with $\bar\mu_g (G) > (1-\delta) \bar \mu_g(B)$, such that $\left| \frac{\mathrm{d} \bar\mu _{B}^p }{\mathrm{d}\bar\mu_g} (x) - 1 \right| < \delta$ for all $x \in G$.
\end{enumerate}
\end{definition}
By \cite[Lemma~5.1]{ChH}, such coverings exist for any $\delta >0$, and for Cantor boxes arbitrarily small. The proof essentially uses the covering of $M^{\mathrm{reg}}$ from Lemma~\ref{lemma:countable_cover_with_Cantor_rectangles} to build Cantor boxes, up to finite subdivision of the covering to ensure a). To get b), subdivide the boxes into smaller ones on which the Jacobian of the holonomy map between unstable manifolds is nearly $1$. This argument relies on Lusin's theorem and goes through in our setting with no changes.

\smallskip
\noindent
\emph{Step 2. Proof that $\bar \alpha_i$ is vwb}. First, define $\bar \alpha_i$ to be the partition of $\tilde \Omega$ into sets of the form $\tilde \Omega_0 \cap (A \times [\tfrac{l}{2^i},\tfrac{l+1}{2^i}))$, where $A \in \cM_{-1}^1$ and $l \in \mathbbm{N}$. Then $\bar \alpha_0 \leqslant \bar \alpha_1 \leqslant \bar \alpha_2 \leqslant \ldots$ is such that $\bigvee_{i=1}^{\infty} \bigvee_{n=-\infty}^{+\infty} \phi_n \bar \alpha_i$ generates the whole $\sigma$-algebra on $\tilde \Omega$. Using Theorems~4.1 and 4.2 from \cite{ChH}, we only need to prove that each partition $\bar \alpha_i$ is vwB in order to prove that $(\phi_t,\bar \mu_g)$ is Bernoulli.

Using $\cM_{-1}^1$ as the basis elements of $\bar \alpha_i$ allows us to apply the bounds \eqref{eq:estimate_mug_neighbourhood_Sk} directly since $\partial \cM_{-1}^1 = \cS_1 \cup \cS_{-1}$. We can now apply the same arguments as in \cite[Section~6.2]{ChH} with the modifications described in the second part of the proof of \cite[Proposition~7.19]{BD2020MME}. Actually, the only place where we need to be careful is \cite[Eq.~(7.33)]{BD2020MME} because of our additional horizontal cuttings. We finish the proof by dealing with this equation. We first have to recall some notations from \cite{BD2020MME} first.

Fix some $i \in \mathbbm{N}$, and let $\bar \alpha = \bar \alpha_i$. Let $\varepsilon >0$ and define $\delta = e^{-(\varepsilon/C')^{2/(1-\gamma)}}$ (recalling that $\gamma > 1$), where $C'>0$ is the constant from \eqref{eq:condition_C'} below.

Let $\mathfrak{B}=\{B_1,B_2,\ldots,B_k\}$ be a $\delta$-regular cover of $\tilde \Omega$ such that the diameters of the $B_i$'s are less than $\delta$. Define the partition $\pi = \{ B_0,B_1,B_2,\ldots,B_k\}$, where $B_0 = \Omega \smallsetminus \cup_{i=1}^k B_i$. For each $i \geqslant 1$, let $G_i \subset B_i$ denote the set identified in Definition~\ref{def:delta-regular_covering}(b).
Since $(\phi_{-1}, \bar \mu_g)$ is K-mixing, there exists an even number $N=2m$, such that for any integers $N_0$, $N_1$ such that $N<N_0<N_1$, $\delta$-almost every atom $A$ of $\bigvee_{N_0-m}^{N_1-m} \phi_{-i} \bar \alpha$, satisfies 
\begin{align*}
\left| \frac{\bar \mu_g(B|A)}{\bar \mu_g(B)} - 1 \right| < \delta, \quad \text{for all } B \in \pi,
\end{align*}
where $\bar \mu_g(\cdot |A)$ is the measure $\bar \mu_g$ conditioned to $A$. Now let $m$, $N_0$, $N_1$ be given as above and define $\omega = \bigvee_{N_0-m}^{N_1-m} \phi_{-i} \bar \alpha$. In order to prove that $\bar \alpha$ is vwb, we need to give estimates on elements of $\omega$. To do so, we identify as in \cite[Section~6.2]{ChH} sets of bad atoms, whose union will have measure less than $c \ve$. As in \cite{ChH}, we call these sets $\hat F_1$, $\hat F_2$, $\hat F_3$, $\hat F_4$. Since the estimates on the $\bar \mu_g$-measure of the bad sets $\hat F_1$ and $\hat F_2$ do not change, we only define $\hat F_3$ and $\hat F_4$. Define $F_3$ to be the set of all points $x \in \Omega \smallsetminus B_0$ such that $W^s(x)$ intersects the boundary of the element $\omega(x)$ before it fully crosses the rectangle $\pi(x)$. Thus, if $x \in F_3$, there exists a subcurve of $W^s(x)$ connecting $x$ to the boundary of $(\phi_{-i}\bar\alpha)(x)$ for some $i \in [N_0 - m,N_1-m]$. Then since $\pi(x)$ has diameter less than $\delta$, $\phi_i(x)$ lies within a distance $C \tilde \Lambda^{-i} \delta$ of the boundary of $\bar \alpha$ -- where $\tilde C_1$ and $\tilde \Lambda > 1$ come from the hyperbolicity of the billiard flow. Using the bound \eqref{eq:estimate_mug_neighbourhood_Sk}, the total measure of such points must add up to at most
\begin{align}\label{eq:condition_C'}
\sum_{i=N_0-m}^{N_1-m} \left( \frac{C}{|\log(\tilde C_1 \tilde \Lambda^{-i}\delta)|^\gamma} + C_{\bar \alpha}  \tilde C_1 \tilde \Lambda^{-i}\delta) \right) \leqslant C_1'|\log \delta |^{1-\gamma} + C_2'\delta \leqslant C'|\log \delta |^{1-\gamma},
\end{align}
for some $C'>0$. Letting $\hat F_3$ denote the union of atoms $A \in \omega$ such that $\bar \mu_g(F_3|A) > |\log \delta |^{\tfrac{1-\gamma}{2}}$, it follows that $\bar \mu_g(F_3) \leqslant C'|\log \delta|^{\tfrac{1-\gamma}{2}}$. This is at most $\varepsilon$ by choice of $\delta$.

The same precaution allows us to get the same bound on $\bar \mu_g(\hat F_4)$ as in \cite{BD2020MME}. 

Finally, the bad set to be avoided in the construction of the joining is $B_0 \cup (\cup_{i=1}^4 \hat F_i)$. Its measure is less than $c\varepsilon$ by choice of $\delta$. From this point, once the measure of the bad set is controlled, the rest of the proof in Section~6.3 of \cite{ChH} can be repeated verbatim. This proves that $\bar \alpha$ is vwB.
\end{proof}

\begin{proposition}\label{prop:flow_adapted}
Under the assumptions of Proposition~\ref{prop:billiard_flow_k-system}, the measure $\bar \mu_g$ is flow adapted\footnote{This result is due to Mark Demers. I thank him for allowing me to use his proof.}, that is, $\log ||D\phi_t ||$ is $\mu_g$-integrable.
\end{proposition}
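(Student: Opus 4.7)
The plan is to reduce the flow-adaptedness to the $T$-adaptedness of $\mu_g$ already established in Corollary~\ref{corol:mug-has_no_atoms-is_adapted-sees_manifolds}(c), by bounding $\log\|D\phi_t\|$ pointwise in terms of $|\log \cos\varphi|$ at the collisions occurring along the orbit in the time-window $[0,t]$ (noting that the integrability statement is with respect to $\bar\mu_g$, since $\log\|D\phi_t\|$ is a function on $\tilde\Omega$). Fix $t>0$. Along the orbit of $(x,s)\in\tilde\Omega$, the flow $\phi_t$ decomposes into at most $N_t:=\lfloor t/\tau_{\min}\rfloor+2$ free-flight segments interspersed with collisions at the points $T^kx$, $0\leqslant k\leqslant N_t$. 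The free-flight contribution to $D\phi_t$ has uniformly bounded norm (depending only on $t$ and the geometry of the scatterers), while in Jacobi coordinates adapted to the flow direction (see \cite[\S 3.7--3.9]{chernov2006chaotic}), the collision at a point $y\in M$ contributes a factor of operator norm at most $C/\cos\varphi(y)$. This would yield a pointwise bound
\begin{equation*}
\log\|D\phi_t(x,s)\|\;\leqslant\; C_t \;+\; C\sum_{k=-1}^{N_t}\bigl|\log \cos\varphi(T^k x)\bigr|
\end{equation*}
valid for $\bar\mu_g$-a.e.\ $(x,s)$ (the index $k=-1$ absorbs the behaviour near $s=0$, where we may be emerging from a collision).

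Integrating this bound against $\bar\mu_g=\mu_g(\tau)^{-1}\mu_g\otimes\lambda$ using Fubini, and invoking $T$-invariance of $\mu_g$, the right-hand side reduces to a constant plus $C(N_t+2)\int_M|\log\cos\varphi|\,\mathrm{d}\mu_g$. In the $(r,\varphi)$-chart on $M$, $\cos\varphi(x)$ is comparable to the distance $d(x,\cS_0)$, and $\cS_0\subset\cS_{-1}\cup\cS_1$. Therefore Corollary~\ref{corol:mug-has_no_atoms-is_adapted-sees_manifolds}(c) gives
\begin{equation*}
\int_M|\log\cos\varphi|\,\mathrm{d}\mu_g \;\leqslant\; C'\int_M\bigl(|\log d(x,\cS_{-1})|+|\log d(x,\cS_1)|\bigr)\,\mathrm{d}\mu_g \;<\;\infty,
\end{equation*}
and we conclude $\int_{\tilde\Omega}\log\|D\phi_t\|\,\mathrm{d}\bar\mu_g<\infty$.

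The only nontrivial step is the pointwise bound on $\|D\phi_t\|$ above: the differential $DT$ already has operator norm of order $1/\cos\varphi$ in the standard $(r,\varphi)$-chart, and one must propagate this through the suspension without accumulating extra factors from the identification $(x,\tau(x))\sim(Tx,0)$. Working in Jacobi-type coordinates that straighten the flow and diagonalize the collision map is the standard remedy; the estimate with a single factor $1/\cos\varphi$ per collision, together with bounded free-flight contribution, is contained in the computations of \cite[\S 3.7--3.9]{chernov2006chaotic}. Given this, the remainder of the argument is a routine application of Fubini, $T$-invariance of $\mu_g$, and the already-proven $T$-adaptedness.
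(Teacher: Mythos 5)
Your argument is correct, but it routes through a genuinely different chain of reductions than the paper's. You work directly with a pointwise estimate on $\log\|D\phi_t\|$ in terms of $\sum_k |\log\cos\varphi(T^k x)|$ over the finitely many (at most $\lfloor t/\tau_{\min}\rfloor+2$) collisions in the time window, then reduce by Fubini and $T$-invariance to the single integral $\int_M|\log\cos\varphi|\,\mathrm{d}\mu_g$, which you finish off with the $T$-adaptedness of $\mu_g$ from Corollary~\ref{corol:mug-has_no_atoms-is_adapted-sees_manifolds}(c) together with the comparability $\cos\varphi\asymp d(\cdot,\cS_0)$ and $\cS_0\subset\cS_{\pm1}$. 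The paper instead stays in the flow space $\Omega$: it introduces the singularity surfaces $\cS_0^\pm$ (the forward/backward flow of $\cS_0$ to the first collision) and shows that $\int_\Omega|\log d_\Omega(\cdot,\cS_0^\pm)|\,\mathrm{d}\nu_g<\infty$ suffices for $|t|\leqslant\tau_{\min}$, extending to all $t$ by subadditivity and flow-invariance; the finiteness of that integral is then obtained directly from the tail estimate $\mu_g(\cN_\varepsilon(\cS_{\pm1}))\leqslant C|\log\varepsilon|^{-\gamma}$ with $\gamma>1$ (via the square-root projection bound $P^\pm(\cN^\Omega_\varepsilon(\cS_0^\pm))\subset\cN^M_{C\varepsilon^{1/2}}(\cS_{\mp1})$ and a dyadic decomposition), rather than from the $T$-adaptedness statement. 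The two arguments are logically cousins — the $T$-adaptedness you invoke is itself a consequence of that same tail estimate — but the decompositions differ: you put the burden on the pointwise Jacobi-coordinate bound for $D\phi_t$ across multiple collisions (which you correctly flag as the nontrivial ingredient), whereas the paper reduces to a single collision via subadditivity and keeps all the measure-theoretic work in $\Omega$. One small cleanup in your write-up: the collisions encountered forward in time from $(x,s)$, $0<s<\tau(x)$, are at $T^k x$ for $k\geqslant 1$, so the index should start at $k=1$ rather than $k=-1$; this does not affect the conclusion since those extra terms are also integrable.
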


\begin{proof}
Let $\Omega = \{ (x,y, \theta) \in Q \times \mathbb{S}^1 \} \subset \mathbb{T}^3$ denote the phase space for the billiard flow $\Phi_t$ with the usual Euclidean metric denoted by $d_\Omega$. Let $\nu_g$ be the flow invariant measure obtained as the image of $\bar \mu_g$ by the conjugacy map between $\Omega$ and $\tilde \Omega$. Let 
\[ \cS_0^- = \{ \Phi_{-t}(z) \in \Omega \mid z \in \cS_0 \mbox{ and } t \leqslant \tau(T^{-1}z) \}  \]
denote the flow surface obtained by flowing $\cS_0$ backward until its first collision under the inverse flow. Similarly, let
\[
    \cS_0^+ = \{ \Phi_t(z) \in \Omega \mid z \in \cS_0 \mbox{ and } t \leqslant \tau(z) \}
\]
denote the forward flow of $\cS_0$ until its first collision. To show that the measure $\nu_t$ is flow-adapted, it suffices to show that $\int_{\Omega} |\log d_\Omega(x, \cS_0^{\pm}) |\, \mathrm{d}\nu_g(x) < \infty$.  For then this implies that $\log \| D\Phi_t \|$ is integrable for each $t \in [-\tau_{\min}, \tau_{\min}]$ and then by subadditivity for each $t \in \mathbb{R}$.

Let $P^\pm(\cdot)$ denote the projection under the forward (backward) flow of a subset of $\Omega$ until first collision.  Let $\cN^M_\ve(\cdot)$ denote the $\ve$-neighborhood of a set in $M$ in the Euclidean metric $d_M$ and let $\cN^\Omega_\ve(\cdot)$ denote the $\ve$-neighborhood of a set in $\Omega$ in the metric $d_\Omega$. It follows from \cite[Exercise~3.15]{chernov2006chaotic}, that there exists $C>0$ such that for any $\ve >0$,
\begin{equation}
\label{eq:project distance}
P^-(\cN^\Omega_\ve(\cS_0^-)) \subset \cN^M_{C\ve^{1/2}}(\cS_1) \quad \mbox{ and similarly } \quad
P^+(\cN^\Omega_\ve(\cS_0^+)) \subset \cN^M_{C\ve^{1/2}}(\cS_{-1})
\end{equation}
From \eqref{eq:estimate_mug_neighbourhood_Sk}, there exist $C_g>0$ and $\gamma >1$ such that
\begin{equation}
\label{eq:eps bound}
\mu_g(\cN^M_\ve(\cS_{\pm 1})) \leqslant C_g |\log \ve|^{-\gamma} \qquad \mbox{for all $\ve>0$.}
\end{equation}
Putting together \eqref{eq:project distance} and \eqref{eq:eps bound} yields
\begin{equation}
\label{eq:flow bound}
\nu_g(\cN^\Omega_\ve(\cS_0^-)) \leqslant \tau_{\max} C_g |\log C\ve^{1/2}|^{-\gamma} \leqslant C' \tau_{\max} |\log \ve|^{-\gamma} \, .
\end{equation}

For $p>1$ to be chosen below, define for $n \geqslant 1$, $A_n = \cN^\Omega_{e^{-n^p}}(\cS_0^-) \setminus \cN^\Omega_{e^{-(n+1)^p}}(\cS_0^-)$.  If $x \in A_n$, then $|\log d_\Omega(x, \cS_0^-)| \leqslant (n+1)^p$.  Thus we estimate using \eqref{eq:flow bound},
\[
\begin{split}
\int_\Omega |\log d_\Omega(x, \cS_0^-) | \, \mathrm{d}\nu_g
& \leqslant 1 + \log \diam(\Omega) + \sum_{n \geqslant 1} \int_{A_n} |\log d_\Omega(x, \cS_0^-) | \, \mathrm{d}\nu_g \\
& \leqslant 1 + \log \diam(\Omega) + \sum_{n \geqslant 1} (n+1)^p C' \tau_{\max} n^{-\gamma p} \, ,
\end{split}
\]
and the series converges as long as $p > 1/(\gamma-1)$.  A similar argument shows that $\log d_\Omega(x, \cS_0^+)$ is $\nu_g$ integrable so that $\nu_g$ is flow adapted.
\end{proof}

\begin{appendix}

\section{Motivations from uniform hyperbolic dynamics}

We start this note by presenting the usual method the existence of measures of maximal entropy is proved in the case of uniform hyperbolicity. First, we consider a hyperbolic transformation of a compact set, and then the case of an Anosov flow.

\subsection{Hyperbolic maps}

Let $X$ be a compact Riemannian manifold and let $T : X \to X$ be a $\mathcal{C}^r$ diffeomorphism. Assume that $T$ is uniformly hyperbolic, that is \begin{align*}
&\exists \lambda > 1, \exists C > 0, \exists E^s, \, E^u \subset T X \text{ such that }\\
(i)&\, TX = E^s \oplus E^u, \, DT(E^s) \subset E^s, \, DT^{-1}(E^u) \subset E^u, \\
(ii)&\, ||D_x T^n v_s|| \leqslant C \lambda^{-n} ||v_s||, \quad \forall n \geqslant 0, \, \forall v_s \in E^s_x \subset T_x X, \\
(iii)&\, ||D_x T^{-n} v_u|| \leqslant C \lambda^{-n} ||v_u||, \quad \forall n \geqslant 0, \, \forall v_u \in E^u_x \subset T_x X.
\end{align*}
One fundamental theorem about hyperbolic dynamic is the Hadamard--Perron Theorem \cite[Theorem 6.2.8]{katok1997introduction} which states that there exist two unique families of $\mathcal{C}^r$ manifolds, $\{ W^+_m \}_{m \in \mathbbm{Z}}$ and $\{ W^-_m \}_{m \in \mathbbm{Z}}$, everywhere tangent respectively to $E^s$ and to $E^u$, obtained as the graph of some functions, and satisfying some stability and contraction properties. A key tool in the proof is the construction of families of stable and unstable cones.

As a consequence \cite[Corollary 6.4.10]{katok1997introduction}, all such diffeomorphisms are expansive, that is 
\begin{align}\label{expansiveness}
\exists \delta > 0, \, \forall x,y \in X, \, [\mathrm{d}(T^n(x),T^n(y)) < \delta, \, \forall n \in \mathbbm{Z} \Rightarrow x=y ].
\end{align}

From the expansive property, it follows from \cite[Theorem 8.2]{Walters82ergth} that the metric entropy $\mu \mapsto h_{\mu}(T)$ is upper semi-continuous, hence the existence of equilibrium states for every continuous potential -- and in particular existence of measures of maximal entropy for the zero potential. In the proof of \cite[Theorem 8.2]{Walters82ergth}, expansiveness is only use to get the equality $h_{\mu}(T) = h_{\mu}(T, \mathcal{A})$ for partition $\mathcal{A}$ with $\mathrm{diam}(\mathcal{A}) < \delta$ (the expansivity constant of $T$) and any $T$-invariant measure $\mu$. 

As proved by Bowen \cite[Theorem 3.5]{bowen1972entropy}, the expansiveness assumption of \cite[Theorem 8.2]{Walters82ergth} can be weakened to \emph{entropy-expansiveness} (the proof remains unchanged). This weakening will be relevant in the case of Anosov flows.

\subsection{Anosov flows}

Let $X$ be a compact manifold and $\phi = \{ \varphi^t\} : \mathbbm{R} \times X \to X$ be a smooth flow. Assume that $\phi$ is an Anosov flow, that is
\begin{align*}
&\exists \lambda > 1, \exists C > 0, \exists E^s, \, E^u, \, E^c \subset T X \text{ such that }\\
(i)&\, TX = E^c \oplus E^s \oplus E^u, \\
(ii)&\, D\varphi^t(E^{s/u}) = E^{s/u}, \, \mathrm{dim} E^c_x =1, \, \left. \frac{d}{dt} \right|_{t=0} \varphi^t(x) \in E^c_x \smallsetminus \{0\}, \\
(iii)&\, ||D_x \varphi^t _{| E^s_x} || \leqslant C \lambda^{-t}, \, ||D_x \varphi^{-t} _{| E^u_x} || \leqslant C \lambda^{-t}, \, \forall t \geqslant 0.
\end{align*}

In \cite[Proposition 1.6]{bowen1972periodic}, Bowen proves that an Anosov flow is \emph{flow expansive} (in the sense of Bowen--Walters), that is -- as defined in \cite{bowen1972expansive} in the case of a fixed-point free flow,
\begin{align}\label{flow_expansiveness}
\begin{matrix}
\forall \varepsilon > 0, \, \exists \delta > 0, \, \forall x,y \in X, \, \forall h \in \mathcal{C}^0(\mathbbm{R}) \text{ with } h(0)=0, \\ [d(\varphi^t(x), \varphi^{h(t)}(y))< \delta, \, \forall t \in \mathbbm{R} \Rightarrow y \in \varphi^{]-\varepsilon, \varepsilon[}(x) ].
\end{matrix}
\end{align}
The key ingredient of the proof is the local product structure for hyperbolic flows.
From (\ref{flow_expansiveness}), it is easy to see, for $h = id$, that an Anosov flow satisfies the following weaker property 
\begin{align}\label{example_1_6}
\begin{matrix}
\exists \varepsilon > 0, \, \exists s > 0, \, \forall x \in X, \\ \Gamma_{\varepsilon}(x) \coloneqq \{ y \in X \mid \forall t \in \mathbbm{R}, \, d(\varphi^t(x), \varphi^t(y)) < \varepsilon\} \subset \varphi^{[-s,s]}(x).
\end{matrix}
\end{align}
Bowen proved \cite[Example 1.6]{bowen1972entropy} that (\ref{example_1_6}) is a sufficient condition so that every time $\varphi^t$ of the flow is entropy-expansive. Therefore the map $\mu \in \mathcal{M}_X(\varphi^1) \mapsto h_{\mu}(\varphi^1)$ is upper semi-continuous, and so is its restriction to $\mathcal{M}_X(\phi) \subset \mathcal{M}_X(\varphi^1)$. Hence, Anosov flows have equilibrium states for every continuous potential, and in particular for the zero potential, measures of maximal entropy.

\section{Obstructions for the Billiard Flow}

In the previous section, in both situations, proofs of existence of MME use some sort of expansiveness. However, the existence of a local product structure is a key ingredient in order to establish the expansivity property: it gives a scale used as the $\delta$ in (\ref{expansiveness}) and the $\varepsilon$ in (\ref{example_1_6}). Furthermore, the uniform contraction of stable (resp. unstable) manifolds for large positive (resp. negative) times is used, and not some estimates of their lengths in negative (resp. positive) times (such as fragmentation or growth lemmas, see for example \cite{chernov2006chaotic}).

\subsection{Entropy expansiveness}
In Bowen's proof, the local product structure is the main tool in order to prove flow expansiveness. In the case of the billiard flow, their is no such structure. Indeed, stable and unstable manifolds exist only for Lebesgue-almost every point and there is no deterministic control of their length (hence no uniform scale for a local structure). One might argue that a billiard flow admits invariant ``cone" fields \cite[Section 2]{BDL2018ExpDecay} and construct stable and unstable curves, but then the control on the length of those curves when applying the flow is in term of expansion, not in term of contraction.

It then seems that h-expansiveness of each time $\varphi^t$ of the flow is too much to ask for. Still, one might hope that each $\varphi^t$ is \emph{asymptotically h-expansive}, that is $h^*(\varphi^t) \coloneqq \lim\limits_{\varepsilon \to 0} h^*(\varphi^t, \varepsilon)=0$, where $h^*(\varphi^t, \varepsilon) = \sup\limits_{x \in X} h(\varphi^t,\overline{B}(x,\varepsilon))$. This definition was first introduced by Misiurewicz in \cite{misiurewicz1973diffeomorphism} where he proved that the metric entropy of an asymptotic h-expansive transformation is upper semi-continuous.

The quantity $h^*(\varphi^t)$ is usually referred to as the \emph{topological tail entropy} of $\varphi^t$ \cite{downarowicz2011bookentropy}. In the context of smooth dynamics, Buzzi \cite{buzzi1997intrinsic} has shown that if $f \in \mathcal{C}^r(M)$, then $h^*(f) \leqslant \frac{dim(M) R(f)}{r}$ for some constant $R(f)$. In particular, the metric entropy of a $\mathcal{C}^{\infty}$ transformation is upper semi-continuous. Clearly, this result does not apply to billiard flows.

Proving that the topological tail entropy of the billiard flow is zero is enough to prove the upper semi-continuity of the metric entropy, hence the existence of some measure of maximal entropy.

\subsection{Relations with the Collision Map}

In \cite[Theorem 6]{bowen1972expansive}, Bowen and Walters prove that the special flow constructed over a continuous transformation and under a continuous return time function, is flow expansive if and only if the base map is expansive. Since flow expansiveness is an invariant for flow under reparametrization, without loss of generality, the return time function can be chosen constant.

In \cite{BD2020MME}, Baladi and Demers show that the collision map is expansive. However, since the return time is only piecewise continuous, it is not easy to relate the expansivity of the collision map to flow expansiveness of the billiard flow. As shown in Figure~\ref{fig:billiard}, two trajectories can be easily separated by the collision map, but they remain \emph{close} in the phase space of the flow. We see that for a $\delta$ too large in (\ref{flow_expansiveness}) (and a natural choice of $h$), the two trajectories cannot be distinguished. What could be a good choice for $\delta$? The main problem being to find a $\delta$ independent of trajectories (it is \emph{easier} to find a $\delta$ for specific trajectories, such as those ones in Figure~\ref{fig:billiard}, but the $\inf$ of those $\delta$ over all trajectories might be $0$). If such $\delta$ existed, we expect it is controlled in some way by $\tau_{min}$.

For similar reasons, it appears that it is not a simple consequence of the collision map expansiveness for the flow to satisfy condition (\ref{example_1_6}) (which is a weaker than \emph{flow expansiveness}). For example, the two orbits shown in Figure~\ref{fig:billiard} (b) are close in the phase space of the flow, but far apart in the phase space of the collision map (since the collisions they make are distinct).

\begin{figure}
\begin{tabular}{ccc}
\raisebox{-0.5\height}{\includegraphics[width=0.45\textwidth]{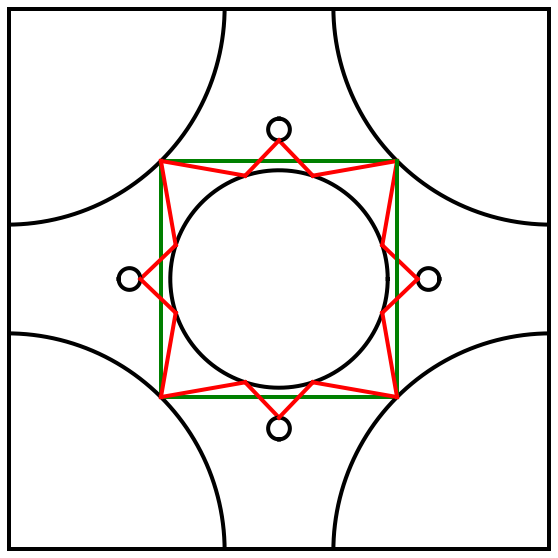}} &~&
\raisebox{-0.5\height}{\includegraphics[width=0.45\textwidth]{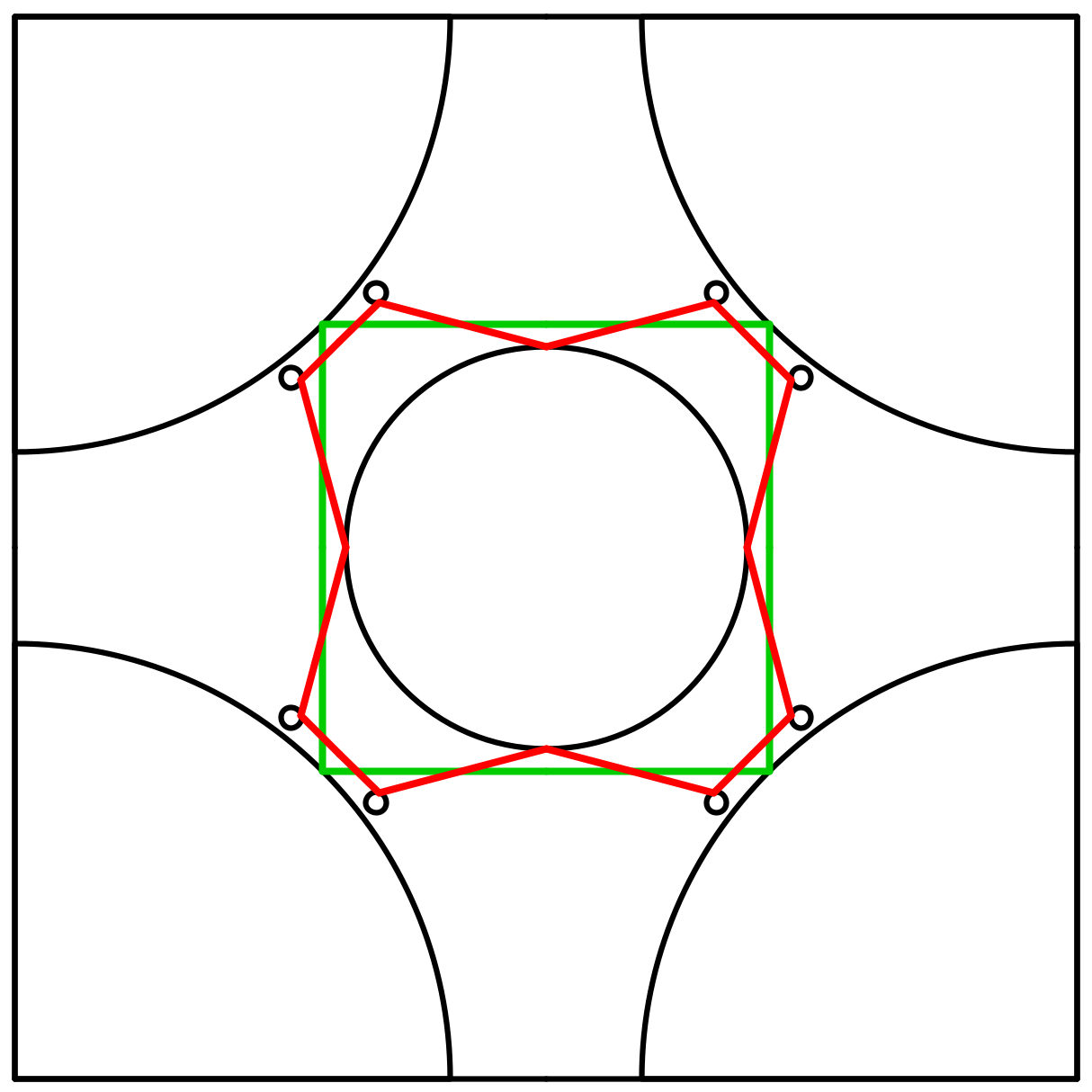}}\\
(a) Some common collisions &~&  (b) Distinct collisions
\end{tabular}
\caption{\label{fig:billiard} Two examples of two periodic trajectories.}
\end{figure}
\end{appendix}

\bibliography{biblio}{}

\begin{thebibliography}{10}

\bibitem{baladi18book}
V.~Baladi.
\newblock {\em Dynamical zeta functions and dynamical determinants for
  hyperbolic maps}, volume~68 of {\em Ergebnisse der Mathematik und ihrer
  Grenzgebiete. 3. Folge. A Series of Modern Surveys in Mathematics [Results in
  Mathematics and Related Areas. 3rd Series. A Series of Modern Surveys in
  Mathematics]}.
\newblock Springer, Cham, 2018.
\newblock A functional approach.

\bibitem{BCD}
V.~Baladi, J.~Carrand, and M.~F. Demers.
\newblock Measure of maximal entropy for finite horizon {S}inai billiard flows.
\newblock {\em Annales Henri Lebesgue (to appear)}, 2024.

\bibitem{BD2020MME}
V.~Baladi and M.~F. Demers.
\newblock On the measure of maximal entropy for finite horizon {S}inai billiard
  maps.
\newblock {\em J. Amer. Math. Soc.}, 33(2):381--449, 2020.

\bibitem{BD2}
V.~Baladi and M.~F. Demers.
\newblock Thermodynamic formalism for dispersing billiards.
\newblock {\em Journal of Modern Dynamics}, 18(0):415--493, 2022.

\bibitem{BDL2018ExpDecay}
V.~Baladi, M.~F. Demers, and C.~Liverani.
\newblock Exponential decay of correlations for finite horizon {S}inai billiard
  flows.
\newblock {\em Invent. Math.}, 211(1):39--177, 2018.

\bibitem{bowen1972entropy}
R.~Bowen.
\newblock Entropy-expansive maps.
\newblock {\em Transactions of the American Mathematical Society},
  164:323--331, 1972.

\bibitem{bowen1972periodic}
R.~Bowen.
\newblock Periodic orbits for hyperbolic flows.
\newblock {\em American Journal of Mathematics}, 94(1):1--30, 1972.

\bibitem{bowen1972expansive}
R.~Bowen and P.~Walters.
\newblock Expansive one-parameter flows.
\newblock {\em Journal of differential Equations}, 12(1):180--193, 1972.

\bibitem{BuS}
L.~A. Bunimovi\v{c} and J.~G. Sina\u{\i}.
\newblock The fundamental theorem of the theory of scattering billiards.
\newblock {\em Mat. Sb. (N.S.)}, 90(132):415--431, 479, 1973.

\bibitem{buzzi1997intrinsic}
J.~Buzzi.
\newblock Intrinsic ergodicity of smooth interval maps.
\newblock {\em Israel J. Math.}, 100:125--161, 1997.

\bibitem{Bu}
J.~Buzzi.
\newblock The degree of {B}owen factors and injective codings of
  diffeomorphisms.
\newblock {\em J. Mod. Dyn.}, 16:1--36, 2020.

\bibitem{CWZ}
J.~Chen, F.~Wang, and H.-K. Zhang.
\newblock Markov partition and thermodynamic formalism for hyperbolic systems
  with singularities, 2017.

\bibitem{chernov2006chaotic}
N.~Chernov and R.~Markarian.
\newblock {\em Chaotic billiards}.
\newblock Number 127. American Mathematical Soc., 2006.

\bibitem{chernov01}
N.~I. Chernov.
\newblock Sinai billiards under small external forces.
\newblock {\em Ann. Henri Poincar\'{e}}, 2(2):197--236, 2001.

\bibitem{ChH}
N.~I. Chernov and C.~Haskell.
\newblock Nonuniformly hyperbolic {$K$}-systems are {B}ernoulli.
\newblock {\em Ergodic Theory Dynam. Systems}, 16(1):19--44, 1996.

\bibitem{DK}
M.~F. Demers and A.~Korepanov.
\newblock Rates of mixing for the measure of maximal entropy of dispersing
  billiard maps.
\newblock {\em Proceedings of the London Mathematical Society}, 128(1):e12578,
  2024.

\bibitem{DRZ}
M.~F. Demers, L.~Rey-Bellet, and H.-K. Zhang.
\newblock Fluctuation of the entropy production for the {L}orentz gas under
  small external {F}orces.
\newblock {\em Comm. Math. Phys.}, 363(2):699--740, 2018.

\bibitem{DZ1}
M.~F. Demers and H.-K. Zhang.
\newblock Spectral analysis of the transfer operator for the {L}orentz gas.
\newblock {\em J. Mod. Dyn.}, 5(4):665--709, 2011.

\bibitem{downarowicz2011bookentropy}
T.~Downarowicz.
\newblock {\em Entropy in dynamical systems}, volume~18 of {\em New
  Mathematical Monographs}.
\newblock Cambridge University Press, Cambridge, 2011.

\bibitem{GO}
G.~Gallavotti and D.~S. Ornstein.
\newblock Billiards and {B}ernoulli schemes.
\newblock {\em Comm. Math. Phys.}, 38:83--101, 1974.

\bibitem{Garrido97}
P.~L. Garrido.
\newblock Kolmogorov-{S}inai entropy, {L}yapunov exponents, and mean free time
  in billiard systems.
\newblock {\em J. Statist. Phys.}, 88(3-4):807--824, 1997.

\bibitem{Baras95}
P.~Gaspard and F.~Baras.
\newblock Chaotic scattering and diffusion in the {L}orentz gas.
\newblock {\em Phys. Rev. E (3)}, 51(6, part A):5332--5352, 1995.

\bibitem{katok1997introduction}
A.~Katok and B.~Hasselblatt.
\newblock {\em Introduction to the modern theory of dynamical systems},
  volume~54.
\newblock Cambridge university press, 1997.

\bibitem{KS}
A.~Katok, J.-M. Strelcyn, F.~Ledrappier, and F.~Przytycki.
\newblock {\em Invariant manifolds, entropy and billiards; smooth maps with
  singularities}, volume 1222 of {\em Lecture Notes in Mathematics}.
\newblock Springer-Verlag, Berlin, 1986.

\bibitem{LM}
Y.~Lima and C.~Matheus.
\newblock Symbolic dynamics for non-uniformly hyperbolic surface maps with
  discontinuities.
\newblock {\em Ann. Sci. \'{E}c. Norm. Sup\'{e}r. (4)}, 51(1):1--38, 2018.

\bibitem{misiurewicz1973diffeomorphism}
M.~Misiurewicz.
\newblock Diffeomorphism without any measure with maximal entropy.
\newblock {\em Bull. Acad. Polon. Sci. S\'{e}r. Sci. Math. Astronom. Phys.},
  21:903--910, 1973.

\bibitem{ornstein70vwb}
D.~S. Ornstein.
\newblock Imbedding {B}ernoulli shifts in flows.
\newblock In {\em Contributions to {E}rgodic {T}heory and {P}robability
  ({P}roc. {C}onf., {O}hio {S}tate {U}niv., {C}olumbus, {O}hio, 1970)}, pages
  178--218. Springer, Berlin, 1970.

\bibitem{ornstein73}
D.~S. Ornstein and B.~Weiss.
\newblock Geodesic flows are {B}ernoullian.
\newblock {\em Israel J. Math.}, 14:184--198, 1973.

\bibitem{Reed80methods}
M.~Reed and B.~Simon.
\newblock {\em Methods of modern mathematical physics. {I}}.
\newblock Academic Press, Inc. [Harcourt Brace Jovanovich, Publishers], New
  York, second edition, 1980.
\newblock Functional analysis.

\bibitem{S}
J.~G. Sina\u{\i}.
\newblock Dynamical systems with elastic reflections. {E}rgodic properties of
  dispersing billiards.
\newblock {\em Uspehi Mat. Nauk}, 25(2 (152)):141--192, 1970.

\bibitem{SC}
Y.~G. Sina\u{\i} and N.~I. Chernov.
\newblock Ergodic properties of some systems of two-dimensional disks and
  three-dimensional balls.
\newblock {\em Uspekhi Mat. Nauk}, 42(3(255)):153--174, 256, 1987.

\bibitem{toth}
I.~P. T\'oth.
\newblock Typicality of finite complexity in planar dispersing billiards.
\newblock {\em The {B}udapest--{W}ien {D}ynamics {S}eminar}, January 13, 2023.

\bibitem{Walters82ergth}
P.~Walters.
\newblock {\em An introduction to ergodic theory}, volume~79 of {\em Graduate
  Texts in Mathematics}.
\newblock Springer-Verlag, New York-Berlin, 1982.

\bibitem{Y}
L.-S. Young.
\newblock Statistical properties of dynamical systems with some hyperbolicity.
\newblock {\em Ann. of Math. (2)}, 147(3):585--650, 1998.

\end{thebibliography}
\bibliographystyle{abbrv}

\end{document}